\def\Xint#1{\mathchoice
    {\XXint\displaystyle\textstyle{#1}}%
    {\XXint\textstyle\scriptstyle{#1}}%
    {\XXint\scriptstyle\scriptscriptstyle{#1}}%
    {\XXint\scriptscriptstyle\scriptscriptstyle{#1}}%
      \!\int}
\def\XXint#1#2#3{{\setbox0=\hbox{$#1{#2#3}{\int}$}
    \vcenter{\hbox{$#2#3$}}\kern-.5\wd0}}
\def\dashint{\Xint-}
\def\YYint#1#2#3{{\setbox0=\hbox{$#1{#2#3}{\int}$}
    \lower1ex\hbox{$#2#3$}\kern-.46\wd0}}
\def\YYYint#1#2#3{{\setbox0=\hbox{$#1{#2#3}{\int}$}
    \lower0.35ex\hbox{$#2#3$}\kern-.48\wd0}}
\def\ZZint#1#2#3{{\setbox0=\hbox{$#1{#2#3}{\int}$}
    \raise1.15ex\hbox{$#2#3$}\kern-.57\wd0}}
\def\ZZZint#1#2#3{{\setbox0=\hbox{$#1{#2#3}{\int}$}
    \raise0.85ex\hbox{$#2#3$}\kern-.53\wd0}}
\def\Xiint#1{\mathchoice
    {\XXiint\displaystyle\textstyle{#1}}%
    {\XXiint\textstyle\scriptstyle{#1}}%
    {\XXiint\scriptstyle\scriptscriptstyle{#1}}%
    {\XXiint\scriptscriptstyle\scriptscriptstyle{#1}}%
      \!\iint}
\def\XXiint#1#2#3{{\setbox0=\hbox{$#1{#2#3}{\iint}$}
    \vcenter{\hbox{$#2#3$}}\kern-.5\wd0}}
\def\dashiint{\Xiint-}
\DeclareMathOperator*{\esup}{ess\,sup}
\DeclareMathOperator*{\dist}{dist}
\DeclareMathOperator*{\supp}{supp}
\DeclareMathOperator*{\ITail}{Tail_{\infty}}
\DeclareMathOperator*{\ITails}{Tail^{2}_{\infty}}
\DeclareMathOperator*{\loc}{loc}
\DeclareMathOperator*{\osc}{osc}
\DeclareMathOperator*{\pv}{p.v.}
\newcommand{\rom}[1]{\uppercase\expandafter{\romannumeral #1\relax}}
\newcommand{\Chi}{\mbox{\Large$\chi$}}
\newcommand{\Title}{Higher H\"older regularity for nonlocal parabolic equations with irregular kernels}
\newcommand{\Author}{Sun-Sig Byun, Hyojin Kim \and Kyeongbae Kim}
\theoremstyle{plain}
\newtheorem{thm}{Theorem}[section]
\newtheorem{lem}[thm]{Lemma}
\newtheorem{cor}[thm]{Corollary}
\theoremstyle{definition}
\newtheorem*{defn*}{Definition}
\theoremstyle{remark}
\newtheorem{rmk}{Remark}
\numberwithin{equation}{section}
\subjclass[2020]{35A01, 35B65, 35D30, 35R05, 47G20}
\keywords{Nonlocal; H\"older Regularity ; Nonlinear}
\newcommand{\dx}{\,dx}
\newcommand{\dt}{\,dt}
\newcommand{\dy}{\,dy}
\newcommand{\ds}{\,ds}
\newcommand{\dsigma}{\,d\sigma}
\newcommand{\ddiv}{\mathrm{div\,}}
\title{\Title}
\author{\Author}
\address{Sun-Sig Byun: Department of Mathematical Sciences and Research Institute of Mathematics, Seoul National University, Seoul 08826, Korea}
\email{byun@snu.ac.kr}
\address{Hyojin Kim: Department of Mathematical Sciences, Seoul National University, Seoul 08826, Korea}
\email{hyojin@snu.ac.kr}
\address{Kyeongbae Kim: Department of Mathematical Sciences, Seoul National University, Seoul 08826, Korea}
\email{kkba6611@snu.ac.kr}
\thanks{S. Byun was supported by NRF-2021R1A4A1027378. H. Kim was supported by NRF-2020R1C1C1A01009760. K. Kim was supported by NRF-2022R1A2C1009312.}
\begin{document}
\maketitle

\begin{abstract}
We study a nonlocal parabolic equation with an irregular kernel coefficient to establish higher H\"older regularity under an appropriate higher integrablilty on the nonhomogeneous terms and a minimal regularity assumption on the kernel coefficient.
\end{abstract}

\section{Introduction}
\subsection{Overview and Main results}
\noindent  
In this paper, we study the following parabolic nonlocal equations
\begin{equation}
\label{eq1}
\partial_{t}u+\mathcal{L}_{A}^{\Phi}u=f \text{ in }\Omega\times(0,T),
\end{equation}
where 
\begin{equation*}
    \mathcal{L}^{\Phi}_{A}u(x,t)\coloneqq\pv\int_{\mathbb{R}^{n}}\Phi(u(x,t)-u(y,t))\frac{A(x,y,t)}{|x-y|^{n+2s}}\dy,\quad (x,t)\in\Omega_{T},
\end{equation*}
for some $s\in(0,1)$. Here, $\Omega\subset\mathbb{R}^{n}$ $(n\geq2)$ is a bounded domain, 
$A:\mathbb{R}^{2n}\times\mathbb{R}\rightarrow\mathbb{R}$ is a kernel coefficient satisfying 
\begin{equation}
\label{coefficeint condition}
\begin{cases}
    A(x,y,t)=A(y,x,t) \ \ a.e.\ (x,y,t)\in\mathbb{R}^{2n}\times\mathbb{R}\\
    \lambda^{-1}\leq A(x,y,t)\leq\lambda \text{ for some constant }\lambda\geq1,
\end{cases}
\end{equation}
and
$\Phi:\mathbb{R}\rightarrow\mathbb{R}$ is a measurable function with $\Phi(0)=0$ such that
\begin{equation}
\label{Phi condition}
\begin{cases}
(\Phi(\xi)-\Phi(\xi'))(\xi-\xi')\geq\lambda^{-1}|\xi-\xi'|^{2}\\
|\Phi(\xi)-\Phi(\xi')|\leq\lambda|\xi-\xi'| \text{ for any }\xi,\xi'\in\mathbb{R},
\end{cases}
\end{equation}
where $\lambda$ is the same number in \eqref{coefficeint condition}. In particular, if $A=1$ and $\Phi(t)=t$, then \eqref{eq1} reduces to a nonhomogeneous fractional heat equation 
$u_{t}+(-\triangle)^{s}u = f$. 
We denote by $\mathcal{L}_{0}(\lambda)\equiv\mathcal{L}_{0}(\lambda;\mathbb{R}^{2n}\times\mathbb{R})$ to mean a class of measurable kernel coefficients satisfying \eqref{coefficeint condition}.
Throughout this paper, we always assume that $A$ satisfies \eqref{coefficeint condition} and $\Phi$ satisfies \eqref{Phi condition}.

Recently there has been a huge amount of research on nonlocal equations. For the elliptic case, Kassmann \cite{KH1} proved H\"older continuity of a weak solution to a fractional Laplace equation with zero boundary condition. Di Castro, Kuusi and Palatucci \cite{CKPf} proved H\"older continuity for a fractional p-Laplace equation. Moreover, they first introduced a tail space to consider nonzero boundary data. We refer to \cite{ KH1,BKO,BOS,Cr,Shf} for H\"older regularity, to \cite{BL,Nv,MSY,Ni,KMS1,FMSY,SM,ABES,Sn,Nexist} for higher Sobolev regularity, and to \cite{KMS2} for a potential estimate. In the parabolic case, Caffarelli, Chan and Vasseur \cite{CCV} proved H\"older regularity for a fractional parabolic equation with linear growth. We refer to \cite{Sl,S,DZZ,LPPS,KSp,FRp,V1,N1,APTl} for various regularity results. 

Now we turn to higher H\"older regularity. Brasco, Lindgren and Schikorra \cite{BLS2} proved higher H\"older regularity for a fractional p-Laplace equation with the nonhomogeneous term in the super-quadratic case. Brasco, Lindgren and Str\"{o}mqvist \cite{BLS} extended the argument used to a parabolic fractional p-Laplace equation. It is notable that Nowak \cite{NHH} considered a fractional equation with linear growth and an irregular kernel coefficient to obtain similar results in \cite{BLS2}. We would like to mention the recent and interesting work \cite{T} in which a similar result as in the present paper is obtained when $A$ is constant and $\Phi(\xi)=|\xi|^{p-2}\xi$ for $p\geq2$. For a further higher H\"older regularity, we refer to \cite{CS,GKS,Fr,CSP} and references therein.

The aim of this paper is two-fold. One is to establish a local boundedness of a weak solution to \eqref{eq1} when the nonhomogeneous term $f$ satisfies \eqref{forcing term}. The other is to obtain the higher H\"older regularity of a weak solution to \eqref{eq1} under a suitable regularity assumption on the kernel coefficient $A$.  
As usual, a solution to \eqref{eq1} is defined in the weak sense as below. See the next section for a precise description of the related  function spaces.

\begin{defn*}(Local weak solution)
Let $f\in L^{q,r}_{\loc}(\Omega_{T})=L_{\loc}^{r}\left(0,T;L^{q}_{\loc}(\Omega)\right)$, where $q$ and $r$ are any positive numbers such that {$q,r \ge 1$} and $\frac{n}{2qs}+\frac{1}{r}\leq 1+\frac{n}{4s}$. We say that 
\[u\in L^{2}_{\loc}\left(0,T;W^{s,2}_{\loc}(\Omega)\right)\cap L^{\infty}_{\loc}\left(0,T;L^{1}_{2s}(\mathbb{R}^{n})\right)\cap C_{\loc}\left(0,T;L_{\loc}^{2}(\Omega)\right)\]is a local weak subsolution(supersolution) to \eqref{eq1} if 
\begin{align*}
    &\int_{t_{1}}^{t_{2}}\int_{\Omega}-u\partial_{t}\phi\dx\dt+\int_{t_{1}}^{t_{2}}\int_{\mathbb{R}^{n}}\int_{\mathbb{R}^{n}}\Phi(u(x,t)-u(y,t))(\phi(x,t)-\phi(y,t))\frac{A(x,y,t)}{|x-y|^{n+2s}}\dx\dy\dt\\
    &\quad\leq(\geq)\int_{t_{1}}^{t_{2}}\int_{\Omega}f\phi\dx\dt-\int_{\Omega}u\phi\dx\Bigg\rvert_{t=t_{1}}^{t=t_{2}}
\end{align*}
for all nonnegative functions $\phi\in L^{2}(I;W^{s,2}(\Omega))\cap W^{1,2}\left(I;L^{2}(\Omega)\right)$ with spatial support compactly embedded in $\Omega$ and $I\coloneqq[t_{1},t_{2}]\Subset(0,T)$. In particular, we say that $u$ is a local weak solution to \eqref{eq1} if $u$ is both a local subsolution and a local supersolution to \eqref{eq1}.
\end{defn*}
\noindent
Note that $L^{2}([t_{1},t_{2}];W^{s,2}(\Omega))\cap W^{1,2}\left([t_{1},t_{2}];L^{2}(\Omega)\right)$ is embedded in $L^{\hat{r}}([t_{1},t_{2}];L^{\hat{q}}(\Omega))$ for some positive numbers $\hat{q}$ and $\hat{r}$ such that {$\hat{q},\hat{r} \ge 1$} and $\frac{n}{2\hat{q}s}+\frac{1}{\hat{r}}=\frac{n}{4s}$, see Lemma \ref{embedSB}, to find 
\[\int_{t_{1}}^{t_{2}}\int_{\Omega}|f\phi|<\infty.\] Existence and uniqueness of a weak solution \eqref{eq1} with appropriate initial and boundary conditions will be discussed in Appendix \ref{Appendix}.

Now we introduce our main results. The first one is the local boundedness for a local weak subsolution to $\eqref{eq1}$ with the following assumption of $f$,
\begin{equation}
\label{forcing term}
f\in L^{q,r}_{\loc}(\Omega_{T})\quad \text{with}\quad\frac{n}{2qs}+\frac{1}{r}<1.
\end{equation}

\begin{thm}(Local boundedness) 
\label{LBlem}
Suppose that $u$ is a local weak subsolution to \eqref{eq1} with  \eqref{forcing term}.
 Then there is a constant $c \equiv c(n,s,q,r,\lambda)$ such that
\begin{align*}
    \sup_{z\in Q_{\rho_{0}/2}(z_{0})}u(z)&\leq c\Bigg[\left(\dashiint_{Q_{\rho_{0}}(z_{0})}u^{2}(x,t)\dx\dt\right)^{\frac{1}{2}}+\ITail\left(u,z_{0},\rho_{0}/2,\rho_{0}^{2s}\right)\\
    &\qquad+\rho_{0}^{2s-\left(\frac{n}{q}+\frac{2s}{r}\right)}\|f\|_{L^{q,r}(Q_{\rho_{0}}(z_{0}))}\Bigg],
\end{align*}
whenever $Q_{\rho_{0}}(z_{0})\equiv B_{\rho_{0}}(x_{0})\times(t_{0}-\rho_{0}^{2s},t_{0}]\Subset\Omega_{T}$.
\end{thm}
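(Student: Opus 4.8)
The plan is to run a De Giorgi–Nash–Moser iteration adapted to the nonlocal parabolic setting. First I would fix $Q_{\rho_0}(z_0)$, and for $j\ge 0$ set up a sequence of shrinking cylinders $Q_j=B_{\rho_j}(x_0)\times(t_0-\rho_j^{2s},t_0]$ with $\rho_j=\tfrac{\rho_0}{2}(1+2^{-j})$, together with increasing truncation levels $k_j=k(1-2^{-j})$ where $k>0$ is a free parameter to be chosen at the end (roughly the size of the right-hand side of the claimed estimate). I would test the weak subsolution inequality with $\phi=(u-k_j)_+\zeta_j^2$, where $\zeta_j$ is a cutoff in space that is $1$ on $B_{\rho_{j+1}}$, supported in $B_{\rho_j}$, with $|\nabla\zeta_j|\lesssim 2^j/\rho_0$ (and similarly a Lipschitz cutoff in time). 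This produces the standard energy/Caccioppoli inequality: the parabolic term controls $\sup_t\int (u-k_j)_+^2\zeta_j^2$, the nonlocal bilinear form controls the Gagliardo seminorm $[(u-k_j)_+\zeta_j]_{W^{s,2}}^2$ up to lower-order terms, and there are three error contributions — a $|\nabla\zeta_j|$-type term $\sim (2^j/\rho_0)^{2s}\iint (u-k_j)_+^2$ after using $|x-y|^{-n-2s}$ on the region $|x-y|\lesssim\rho_0$ and the tail on $|x-y|\gtrsim\rho_0$, a nonlocal tail term involving $\ITail$, and the forcing term $\iint |f|(u-k_j)_+\zeta_j^2$, which I would estimate by Hölder in the $L^{q,r}$-scale.

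Next I would convert the energy inequality into a decay estimate for the quantity
\[
Y_j:=\Yint\tbvline_{\,Q_j}(u-k_j)_+^2\dz.
\]
The key analytic input is the parabolic Sobolev embedding of Lemma \ref{embedSB}: the space $L^\infty(I;L^2)\cap L^2(I;W^{s,2})$ embeds into $L^{\hat r}(I;L^{\hat q})$ with $\frac{n}{2\hat q s}+\frac1{\hat r}=\frac{n}{4s}$, i.e.\ an exponent strictly larger than $2$. Combining this with the Caccioppoli inequality and the elementary measure-shrinking estimate $|\{u>k_j\}\cap Q_{j+1}|\lesssim (k_{j+1}-k_j)^{-2}Y_j|Q_j|$ (which gives us the extra room to absorb the $L^{q,r}$-norm of $f$, using precisely the strict inequality $\frac{n}{2qs}+\frac1r<1$ in \eqref{forcing term}), I would arrive at a recursion of the form $Y_{j+1}\le C\,b^{j}\,Y_j^{1+\beta}$ for some $b>1$ and $\beta>0$ depending only on $n,s,q,r$. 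The fast-geometric-convergence lemma then yields $Y_j\to 0$, hence $\esup_{Q_{\rho_0/2}}(u-k)_+=0$, provided $Y_0\le C^{-1/\beta}b^{-1/\beta^2}$.

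Finally I would choose
\[
k=c\left[\left(\Yint\tbvline_{\,Q_{\rho_0}(z_0)}u_+^2\dz\right)^{1/2}+\ITail(u,z_0,\rho_0/2,\rho_0^{2s})+\rho_0^{2s-(\frac nq+\frac{2s}{r})}\|f\|_{L^{q,r}(Q_{\rho_0}(z_0))}\right]
\]
with $c$ large enough that the smallness condition on $Y_0$ holds; tracking the scaling of each term through the Caccioppoli estimate shows these are exactly the three quantities that appear, with the stated powers of $\rho_0$. Applying the same argument to $-u$ is not needed since only the subsolution bound is claimed, but one does need $u_+$ rather than $|u|$ throughout; replacing $u_+$ by $u$ in the final statement is harmless since $u\le u_+$. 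The main obstacle is the bookkeeping in the nonlocal Caccioppoli inequality: one must carefully split the double integral over $\{|x-y|\le \rho_0\}$ and its complement, control the cross terms coming from $(u-k_j)_+(x)$ times $(u-k_j)_+(y)$ when only one of the two points lies in the support of $\zeta_j$ (this is where the nonlocal tail enters, and where the monotonicity and Lipschitz bounds \eqref{Phi condition} on $\Phi$ together with the ellipticity \eqref{coefficeint condition} of $A$ are used to reduce to the model case), and keep all constants independent of $j$ so that the geometric factor $b^j$ is genuinely summable after taking logarithms.
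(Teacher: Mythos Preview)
Your overall strategy---Caccioppoli estimate, parabolic Sobolev embedding (this is Lemma~\ref{embedSF}, not Lemma~\ref{embedSB}), and De~Giorgi iteration with increasing levels $k_j$ on shrinking cylinders---matches the paper. The substantive difference is that you propose a \emph{single} recursion $Y_{j+1}\le Cb^{j}Y_j^{1+\beta}$, whereas the paper runs a \emph{coupled two-sequence} iteration (Lemma~\ref{TLI}), tracking simultaneously
\[
y_h=N^{-2}\iint_{Q_h}(u-k_h)_+^2\dx\dt
\qquad\text{and}\qquad
z_h=\Bigl(\int_{-\tau_h}^{0}|E(t;\rho_h,k_h)|^{\hat r/\hat q}\dt\Bigr)^{2/\hat r}.
\]

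This is not cosmetic, and your proposal has a gap at the forcing term. After H\"older against $\|f\|_{L^{q,r}}$ and absorption via $V^2_s\hookrightarrow L^{\hat q,\hat r}$ (with $\hat q=2(1+\kappa)q'$, $\hat r=2(1+\kappa)r'$), the forcing contribution in the Caccioppoli inequality is
\[
ck^{2}R^{-n\kappa}\Bigl(\int_{I}|E(t;R,k)|^{\hat r/\hat q}\dt\Bigr)^{2(1+\kappa)/\hat r},
\]
a \emph{time-sliced} level-set quantity in the mixed norm---precisely $z_h^{1+\kappa}$---and not the space--time measure $|\{u>k_j\}\cap Q_j|$ that your Chebyshev step controls. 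There is no inequality reducing $z_h^{1+\kappa}$ to a power of $y_h$ that is $\ge 1$ uniformly in $q,r$: when $\hat r$ is large (i.e.\ $r$ close to $1$) the exponent $2(1+\kappa)/\hat r$ is small, so even the crude bound $\int|E|^{\hat r/\hat q}\dt\le|B_\rho|^{\hat r/\hat q-1}\int|E|\dt$ followed by space--time Chebyshev leaves a power of $y_h$ strictly below $1$, and the term $y_h^{\delta}z_h^{1+\kappa}$ in the recursion for $y_{h+1}$ fails to exceed $y_h$. The paper's remedy is to iterate $z_h$ alongside $y_h$, obtaining
\[
y_{h+1}\le Cb^{h}\bigl(y_h^{1+\delta}+z_h^{1+\kappa}y_h^{\delta}\bigr),
\qquad
z_{h+1}\le Cb^{h}\bigl(y_h+z_h^{1+\kappa}\bigr),
\]
and then invoking Lemma~\ref{TLI}. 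Your outline would work verbatim for $f\in L^{p}_{t,x}$ with a single exponent $p$, but for genuinely mixed $L^{q,r}$ data you must either adopt the two-sequence scheme or supply an explicit argument bounding the mixed-norm level-set term by a sufficiently high power of $Y_j$; as written, that step does not close.
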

\begin{rmk}
We first notice that \eqref{forcing term} is an appropriate condition to get the local boundedness (see \cite{LPPS}).
{In addition, we observe that in the limiting case of $s\to 1$, \eqref{forcing term} is a suitable condition to get the local boundedness for the local case when $s=1$ (see \cite[Chapter 3]{LSU}).}
\end{rmk}
\noindent
The second one is the higher H\"older regularity. Here, we introduce a new kernel class for the desired result. We say that $\Tilde{A}\in\mathcal{L}_{1}(\lambda;\Omega\times\Omega\times(0,T))$ if $\Tilde{A}\in\mathcal{L}_{0}(\lambda;\Omega\times\Omega\times(0,T))$ and there is a measurable function $a:\mathbb{R}^{n}\times(0,T)\to \mathbb{R}$ such that
\begin{equation*}
\Tilde{A}(x,y,t)=a(x-y,t),\quad (x,y,t)\in\Omega\times\Omega\times(0,T).
\end{equation*}\begin{thm} 
\label{Holder}
Let $u$ be a local weak solution to \eqref{eq1} with \eqref{forcing term} and let $\alpha$ be a positive  number such that
\begin{equation}\label{alphacondthm}
\alpha<\min\left\{2s-\left(\frac{n}{q}+\frac{2s}{r}\right),1\right\}.
\end{equation}Then there is a constant $\delta=\delta(n,s,q,r,\lambda,\alpha)>0$ such that if for any $\Tilde{z}\in\Omega_{T}$, there exist sufficiently small $\rho_{\Tilde{z}}>0$ and  
 $\Tilde{A}_{\Tilde{z}}\in\mathcal{L}_{1}\left(\lambda;B_{\rho_{\Tilde{z}}}(\Tilde{x})\times B_{\rho_{\Tilde{z}}}(\Tilde{x})\times [\Tilde{t}-\rho_{\Tilde{z}}^{2s},\Tilde{t}]\right)$ with
\begin{equation}
\label{assumption closedness of A}
    \|\Tilde{A}_{\Tilde{z}}-A\|_{L^{\infty}\left(B_{\rho_{\Tilde{z}}}(\Tilde{x})\times B_{\rho_{\Tilde{z}}}(\Tilde{x})\times [\Tilde{t}-\rho_{\Tilde{z}}^{2s},\Tilde{t}]\right)}\leq \delta,
\end{equation}
then we have $u\in C^{\alpha,\frac{\alpha}{2s}}_{\loc}(\Omega_{T})$. In particular, for any $Q_{\rho_{0}}({z_{0}})\Subset\Omega_{T}$, we have 
\begin{align*}
    [u]_{C^{\alpha,\frac{\alpha}{2s}}(Q_{\rho_{0}/2}(z_{0}))}&\leq c\Bigg(\rho_{0}^{-\frac{n+2s}{2}}\|u\|_{L^{2}(Q_{\rho_{0}}(z_{0}))}+\ITail\left(u,z_{0},\rho_{0}/2,\rho_{0}^{2s}\right)\\
    &\qquad+\rho_{0}^{2s-\left(\frac{n}{q}+\frac{2s}{r}\right)}\|f\|_{L^{q,r}(Q_{\rho_{0}}(z_{0}))}\Bigg),
\end{align*}
for some constant $c \equiv c\left(n,s,q,r,\lambda,\alpha,\{\rho_{z}\}_{z\in Q_{\rho_{0}/2}(z_{0})}\right)$.
\end{thm}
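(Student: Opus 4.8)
The strategy is a Campanato-type iteration combined with a perturbative comparison argument. Fix a point $\Tilde z=(\Tilde x,\Tilde t)\in\Omega_T$ and work in the cylinders $Q_\rho=Q_\rho(\Tilde z)$ for $\rho\le\rho_{\Tilde z}$. The key decay estimate we aim for is: there exists $\beta\in(\alpha,\min\{2s-(\frac{n}{q}+\frac{2s}{r}),1\}]$ and constants $c,\sigma\in(0,1)$ such that, for every such $\rho$,
\begin{equation}\label{planosc}
\dashiint_{Q_{\sigma\rho}}\bigl|u-(u)_{Q_{\sigma\rho}}\bigr|^2\dz
\le \tfrac14\sigma^{2\beta}\dashiint_{Q_{\rho}}\bigl|u-(u)_{Q_{\rho}}\bigr|^2\dz
+c\,\rho^{2\beta}\bigl(\mathrm{Tail\ and\ }f\text{-terms}\bigr)^2.
\end{equation}
Iterating \eqref{planosc} over the geometric sequence of radii $\sigma^k\rho_{\Tilde z}$ and summing the Tail/forcing contributions (which converge because $\alpha<\beta$ and because of the integrability condition \eqref{forcing term}) yields the Campanato excess decay, and the equivalence between Campanato spaces and Hölder spaces on parabolic cylinders (after translating in time to handle the parabolic scaling $|t-t'|^{\alpha/2s}$) gives $u\in C^{\alpha,\alpha/2s}_{\loc}$ together with the quantitative bound; the global estimate on $Q_{\rho_0/2}(z_0)$ follows by a standard covering argument, which is why the final constant is allowed to depend on the collection $\{\rho_z\}$.

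To prove \eqref{planosc} I would freeze the kernel. By hypothesis there is $\Tilde A_{\Tilde z}(x,y,t)=a(x-y,t)\in\mathcal L_1(\lambda)$ with $\|\Tilde A_{\Tilde z}-A\|_{L^\infty}\le\delta$ on $Q_{\rho_{\Tilde z}}$. Let $w$ solve the "frozen" homogeneous problem $\partial_t w+\mathcal L^{\Phi}_{\Tilde A_{\Tilde z}}w=0$ in $Q_{\rho/2}$ with the same lateral/initial data as $u$ (i.e. $w=u$ on the parabolic boundary, in the appropriate weak sense; existence and uniqueness is provided by the Appendix). Because $\Tilde A_{\Tilde z}$ is translation-invariant in space, the equation for $w$ commutes with difference quotients $\tau_h w(x,t)=w(x+h,t)-w(x,t)$, so one obtains Caccioppoli-type and energy estimates for $\tau_h w$ uniformly in $h$; this self-improves to an a priori interior estimate of the form
\[
[w]_{C^{\beta,\beta/2s}(Q_{\rho/4})}\le c\,\rho^{-\beta}\Bigl(\dashiint_{Q_{\rho/2}}|w-(w)_{Q_{\rho/2}}|^2\dz\Bigr)^{1/2}+c\,\rho^{-\beta}\Tail(w;\dots)
\]
for some $\beta$ strictly above $\alpha$ — this is exactly the "higher Hölder regularity for the model equation with irregular (translation-invariant) kernel" in the spirit of Nowak's work, and it is the technical heart of the argument. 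From this one extracts the Campanato decay $\dashiint_{Q_{\sigma\rho}}|w-(w)_{Q_{\sigma\rho}}|^2\le c\,\sigma^{2\beta}\dashiint_{Q_{\rho/2}}|w-(w)_{Q_{\rho/2}}|^2+\dots$ for $w$.

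The comparison step estimates $v\coloneqq u-w$. Testing the two weak formulations against $v$ and using the monotonicity and Lipschitz bounds in \eqref{Phi condition}, the difference is controlled by two error sources: the forcing term $f$, whose contribution is handled via the parabolic Sobolev embedding (Lemma \ref{embedSB}) and \eqref{forcing term}, producing a factor $\rho^{2s-(\frac{n}{q}+\frac{2s}{r})}\|f\|_{L^{q,r}}$ which is $O(\rho^{\alpha+\varepsilon})$; and the kernel discrepancy term $\int\Phi(u(x)-u(y))(v(x)-v(y))\frac{\Tilde A_{\Tilde z}-A}{|x-y|^{n+2s}}$, which by \eqref{assumption closedness of A} carries the small factor $\delta$ times the energy of $u$ (plus a Tail term). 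Choosing $\delta$ small relative to the constants coming from the frozen estimate makes this contribution absorbable. Combining the Campanato decay for $w$ with the smallness of $\|v\|_{L^2(Q_{\rho/2})}$ via the triangle inequality $\dashiint_{Q_{\sigma\rho}}|u-(u)_{Q_{\sigma\rho}}|^2\le 2\dashiint_{Q_{\sigma\rho}}|w-(w)_{Q_{\sigma\rho}}|^2+2\sigma^{-(n+2s)}\dashiint_{Q_{\rho/2}}|v|^2$ and then fixing $\sigma$ small enough that $c\sigma^{2\beta}\le\frac14\sigma^{2\alpha}$ (possible since $\beta>\alpha$) gives \eqref{planosc}.

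The main obstacle I anticipate is the a priori $C^{\beta,\beta/2s}$ estimate for the frozen translation-invariant equation: making the difference-quotient bootstrap work in the parabolic nonlocal setting requires carefully tracking the nonlocal Tail terms through each iteration and ensuring the gain in differentiability exponent is uniform and strictly positive up to (but not beyond) the threshold $\min\{2s-(\frac{n}{q}+\frac{2s}{r}),1\}$; the interplay between the time-derivative term and the fractional spatial seminorm (which forces the anisotropic scaling $\rho^{2s}$ in time) is the delicate point, and one must also be careful that the comparison function $w$ has a well-defined, controlled tail so that $\Tail(w;\cdot)\lesssim\Tail(u;\cdot)+(\text{local }L^2)$.
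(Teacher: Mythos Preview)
Your proposal follows the same overall architecture as the paper---freeze to a translation-invariant kernel, use the higher regularity of the frozen homogeneous solution (proved in the paper via a difference-quotient Besov bootstrap, exactly as you outline), and transfer it to $u$ by perturbation---but the mechanics of the comparison and iteration are genuinely different. You propose a \emph{direct energy comparison} giving $L^{2}$-closeness of $u$ and the frozen solution $w$, followed by an $L^{2}$-Campanato iteration of the mean oscillation. The paper instead obtains $L^{\infty}$-closeness via a \emph{compactness/contradiction argument} (Lemma~\ref{Basestep}), and then runs an $L^{\infty}$-based oscillation iteration (Lemma~\ref{almost main}) in which a rescaled tail bound is carried as an explicit induction hypothesis alongside the sup-decay (see \eqref{induction1}--\eqref{induction2}). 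Your route is more quantitative---one can in principle read off $\delta$ as an explicit function of $\alpha$---and avoids compactness; on the other hand it places the full burden of the nonlocal tail control inside the Campanato recursion, which you correctly flag as the delicate point, and the paper's explicit tail induction \eqref{induction2} is precisely the device you would need to import. The paper's compactness step buys a clean $L^{\infty}$ comparison that keeps the rescaled solutions uniformly bounded by $1$ throughout the iteration, making the tail bookkeeping more transparent, at the price of a non-constructive $\delta$.
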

{
\begin{rmk} The condition \eqref{alphacondthm} is natural to obtain H\"older continuity. When $s=1$, \eqref{alphacondthm} is exactly the same as the condition to get the H\"older continuity of weak solutions to local parabolic problems. See \cite[Chpater 2]{DE} for more details. In addition, if $f$ is autonomous, then \eqref{alphacondthm} is sharp. In this case, \eqref{alphacondthm} becomes $f\in L^{q}_{\mathrm{loc}}(\Omega)$ with $q>\frac{n}{2s}$ and a particular class of solutions is given by stationary solutions to the corresponding elliptic problem of \eqref{eq1}. From \cite[Corollary 1.2]{KMS2}, we deduce that if $q\leq\frac{n}{2s}$, then it does not satisfy the continuity criterion. Hence, it verifies that \eqref{alphacondthm} is an optimal condition to obtain the H\"older regularity of a weak solution to \eqref{eq1} when the right-hand side is given by an autonomous function.

\end{rmk}}

{\begin{rmk}
\label{rmkkernel}
We give some comments of the assumption \eqref{assumption closedness of A} on the kernel coefficient $A$.
\begin{enumerate}
\item We clearly point out that any continuous kernel coefficient $A$ satisfies \eqref{assumption closedness of A}. To be specific, we consider
\begin{equation}
\label{kernel11}
A_{1}(x,y,t)=K_{1,1}(x,y,t)+\Chi_{|x-y|\geq \epsilon}K_{1,2}(x,y,t)
\end{equation}
 for $\epsilon>0$, where $K_{1,1} \in \mathcal{L}_{0}(\lambda/2)$ is continuous, $K_{1,2} \in \mathcal{L}_{0}(\lambda/2)$ is merely measurable { for some $\lambda \ge 2$.}
Let us fix $\delta>0$. Then for any $\tilde{z} = (\tilde{x},\tilde{t}) \in \Omega_{T}$, there is a sufficiently small $\rho_{\tilde{z}}\in (0,\epsilon)$ such that 
\begin{equation*}
    \|A_{1}(x,y,t)-A_{1}(\tilde{x},\tilde{x},\tilde{t})\|_{L^{\infty}\left(B_{\rho_{\Tilde{z}}}(\Tilde{x})\times B_{\rho_{\Tilde{z}}}(\Tilde{x})\times [\Tilde{t}-\rho_{\Tilde{z}}^{2s},\Tilde{t}]\right)}\leq \delta.
\end{equation*}
Taking
\begin{equation*}
    \tilde{A}_{1,\tilde{z}}(x,y,t)=A(\tilde{x},\tilde{x},\tilde{t})\quad\text{for }(x,y,t)\in B_{\rho_{\Tilde{z}}}(\Tilde{x})\times B_{\rho_{\Tilde{z}}}(\Tilde{x})\times [\Tilde{t}-\rho_{\Tilde{z}}^{2s},\Tilde{t}],
\end{equation*}
we see that $\tilde{A}_{1,\tilde{z}}\in \mathcal{L}_{1}\left(\lambda;B_{\rho_{\Tilde{z}}}(\Tilde{x})\times B_{\rho_{\Tilde{z}}}(\Tilde{x})\times [\Tilde{t}-\rho_{\Tilde{z}}^{2s},\Tilde{t}]\right)$ and $A$ satisfies \eqref{assumption closedness of A}. Furthermore, we observe that any continuous function satisfies the assumption by taking $K_{1,2}=0$ in \eqref{kernel11}. Generally, if the kernel coefficient $A_{1}$ is continuous only near the diagonal, then the assumption holds. 
Moreover, if $A_{1}$ is H\"older continuous, then we can remove the dependence on $\{\rho_{z}\}_{z\in Q_{\rho_{0}/2}(z_{0})}$ of a universal constant $c$ determined in Theorem \ref{Holder}. See Corollary \ref{main theorem}. 

\medskip

\item We note that the kernel coefficient $A$ need not be continuous near the diagonal.
Let us consider a function $A_{2}(x,y,t)=K_{2,1}(x,y,t)K_{2,2}(x,y,t)$, where $K_{2,1}(x,y,t) \in \mathcal{L}_{0}(\sqrt{\lambda})$ is continuous near the diagonal and {$K_{2,2}(x,y,t)\in \mathcal{L}_{1}(\sqrt{\lambda};\Omega\times\Omega\times(0,T))\cap \mathcal{L}_{0}(\sqrt{\lambda})$}. Let us fix $\delta>0$. Then for any $\tilde{z} = (\tilde{x},\tilde{t})\in \Omega_{T}$, there is a sufficiently small $\rho_{\tilde{z}}\in (0,\epsilon)$ such that 
\begin{equation*}
    \|A_{2}(x,y,t)-A_{2}(\tilde{x},\tilde{x},\tilde{t})\|_{L^{\infty}\left(B_{\rho_{\Tilde{z}}}(\Tilde{x})\times B_{\rho_{\Tilde{z}}}(\Tilde{x})\times [\Tilde{t}-\rho_{\Tilde{z}}^{2s},\Tilde{t}]\right)}\leq \frac{\delta}{\sqrt{\lambda}}.
\end{equation*}
Similarly, we take
\begin{equation*}
    \tilde{A}_{2,\tilde{z}}(x,y,t)=
        K_{2,1}(\tilde{x},\tilde{x},\tilde{t})K_{2,2}(x,y,t)\quad\text{for }(x,y,t)\in B_{\rho_{\Tilde{z}}}(\Tilde{x})\times B_{\rho_{\Tilde{z}}}(\Tilde{x})\times [\Tilde{t}-\rho_{\Tilde{z}}^{2s},\Tilde{t}]
\end{equation*}
to find that $\tilde{A}_{2,\tilde{z}}\in \mathcal{L}_{1}\left(\lambda;B_{\rho_{\Tilde{z}}}(\Tilde{x})\times B_{\rho_{\Tilde{z}}}(\Tilde{x})\times [\Tilde{t}-\rho_{\Tilde{z}}^{2s},\Tilde{t}]\right)$ and $A_{2}$ satisfies \eqref{assumption closedness of A}.

\item The assumption \eqref{assumption closedness of A} naturally appears in the literature when a perturbation argument is used, as follows from \cite{NHH}.

\end{enumerate}
\end{rmk}

\begin{rmk}
We here compare a nonlocal case with a local case. For the local parabolic problem  
\begin{equation*}
    \partial_{t}v-\ddiv(B(x,t)Dv)=f \text{ in }Q_{1}
\end{equation*}
when $B(x,t)$ is a continuous coefficient and $f\in L^{q,r}(Q_{1})$ with $\frac{n}{2q}+\frac{1}{r}<1$, we note that $v\in C^{\beta,\frac{\beta}{2}}_{\loc}$ for every $\beta\in\left(0,\min\left\{2-\left(\frac{n}{q}+\frac{2}{r}\right),1\right\}\right)$. See \cite{TUg} for the case $B \equiv 1$. For a general case $B \not\equiv 1$, a freezing argument as in \cite[Theorem 3.8]{QF} leads to the desired result. Based on this observation for the local case, it might be expected for the nonlocal case that a weak solution to \eqref{eq1} has at most $C^{\alpha,\frac{\alpha}{2}}_{\loc}$ regularity for every $\alpha\in\left(0,\min\left\{2s-\left(\frac{n}{q}+\frac{2}{r}\right),s\right\}\right)$. However, we observe that the regularity of $u$ is better than this expectation. Indeed, Theorem \ref{Holder} asserts that it exceeds $C^{s,\frac{s}{2}}_{\loc}$ regularity when $2s-\left(\frac{n}{q}+\frac{2}{r}\right)>s$. This phenomenon is first observed {in \cite{BLS2}} for the elliptic case.

\end{rmk}}

\subsection{Plan of the paper}
Our approach to proving the local boundedness is based on \cite[Chapter 2]{LSU} alongside a suitable modification for the fractional Sobolev space and the nonzero tail term. 
For the higher H\"older regularity, we use a notion of discrete fractional derivatives in \cite{BLS,BLS2, NHH} to prove the almost Lipschitz regularity of a weak solution to a homogeneous equation \eqref{normalized homo} below, and an approximation technique as in \cite{NHH,CS} to transfer its regularity to a weak solution of \eqref{eq1} with \eqref{forcing term} and \eqref{assumption closedness of A}.

The paper is organized as follows. Section \ref{section2} deals with the  notations, embeddings between function spaces, technical lemma, and H\"older regularity for the homogeneous problem. In Section \ref{section3}, we give Caccioppoli-type inequality of \ref{caccioppoli estimate} to prove the local boundedness and H\"older regularity of \eqref{eq1} with \eqref{forcing term}. In Section \ref{section4}, we improve H\"older regularity for the homogeneous problem with the kernel coefficient which is invariant under the translation in only spatial direction. Finally, in Section  \ref{section 5}, we obtain the higher H\"older regularity for \eqref{eq1} with \eqref{assumption closedness of A}. In Appendix \ref{Appendix}, we give the existence for the initial and boundary value problem with the inhomogeneous term $f\in L^{q,r}$, where $\frac{n}{2sq}+\frac{1}{r}\leq 1+\frac{n}{4s}$.


\section{Preliminaries and Notations}
\label{section2}
Throughout the paper, we write $c$ by a general positive constant, possibly changing from line to line. In particular, we give the relevant dependencies on parameters using parentheses, i.e., $c \equiv c(n,s,\alpha)$. 
We now introduce some geometric and function notations.
\begin{enumerate}
\item The standard Lebesgue measures in $\mathbb{R}^{n}$ and $\mathbb{R}$ are denoted by $\dx$ and $\dt$. A usual point in $\mathbb{R}^{n}\times\mathbb{R}$ is $z=(x,t)$.
\item We denote the open ball in $\mathbb{R}^{n}$ with center $x_{0}$ and radius $\rho>0$ by $B_{\rho}(x_{0})$. In particular if the center is obvious, we omit the center.
\item We shall use $Q_{\rho,\tau}(z_{0})\equiv B_{\rho}(x_{0})\times(t_{0}-\tau,t_{0}]$ for {$z_{0} = (x_{0},t_{0})\in\mathbb{R}^{n}\times\mathbb{R}$} and $\tau>0$. Also we write $Q_{\rho}(z_{0})\equiv Q_{\rho,\rho^{2s}}(z_{0})$.
\item For any $Q_{\rho,\tau}(z_{0})$, we define a parabolic boundary of $Q_{\rho,\tau}(z_{0})$ by 
\begin{equation*}
    \partial_{P}Q_{\rho,\tau}(z_{0})=B_{\rho}(x_{0})\times\{t=t_{0}-\tau\}\cup \partial B_{\rho}(x_{0})\times[t_{0}-\tau,t_{0}].
\end{equation*}
\item Given a measurable set $K\subset\mathbb{R}^{n}$, $|K|$ means the volume of $K$ with respect to the Lebesgue measure in $\mathbb{R}^{n}$.
\item Given a measurable set $K\subset\mathbb{R}^{n+1}$ and a measurable function $f:K\rightarrow\mathbb{R}$,
\[\overline{f}_{K}=\frac{1}{|K|}\iint_{K}f\dx\dt=\dashiint_{K}f\dx\dt\]
is the average of $f$ over $K$. In particular, we denote 
\[\overline{f}_{B}(t)=\dashint_{B}f(x,t)\dx, \quad t \in \mathbb{R}, \]
where $B\subset\mathbb{R}^{n}$.
\item Given a measurable function $f:\mathbb{R}^{n+1}\rightarrow\mathbb{R}$ and $h\in\mathbb{R}^{n}$, we define 
\[f_{h}(x,t)=f(x+h,t),\quad\delta_{h}f(x,t)=f_{h}(x,t)-f(x,t)\quad \text{and}\quad \delta_{h}^{2}f=\delta_{h}(\delta_{h}f).\] 
\item $p'$ is denoted by the H\"older conjugate of $p\in[1,\infty]$.
\end{enumerate}

We now describe relevant function spaces.
 Let $u:\Omega\rightarrow \mathbb{R}$ be a measurable function. For $s\in(0,1)$ and $p\in[1,\infty)$, we define a seminorm
\[[u]_{W^{s,p}(\Omega)}=\left(\int_{\Omega}\int_{\Omega}\frac{|u(x)-u(y)|^{p}}{|x-y|^{n+sp}}\dx\dy\right)^{\frac{1}{p}}\]
and the fractional Sobolev space
\[W^{s,p}(\Omega)=\{u\in L^{p}(\Omega)\,;\,[u]_{W^{s,p}(\Omega)}<\infty\}.\]
In particular, we say that $u$ is in $W^{s,p}_{0}(\Omega)$ if $u$ is in $W^{s,p}(\mathbb{R}^{n})$ and $u\equiv0$ in $\mathbb{R}^{n}\setminus\Omega \ a.e$. 

\noindent
Next we introduce a nonlocal tail space. We {write $L^{1}_{2s}(\mathbb{R}^{n})$ as  
\[L^{1}_{2s}(\mathbb{R}^{n}) = \left\{ u \in L^{1}_{loc}(\mathbb{R}^{n})\,; \int_{\mathbb{R}^{n}}\frac{|u(y)|}{(1+|y|)^{n+2s}}\dy<\infty \right\}. \]}
Then we give a suitable nonlocal tail for a parabolic case. 
Given $u\in L^{\infty}\left(I;L^{1}_{2s}(\mathbb{R}^{n})\right)$, we define
\[\ITail(u,z_{0},\rho,\tau)=\esup_{t\in [t_{0}-\tau,t_{0}]}{\rho^{2s}}\int_{\mathbb{R}^{n}\setminus B_{\rho}(x_{0})}\frac{|u(y,t)|}{|y-x_{0}|^{n+2s}}\dy,\]
for any $B_{\rho}(x_{0})\subset\mathbb{R}^{n}$ and $[t_{0}-\tau,t_{0}]\subset I$ where $I$ is a time interval in $\mathbb{R}$.
In particular, we write
\[\ITail(u;z_{0},\rho) \coloneqq \esup_{t\in [t_{0}-\rho^{2s},t_{0}]}{\rho^{2s}}\int_{\mathbb{R}^{n}\setminus B_{\rho}(x_{0})}\frac{|u(y,t)|}{|y-x_{0}|^{n+2s}}\dy.\]
Note that a simple calculation gives that $u\in L^{\infty}(I;L^{1}_{2s}(\mathbb{R}^{n}))$ if and only if
\[\ITail (u,x_{0},\rho,I)<\infty\]
for any $B_{\rho}(x_{0})\subset\mathbb{R}^{n}$. We next introduce Besov-type spaces to get higher regularity as follows.
\begin{defn*}(Besov-type spaces) Let $p\in[1,\infty)$ and  
let $u:\mathbb{R}^{n}\rightarrow\mathbb{R}$ be in $L^{p}(\mathbb{R}^{n})$. Define two seminorms by
\[[u]_{\mathcal{N}_{\infty}^{\beta,p}(\mathbb{R}^{n})}=\sup_{|h|>0}\left\|\frac{\delta_{h}u}{|h|^{\beta}}\right\|_{L^{p}(\mathbb{R}^{n})} \text{ for } \beta\in(0,1]\]
and 
\[[u]_{\mathcal{B}_{\infty}^{\beta,p}(\mathbb{R}^{n})}=\sup_{|h|>0}\left\|\frac{\delta^{2}_{h}u}{|h|^{\beta}}\right\|_{L^{p}(\mathbb{R}^{n})} \text{ for } \beta\in(0,2).\]
Then we define two Besov-type spaces.
\[\mathcal{N}_{\infty}^{\beta,p}(\mathbb{R}^{n})=\left\{u\in L^{p}(\mathbb{R}^{n})\ ;[u]_{\mathcal{N}_{\infty}^{\beta,p}(\mathbb{R}^{n})}<\infty\right\} \text{ for } \beta\in(0,1]\]
and
\[\mathcal{B}_{\infty}^{\beta,p}(\mathbb{R}^{n})=\left\{u\in L^{p}(\mathbb{R}^{n})\ ; [u]_{\mathcal{B}_{\infty}^{\beta,p}(\mathbb{R}^{n})}<\infty\right\} \text{ for } \beta\in(0,2).\]
\end{defn*}

Now we present several lemmas that will be used in the remainder of the paper. The first five results are related to the embeddings.
We start with Campanato's embedding for the fractional version for $s\in(0,1)$. 

We refer to \cite{C,Gc,SDgc} for a local case.
\begin{lem}[Campanato's embedding]
\label{Campanato embedding for fractional}
Let $p>1$ and $s\in(0,1)$.
Let $u\in L^{p}(Q_{2R_{0}}(z_{0}))$. Suppose that there is a $\alpha\in(0,1)$ such that for any $z\in Q_{R_{0}}(z_{0})$ and $\rho>0$ with $Q_{\rho}(z)\subset Q_{2R_{0}}(z_{0})$,
\begin{equation*}
    \dashiint_{Q_{\rho}(z)}|u-\overline{u}_{Q_{\rho}(z)}|^{p}\dx{\dt}\leq M^{p}\rho^{p\alpha},
\end{equation*}
for some constant $M>0$. Then we have $u\in C^{\alpha,\frac{\alpha}{2s}}(Q_{R_{0}}(z_{0}))$. In particular, there is a constant $c \equiv c(n,p,s,\alpha,R_{0})$ such that
\begin{equation*}
    [u]_{C^{\alpha,\frac{\alpha}{2s}}(Q_{R_{0}}(z_{0}))}\leq c\left(M+\|u\|_{L^{\infty}(Q_{R_{0}}(z_{0}))}\right),
\end{equation*}
where
\begin{equation*}
    [u]_{C^{\alpha,\frac{\alpha}{2s}}(Q)}\coloneqq\sup_{\substack{(x,t),(x',t')\in Q\\ (x,t)\neq(x',t')}}\frac{|u(x,t)-u(x',t')|}{|x-x'|^{\alpha}+|t-t'|^{\frac{\alpha}{2s}}}\quad \mbox{for any }Q\subset Q_{2R_{0}(z_{0})}.
\end{equation*}
\end{lem}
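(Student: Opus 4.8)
The plan is to mimic the classical parabolic Campanato argument, but with the parabolic cylinders $Q_\rho(z) = B_\rho(x)\times(t-\rho^{2s},t]$ scaled so that the time direction counts as $2s$ spatial dimensions. The natural ``parabolic distance'' here is $d_P((x,t),(x',t')) := |x-x'| + |t-t'|^{1/(2s)}$, and $|Q_\rho| \simeq \rho^{n+2s}$. I would first record a doubling-type comparison: for $z,z'\in Q_{R_0}(z_0)$ with $r := d_P(z,z')$ small, both $z$ and $z'$ lie in a common cylinder $Q_{cr}(\hat z)\subset Q_{2R_0}(z_0)$ for a suitable $\hat z$ and absolute constant $c$, and conversely $Q_r(z)$ and $Q_r(z')$ both contain a cylinder of comparable size contained in $Q_{cr}(\hat z)$. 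This is the only place where the parabolic (rather than isotropic) geometry enters, and it is routine.

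Next I would prove the ``oscillation decays geometrically'' step. Fix $z\in Q_{R_0}(z_0)$ and a radius $\rho$ with $Q_{2\rho}(z)\subset Q_{2R_0}(z_0)$. Using the hypothesis on two concentric cylinders $Q_\rho(z)\subset Q_{2\rho}(z)$ and the triangle inequality,
\[
|\overline{u}_{Q_\rho(z)} - \overline{u}_{Q_{2\rho}(z)}|
\le \left(\dashiint_{Q_\rho(z)} |u-\overline{u}_{Q_{2\rho}(z)}|^p \right)^{1/p}
\le \left(\frac{|Q_{2\rho}(z)|}{|Q_\rho(z)|}\right)^{1/p}\!\! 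M (2\rho)^\alpha
\le c\,M\rho^\alpha .
\]
Iterating this over the dyadic scales $\rho_j = 2^{-j}\rho$ and summing the geometric series (which converges since $\alpha>0$), I get that $\{\overline{u}_{Q_{\rho_j}(z)}\}_j$ is Cauchy; denoting its limit by $u^*(z)$ (this is the precise representative, equal to $u$ a.e.\ by Lebesgue differentiation), I obtain the two key bounds
\[
|u^*(z) - \overline{u}_{Q_\rho(z)}| \le c\,M\rho^\alpha
\qquad\text{and}\qquad
\left(\dashiint_{Q_\rho(z)} |u-u^*(z)|^p\right)^{1/p} \le c\,M\rho^\alpha .
\]

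Finally I would close the Hölder estimate. Given $z,z'\in Q_{R_0}(z_0)$ with $r = d_P(z,z')$ small, pick the common cylinder $Q_{cr}(\hat z)$ from the first step; then
\[
|u^*(z)-u^*(z')| \le |u^*(z) - \overline{u}_{Q_{cr}(\hat z)}| + |\overline{u}_{Q_{cr}(\hat z)} - u^*(z')|,
\]
and each term is estimated by writing $\overline{u}_{Q_{cr}(\hat z)}$ against $\overline{u}_{Q_{c'r}(z)}$ (resp.\ $\overline{u}_{Q_{c'r}(z')}$) via the averaged-difference inequality on the nested cylinders and then invoking the bound $|u^*(z)-\overline{u}_{Q_{c'r}(z)}|\le cMr^\alpha$ just proved; this yields $|u^*(z)-u^*(z')|\le cM\, d_P(z,z')^\alpha$, i.e.\ $[u]_{C^{\alpha,\alpha/2s}(Q_{R_0}(z_0))}\le cM$ for points at small parabolic distance, and for points at distance $\gtrsim R_0$ one trivially bounds the difference by $2\|u\|_{L^\infty}$ (after noting $u$ is bounded: $\|u\|_{L^\infty(Q_{R_0}(z_0))}\le cM R_0^\alpha + c\,\dashiint_{Q_{R_0}(z_0)}|u|$, which in turn is controlled by the stated right-hand side). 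Adjusting constants and folding in the $R_0$-dependence gives the claimed inequality.

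The only genuinely nonroutine point is the geometric lemma in the first paragraph: because the cylinders $Q_\rho$ open only backward in time, one must be a little careful that two nearby points really do sit in one controlled cylinder contained in $Q_{2R_0}(z_0)$ — this forces the center $\hat z$ to be taken slightly later in time than both $z$ and $z'$, and near the top time-slice $t = t_0$ one may have to shrink $R_0$ to $R_0/2$ or work on $Q_{R_0}(z_0)$ while using the hypothesis up to $Q_{2R_0}(z_0)$. Everything else is the standard telescoping-of-averages computation. I expect the main obstacle to be bookkeeping the anisotropic scaling consistently, not any conceptual difficulty.
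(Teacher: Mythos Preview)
Your proposal is correct and follows essentially the same route as the paper: the classical Campanato telescoping-of-averages argument adapted to the anisotropic cylinders $Q_\rho(z)=B_\rho(x)\times(t-\rho^{2s},t]$, with dyadic comparison of concentric averages, convergence to the precise representative, and the two-point estimate via a common cylinder (plus the trivial $L^\infty$ bound for distant points). The paper's version is terser---it refers to the standard argument in \cite[Theorem 3.1]{QF} for the iteration and the two-point step---whereas you spell out the backward-in-time geometric subtlety more carefully, but there is no substantive difference in strategy.
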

\begin{proof}
Take $z_{1}\in Q_{R_{0}}(z_{0})$ and $0<\rho_{1}<\rho_{2}\leq\min\left\{\left(2^{2s}-1\right)^{\frac{1}{2s}}R_{0},R_{0}\right\}$ so that $Q_{\rho_{2}}(z_{1})\subset Q_{2R_{0}}(z_{0})$. Then we observe that 
\begin{align*}
    |\overline{u}_{Q_{\rho_{1}}(z_{1})}-\overline{u}_{Q_{\rho_{2}}(z_{1})}|^{p}\leq cM^{p}\left(\rho_{1}^{p\alpha}+\rho_{2}^{p\alpha}\left(\frac{\rho_{2}}{\rho_{1}}\right)^{n+2s}\right).
\end{align*}
 For any $0<R\leq\min\left\{\left(2^{2s}-1\right)^{\frac{1}{2s}}R_{0},R_{0}\right\}$ with $\rho_{1}=2^{-i-1}R$ and $\rho_{2}=2^{-i}R$, we see that
\begin{equation*}
    |\overline{u}_{Q_{2^{-i-1}R}(z_{1})}-\overline{u}_{Q_{2^{-i}R}(z_{1})}|\leq c2^{-\alpha(i+1)}MR^{\alpha}.
\end{equation*}
With the standard argument as in \cite[Theorem 3.1]{QF}, we have $\|u\|_{L^{\infty}(Q_{R_{0}}(z_{0}))}<\infty$ and
\begin{equation*}
    |u(z_{1})-\overline{u}_{Q_{\rho}(z_{1})}|\leq cM\rho^{\alpha} \quad\text{for any  }\rho\leq\min\left\{\left(2^{2s}-1\right)^{\frac{1}{2s}}R_{0},R_{0}\right\}.
\end{equation*}
Fix $z_{1},z_{2}\in Q_{R_{0}}(z_{0})$ with $R=\max\left\{|x_{1}-x_{2}|,|t_{1}-t_{2}|^{\frac{1}{2s}}\right\}<\min\left\{\left(2^{2s}-1\right)^{\frac{1}{2s}}\frac{R_{0}}{2},\frac{R_{0}}{2}\right\}$ , then we get 
\begin{equation*}
    Q_{2R}(z_{i})\subset Q_{2R_{0}}(z_{0}) \text{ for }i=1,2.
\end{equation*}
As in \cite[Theorem 3.1]{QF}, we have the following 
\begin{equation}
\label{campanato hol 1}
    |u(z_{1})-u(z_{2})|\leq cMR^{\alpha}\leq cM\left(|x_{1}-x_{2}|^{\alpha}+|t_{1}-t_{2}|^{\frac{\alpha}{2s}}\right).
\end{equation}
On the other hand, we deduce
\begin{equation}
\label{campanato hol 2}
    |u(z_{1})-u(z_{2})|\leq c\|u\|_{L^{\infty}(Q_{R_{0}}(z_{0}))}\left(|x_{1}-x_{2}|^{\alpha}+|t_{1}-t_{2}|^{\frac{\alpha}{2s}}\right),
\end{equation}
whenever $z_{1},z_{2}\in Q_{R_{0}}(z_{0})$ with $\max\left\{|x_{1}-x_{2}|,|t_{1}-t_{2}|^{\frac{1}{2s}}\right\}>\min\left\{\left(2^{2s}-1\right)^{\frac{1}{2s}}\frac{R_{0}}{2},\frac{R_{0}}{2}\right\}$.
We combine \eqref{campanato hol 1} and \eqref{campanato hol 2} to complete the proof.
\end{proof}

\noindent
We write $V^{2}_{s}(B_{R}\times I) \equiv L^{2}(I;W^{s,2}(B_{R}))\cap L^{\infty}(I;L^{2}(B_{R}))$ and its norm is given by
\[\|u\|_{V^{2}_{s}(B_{R}\times I)}=\left(\int_{t_{0}-\tau}^{t_{0}}[u(\cdot,t)]^{2}_{{W}^{s,2}(B_{R})}\dt\right)^{\frac{1}{2}}+\esup_{t\in I}\|u(\cdot,t)\|_{L^{2}(B_{R})}.\] 
Then we prove the following embedding which is a essential tool to deal with a nonhomogeneous term in \eqref{eq1}. We refer to {\cite[Chapter 1, Proposition 3.3]{DE}} for the local case when $s=1$.
{\begin{lem}
\label{embedSF}
Let $f\in V_{s}^{2}(B_{R}\times I)$. Suppose that \begin{equation*}
\begin{cases}
    \hat{q}\in \left[2,\frac{2n}{n-2s}\right]&\quad\text{and}\quad \hat{r}\in \left[2,\infty\right]\quad\text{if }2s<n,\\
    \hat{q}\in [2,\infty)&\quad\text{and}\quad \hat{r}\in \left(4s,\infty\right]\quad\text{if }2s=n,\\
    \hat{q}\in [2,\infty]&\quad\text{and}\quad \hat{r}\in [4s,\infty]\quad\text{if }n<2s.
\end{cases}
\end{equation*} satisfy
\begin{equation*}
\begin{aligned}
 \frac{n}{2\hat{q}s}+\frac{1}{\hat{r}}=\frac{n}{4s}.
\end{aligned}
\end{equation*}
Then there is a constant $c \equiv c(n,s,\hat{q},\hat{r})$ such that
\begin{align*}
    \|f\|_{L^{\hat{q},\hat{r}}(B_{R}\times I)}&\leq cR^{-s}\|f\|_{L^{2}(I;L^{2}(B_{R}))}+c\|f\|_{V^{2}_{s}(B_{R}\times I)}.
\end{align*}
Moreover if $f\in L^{2}(I;W^{s,2}_{0}(B_{R}))\cap L^{\infty}(I;L^{2}(B_{R}))$, we also have 
\begin{align}
\label{zero embedding}
    \|f\|_{L^{\hat{q},\hat{r}}(B_{R}\times I)}&\leq c\left(\left(\int_{t_{0}-\tau}^{t_{0}}[f(\cdot,t)]^{2}_{{W}^{s,2}(\mathbb{R}^{n})}\right)^{\frac{1}{2}}+\esup_{t\in I}\|f(\cdot,t)\|_{L^{2}(B_{R})}\right)
\end{align}
for some constant $c \equiv c(n,s,\hat{q},\hat{r})$.
\end{lem}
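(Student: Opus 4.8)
The plan is to derive this parabolic Sobolev embedding from the stationary fractional Sobolev inequality combined with an interpolation in time, mirroring the classical argument for $s=1$ found in \cite[Chapter 1, Proposition 3.3]{DE}. First I would recall the fractional Sobolev–Poincar\'e inequality on $B_R$: for a.e.\ fixed $t\in I$,
\[
\|f(\cdot,t)\|_{L^{2^{*}_{s}}(B_R)}\le cR^{-s}\|f(\cdot,t)\|_{L^{2}(B_R)}+c[f(\cdot,t)]_{W^{s,2}(B_R)},
\]
where $2^{*}_{s}=\tfrac{2n}{n-2s}$ when $2s<n$ (and the analogous statements with an arbitrary finite exponent when $2s=n$, or $L^\infty$ when $2s>n$; in the zero–trace case \eqref{zero embedding} the $R^{-s}\|f\|_{L^2}$ term is absorbed via the genuine fractional Sobolev inequality on $\mathbb{R}^n$). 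On the other hand, trivially $\|f(\cdot,t)\|_{L^{2}(B_R)}\le \esup_{t\in I}\|f(\cdot,t)\|_{L^2(B_R)}$. The idea is then to interpolate the spatial $L^{\hat q}$ norm between $L^{2}$ and $L^{2^{*}_{s}}$ pointwise in $t$.

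The key computation: write $\tfrac{1}{\hat q}=\tfrac{\theta}{2^{*}_{s}}+\tfrac{1-\theta}{2}$ for the appropriate $\theta=\theta(n,s,\hat q)\in[0,1]$, so that by H\"older's inequality in space,
\[
\|f(\cdot,t)\|_{L^{\hat q}(B_R)}\le \|f(\cdot,t)\|_{L^{2^{*}_{s}}(B_R)}^{\theta}\,\|f(\cdot,t)\|_{L^{2}(B_R)}^{1-\theta}.
\]
Raising to the power $\hat r$, integrating over $I$, and bounding the factor $\|f(\cdot,t)\|_{L^2(B_R)}^{(1-\theta)\hat r}$ by $\big(\esup_{t\in I}\|f(\cdot,t)\|_{L^2(B_R)}\big)^{(1-\theta)\hat r}$ pulled outside the integral, one gets
\[
\|f\|_{L^{\hat q,\hat r}(B_R\times I)}\le \big(\esup_{t\in I}\|f(\cdot,t)\|_{L^2(B_R)}\big)^{1-\theta}\Big(\int_I \|f(\cdot,t)\|_{L^{2^{*}_{s}}(B_R)}^{\theta\hat r}\dt\Big)^{1/(\theta\hat r)\cdot\theta}.
\]
For this last time integral to be controlled by $\big(\int_I[f(\cdot,t)]^2_{W^{s,2}(B_R)}\dt\big)^{1/2}$ (up to the lower-order $R^{-s}\|f\|_{L^2(L^2)}$ term and the sup term), one needs $\theta\hat r=2$, i.e.\ $\theta=2/\hat r$; a short algebra check shows that the scaling relation $\tfrac{n}{2\hat q s}+\tfrac1{\hat r}=\tfrac{n}{4s}$ is exactly equivalent to $\theta=2/\hat r$ together with $\tfrac1{\hat q}=\tfrac{\theta}{2^{*}_{s}}+\tfrac{1-\theta}{2}$. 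Then Young's inequality $a^{1-\theta}b^{\theta}\le (1-\theta)a+\theta b$ applied to $a=\esup_t\|f(\cdot,t)\|_{L^2(B_R)}$ and $b=\big(\int_I(R^{-s}\|f(\cdot,t)\|_{L^2(B_R)}+[f(\cdot,t)]_{W^{s,2}(B_R)})^2\dt\big)^{1/2}$ converts the product into a sum, and recognizing the right side as $cR^{-s}\|f\|_{L^2(I;L^2(B_R))}+c\|f\|_{V^2_s(B_R\times I)}$ finishes the proof. The two borderline regimes $2s=n$ and $2s>n$ are handled the same way: when $2s>n$ one interpolates $L^{\hat q}$ between $L^2$ and $L^\infty$ using Morrey's fractional embedding $W^{s,2}(B_R)\hookrightarrow L^\infty(B_R)$, and when $2s=n$ one replaces $2^{*}_{s}$ by any sufficiently large finite exponent and adjusts $\theta$ accordingly, which is why the ranges of $\hat q,\hat r$ are restricted there.

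The main obstacle I anticipate is bookkeeping the constants' dependence on $R$ correctly across the different regimes — in particular making sure the $R^{-s}$ factor emerges with the right power after the interpolation exponent $\theta$ and the Young's inequality step, and checking that in the zero-trace case \eqref{zero embedding} no negative power of $R$ survives (which is forced by the scale invariance of $[\,\cdot\,]_{W^{s,2}(\mathbb{R}^n)}$ under the natural parabolic scaling). A secondary technical point is justifying the pointwise-in-$t$ application of the fractional Sobolev inequality and the measurability of $t\mapsto\|f(\cdot,t)\|_{L^{2^{*}_{s}}(B_R)}$, which follows from $f\in V^2_s(B_R\times I)$ by a standard Fubini/approximation argument; I would dispatch this briefly rather than dwell on it.
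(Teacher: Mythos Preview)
Your proposal is correct and follows essentially the same route as the paper: fix $t$, interpolate $\|f(\cdot,t)\|_{L^{\hat q}}$ between $\|f(\cdot,t)\|_{L^2}$ and a fractional Sobolev quantity with exponent $\theta=2/\hat r$, integrate in $t$, pull out the sup, and apply Young's inequality. The only cosmetic difference is that the paper packages the ``Sobolev embedding followed by H\"older interpolation in space'' step as a single fractional Gagliardo--Nirenberg inequality cited from \cite[Lemma~2.1]{DZZ}, and for the zero-trace estimate~\eqref{zero embedding} it simply invokes \cite[Theorem~6.5]{DPGV} rather than sketching the argument as you do.
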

\begin{proof}
If $(\hat{q},\hat{r})=(2,\infty)$ and $(\hat{q},\hat{r})=\left(\frac{2n}{n-2s},2\right)$, then we check the above results directly. Therefore we may assume that $2<\hat{r}<\infty$.  We observe that for $\alpha=\frac{2}{\hat{r}}\in(0,1)$, 
\[\alpha\frac{n-2s}{2n}+(1-\alpha)\frac{1}{2}=\frac{1}{\hat{q}}.\]
Applying scaled version of \cite[Lemma 2.1]{DZZ} with $p=p_{2}=2$, $p_{1}=\hat{q}$ and $\theta=\alpha$, we have
\begin{equation}
\label{gnineq}
\|f(\cdot,t)\|_{L^{\hat{q}}(B_{R})}\leq c\left([f(\cdot,t)]_{W^{s,2}(B_{R})}+R^{-s}\|f(\cdot,t)\|_{L^{2}(B_{R})}\right)^{\alpha}\|f(\cdot,t)\|_{L^{2}(B_{R})}^{1-\alpha} \quad \text{a.e. }t\in I,
\end{equation}
where $c=c(n,s,\hat{q})$.
Using \eqref{gnineq} and young's inequality, we get that
\begin{align*}
    \left(\int_{I}\|f(\cdot,t)\|_{L^{\hat{q}}(B_{R})}^{\hat{r}}\dt\right)^{\frac{1}{\hat{r}}}&\leq c\left(\left(\int_{I}[f(\cdot,t)]^{2}_{{W}^{s,2}(B_{R})}\dt\right)^{\frac{1}{2}}+R^{-s}\|f\|_{L^{2}(I;L^{2}(B_{R}))}\right)^{\frac{2}{\hat{r}}}\\
    &\qquad\times\esup_{t\in I}\|f(\cdot,t)\|_{L^{2}(B_{R})}^{1-\frac{2}{\hat{r}}}\\
    &\leq cR^{-s}\|f\|_{L^{2}(I;L^{2}(B_{R}))}+c\|f\|_{V^{2}_{s}(B_{R}\times I)}
\end{align*}
for some constant $c=c(n,s,\hat{q},\hat{r})$.
In particular, \eqref{zero embedding} is a direct consequence from \cite[Theorem 6.5]{DPGV}.
\end{proof}}Next, we see three Besov-type embeddings without proof.\begin{lem}\cite[Lemma 2.6.]{BLS2}
\label{embed1}
Let $0<\beta<1$ and $1\leq p<\infty$. Let $u$ be in $L^{p}(B_{2R_{0}})$ for some $R_{0}>0$. Suppose for some $0<h_{0}<\frac{R_{0}}{4}$, we have 
\begin{equation*}
    \sup_{0<|h|<h_{0}}\left\|\frac{\delta_{h}^{2}u}{|h|^{\beta}}\right\|_{L^{p}(B_{R_{0}+h_{0}})}<\infty.
\end{equation*}
Then we obtain
\begin{equation*}
    \sup_{0<|h|<h_{0}}\left\|\frac{\delta_{h}u}{|h|^{\beta}}\right\|_{L^{p}(B_{R_{0}})}\leq c\left(\sup_{0<|h|<h_{0}}\left\|\frac{\delta_{h}^{2}u}{|h|^{\beta}}\right\|_{L^{p}(B_{R_{0}+h_{0}})}+\|u\|^{p}_{L^{p}(B_{R_{0}+h_{0}})}\right)
\end{equation*}
for some $c \equiv c(n,p,\beta,h_{0})$.
\end{lem}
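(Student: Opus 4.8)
The statement to prove is Lemma \ref{embed1}, the Besov-type embedding: from control on the second difference quotient $\delta_h^2 u / |h|^\beta$ in $L^p(B_{R_0+h_0})$, recover control on the first difference quotient $\delta_h u/|h|^\beta$ in $L^p(B_{R_0})$.

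\medskip

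\textbf{Proof proposal.} The plan is to exploit the elementary telescoping identity that expresses a first difference at scale $2^{-k}h$ in terms of second differences plus a first difference at a doubled scale. Concretely, for fixed $h$ with $0<|h|<h_0$, I would use $\delta_h u = \delta_{2h}u - \delta_h(\delta_h u)_h$... more precisely the identity $\delta_{2h}u(x) = \delta_h^2 u(x) + 2\delta_h u(x)$ rearranged as $\delta_h u(x) = \tfrac12 \delta_{2h}u(x) - \tfrac12 \delta_h^2 u(x)$. Iterating this halving scheme: write $h = 2^N \eta$ where $\eta$ is a small vector, so that $\delta_h u$ is related to $\delta_\eta u$ (which has small $L^p$ norm by absolute continuity of translation, or simply is bounded in $L^p(B_{R_0})$ by $2\|u\|_{L^p(B_{R_0+h_0})}$) plus a sum $\sum_{j} 2^{-j}\delta_{2^j \eta}^2 u$ of second differences at dyadic scales $2^j|\eta| \le |h| < h_0$.

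\medskip

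The key steps, in order, would be: (i) Establish the pointwise recursion $\delta_h u = \tfrac12\delta_{2h}u - \tfrac12\delta_h^2 u$ and iterate it $k$ times to get $\delta_{2^{-k}h'}u$ expressed via $\delta_{h'}u$ and a geometric sum of $\delta_{2^{-j}h'}^2 u$ for $j=1,\dots,k$, with coefficients $2^{-j}$. (ii) Take $L^p(B_{R_0})$ norms: the translated sets $B_{R_0}+(\text{small vector})$ stay inside $B_{R_0+h_0}$ because all the shift vectors appearing have length at most $|h|<h_0$, so the hypothesis applies on each term. (iii) For the term $\delta_{h'}u$ at the coarsest scale, bound it crudely by $2\|u\|_{L^p(B_{R_0+h_0})}$ after choosing $h'$ with $|h'|$ comparable to $h_0$ (this is where the factor $2^{-k}$ from the coefficient kills the loss, since we need $|h| = 2^{-k}|h'|$, giving a prefactor $2^{-k} \le |h|/h_0$, hence the division by $|h|^\beta$ is harmless for $\beta<1$ after summing). (iv) For the second-difference terms, pull out $|h|^\beta$: since the scale of the $j$-th term is $2^{-j}|h'| $, each contributes $(2^{-j}|h'|)^\beta \sup_{0<|g|<h_0}\|\delta_g^2 u/|g|^\beta\|_{L^p(B_{R_0+h_0})}$, and $\sum_j 2^{-j}(2^{-j})^\beta$ converges because $\beta>0$. (v) Collect constants, taking the supremum over $0<|h|<h_0$, and absorb the $h_0$-dependence into $c$.

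\medskip

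An alternative cleaner route, which I might actually prefer to write, is: for each $h$ with $0<|h|<h_0$ choose the unique integer $m\ge 1$ with $2^{m-1}|h| \le R_0/4 < 2^m |h|$ (possible only if $|h|$ small; otherwise $|h|$ is already comparable to $R_0$ and the estimate is trivial from $\|\delta_h u\|_{L^p(B_{R_0})}\le 2\|u\|_{L^p(B_{R_0+h_0})}$ with $|h|^\beta \ge$ constant). Then telescope from scale $h$ up to scale $2^m h$:
$$ \delta_h u = 2^{-m}\delta_{2^m h}u - \sum_{j=0}^{m-1} 2^{-(j+1)} \delta_{2^j h}^2 u \big(\cdot + (\text{shift})\big),$$
where every shift vector has length $\le 2^m|h| \le R_0/2 + $ a controlled amount — here I must be slightly careful that the enlarged ball fits inside $B_{R_0+h_0}$; if $2^m|h|$ slightly exceeds $h_0$ I simply stop the iteration one step earlier, at the largest scale still below $h_0$. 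Dividing by $|h|^\beta$, the coarse term gives $2^{-m}|h|^{-\beta}\cdot 2\|u\|_{L^p} \lesssim h_0^{-\beta}\|u\|_{L^p}$ (using $2^{-m} \lesssim |h|/R_0$), and the second-difference sum gives $\sum_{j\ge 0} 2^{-(j+1)} (2^j|h|)^\beta|h|^{-\beta} [u]_{\mathcal{B}} = \big(\sum_{j\ge0} 2^{-(j+1)}2^{j\beta}\big)[u]_{\mathcal{B}} < \infty$ since $\beta<1$.

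\medskip

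\textbf{Main obstacle.} The only delicate point is bookkeeping the domains of integration: after the change of variables involved in each $\delta_{2^j h}^2 u$, the relevant region is a translate of $B_{R_0}$ by a vector of length up to $\sim R_0/4$ (or up to $h_0$ if we cut the iteration at the $h_0$-scale), and one must verify this stays within $B_{R_0+h_0}$ so the hypothesis is applicable — this is exactly why the hypothesis is posed on the enlarged ball $B_{R_0+h_0}$ and why $h_0 < R_0/4$. Everything else is the convergence of a geometric series, which works precisely in the range $0<\beta<1$. Since this is a known result, I would cite \cite[Lemma 2.6]{BLS2} and omit the routine verification, which is consistent with the statement "without proof" preceding the lemma.
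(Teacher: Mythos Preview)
The paper does not prove this lemma at all: it is stated without proof as a direct citation of \cite[Lemma~2.6]{BLS2}, exactly as you yourself note in your final paragraph. Your telescoping/dyadic-iteration sketch is the standard argument and is correct; the identity $\delta_h u = \tfrac12\delta_{2h}u - \tfrac12\delta_h^2 u$ iterates to $\delta_h u = 2^{-m}\delta_{2^m h}u - \sum_{j=0}^{m-1}2^{-(j+1)}\delta_{2^j h}^2 u$ with no shifts needed, and choosing $m$ so that $h_0/2 \le 2^m|h| < h_0$ makes every second-difference term fall under the hypothesis on $B_{R_0+h_0}$ while the coarse term contributes $2^{-m}|h|^{-\beta}\cdot 2\|u\|_{L^p(B_{R_0+h_0})} \lesssim h_0^{-\beta}\|u\|_{L^p(B_{R_0+h_0})}$ and the geometric sum $\sum_j 2^{j(\beta-1)}$ converges since $\beta<1$.
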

\begin{lem}\cite[Lemma 2.8.]{BLS2}
\label{Holder for Besov}
Let $0<\beta<1$ and $1\leq p<\infty$ with $p\beta>n$. If $u\in\mathcal{N}^{\beta,p}_{\infty}(\mathbb{R}^{n})$, then $u\in C^{\alpha}_{\loc}(\mathbb{R}^{n})$ for any $0<\alpha<\beta-\frac{n}{p}$ with 
\begin{equation*}
    \frac{|u(x)-u(y)|}{|x-y|^{\alpha}}\leq c\left([u]_{\mathcal{N}^{\beta,p}_{\infty}(\mathbb{R}^{n})}\right)^{\frac{p\alpha+n}{p\beta}}\left(\|u\|_{L^{p}(\mathbb{R}^{n})}\right)^{1-\frac{p\alpha+n}{p\beta}} \text{ for any }x\neq y \text{ in }\mathbb{R}^{n}
\end{equation*}
where $c \equiv c(n,p,\alpha,\beta)$. In particular, if $u\in L^{p}(B_{R+2h_{0}})$ with $R>0$ and $h_{0}>0$  
then for any $\alpha\in(0,\beta-\frac{n}{p})$, there is a constant $c \equiv c(n,p,\alpha,\beta,M,N,R,h_{0})$ such that
\begin{equation*}
    \frac{|u(x)-u(y)|}{|x-y|^{\alpha}}\leq c,\quad \text{for any }x\neq y\in B_{\frac{R}{2}},
\end{equation*}
provided that
\begin{equation*}
    \sup_{0<|h|<h_{0}}\left\|\frac{\delta_{h}u}{|h|^{\beta}}\right\|_{L^{p}(B_{R+h_{0}})}\leq M\text{ and }\|u\|_{L^{p}(B_{R+h_{0}})}\leq N,\quad \text{for } M\text{ and }N>0.
\end{equation*}
\end{lem}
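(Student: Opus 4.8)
The plan is to run Campanato's embedding argument by means of spherical averages, using the seminorm $[u]_{\mathcal{N}^{\beta,p}_{\infty}(\mathbb{R}^{n})}$ to control the oscillation of $u$ on balls; the hypothesis $p\beta>n$, i.e.\ $\beta-\frac{n}{p}>0$, is precisely what makes the relevant telescoping series summable. Write $\dashint_{B}u$ for the average of $u$ over a ball $B$. The one estimate on which everything rests is: for every $x_{0}\in\mathbb{R}^{n}$ and $\rho>0$,
\[
\Bigl|\dashint_{B_{\rho}(x_{0})}u-\dashint_{B_{\rho/2}(x_{0})}u\Bigr|\leq c\,[u]_{\mathcal{N}^{\beta,p}_{\infty}(\mathbb{R}^{n})}\,\rho^{\beta-\frac{n}{p}},\qquad\dashint_{B_{\rho}(x_{0})}\Bigl|u-\dashint_{B_{\rho}(x_{0})}u\Bigr|\leq c\,[u]_{\mathcal{N}^{\beta,p}_{\infty}(\mathbb{R}^{n})}\,\rho^{\beta-\frac{n}{p}}.
\]
To prove this I would dominate the left-hand side by the double average $\dashint_{B_{\rho/2}(x_{0})}\dashint_{B_{\rho}(x_{0})}|u(w)-u(z)|\,dw\,dz$, substitute $h=w-z$ (so $|h|<\tfrac{3}{2}\rho$), and for each fixed $h$ estimate the remaining $z$-integral by H\"older's inequality together with $\int_{\mathbb{R}^{n}}|\delta_{h}u|^{p}\leq[u]_{\mathcal{N}^{\beta,p}_{\infty}}^{p}|h|^{p\beta}$; integrating in $h$ over $\{|h|<\tfrac{3}{2}\rho\}$ and collecting the powers of $\rho$ yields exactly the exponent $\beta-\frac{n}{p}$.

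Granting this, since $\beta-\frac{n}{p}>0$ the series $\sum_{k\geq0}(2^{-k}\rho)^{\beta-\frac{n}{p}}$ converges, so $\{\dashint_{B_{2^{-k}\rho}(x_{0})}u\}_{k}$ is Cauchy; its limit $u^{\ast}(x_{0})$ exists for every $x_{0}$, agrees with $u$ a.e.\ by the Lebesgue differentiation theorem, and satisfies $\bigl|u^{\ast}(x_{0})-\dashint_{B_{\rho}(x_{0})}u\bigr|\leq c[u]_{\mathcal{N}^{\beta,p}_{\infty}}\rho^{\beta-\frac{n}{p}}$ for all $\rho>0$. Replacing $u$ by $u^{\ast}$ and setting $r=|x-y|$, I would split
\[
|u(x)-u(y)|\leq\Bigl|u(x)-\dashint_{B_{2r}(x)}u\Bigr|+\Bigl|\dashint_{B_{2r}(x)}u-\dashint_{B_{2r}(y)}u\Bigr|+\Bigl|\dashint_{B_{2r}(y)}u-u(y)\Bigr|,
\]
bounding the outer terms by the telescoped estimate and the middle one by the same change-of-variables computation applied to the double average over $B_{2r}(x)\times B_{2r}(y)$ (where now $|h|<5r$). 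This gives $|u(x)-u(y)|\leq c[u]_{\mathcal{N}^{\beta,p}_{\infty}}|x-y|^{\beta-\frac{n}{p}}$ for all $x,y$, hence $u\in C^{\beta-\frac{n}{p}}(\mathbb{R}^{n})\subset C^{\alpha}_{\loc}(\mathbb{R}^{n})$ and $u$ has a continuous representative.

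For the scaling-invariant estimate I would argue by homogeneity. Optimizing $|u(x)|\leq\bigl|\dashint_{B_{\rho}(x)}u\bigr|+\bigl|u(x)-\dashint_{B_{\rho}(x)}u\bigr|\leq c\rho^{-\frac{n}{p}}\|u\|_{L^{p}(\mathbb{R}^{n})}+c[u]_{\mathcal{N}^{\beta,p}_{\infty}}\rho^{\beta-\frac{n}{p}}$ over $\rho>0$ yields $\|u\|_{L^{\infty}}\leq c\|u\|_{L^{p}}^{1-\frac{n}{p\beta}}[u]_{\mathcal{N}^{\beta,p}_{\infty}}^{\frac{n}{p\beta}}$. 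Since both sides of the claimed inequality, with $\theta=\frac{p\alpha+n}{p\beta}$, scale the same way under $u\mapsto\kappa\,u(\mu\,\cdot)$, I would choose $\kappa,\mu>0$ so that the rescaled $v$ satisfies $[v]_{\mathcal{N}^{\beta,p}_{\infty}}=\|v\|_{L^{p}}=1$ and then just check $|v(x)-v(y)|\leq c|x-y|^{\alpha}$; this follows from $|v(x)-v(y)|\leq c\min\{|x-y|^{\beta-\frac{n}{p}},\|v\|_{L^{\infty}}\}\leq c\min\{|x-y|^{\beta-\frac{n}{p}},1\}\leq c|x-y|^{\alpha}$, splitting $|x-y|\leq1$ and $|x-y|>1$ and using $\alpha\leq\beta-\frac{n}{p}$. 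Undoing the rescaling gives the stated bound (the cases $[u]_{\mathcal{N}^{\beta,p}_{\infty}}=0$ or $\|u\|_{L^{p}}=0$ being trivial). For the localized statement one performs the telescoping only for radii $\rho<\rho_{0}$, with $\rho_{0}=\rho_{0}(R,h_{0})$ small enough that every ball $B_{2^{-k}\rho}(x_{0})$ with $x_{0}\in B_{R/2}$ and every difference vector entering the estimates stays in $B_{R+h_{0}}$ with $|h|<h_{0}$; then the hypotheses $\sup_{0<|h|<h_{0}}\|\delta_{h}u/|h|^{\beta}\|_{L^{p}(B_{R+h_{0}})}\leq M$ and $\|u\|_{L^{p}(B_{R+h_{0}})}\leq N$ replace the global ones, giving $|u(x)-u(y)|\leq cM|x-y|^{\beta-\frac{n}{p}}$ for $x,y\in B_{R/2}$ with $|x-y|<\rho_{0}$ (divide by $|x-y|^{\alpha}$, which is bounded since $\beta-\frac{n}{p}-\alpha>0$ and $|x-y|<\rho_{0}$), while for $|x-y|\geq\rho_{0}$ one bounds crudely by $2\|u\|_{L^{\infty}(B_{R/2})}\rho_{0}^{-\alpha}$ with $\|u\|_{L^{\infty}(B_{R/2})}\leq c\rho_{0}^{-\frac{n}{p}}N+cM\rho_{0}^{\beta-\frac{n}{p}}$.

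The only genuinely delicate point I anticipate is the bookkeeping in the key change-of-variables estimate — converting $\dashint\dashint|u(w)-u(z)|$ over one (or two, differently centered) balls of radius $r$ into an $h$-integral that is controlled by $[u]_{\mathcal{N}^{\beta,p}_{\infty}}$, and keeping track of the powers of $r$ so that precisely $r^{\beta-n/p}$ comes out — together with the routine care needed to pass to the continuous representative; everything else is Campanato's telescoping and a scaling normalization.
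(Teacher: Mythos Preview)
Your argument is correct and self-contained. The paper does not actually prove this lemma: the first (global) statement is quoted directly from \cite[Lemma~2.8]{BLS2}, and the only indication of proof in the paper is the remark that the second (local) statement follows from the first by multiplying $u$ by a smooth cutoff supported in $B_{R+h_{0}}$ and equal to $1$ on $B_{R/2}$, then applying the global estimate to the cutoff product (this is the route taken in \cite[Theorem~4.2]{BLS2}).

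Your approach differs from this in two places. For the global inequality you give a direct Campanato/telescoping proof together with a scaling normalization to extract the precise interpolation exponents, rather than appealing to the reference; this is a perfectly standard and clean way to obtain the result, and has the advantage of being explicit about where the exponent $\beta-\tfrac{n}{p}$ and the factor $[u]_{\mathcal{N}^{\beta,p}_{\infty}}^{\theta}\|u\|_{L^{p}}^{1-\theta}$ come from. For the local statement you localize the telescoping directly (restricting to radii $\rho<\rho_{0}(R,h_{0})$ so that all balls and all difference vectors stay inside $B_{R+h_{0}}$ with $|h|<h_{0}$), whereas the paper's suggested route is to globalize $u$ via a cutoff and invoke the first part. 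Both work; the cutoff argument is shorter to write and makes the dependence $c\equiv c(n,p,\alpha,\beta,M,N,R,h_{0})$ transparent because one only needs to bound $[\chi u]_{\mathcal{N}^{\beta,p}_{\infty}(\mathbb{R}^{n})}$ and $\|\chi u\|_{L^{p}(\mathbb{R}^{n})}$ in terms of $M,N,R,h_{0}$, while your direct localization avoids introducing the cutoff at the cost of a little extra bookkeeping on the admissible radii.
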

\noindent
\begin{rmk}
Using a cutoff function with the first statement, we can prove the second statement. See in \cite[Theorem 4.2.]{BLS2}.
\end{rmk}
\begin{lem}\cite[Proposition 2.6.]{BL}
\label{embedSB}
Let $s\in(0,1)$. We have two embeddings.
\begin{enumerate}
\item Suppose $u\in W^{s,2}(\mathbb{R}^{n})$. Then we get
\[\sup_{|h|>0}\left\|\frac{\delta_{h}u}{|h|^{s}}\right\|^{2}_{L^{2}(\mathbb{R}^{n})}\leq c(n,s)[u]^{2}_{W^{s,2}(\mathbb{R}^{n})}\]
\item Suppose $u\in W^{s,2}_{\loc}(\Omega)$ where $\Omega\subset\mathbb{R}^{n}$ is open set and $B_{R}\Subset\Omega$ with $h_{0}<\frac{\dist(B_{R},\partial\Omega)}{2}$. Then we deduce 
\[\sup_{0<|h|<h_{0}}\left\|\frac{\delta_{h}u}{|h|^{s}}\right\|^{2}_{L^{2}(B_{R})}\leq c(s,n,R,h_{0})\left([u]^{2}_{W^{s,2}(B_{R+h_{0}})}+\|u\|^{2}_{L^{2}(B_{R+h_{0}})}\right)\]
\end{enumerate}
\end{lem}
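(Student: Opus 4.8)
The plan is to derive both bounds from one elementary device --- comparing the difference quotient $\delta_{h}u$ with a ball average whose radius is comparable to $|h|$ --- and then to localize. No deep input is needed; this is the fractional counterpart of the classical Nikolskii-type characterization behind \cite[Proposition 2.6]{BL}.

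For (1), fix $h\neq 0$ and let $B=B_{2|h|}(x)$, which contains both $x$ and $x+h$. I would write
\[
u(x+h)-u(x)=\dashint_{B}\bigl(u(x+h)-u(y)\bigr)\dy-\dashint_{B}\bigl(u(x)-u(y)\bigr)\dy,
\]
apply $|a-b|^{2}\le 2a^{2}+2b^{2}$ together with Jensen's inequality to obtain
\[
|u(x+h)-u(x)|^{2}\le 2\dashint_{B}|u(x+h)-u(y)|^{2}\dy+2\dashint_{B}|u(x)-u(y)|^{2}\dy,
\]
and integrate in $x$ over $\mathbb{R}^{n}$. On the domain of the second inner integral $|x-y|<2|h|$, so $1\le (2|h|)^{n+2s}|x-y|^{-n-2s}$; by Fubini this term is at most $|B_{2|h|}|^{-1}(2|h|)^{n+2s}[u]_{W^{s,2}(\mathbb{R}^{n})}^{2}=c(n)|h|^{2s}[u]_{W^{s,2}(\mathbb{R}^{n})}^{2}$, the prefactor being exactly the gain. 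Substituting $w=x+h$ in the outer variable brings the first term to the same form, now with $B_{2|h|}(w-h)$ and $|w-y|\le 3|h|$ on the relevant set, so it is likewise $\le c(n)|h|^{2s}[u]_{W^{s,2}(\mathbb{R}^{n})}^{2}$. Taking the supremum over $h$ gives (1); $s$ enters only through the harmless factors $2^{2s},3^{2s}\le 9$. (One may also see (1) in a line on the Fourier side from $|e^{\mathrm{i}h\cdot\xi}-1|^{2}\le\min\{4,|h|^{2}|\xi|^{2}\}\le 4|h|^{2s}|\xi|^{2s}$, Plancherel, and $[u]_{W^{s,2}(\mathbb{R}^{n})}^{2}\simeq\int_{\mathbb{R}^{n}}|\xi|^{2s}|\hat u(\xi)|^{2}\dx$.)

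For (2) I would rerun this computation keeping all integrals inside $B_{R+h_{0}}$. If $0<|h|\le h_{0}/2$ and $x\in B_{R}$, then $B_{2|h|}(x)\subset B_{R+h_{0}}$ and $B_{R}+h\subset B_{R+h_{0}}$, so the same steps, with $\mathbb{R}^{n}$ replaced by $B_{R}$ and $B_{R+h_{0}}$, give $\|\delta_{h}u\|_{L^{2}(B_{R})}^{2}\le c(n,s)|h|^{2s}[u]_{W^{s,2}(B_{R+h_{0}})}^{2}$. For $h_{0}/2<|h|<h_{0}$ the crude bound $\|\delta_{h}u\|_{L^{2}(B_{R})}^{2}\le 2\|u(\cdot+h)\|_{L^{2}(B_{R})}^{2}+2\|u\|_{L^{2}(B_{R})}^{2}\le 4\|u\|_{L^{2}(B_{R+h_{0}})}^{2}$ already suffices, since there $1\le(2/h_{0})^{2s}|h|^{2s}$. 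Combining the two ranges and taking the supremum over $0<|h|<h_{0}$ yields (2) with a constant depending on $n,s,R,h_{0}$.

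I do not expect a genuine obstacle --- this is a standard estimate --- but the one point to handle with care is the radius bookkeeping in (2): to produce the factor $|h|^{2s}$ the averaging ball must have radius of order $|h|$, hence it cannot be kept inside $B_{R+h_{0}}$ once $|h|$ is comparable to $h_{0}$, which is precisely why the argument splits into small and large $|h|$, with the large-$|h|$ range absorbed by the additional $\|u\|_{L^{2}(B_{R+h_{0}})}^{2}$ on the right-hand side (the term that is absent in the global statement (1)).
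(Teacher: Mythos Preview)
The paper does not give its own proof of this lemma: it is one of the ``three Besov-type embeddings'' stated \emph{without proof} in Section~\ref{section2}, with a citation to \cite[Proposition~2.6]{BL}. Your argument is correct and entirely standard; the ball-average device (or the one-line Fourier variant you mention) is exactly the way such Nikolskii-type bounds are established, and your localization in part~(2) via the small-$|h|$/large-$|h|$ split is clean and accounts precisely for the extra $L^{2}$ term on the right. An equally common route to~(2), and the one typically taken in the literature, is to multiply by a cutoff supported in $B_{R+h_{0}}$ and equal to $1$ on a slightly smaller ball, apply~(1) to the extended function, and control the seminorm of the product by $[u]^{2}_{W^{s,2}(B_{R+h_{0}})}+\|u\|^{2}_{L^{2}(B_{R+h_{0}})}$; this avoids the case split but costs a product-rule estimate. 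Either way there is nothing to compare against in the present paper.
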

For a local boundedness of a weak solution \eqref{eq1} with \eqref{forcing term}, we need the following technical lemma.
\begin{lem}\cite[Chapter 1, {Lemma 4.2}]{DE}
\label{TLI}
Let $M,b>1$ and $\kappa,\delta>0$ be given. For each $h\in\mathbb{N}$, 
\begin{align*}
    &Y_{h+1}\leq Mb^{h}(Y_{h}^{1+\delta}+Z_{h}^{1+\kappa}Y_{h}^{\delta})\\
    &Z_{h+1}\leq Mb^{h}(Y_{h}+Z_{h}^{1+\kappa}).
\end{align*}
If $Y_{0}+Z_{0}^{1+\kappa}\leq(2M)^{-\frac{1+\kappa}{\sigma}}b^{-\frac{1+\kappa}{\sigma^{2}}}$ where $\sigma=\min\{\kappa,\delta\}$, then
\[\lim_{h\to\infty}Y_{h}=\lim_{h\to\infty}Z_{h}=0.\]
\end{lem}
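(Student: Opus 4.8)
The plan is to collapse the coupled system into a single scalar fast--geometric--convergence inequality. Set $W_{h}:=Y_{h}+Z_{h}^{1+\kappa}$; since $Y_{h},Z_{h}\geq 0$ are both dominated by $W_{h}$, it suffices to prove $W_{h}\to 0$.

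First I would factor the first recursion as $Y_{h+1}\leq Mb^{h}Y_{h}^{\delta}\bigl(Y_{h}+Z_{h}^{1+\kappa}\bigr)=Mb^{h}Y_{h}^{\delta}W_{h}\leq Mb^{h}W_{h}^{1+\delta}$, using $Y_{h}\leq W_{h}$. The second recursion gives directly $Z_{h+1}\leq Mb^{h}W_{h}$, hence $Z_{h+1}^{1+\kappa}\leq (Mb^{h})^{1+\kappa}W_{h}^{1+\kappa}$. As long as $W_{h}\leq 1$, both exponents may be lowered to $1+\sigma$ with $\sigma=\min\{\kappa,\delta\}$, and since $Mb^{h}>1$ one has $Mb^{h}\leq (Mb^{h})^{1+\kappa}$; adding the two estimates yields
\[
W_{h+1}\leq 2M^{1+\kappa}\,(b^{1+\kappa})^{h}\,W_{h}^{1+\sigma}=:\bar{M}\,\bar{b}^{\,h}\,W_{h}^{1+\sigma},\qquad \bar{M}=2M^{1+\kappa},\ \ \bar{b}=b^{1+\kappa}.
\]

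Next I would run the standard induction, claiming $W_{h}\leq \bar{b}^{-h/\sigma}W_{0}$ for every $h$. The base case is trivial; for the inductive step one substitutes the hypothesis into the displayed inequality, getting $W_{h+1}\leq \bar{M}\,\bar{b}^{-h/\sigma}W_{0}^{1+\sigma}$, and this is $\leq \bar{b}^{-(h+1)/\sigma}W_{0}$ precisely when $W_{0}^{\sigma}\leq \bar{M}^{-1}\bar{b}^{-1/\sigma}$, i.e.\ $W_{0}\leq \bar{M}^{-1/\sigma}\bar{b}^{-1/\sigma^{2}}=2^{-1/\sigma}M^{-(1+\kappa)/\sigma}b^{-(1+\kappa)/\sigma^{2}}$. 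Because $(1+\kappa)/\sigma\geq 1/\sigma$, the stated smallness condition $Y_{0}+Z_{0}^{1+\kappa}\leq (2M)^{-(1+\kappa)/\sigma}b^{-(1+\kappa)/\sigma^{2}}$ is stronger than this requirement, and in particular forces $W_{0}<1$; this retroactively justifies the step $W_{h}\leq 1$ used above, since the induction hypothesis gives $W_{h}\leq W_{0}<1$ for all $h$. Letting $h\to\infty$ gives $W_{h}\to 0$, hence $Y_{h}\to 0$ and $Z_{h}\to 0$.

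There is no genuine analytic obstacle here; the only delicate point is the bookkeeping of constants, namely ensuring that the threshold produced by the scalar induction is \emph{implied by} (not merely comparable to) the threshold stated in the lemma. This is why the factor $2M^{1+\kappa}$ and the exponent $1+\kappa$ in $\bar{b}$ must be tracked carefully and then absorbed using $\kappa>0$.
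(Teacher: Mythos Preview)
The paper does not supply its own proof of this lemma; it is quoted directly from \cite[Chapter~1, Lemma~4.2]{DE} without argument. Your proof is correct and is essentially the standard DiBenedetto argument: introduce the combined quantity $W_h=Y_h+Z_h^{1+\kappa}$, derive a scalar recursion $W_{h+1}\le \bar M\,\bar b^{\,h}W_h^{1+\sigma}$, and close by the usual fast-geometric induction $W_h\le \bar b^{-h/\sigma}W_0$. The constant bookkeeping in your last paragraph---checking that $(2M)^{-(1+\kappa)/\sigma}\le 2^{-1/\sigma}M^{-(1+\kappa)/\sigma}$ so that the stated threshold implies the one actually needed---is accurate, and the apparent circularity (using $W_h\le 1$ before it is established) is resolved exactly as you say, by folding it into the induction hypothesis.
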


Before ending this section, we give the H\"older regularity of a homogeneous equation to transfer the regularity to a weak solution to \eqref{eq1} with \eqref{forcing term}. In {\cite{DZZ,N1,APTl}}, the H\"older continuity is proved when $\Phi(t)=t$. In a similar way, we can prove the H\"older continuity when $\Phi$ satisfies \eqref{Phi condition} with some minor modifications. Therefore we get the following H\"older estimate.
\begin{lem}
\label{HomoHolder}
Suppose that 
\[u\in L^{2}\left((-1,0];W^{s,2}(B_{1})\right)\cap L^{\infty}\left((-1,0];L^{1}_{2s}(\mathbb{R}^{n})\right)\cap C\left((-1,0];L^{2}(B_{1})\right)\]
is a local weak solution to 
\begin{equation*}
    \partial_{t}u+\mathcal{L}^{\Phi}_{A}u=0 \text{ in }Q_{1}.
\end{equation*}
Then we have that $u$ is in $C_{\loc}^{\beta,\frac{\beta}{2s}}(Q_{1})$ for some $\beta=\beta(n,s,\lambda)$. In particular, we obtain for any $0<\rho\leq\frac{R}{2}$ with $Q_{R}(z_{0})\Subset Q_{1}$ 
\begin{equation*}
    \osc_{Q_{\rho}(z_{0})}u\leq c\left(\frac{\rho}{R}\right)^{\beta}\left[\left(\dashiint_{Q_{R}}|u|^{2}\dx\dt\right)^{\frac{1}{2}}+\ITail\left(u;x_{0},\frac{R}{2},t_{0}-R^{2s},t_{0}\right)\right]
\end{equation*}
and
\begin{equation*}
    \dashiint_{Q_{\rho}(z_{0})}|u-\overline{u}_{Q_{\rho}(z_{0})}|^{2}\dx\dt\leq c\left(\frac{\rho}{R}\right)^{2\beta}\left[\left(\dashiint_{Q_{R}}|u|^{2}\dx\dt\right)+\ITails\left(u;x_{0},\frac{R}{2},t_{0}-R^{2s},t_{0}\right)\right],
\end{equation*}
where $c \equiv c(n,s,\lambda)$.
\end{lem}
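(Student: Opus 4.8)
The plan is to prove Lemma \ref{HomoHolder} by following the standard De Giorgi--Nash--Moser oscillation-decay scheme for nonlocal parabolic equations, adapted to the nonlinear operator $\mathcal{L}^{\Phi}_{A}$. The point is that the structure conditions \eqref{Phi condition} make $\Phi$ behave, for the purposes of energy estimates, exactly like the identity: whenever we test the weak formulation with a truncation $w_{\pm}=(u-k)_{\pm}$ (or more precisely with $\eta^{2}(u-k)_{\pm}$ for a cutoff $\eta$), the monotonicity lower bound $(\Phi(\xi)-\Phi(\xi'))(\xi-\xi')\geq\lambda^{-1}|\xi-\xi'|^{2}$ reproduces the Gagliardo energy $\lambda^{-1}[w_{\pm}(\cdot,t)]^{2}_{W^{s,2}}$ up to constants, while the Lipschitz bound $|\Phi(\xi)-\Phi(\xi')|\leq\lambda|\xi-\xi'|$ controls the mixed and tail terms from above. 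Hence the argument is literally the one in \cite{DZZ,N1,APTl} for $\Phi(t)=t$, with $\lambda$ entering the constants; I would say this explicitly and then carry out the three standard ingredients.

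First I would establish the Caccioppoli inequality for truncations: for $Q_{r}(z_{0})\subset Q_{R}(z_{0})\Subset Q_{1}$ and any level $k$,
\[
\esup_{t}\int_{B_{r}}\eta^{2}w_{\pm}^{2}\,dx+\int\!\!\int_{B_{R}\times B_{R}}\frac{|\eta w_{\pm}(x)-\eta w_{\pm}(y)|^{2}}{|x-y|^{n+2s}}
\leq c\,\lambda^{2}\!\left(\text{lower-order terms in }w_{\pm}\right)+\text{Tail contribution},
\]
obtained by testing with $\phi=\eta^{2}w_{\pm}$ and splitting the double integral over $B_{R}\times B_{R}$, $B_{R}\times(\mathbb{R}^{n}\setminus B_{R})$, using \eqref{Phi condition} on each piece exactly as in the cited references. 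Second, from this energy inequality I would run the De Giorgi iteration on shrinking cylinders to get a local $L^{\infty}$ bound and, crucially, the \emph{measure-to-pointwise} ``first alternative'' and the \emph{expansion of positivity} (``second alternative'') lemmas; combining them yields the oscillation reduction
\[
\osc_{Q_{\theta R}(z_{0})}u\leq (1-\sigma)\,\osc_{Q_{R/2}(z_{0})}u+c\,\ITail\bigl(u;x_{0},R/2,t_{0}-R^{2s},t_{0}\bigr)
\]
for some $\theta,\sigma\in(0,1)$ depending only on $n,s,\lambda$. Third, I would iterate this over dyadic radii; the tail term is handled by the standard observation that $\ITail$ at scale $2^{-j}R$ is controlled by $\sup_{Q_{R}}|u|$ plus a geometrically summable remainder, so the iteration closes and produces a genuine power $\beta=\beta(n,s,\lambda)$, giving the first displayed oscillation estimate of the lemma; the stated $L^{2}$ Campanato-type estimate then follows simply by bounding $|u-\overline{u}_{Q_{\rho}}|\leq\osc_{Q_{\rho}}u$ and squaring (with $\ITails$ replacing $\ITail^{2}$ via Cauchy--Schwarz in the definition).

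The main obstacle — or rather the only place requiring genuine care beyond citing \cite{DZZ,N1,APTl} — is the bookkeeping of the nonlocal tail through the iteration: one must verify that at each dyadic step the truncated tail quantity $\rho^{2s}\int_{\mathbb{R}^{n}\setminus B_{\rho}}|(u-k)_{\pm}|/|y-x_{0}|^{n+2s}\,dy$ can be split into a part absorbed by the current oscillation (because on $B_{R/2}$ the function is already within the previous oscillation bound) and a part that is genuinely $\ITail(u;x_{0},R/2,\cdot)$ up to a harmless geometric factor; this is precisely the ``tail estimate'' manipulation from \cite{DZZ}, and the nonlinearity $\Phi$ affects it only through the multiplicative constant $\lambda$. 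A secondary technical point is that the time regularity exponent is $\beta/(2s)$ rather than $\beta/2$, which is automatic here because the intrinsic parabolic cylinders are $Q_{\rho}=B_{\rho}\times(t_{0}-\rho^{2s},t_{0}]$ and Campanato's embedding in this scaling is exactly Lemma \ref{Campanato embedding for fractional}. I would therefore keep the write-up short: state the Caccioppoli inequality, quote the De Giorgi machinery from the references with the remark that \eqref{Phi condition} replaces linearity at the cost of $\lambda$-dependent constants, and give the tail-iteration step in a few lines.
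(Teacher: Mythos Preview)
Your proposal is correct and matches the paper's approach: the paper does not give a detailed proof of this lemma but simply remarks that the H\"older continuity proved in \cite{DZZ,N1,APTl} for $\Phi(t)=t$ carries over ``with some minor modifications'' when $\Phi$ satisfies \eqref{Phi condition}, which is precisely the content of your outline. Your sketch of the Caccioppoli estimate, De Giorgi iteration, and tail bookkeeping is in fact more detailed than what the paper provides; note only that $\ITails$ is by definition $(\ITail)^{2}$, so no Cauchy--Schwarz step is needed there.
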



\section{Local boundedness and H\"older regularity for inhomogeneous equation}
\label{section3}
Let $\zeta :\mathbb{R}\to\mathbb{R}$ be a smooth even function with $\supp\zeta\subset(-\frac{1}{2},\frac{1}{2})$ and $\int_{\mathbb{R}}\zeta=1$. For any locally integrable function $u:\Omega\times(0,T)\to \mathbb{R}$, we define 
\begin{align*}
    u^{\epsilon}(x,t)\coloneqq\dashint_{t-\frac{\epsilon}{2}}^{t+\frac{\epsilon}{2}}\zeta\left(\frac{t-\sigma}{\epsilon}\right)u(x,\sigma)d\sigma=\dashint_{-\frac{1}{2}}^{\frac{1}{2}}\zeta(\sigma)u(x,t-\epsilon\sigma)d\sigma,\quad (x,t)\in\Omega_{T}
\end{align*}
for $0<\epsilon<\min\{t,T-t\}$.
We check the following elementary lemma.
\begin{lem}(Property of mollification)
\label{MT}
Let $X$ be a seperable Banach space and $1\leq p<\infty$.
\begin{enumerate}
\item Suppose $u\in C([0,T];X)$. Then for each $t\in(0,T)$, 
\begin{align*}
    u^{\epsilon}(\cdot,t)\to u(\cdot,t) \text{ in } X 
\end{align*}
as $\epsilon$ tends to 0.

\item Suppose $u\in L^{p}(0,T;X)$. Then 
\begin{align*}
    u^{\epsilon}\to u \text{ in } L^{p}_{loc}(0,T;X)
\end{align*}
as $\epsilon$ tends to 0.
\end{enumerate}
\end{lem}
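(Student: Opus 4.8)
\textbf{Plan of proof for Lemma \ref{MT}.}

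The plan is to verify the two mollification properties directly from the integral representation
\[
u^{\epsilon}(x,t)=\dashint_{-\frac{1}{2}}^{\frac{1}{2}}\zeta(\sigma)\,u(x,t-\epsilon\sigma)\,d\sigma,
\]
treating $u(\cdot,t-\epsilon\sigma)$ as an $X$-valued function and exploiting that $\zeta$ is a mollifier with unit mass supported in $(-\tfrac12,\tfrac12)$. For part (1), fix $t\in(0,T)$. Since $\int_{\mathbb{R}}\zeta=1$, we may write
\[
u^{\epsilon}(\cdot,t)-u(\cdot,t)=\dashint_{-\frac{1}{2}}^{\frac{1}{2}}\zeta(\sigma)\bigl(u(\cdot,t-\epsilon\sigma)-u(\cdot,t)\bigr)\,d\sigma,
\]
so by the triangle inequality in $X$,
\[
\|u^{\epsilon}(\cdot,t)-u(\cdot,t)\|_{X}\leq \|\zeta\|_{L^{\infty}}\sup_{|\sigma|\leq \frac12}\|u(\cdot,t-\epsilon\sigma)-u(\cdot,t)\|_{X}.
\]
Because $u\in C([0,T];X)$ and, for $\epsilon$ small, $t-\epsilon\sigma$ stays in a compact subinterval of $(0,T)$ containing $t$, uniform continuity of $s\mapsto u(\cdot,s)$ on that subinterval forces the right-hand side to $0$ as $\epsilon\to 0$. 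This gives the claim.

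For part (2), first observe that $u^{\epsilon}$ is well defined and measurable as an $X$-valued function on any interval $[a,b]\Subset(0,T)$ once $\epsilon<\operatorname{dist}(\{a,b\},\{0,T\})$. A change of variables together with Minkowski's integral inequality in $L^{p}$ yields $\|u^{\epsilon}\|_{L^{p}(a,b;X)}\leq \|u\|_{L^{p}(a-\epsilon/2,b+\epsilon/2;X)}$, which is the standard uniform bound; this reduces the convergence statement to a dense class. On the dense subspace $C([a-1,b+1];X)$ (or continuous functions compactly supported in $(0,T)$), part (1) gives pointwise-in-$t$ convergence in $X$, and combined with the uniform bound and dominated convergence one gets convergence in $L^{p}(a,b;X)$; the uniform operator bound then upgrades this to all of $L^{p}(0,T;X)$ by the usual $3\epsilon$ argument. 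Since $[a,b]\Subset(0,T)$ was arbitrary, this is exactly convergence in $L^{p}_{\mathrm{loc}}(0,T;X)$.

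The only mildly delicate point is the density/approximation step in part (2): one must be slightly careful that the mollification is only defined away from the endpoints $0,T$, so the approximating sequence and the uniform bound must be set up on a fixed compact subinterval before passing to the limit, and the density of continuous $X$-valued functions in $L^{p}(a',b';X)$ (for a slightly larger interval $[a',b']$) must be invoked. This is entirely standard — it is the Bochner-space analogue of the classical mollification lemma — so no real obstacle arises; the proof is essentially bookkeeping with Minkowski's inequality and the continuity statement of part (1).
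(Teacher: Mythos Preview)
Your argument is correct and is the standard proof of this elementary mollification lemma. The paper itself does not give a proof of Lemma~\ref{MT}; it simply states the result, calling it an ``elementary lemma,'' and proceeds to use it. So there is nothing to compare against, and your write-up would serve perfectly well as the omitted proof.

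One cosmetic point: in your bound for part~(1) you write $\|\zeta\|_{L^{\infty}}$ as the constant in front of the supremum. Strictly speaking, the triangle inequality gives a factor of $\int_{-1/2}^{1/2}|\zeta(\sigma)|\,d\sigma=\|\zeta\|_{L^{1}}$; since the interval has unit length this is dominated by $\|\zeta\|_{L^{\infty}}$, so your inequality is valid, but $\|\zeta\|_{L^{1}}$ (which equals $1$ if $\zeta\geq 0$) is the natural constant. This does not affect the argument.
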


First, we give a Caccioppoli-type estimate, Lemma \ref{caccioppoli estimate}, for \eqref{eq1} with \eqref{forcing term}.
Let us consider a parameter $\kappa\in\left(0,\frac{2s}{n}\right)$ so that
\begin{equation}
\label{kappa condition}
    0<\frac{n}{2qs}+\frac{1}{r}=1-\frac{n}{2s}\kappa<1.
\end{equation} 
Let us also consider two parameters $\hat{q}\coloneqq2(1+\kappa)q'$ and $\hat{r}\coloneqq2(1+\kappa)r'$
to see that 
\begin{equation}
\label{hatqhatr}
\frac{n}{2\hat{q}s}+\frac{1}{\hat{r}}=\frac{n}{4s},\quad \hat{q}\in\left[2,\frac{2n}{n-2s}\right]\quad\mbox{and}\quad\hat{r}\in[2,\infty].
\end{equation}
If a function $u$ belongs to $L^{2}(0,T;W^{s,2}(\Omega))$, then for $x_{0}\in\Omega$, $\rho>0$, $k\geq0$, and $t\in(0,T)$, we define 
\begin{equation}
\label{def of level set}
    {E(t;x_{0},\rho,k)}=\{x\in B_{\rho}(x_{0}) \ ; u(x,t)>k\}\quad \text{and}\quad w_{+}=(u-k)_{+},
\end{equation}
where $B_{\rho}(x_{0})\subset\Omega$.

\begin{lem}
\label{caccioppoli estimate}
Suppose that $u$ is a local weak subsolution to \eqref{eq1} with \eqref{forcing term}. Then for any $Q_{R,T_{2}}(z_{0})\Subset\Omega_{T}$ with $0<\rho<R$ and $0<T_{1}<T_{2}<R^{2s}$, there exists a constant $c \equiv c(n,s,q,r,\lambda)$ such that
\begin{align*}
    &\int_{t_{0}-T_{1}}^{t_{0}}\int_{B_{\rho}(x_{0})}\int_{B_{\rho}(x_{0})}\frac{|w_{+}(x,t)-w_{+}(y,t)|^{2}}{|x-y|^{n+2s}}\dx\dy\dt+
    \esup_{t\in[t_{0}-T_{1},t_{0}]}\int_{B_{\rho}(x_{0})}w_{+}^{2}(x,t)\dx\\
    &\quad\leq c\left(\frac{R^{2(1-s)}}{(R-\rho)^{2}}+\frac{1}{T_{2}-T_{1}}\right)\int_{t_{0}-T_{2}}^{t_{0}}\int_{B_{R}(x_{0})}w_{+}^{2}\dx\dt\\
    &\qquad+c\left(\frac{R^{n+2s}}{(R-\rho)^{n+2s}}\esup_{t\in[t_{0}-T_{2},t_{0}]}\int_{\mathbb{R}^{n}\setminus B_{\rho}(x_{0})}\frac{w_{+}(y,t)}{|x_{0}-y|^{n+2s}}\dy\right)\times\|w_{+}\|_{L^{1}(Q_{R,T_{2}}(z_{0}))}\\
    &\qquad+ck^{2}R^{-n\kappa}\left(\int_{t_{0}-T_{2}}^{t_{0}}|{E(t;x_{0},R,k)}|^{\frac{\hat{r}}{\hat{q}}}\dt\right)^{\frac{2(1+\kappa)}{\hat{r}}},
\end{align*}
whenever $k\geq\|f\|_{L^{q,r}(Q_{R,T_{2}}(z_{0}))}R^{{n\kappa}}$.
\end{lem}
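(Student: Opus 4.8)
The plan is to follow the classical Caccioppoli scheme for nonlocal parabolic equations, adapted here to handle the level-set truncation $w_+ = (u-k)_+$ and the nonhomogeneous term $f$. First I would fix a cutoff: choose $\psi \in C_0^\infty(B_{(R+\rho)/2}(x_0))$ with $\psi \equiv 1$ on $B_\rho(x_0)$, $0 \le \psi \le 1$, and $|D\psi| \lesssim (R-\rho)^{-1}$, together with a Lipschitz time cutoff $\eta$ that vanishes before $t_0 - T_2$ and equals $1$ on $[t_0 - T_1, t_0]$, with $|\eta'| \lesssim (T_2-T_1)^{-1}$. Because $u$ need not be differentiable in time, I would test the weak formulation with the Steklov/mollified function $\phi = (w_+^\epsilon)\psi^2 \eta^2$ (using the mollification $u^\epsilon$ from Lemma \ref{MT}), perform the standard manipulation of the parabolic term $\int -u^\epsilon \partial_t\phi \to \frac12 \int \partial_t (w_+^2)\psi^2\eta^2$, and then send $\epsilon \to 0$ via Lemma \ref{MT}. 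This yields, after integrating the time derivative and discarding the favorable boundary term at $t=t_0-T_2$,
\[
\esup_{t}\int_{B}\! w_+^2\psi^2\,dx + \int\!\!\int\!\!\int \Phi(u(x)-u(y))(w_+(x)\psi^2(x)-w_+(y)\psi^2(y))\frac{A}{|x-y|^{n+2s}} \le \int\!\!\int w_+^2 \psi^2 |\eta'|\eta + \int\!\!\int f\, w_+\psi^2\eta^2.
\]

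The core of the argument is the pointwise algebraic estimate for the nonlocal bilinear term. Splitting the $(x,y)$ integral over $B\times B$, $B\times B^c$, and using symmetry, I would use the structure conditions \eqref{Phi condition} and \eqref{coefficeint condition} to bound
\[
\Phi(u(x)-u(y))\bigl(w_+(x)\psi^2(x)-w_+(y)\psi^2(y)\bigr) \ge \tfrac{1}{c}\,|w_+(x)-w_+(y)|^2\psi^2(y) - c\,|w_+(x)+w_+(y)||\psi(x)-\psi(y)|^2,
\]
the standard truncation inequality (valid because $\Phi$ is Lipschitz and monotone, exactly as in the $\Phi(t)=t$ case), which produces the Gagliardo energy of $w_+$ on the left and a remainder controlled by $\|D\psi\|_\infty^2 \lesssim (R-\rho)^{-2}$ times $\int\int w_+^2$ on the right. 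The contribution from $y \in \mathbb{R}^n \setminus B_\rho$ — where $w_+(x)\psi^2(x) - w_+(y)\psi^2(y) = w_+(x)\psi^2(x)$ since $\psi$ is supported in $B_{(R+\rho)/2}$ — is, using $|x - y| \gtrsim \tfrac{R-\rho}{R}|x_0-y|$ for $x$ in the support of $\psi$, bounded by the tail term: a factor $\frac{R^{n+2s}}{(R-\rho)^{n+2s}}\,\esup_t \int_{\mathbb{R}^n\setminus B_\rho} \frac{w_+(y,t)}{|x_0-y|^{n+2s}}\,dy$ multiplying $\|w_+\|_{L^1(Q_{R,T_2})}$, which is exactly the second line of the claimed estimate (one uses that $w_+\psi^2 \le w_+$ and the supersolution/sign structure so only $w_+(y)_+$ appears).

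The remaining point is the forcing term $\int\int |f|\, w_+\psi^2\eta^2$, and this is where the parameters $\kappa, \hat q, \hat r$ from \eqref{kappa condition}–\eqref{hatqhatr} and the embedding Lemma \ref{embedSF} enter. I would estimate $\int\int |f| w_+ \le \|f\|_{L^{q,r}(Q_{R,T_2})}\,\|w_+ \mathbf{1}_{\{u>k\}}\|_{L^{q',r'}(Q_{R,T_2})}$ by Hölder in space and time; then by Hölder again with the exponent pair splitting $w_+ = w_+ \cdot \mathbf{1}_{E}$, interpolate $\|w_+ \mathbf{1}_E\|_{L^{q',r'}}$ against $\|w_+\|_{L^{\hat q,\hat r}}$ times a power of $\iint_E$, using $\hat q = 2(1+\kappa)q'$, $\hat r = 2(1+\kappa)r'$. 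Applying Lemma \ref{embedSF} to bound $\|w_+\|_{L^{\hat q,\hat r}}$ by $R^{-s}\|w_+\|_{L^2} + \|w_+\|_{V^2_s}$, and absorbing the $V^2_s$-part into the left-hand side via Young's inequality (this is why we keep both the Gagliardo seminorm and the $\esup$ of $\int w_+^2$ on the left), produces the two terms $c(\frac{R^{2(1-s)}}{(R-\rho)^2}+\frac{1}{T_2-T_1})\iint w_+^2$ and $c k^2 R^{-n\kappa}(\int |E(t;x_0,R,k)|^{\hat r/\hat q}\,dt)^{2(1+\kappa)/\hat r}$; the constraint $k \ge \|f\|_{L^{q,r}(Q_{R,T_2})} R^{n\kappa}$ is precisely what lets the $\|f\|$ factor be replaced by $k R^{-n\kappa}$ (a scaling bookkeeping step). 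The main obstacle I anticipate is the careful tracking of the scaling exponents ($R^{2(1-s)}$, $R^{-n\kappa}$, the powers of $R-\rho$ and of $|E|$) through the double Hölder/interpolation and the embedding — getting each power right, and making sure the absorbing Young's inequality leaves a clean fraction of the left-hand side — rather than any conceptual difficulty; the nonlocal tail split and the $\Phi$-truncation inequality are routine given \eqref{Phi condition} and the cited parabolic literature.
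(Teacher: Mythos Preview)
Your proposal is correct and follows essentially the same route as the paper: mollified test function $w_+^\epsilon\psi^2\eta^2$, passage to the limit, the standard truncation/Caccioppoli inequality for the local part of the bilinear form (the paper cites \cite[Lemma 3.3]{DZZ} here), the geometric estimate $|x-y|\gtrsim \frac{R-\rho}{R}|x_0-y|$ for the tail, and the three-way H\"older (your two-step version is equivalent, since $\tfrac{1}{q'}=\tfrac{1}{\hat q}+\tfrac{2\kappa+1}{\hat q}$) followed by Lemma~\ref{embedSF} and Young's absorption for the forcing term. The only cosmetic points are that the paper mollifies twice, writes $\|w_+\psi\eta\|_{L^{\hat q,\hat r}}$ (with the cutoffs) so that the embedding output matches $J_0$ exactly for absorption, and your displayed pointwise inequality is missing a square on $|w_+(x)+w_+(y)|$.
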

\begin{proof}

Let $\epsilon>0$ be a sufficiently small number so that $J\coloneqq[t_{0}-T_{2}-\epsilon,t_{0}+\epsilon]\Subset(0,T)$ and $t_{0}-T_{2}+\epsilon<t_{0}-\frac{T_{1}+T_{2}}{2}-\epsilon$.
We take a nonnegative function $\eta=\eta(t)\in C^{\infty}(\mathbb{R})$ satisfying 
\begin{equation}
\label{time test function}
\eta(t)\equiv1\quad \mbox{in }t\geq t_{0}-T_{1},\quad \eta(t)\equiv0 \quad\mbox{in } t\leq t_{0}-\left(\frac{T_{1}+T_{2}}{2}\right),\quad \|\eta'(t)\|_{L^{\infty}(\mathbb{R})}\leq\frac{4}{T_{2}-T_{1}},
\end{equation}
and choose a cutoff function $\psi=\psi(x)\in C_{c}^{\infty}(B_{{(R+\rho)/2}}(x_{0}))$ such that 
\begin{equation}
\label{space test function}
\psi\equiv 1\quad\mbox{in } B_{\rho}(x_{0})\quad\mbox{and}\quad \|\nabla\psi\|_{L^{\infty}(\mathbb{R}^{n})}\leq\frac{{4}}{R-\rho}.
\end{equation}
Define
\begin{equation*}
    \phi_{{\epsilon}}(x,t)=(w_{+}^{\epsilon}\psi^{2}\eta^{2})^{\epsilon}(x,t),\quad (x,t)\in B_{R}(x_{0})\times J,
\end{equation*}
where $w_{+}^{\epsilon}$ is the convolution of $w_{+}$ with respect to time variable. Note that 
\begin{equation}
\label{test limit}
    \phi_{\epsilon}\to \phi \coloneqq w_{+}\psi^{2}\eta^{2}  \mbox{ in } L^{2}\big(J;W^{s,2}(B_{R}(x_{0}))\big) \mbox{ as }\epsilon\to0,
\end{equation}
by Lemma \ref{MT}. Take $\phi_{\epsilon}$ as a test function to find
\begin{align*}
    I_{1}^{\epsilon}+I_{2}^{\epsilon}&\coloneqq\int_{t_{0}-T_{2}}^{\tau}\int_{B_{R}(x_{0})}-u\partial_{t}\phi_{\epsilon} \dx\dt\\
    &\quad+\int_{t_{0}-T_{2}}^{\tau}\int_{\mathbb{R}^{n}}\int_{\mathbb{R}^{n}}\Phi(u(x,t)-u(y,t))(\phi_{\epsilon}(x,t)-\phi_{\epsilon}(y,t))\frac{A(x,y,t)}{|x-y|^{n+2s}}\dx\dy\dt\\
    &\leq\int_{t_{0}-T_{2}}^{\tau}\int_{B_{R}(x_{0})}f\phi_{\epsilon}\dx\dt-\int_{B_{R}(x_{0})}u\phi_{\epsilon}\dx\Bigg\rvert_{t=t_{0}-T_{2}}^{t=\tau}\eqqcolon I_{3}^{\epsilon}+I_{4}^{\epsilon},
\end{align*}
for any $\tau\in\left[t_{0}-\left(\frac{T_{1}+T_{2}}{2}\right),t_{0}\right]$.
Now we find the limits of $I_{1}^{\epsilon},I_{2}^{\epsilon},I_{3}^{\epsilon}$, and $I_{4}^{\epsilon}$.

\medskip
\noindent
\textbf{Estimate of $I_{1}^{\epsilon}$.} Using Fubini's theroem, we obtain
\begin{align*}
I_{1}^{\epsilon}&=-\int_{B_{R}(x_{0})}\dashint^{t_{0}-T_{2}+\frac{\epsilon}{2}}_{t_{0}-T_{2}-\frac{\epsilon}{2}}\int_{t_{0}-T_{2}}^{\sigma+\frac{\epsilon}{2}}\frac{1}{\epsilon}u(x,t)\zeta'\left(\frac{t-\sigma}{\epsilon}\right){\left(w_{+}^{\epsilon}\psi^{2}\eta^{2}\right)}(x,\sigma)\dt\dsigma\dx\\
&\quad-\int_{B_{R}(x_{0})}\dashint^{\tau+\frac{\epsilon}{2}}_{\tau-\frac{\epsilon}{2}}\int^{\tau}_{\sigma-\frac{\epsilon}{2}}\frac{1}{\epsilon}u(x,t)\zeta'\left(\frac{t-\sigma}{\epsilon}\right){\left(w_{+}^{\epsilon}\psi^{2}\eta^{2}\right)}(x,\sigma)\dt\dsigma\dx\\
&\quad+\int_{B_{R}(x_{0})}\int_{t_{0}-T_{2}+\frac{\epsilon}{2}}^{\tau-\frac{\epsilon}{2}}\partial_{t}u^{\epsilon}(x,t){\left(w_{+}^{\epsilon}\psi^{2}\eta^{2}\right)}(x,t)\dt\dx\eqqcolon-I_{1,1}^{\epsilon}-I_{1,2}^{\epsilon}+I_{1,3}^{\epsilon}.
\end{align*}
By using a suitable change of variables, we write $I_{1,1}^{\epsilon}$ the following 
\begin{align*}
I_{1,1}^{\epsilon}=\int_{B_{R}(x_{0})}\int_{-\frac{1}{2}}^{\frac{1}{2}}\int_{0}^{\sigma+\frac{1}{2}}u(x,\epsilon t+t_{0}-T_{2})\zeta'(t-\sigma){\left(w_{+}^{\epsilon}\psi^{2}\eta^{2}\right)}(x,\epsilon \sigma+t_{0}-T_{2})\dt\dsigma\dx.
\end{align*}
Thus, we have
\begin{align*}
\lim_{\epsilon\to0}I_{1,1}^{\epsilon}
&=\int_{B_{R}(x_{0})}\int_{-\frac{1}{2}}^{\frac{1}{2}}\int_{0}^{\sigma+\frac{1}{2}}u(x,t_{0}-T_{2}){\phi(x,t_{0}-T_{2})}\zeta'(t-\sigma)\dt\dsigma\dx\\
&=-\int_{B_{R}(x_{0})}u(x,t_{0}-T_{2})\phi(x,t_{0}-T_{2})\dx
\end{align*}
by the fact that $u,w_{+}\in C(J;L^{2}(\Omega))$ with Lemma \ref{MT}. Similarly, we get
\begin{align*}
&\lim_{\epsilon\to0}I_{1,2}^{\epsilon}=\int_{B_{R}(x_{0})}u(x,\tau)\phi(x,\tau)\dx.
\end{align*}
For $I_{1,3}^{\epsilon}$, we use an integration by parts, which gives
\begin{align*}
    I_{1,3}^{\epsilon}&=\int_{B_{R}(x_{0})}\int_{t_{0}-T_{2}+\frac{\epsilon}{2}}^{\tau-\frac{\epsilon}{2}}\partial_{t}w_{+}^{\epsilon}{\left(w_{+}^{\epsilon}\psi^{2}\eta^{2}\right)}\dt\dx\\
    &=\int_{B_{R}(x_{0})}\int_{t_{0}-T_{2}+\frac{\epsilon}{2}}^{\tau-\frac{\epsilon}{2}}\partial_{t}\left(\frac{w_{+}^{\epsilon}(x,t)^{2}}{2}\right)\psi^{2}(x)\eta^{2}(t)\dt\dx\\
    &=-\int_{B_{R}(x_{0})}\int_{t_{0}-T_{2}+\frac{\epsilon}{2}}^{\tau-\frac{\epsilon}{2}}\frac{w_{+}^{\epsilon}(x,t)^{2}}{2}\psi^{2}(x)\left(\eta^{2}(t)\right)'\dt\dx\\
    &\quad+\int_{B_{R}(x_{0})}\frac{w_{+}^{\epsilon}(x,t)^{2}}{2}\psi^{2}(x)\eta^{2}(t)\dx\Bigg\rvert_{t=t_{0}-T_{2}-\frac{\epsilon}{2}}^{t=\tau+\frac{\epsilon}{2}}
\end{align*}
As in $I_{1,1}^{\epsilon}$, we observe that 
\begin{align*}
    \lim_{\epsilon\to0}I_{1,3}^{\epsilon}&=-\int_{t_{0}-T_{2}}^{\tau}\int_{B_{R}(x_{0})}w_{+}^{2}(x,t)\psi^{2}(x)\eta(t)\eta'(t)\dt\dx
    +\int_{B_{R}(x_{0})}\frac{w_{+}^{2}(x,t)}{2}\psi^{2}(x)\eta^{2}(t)\dx\Bigg\rvert_{t=t_{0}-T_{2}}^{t=\tau}.
\end{align*}
Combining all the limits of $I_{1,1}^{\epsilon}$, $I_{1,2}^{\epsilon}$, and $I_{1,3}^{\epsilon}$, we find that
\begin{align*}
    \lim_{\epsilon\to0}I_{1}^{\epsilon}&=-\int_{t_{0}-T_{2}}^{t_{1}}\int_{B_{R}(x_{0})}w_{+}^{2}(x,t)\psi^{2}(x)\eta(t)\eta'(t)\dt\dx+\int_{B_{R}(x_{0})}\frac{w_{+}^{2}(x,t)}{2}\psi^{2}(x)\eta^{2}(t)\dx\Bigg\rvert_{t=t_{0}-T_{2}}^{t=\tau}\\
    &\qquad-\int_{B_{R}(x_{0})}u(x,t){\phi(x,t)}\dx\Bigg\rvert_{t=t_{0}-T_{2}}^{t=\tau}.
\end{align*}
\textbf{Estimate of $I_{2}^{\epsilon}$.} We write $I_{2}^{\epsilon}$ as follows:
\begin{align*}
    I_{2}^{\epsilon}&=\int_{t_{0}-T_{2}}^{\tau}\int_{B_{R}(x_{0})}\int_{B_{R}(x_{0})}\Phi(u(x,t)-u(y,t))(\phi_{\epsilon}(x,t)-\phi_{\epsilon}(y,t))\frac{A(x,y,t)}{|x-y|^{n+2s}}\dx\dy\dt\\
    &\quad+\int_{t_{0}-T_{2}}^{\tau}\int_{\mathbb{R}^{n}\setminus B_{R}(x_{0})}\int_{B_{R}(x_{0})}\Phi(u(x,t)-u(y,t))\phi_{\epsilon}(x,t)\frac{A(x,y,t)}{|x-y|^{n+2s}}\dx\dy\dt\\
    &\quad-\int_{t_{0}-T_{2}}^{\tau}\int_{B_{R}(x_{0})}\int_{\mathbb{R}^{n}\setminus B_{R}(x_{0})}\Phi(u(x,t)-u(y,t))\phi_{\epsilon}(y,t)\frac{A(x,y,t)}{|x-y|^{n+2s}}\dx\dy\dt\\
    &\eqqcolon I_{2,1}^{\epsilon}+I_{2,2}^{\epsilon}-I_{2,3}^{\epsilon}.
\end{align*}
Using H\"older's inequality, we observe that
\begin{equation}
\label{moll limit1}
\begin{aligned}
    &\left|I_{2,1}^{\epsilon}-\int_{t_{0}-T_{2}}^{\tau}\int_{B_{R}(x_{0})}\int_{B_{R}(x_{0})}\Phi(u(x,t)-u(y,t))
    (\phi(x,t)-\phi(y,t))\frac{A(x,y,t)}{|x-y|^{n+2s}}\dx\dy\dt\right|\\
    &\quad \leq\lambda^{2}\|u\|_{L^{2}\big(J;W^{s,2}(B_{R}(x_{0}))\big)}\|\phi_{\epsilon}-\phi\|_{L^{2}\big(J;W^{s,2}(B_{R}(x_{0}))\big)}.
\end{aligned}
\end{equation}
We next focus on the estimate of $I_{2,2}^{\epsilon}$. Due to the fact that 
\begin{equation}
\label{tail point}    
    |y-x|\geq|y-x_{0}|-|x-x_{0}|\geq \frac{(R-\rho)}{{2R}}|y-x_{0}|,\quad x\in B_{{(R+\rho)/2}}(x_{0})\subset \supp\psi,\, y\in\mathbb{R}^{n}\setminus B_{R}(x_{0})
\end{equation}and H\"older's inequality,
we have
\begin{equation}
\label{moll limit2}
\begin{aligned}
    &\left|I_{2,2}^{\epsilon}-\int_{t_{0}-T_{2}}^{\tau}\int_{\mathbb{R}^{n}\setminus B_{R}(x_{0})}\int_{B_{R}(x_{0})}\Phi(u(x,t)-u(y,t))\phi(x,t)\frac{A(x,y,t)}{|x-y|^{n+2s}}\dx\dy\dt\right|\\
    &\quad\leq \lambda^{2}\int_{t_{0}-T_{2}}^{\tau}\int_{\mathbb{R}^{n}\setminus B_{R}(x_{0})}\int_{B_{R}(x_{0})}\frac{|u(x,t)|+|u(y,t)|}{|x-y|^{n+2s}}(\phi_{\epsilon}(x,t)-\phi(x,t))\dx\dy\dt\\
    &\quad\leq c\int_{t_{0}-T_{2}}^{\tau}\int_{\mathbb{R}^{n}\setminus B_{R}(x_{0})}\int_{B_{R}(x_{0})}\frac{|u(x,t)|}{|x_{0}-y|^{n+2s}}(\phi_{\epsilon}(x,t)-\phi(x,t))\dx\dy\dt\\
    &\qquad+c\int_{t_{0}-T_{2}}^{\tau}\int_{\mathbb{R}^{n}\setminus B_{R}(x_{0})}\int_{B_{R}(x_{0})}\frac{|u(y,t)|}{|x_{0}-y|^{n+2s}}(\phi_{\epsilon}(x,t)-\phi(x,t))\dx\dy\dt\\
    &\quad\leq c(\rho,R)\|u\|_{L^{2}\big(J;L^{2}(B_{R})(x_{0}))\big)}\|\phi_{\epsilon}-\phi\|_{L^{2}\big(J;L^{2}(B_{R}(x_{0}))\big)}\\
    &\qquad+c(\rho,R)\ITail(u;x_{0},R,J)\|\phi_{\epsilon}-\phi\|_{L^{2}\big(J;L^{2}(B_{R}(x_{0}))\big)}.
\end{aligned}
\end{equation}
Similarly, we deduce
\begin{equation}
\label{moll limit3}
\begin{aligned}
&\left|I_{2,3}^{\epsilon}-\int_{t_{0}-T_{2}}^{\tau}\int_{B_{R}(x_{0})}\int_{\mathbb{R}^{n}\setminus B_{R}(x_{0})}\Phi(u(x,t)-u(y,t))\phi(y,t)\frac{A(x,y,t)}{|x-y|^{n+2s}}\dx\dy\dt\right|\\
&\quad\leq c(\rho,R)\|u\|_{L^{2}\big(J;L^{2}(B_{R})(x_{0}))\big)}\|\phi_{\epsilon}-\phi\|_{L^{2}\big(J;L^{2}(B_{R}(x_{0}))\big)}\\
    &\qquad+c(\rho,R)\ITail(u;x_{0},R,J)\|\phi_{\epsilon}-\phi\|_{L^{2}\big(J;L^{2}(B_{R}(x_{0}))\big)}.
\end{aligned}
\end{equation}
Combining the above estimates \eqref{moll limit1}, \eqref{moll limit2}, \eqref{moll limit3} and using the fact \eqref{test limit}, we discover 
\begin{align*}
\lim_{\epsilon\to0}I_{2}^{\epsilon}&=\int_{t_{0}-T_{2}}^{\tau}\int_{\mathbb{R}^{n}}\int_{\mathbb{R}^{n}}\Phi(u(x,t)-u(y,t))(\phi(x,t)-\phi(y,t))\frac{A(x,y,t)}{|x-y|^{n+2s}}\dx\dy\dt.
\end{align*} 
\textbf{Estimate of $I^{\epsilon}_{3}$.} From H\"older's inequality and Lemma \ref{embedSF}, we get
\begin{align*}
    &\left|I_{3}^{\epsilon}-\int_{t_{0}-T_{2}}^{\tau}\int_{B_{R}(x_{0})}f(x,t)\phi(x,t)\dx\dt\right|\\
    &\quad\leq \|f\|_{L^{r}\big(J;L^{q}(B_{R}(x_{0}))\big)}\|\phi_{\epsilon}-\phi\|_{L^{r'}\big(J;L^{q'}(B_{R}(x_{0}))\big)}\\
    &\quad\leq c\|f\|_{L^{r}\big(J;L^{q}(B_{R}(x_{0}))\big)}\|\phi_{\epsilon}-\phi\|_{V^{s}_{2}((B_{R}(x_{0})\times J))}\\
    &\qquad+c(R)\|f\|_{L^{r}\big(J;L^{q}(B_{R}(x_{0}))\big)}\|\phi_{\epsilon}-\phi\|_{L^{2}\big(J;L^{2}(B_{R}(x_{0}))\big)},
\end{align*}
and this estimate with \eqref{test limit} yields
\[\lim_{\epsilon\to0}I_{3}^{\epsilon}=\int_{t_{0}-T_{2}}^{\tau}\int_{B_{R}}f(x,t)\phi(x,t)\dx\dt.\]
\textbf{Estimate of $I^{\epsilon}_{4}$.} Since $u$ and ${\phi} \in C(J;L^{2}(B_{R}))$, we deduce
\begin{align*}
    \lim_{\epsilon\to0}I_{4}^{\epsilon}=-\int_{B_{R}(x_{0})}{u(x,t)}\phi(x,t)\dx\Bigg\rvert_{t=t_{0}-T_{2}}^{t=\tau}.
\end{align*}
We combine all the estimates of $I_{1}^{\epsilon},I_{2}^{\epsilon},I_{3}^{\epsilon}$, and $I_{4}^{\epsilon}$ 
to see the following
\begin{align*}
&\int_{B_{R}(x_{0})}\frac{w_{+}^{2}(x,t)}{2}\psi^{2}(x)\eta^{2}(t)\dx\Bigg\rvert_{t=t_{0}-T_{2}}^{t=\tau}\\
&+\int_{t_{0}-T_{2}}^{\tau}\int_{\mathbb{R}^{n}}\int_{\mathbb{R}^{n}}\Phi(u(x,t)-u(y,t))(\phi(x,t)-\phi(y,t))\frac{A(x,y,t)}{|x-y|^{n+2s}}\dx\dy\dt\\
&\qquad\leq\int_{t_{0}-T_{2}}^{\tau}\int_{B_{R}(x_{0})}f(x,t)\phi(x,t)\dx\dt
+\int_{t_{0}-T_{2}}^{\tau}\int_{B_{R}(x_{0})}w_{+}^{2}(x,t)\psi^{2}(x)\eta(t)\eta'(t)\dt\dx.
\end{align*}
From the Caccioppoli-type estimate as in \cite[Lemma 3.3]{DZZ}, we further investigate the second term in the left hand-side so that we have
\begin{align*}
    J_{0}&\coloneqq\int_{t_{0}-T_{2}}^{\tau}\int_{B_{R}(x_{0})}\int_{B_{R}(x_{0})}\frac{|w_{+}(x,t)\psi(x)-w_{+}(y,t)\psi(y)|^{2}}{|x-y|^{n+2s}}\eta^{2}(t)\dx\dy\dt\\
    &\quad+\int_{B_{R}(x_{0})}\frac{(w_{+}(x,t)\psi(x)\eta(t))^{2}}{2}\dx\Bigg\rvert_{t=t_{0}-T_{2}}^{t=\tau}\\
    &\leq c\int_{t_{0}-T_{2}}^{\tau}\int_{B_{R}(x_{0})}\int_{B_{R}(x_{0})}\frac{\max\{w_{+}(x,t),w_{+}(y,t)\}^{2}|\psi(x)-\psi(y)|^{2}\eta^{2}(t)}{|x-y|^{n+2s}}\dx\dy\dt\\
    &\quad+c\int_{t_{0}-T_{2}}^{\tau}\int_{\mathbb{R}^{n}\setminus B_{R}(x_{0})}\int_{B_{R}(x_{0})}\frac{w_{+}(y,t)}{|x-y|^{n+2s}}\phi(x,t)\dx\dy\dt\\
    &\quad+c\int_{t_{0}-T_{2}}^{\tau}\int_{B_{R}(x_{0})}f(x,t)\phi(x,t)\dx\dt\\
    &\quad+c\int_{t_{0}-T_{2}}^{\tau}\int_{B_{R}(x_{0})}w_{+}^{2}(x,t)\psi^{2}(x)\eta(t)\eta'(t)\dx\dt=J_{1}+J_{2}+J_{3}+J_{{4}}.
\end{align*}
Now we estimate $J_{i}$ for each $i=0,1,2,3$ and 4. Note that $J_{0},J_{1},J_{2}$ and $J_{{4}}$ can be estimated with the help of \eqref{time test function}, \eqref{space test function} and \eqref{tail point} as follows:

\noindent
\textbf{Estimate of $J_{0}$.}
\begin{align*}
J_{0}\geq\int_{t_{0}-T_{1}}^{\tau}\int_{B_{\rho}(x_{0})}\int_{B_{\rho}(x_{0})}\frac{|w_{+}(x,t)-w_{+}(y,t)|^{2}}{|x-y|^{n+2s}}\dx\dy\dt+
    \esup_{t\in[t_{0}-T_{1},\tau]}\int_{B_{\rho}(x_{0})}\frac{w_{+}^{2}(x,t)}{2}\dx
\end{align*}
\textbf{Estimate of $J_{1}$.} 
\begin{align*}
    J_{1}
    \leq c\frac{R^{2(1-s)}}{(R-\rho)^{2}}\int_{t_{0}-T_{2}}^{\tau}\int_{B_{R}(x_{0})}w_{+}^{2}(x,t)\dx\dt.
\end{align*}
\textbf{Estimate of $J_{2}$.}
\begin{align*}
    J_{2}
    \leq \frac{R^{n+2s}}{(R-\rho)^{n+2s}}\left(\esup_{t\in[t_{0}-T_{2},t_{0}]}\int_{\mathbb{R}\setminus B_{\rho}(x_{0})}\frac{w_{+}(y,t)}{|x_{0}-y|^{n+2s}}\dy\right)\|w_{+}\|_{L^{1}(Q_{R,T_{2}}(z_{0}))}.
\end{align*}
\textbf{Estimate of $J_{4}$.}
\begin{align*}
    J_{4}
    \leq \frac{2}{T_{2}-\tau}\int_{t_{0}-T_{2}}^{\tau}\int_{B_{R}(x_{0})}w_{+}^{2}(x,t)\dx\dt.
\end{align*}
In the case of $J_{3}$, {applying H\"older's inequality}, we have
\begin{align*}
    J_{3}&\leq\int_{t_{0}-T_{2}}^{\tau}\int_{B_{R}(x_{0})}|f(x,t)|w_{+}(x,t)\psi(x)\eta(t)\dx\dt\\
    &\leq \|f\|_{L^{q,r}(B_{R}(x_{0})\times(t_{0}-T_{2},\tau))}\left\|\Chi_{\{ u\ge k \}}\right\|_{L^{\tilde{q},\tilde{r}}(B_{R}(x_{0})\times(t_{0}-T_{2},\tau))}\|w_{+}\psi\eta\|_{L^{\hat{q},\hat{r}}(B_{R}(x_{0})\times(t_{0}-T_{2},\tau))}\\
    &\leq kR^{-n\kappa}\left\|\Chi_{\{ u\ge k \}}\right\|_{L^{\tilde{q},\tilde{r}}(B_{R}(x_{0})\times(t_{0}-T_{2},\tau))}\left\|w_{+}\psi\eta\right\|_{L^{\hat{q},\hat{r}}(B_{R}(x_{0})\times(t_{0}-T_{2},\tau))},
\end{align*}
where $\tilde{q} = \frac{\hat{q}}{2\kappa+1}$ and $\tilde{r} = \frac{\hat{r}}{2\kappa+1}$. 
From \eqref{kappa condition} and \eqref{hatqhatr}, we obtain
\begin{align}\label{j3dist1}
\begin{split}
\left\|\Chi_{\{ u\ge k \}}\right\|_{L^{\tilde{q},\tilde{r}}(B_{R}(x_{0})\times(t_{0}-T_{2},\tau))}
&= \left( \int_{t_{0}-T_{2}}^{\tau}|E(t;x_{0},R,k)|^{\frac{\tilde{r}}{\tilde{q}}}\dt \right)^{\frac{1}{\tilde{r}}}\\
&= \left( \int_{t_{0}-T_{2}}^{\tau}|E(t;x_{0},R,k)|^{\frac{\hat{r}}{\hat{q}}}\dt \right)^{\frac{2\kappa+1}{\hat{r}}},
\end{split}
\end{align}
and
\begin{align}\label{j3dist2}
&\left( \int_{t_{0}-T_{2}}^{\tau}|E(t;x_{0},R,k)|^{\frac{\hat{r}}{\hat{q}}}\dt \right)^{\frac{2\kappa}{\hat{r}}}
\le c\left( \int_{t_{0}-T_{2}}^{\tau}R^{\frac{n\hat{r}}{\hat{q}}}\dt \right)^{\frac{2\kappa}{\hat{r}}}
\le cR^{\left( 2s+\frac{n\hat{r}}{\hat{q}} \right)\frac{2\kappa}{\hat{r}}}
= cR^{n\kappa}.
\end{align}
Then we use Cauchy's inequality, \eqref{j3dist1}, \eqref{j3dist2}, and Lemma \ref{embedSF} to get
\begin{align*}
    J_{3}&\leq \frac{k^{2}}{4\epsilon}R^{-2n\kappa}\left\|\Chi_{\{ u\ge k \}}\right\|_{L^{\tilde{q},\tilde{r}}(B_{R}(x_{0})\times(t_{0}-T_{2},\tau))}^{2} 
	+ \epsilon\|w_{+}\psi\eta\|^{2}_{L^{\hat{q},\hat{r}}(B_{R}(x_{0})\times(t_{0}-T_{2},\tau))}\\
    &\leq \frac{k^{2}}{4\epsilon}R^{-n\kappa}\left(\int_{t_{0}-T_{2}}^{\tau}|{E(t;x_{0},R,k)}|^{\frac{\hat{r}}{\hat{q}}}\dt\right)^{\frac{2(1+\kappa)}{\hat{r}}}+c\epsilon\left(J_{0}+R^{-2s}\int_{t_{0}-T_{2}}^{\tau}\int_{B_{R}(x_{0})}w_{+}^{2}(x,t)\dx\dt\right)\\
    &\leq ck^{2}R^{-n\kappa}\left(\int_{t_{0}-T_{2}}^{\tau}|{E(t;x_{0},R,k)}|^{\frac{\hat{r}}{\hat{q}}}\dt\right)^{\frac{2(1+\kappa)}{\hat{r}}}+\frac{1}{4}\left(J_{0}+R^{-2s}\int_{t_{0}-T_{2}}^{\tau}\int_{B_{R}(x_{0})}w_{+}^{2}(x,t)\dx\dt\right),
\end{align*}
by taking $\epsilon=\frac{1}{4c}$. Since $\tau\in\left[t_{0}-\left(\frac{T_{1}+T_{2}}{2}\right),t_{0}\right]$ is arbitrary, we can combine estimates $J_{0},J_{1},J_{2},J_{3}$ and $J_{4}$ to complete the proof. 

\end{proof}
Now using Lemma \ref{caccioppoli estimate}, we prove the local boundedness for a local weak subsolution to \eqref{eq1} with \eqref{forcing term}.

\textbf{Proof of Theorem \ref{LBlem}.}

\noindent
\textbf{Step 1: Normalization.} Define 
\begin{equation*}
    \Tilde{u}(x,t)=u\left(\rho_{0}x+x_{0},\rho_{0}^{2s}t+t_{0}\right),\quad (x,t)\in Q_{1},
\end{equation*}
\begin{equation*}
    \Tilde{A}(x,y,t)=A\left(\rho_{0}x+x_{0},\rho_{0}y+x_{0},\rho_{0}^{2s}t+t_{0}\right),\quad (x,y,t)\in\mathbb{R}^{2n}\times\mathbb{R},
\end{equation*}
and
\begin{equation*}
    \Tilde{f}(x,t)=\rho_{0}^{2s}f\left(\rho_{0}x+x_{0},\rho_{0}^{2s}t+t_{0}\right),\quad (x,t)\in Q_{1}.
\end{equation*}
Then \[\Tilde{u}\in L^{2}_{\loc}\left(-1,0;W^{s,2}_{\loc}(B_{1})\right)\cap L^{\infty}_{\loc}\left(-1,0;L^{1}_{2s}(\mathbb{R}^{n})\right)\cap C_{\loc}\left(-1,0;L_{\loc}^{2}(B_{1})\right)\]is a local weak subsolution to
\begin{equation*}
    \partial_{t}\Tilde{u}+\mathcal{L}^{\Phi}_{\Tilde{A}}\Tilde{u}=\Tilde{f} \text{ in } Q_{1}.
\end{equation*}
\noindent  
Now take $k>0$ such that
\begin{equation*}
k > \|\Tilde{f}\|_{L^{q,r}(Q_{1})}+\esup_{t\in[-1,0]}\int_{\mathbb{R}^{n}\setminus B_{\frac{1}{2}}}\frac{|\Tilde{u}(y,t)|}{|y|^{n+2s}}\dy.
\end{equation*}
From the Lemma \ref{caccioppoli estimate}, for any $\frac{1}{2}<\rho<R<1$ and $\frac{1}{2^{2s}}<T_{1}<T_{2}\leq R^{2s}$, we have
\begin{equation}
\label{energy of normal}
\begin{aligned}
    &\|(\Tilde{u}-k)_{+}\|^{2}_{V^{2}_{s}(Q_{\rho,T_{1}})}\\
    &\quad\leq c\left(\frac{1}{(R-\rho)^{2}}+\frac{1}{T_{2}-T_{1}}\right)\int_{-T_{2}}^{0}\int_{B_{R}}(\Tilde{u}-k)_{+}^{2}\dx\dt\\
    &\qquad+c\left(\frac{1}{(R-\rho)^{n+2s}}\esup_{t\in[-T_{2},0]}\int_{\mathbb{R}^{n}\setminus B_{\rho}}\frac{(\Tilde{u}-k)_{+}(y,t)}{|y|^{n+2s}}\dy\right)\times\|({\Tilde{u}-k)_{+}}\|_{L^{1}(Q_{R,T_{2}})}\\
    &\qquad+ck^{2}R^{-n\kappa}\left(\int_{-T_{2}}^{0}|{\Tilde{E}(t;0,R,k)}|^{\frac{\hat{r}}{\hat{q}}}\dt\right)^{\frac{2(1+\kappa)}{\hat{r}}},
\end{aligned}
\end{equation}
where ${\Tilde{E}}$ is the level set of $\Tilde{u}$ as in \eqref{def of level set}. We write the second term on the right hand-side as $I$ for simplicity. 
Then, using  Cauchy's inequality and H\"older's inequality, we have
\begin{align*}
    I&\leq \frac{c}{(R-\rho)^{n+2s}}\iint_{Q_{R,T_{2}}}k(\Tilde{u}-k)_{+}\dx\dt \\
    &\leq \frac{c}{(R-\rho)^{n+2s}}\left(\iint_{Q_{R,T_{2}}}k^{2}\Chi_{(\Tilde{u}(x,t)>k)}\dx\dt+\iint_{Q_{R,T_{2}}}(\Tilde{u}-k)_{+}^{2}\dx\dt\right)\\
    &\leq \frac{c}{(R-\rho)^{n+2s}}k^{2}\left(\int_{-T_{2}}^{0}|{\Tilde{E}(t;0,R,k)}|^{\frac{\hat{r}}{\hat{q}}}\dt\right)^{\frac{2(1+\kappa)}{\hat{r}}}+\frac{c}{(R-\rho)^{n+2s}}\iint_{Q_{R,T_{2}}}(\Tilde{u}-k)_{+}^{2}\dx\dt,
\end{align*}
With \eqref{energy of normal} and the estimate of I, we get
\begin{equation}
\label{sc1}
\begin{aligned}
    \|(\Tilde{u}-k)_{+}\|^{2}_{V^{2}_{s}(Q_{\rho,T_{1}})}&\leq c\Bigg(\left(\frac{1}{(R-\rho)^{n+2s}}+\frac{1}{T_{2}-T_{1}}\right)\|(\Tilde{u}-k)_{+}\|^{2}_{L^{2}(Q_{R,T_{2}})}\\
    &\quad\quad+\frac{c}{(R-\rho)^{n+2s}}k^{2}\left(\int_{-T_{2}}^{0}|{\Tilde{E}(t;0,R,k)}|^{\frac{\hat{r}}{\hat{q}}}\dt\right)^{\frac{2(1+\kappa)}{\hat{r}}}\Bigg).
\end{aligned}
\end{equation}
\textbf{Step 2: Iteration.} For any nonnegative integer $h$, we write
\begin{align*}
    \rho_{h}=\frac{1}{2}+\frac{1}{2^{h+2}},\quad \tau_{h}=\rho_{h}^{2s},\quad  \overline{\rho}_{h}=\frac{\rho_{h}+\rho_{h+1}}{2}\quad \text{and}\quad
     Q_{h}=Q_{\rho_{h},\tau_{h}}
\end{align*}
Moreover, take
\begin{align*}
    &k_{h}=N+N\left(1-\frac{1}{2^{h}}\right),\\
     &\zeta_{h}\in C_{c}^{\infty}(B_{\overline{\rho}_{h}}),\quad 0\leq\zeta_{h}\leq1,\quad \zeta_{h}\equiv1\text{ on},\quad \|D\zeta_{h}\|_{\infty}\leq2^{h+5},\\
    &y_{h}=N^{-2}\int_{-\tau_{h}}^{0}\int_{B_{\rho_{h}}}(\Tilde{u}-k_{h})_{+}^{2}\dx\dt,\quad z_{h}=\left(\int_{-\tau_{h}}^{0}|{\Tilde{E}(t;\rho_{h},k_{h})}|^{\frac{\hat{r}}{\hat{q}}}\dt\right)^{\frac{2}{\hat{r}}},\\
    &\Lambda_{h}=\int^{0}_{-\tau_{h+1}}{\Tilde{E}(t;0,\overline{\rho}_{h},k_{h+1})}\dt,
\end{align*}
{where $N>\|\Tilde{f}\|_{L^{q,r}(Q_{1})}+\esup_{t\in[-1,0]}\int_{\mathbb{R}^{n}\setminus B_{\frac{1}{2}}}\frac{|\Tilde{u}(y,t)|}{|y|^{n+2s}}\dy$ will be determined later in \eqref{N condition}.}
Note that for any nonnegative integer $h$,
\begin{equation}
\label{Lambdah}
\begin{aligned}
&\tau_{h}-\tau_{h+1}=2s\int_{\rho_{h+1}}^{\rho_{h}}t^{2s-1}\dt\geq s(\rho_{h}-\rho_{h+1}),\\
&\Lambda_{h}\leq(k_{h+1}-k_{h})^{-2}N^{2}y_{h}\leq2^{2(h+4)}y_{h}.
\end{aligned}
\end{equation}
Now we obtain an iterative inequality of $y_{h}$ and $z_{h}$. Using H\"older's inequality and Lemma \ref{embedSF}, we have
\begin{align*}
    y_{h+1}&\leq N^{-2}\int_{-\tau_{h+1}}^{0}\int_{B_{\overline{\rho}_{h}}}(\Tilde{u}-k_{h+1})_{+}^{2}\zeta_{h}^{2}\dx\dt\\
    &\leq N^{-2}\|(\Tilde{u}-k_{h+1})_{+}\zeta_{h}\|^{2}_{L^{2}(Q_{\overline{\rho}_{h},\tau_{h+1}})}\\
    &\leq N^{-2}\Lambda_{h}^{\frac{2s}{n+2s}}\|(\Tilde{u}-k_{h+1})_{+}\zeta_{h}\|^{2}_{L^{2\left(1+\frac{2s}{n}\right)}(Q_{\overline{\rho}_{h},\tau_{h+1}})}\\
    &\leq cN^{-2}\Lambda_{h}^{\frac{2s}{n+2s}}\Big(\|(\Tilde{u}-k_{h+1})_{+}\zeta_{h}\|^{2}_{V^{2}_{s}\left(Q_{\overline{\rho}_{h},\tau_{h+1}}\right)}
    +\overline{\rho}_{h}^{-2s}\|(\Tilde{u}-k_{h+1})_{+}\zeta_{h}\|^{2}_{L^{2}(Q_{\overline{\rho}_{h},\tau_{h+1}})}\Big)\\
    &\leq cN^{-2}\Lambda_{h}^{\delta}\left(\|(\Tilde{u}-k_{h+1})_{+}\|^{2}_{V^{2}_{s}\left(Q_{\overline{\rho}_{h},\tau_{h+1}}\right)}+N^{2}y_{h}\right),
\end{align*}
where $\delta=\frac{2s}{n+2s}$.
By applying $\rho=\overline{\rho}_{h}$, $R=\rho_{h}$, $T_{1}=\tau_{h+1}$, $T_{2}=\tau_{h}$ and $k=k_{h+1}$ to \eqref{sc1} and using \eqref{Lambdah} and Lemma \ref{embedSF}, we observe that the followings
\begin{equation}
\label{Iyh}
\begin{aligned}
    y_{h+1}&\leq cN^{-2}\left(2^{2(h+4)}y_{h}\right)^{\delta}\left(\left(2^{h(2s+n)}+2^{h}\right)\|(\Tilde{u}-k_{h+1})_{+}\|^{2}_{L^{2}(Q_{\rho_{h},\tau_{h}})}+2^{h(n+2s)}k_{h+1}^{2}z_{h}^{1+\kappa}\right)\\
    &\quad+c2^{2(h+4)}y_{h}^{1+\delta}\\
    &\leq cN^{-2}\left(2^{2(h+4)}y_{h}\right)^{\delta}\left(\left(2^{h(n+2s)}+2^{h}\right)y_{h}N^{2}+2^{h(n+2s)}k_{h+1}^{2}z_{h}^{1+\kappa}\right)+c2^{2(h+4)}y_{h}^{1+\delta}\\
    &\leq c\left(\left(2^{h(n+4)}+2^{h+2h}\right)y_{h}^{1+\delta}+2^{h(n+4)}z_{h}^{1+\kappa}y_{h}^{\delta}\right),
\end{aligned}
\end{equation}
and
\begin{equation}
\label{Izh}
\begin{aligned}
    (k_{h+1}-k_{h})^{2}z_{h+1}&=(k_{h+1}-k_{h})^{2}\left(\int_{-\tau_{h+1}}^{0}|{\Tilde{E}(t;0,\rho_{h+1},k_{h+1})|}^{\frac{\hat{r}}{\hat{q}}}\dt\right)^{\frac{2}{\hat{r}}}\\
    &\leq c\|(\Tilde{u}-k_{h})_{+}\|^{2}_{L^{\hat{q},\hat{r}}(Q_{\rho_{h+1},\tau_{h+1}})}\\
    &\leq c\|(\Tilde{u}-k_{h})_{+}\zeta_{h}\|^{2}_{L^{\hat{q},\hat{r}}(Q_{\overline{\rho}_{h},\tau_{h+1}})}\\
    &\leq c\left(\|(\Tilde{u}-k_{h})_{+}\zeta_{h}\|^{2}_{V^{2}_{s}(Q_{\overline{\rho}_{h},\tau_{h+1}})}+\|(\Tilde{u}-k_{h})_{+}\zeta_{h}\|^{2}_{L^{2}(Q_{\overline{\rho}_{h},\tau_{h+1}})}\right)\\
    &\leq c\left(\left(2^{h(n+2)}+2^{h}\right)\|(\Tilde{u}-k_{h})_{+}\|{^{2}}_{L^{2}(Q_{\overline{\rho}_{h},\tau_{h+1}})}+2^{h(n+2s)}k_{h}^{2}z_{h}^{1+\kappa}\right)\\
    &\leq c2^{h(n+2)}N^{2}y_{h}+c2^{h(n+2)}N^{2}z_{h}^{1+\kappa}.
\end{aligned}
\end{equation}
From \eqref{Iyh} and \eqref{Izh}, we observe that 
\begin{align*}
    &y_{h+1}\leq c\left(2^{n+4}\right)^{h}(y_{h}^{1+\delta}+z^{1+\kappa}_{h}y_{h}^{\delta}),\\
    &z_{h+1}\leq c\left(2^{n+4}\right)^{h}(y_{h}+z^{1+\kappa}_{h}).
\end{align*}
{In addition, in light of \eqref{Iyh} and \eqref{Izh} with $k_{0}=N$, $k_{-1}=\|\Tilde{f}\|_{L^{q,r}(Q_{1})}+\esup_{t\in[-1,0]}\int_{\mathbb{R}^{n}\setminus B_{\frac{1}{2}}}\frac{|\Tilde{u}(y,t)|}{|y|^{n+2s}}\dy$, $\rho_{-1}=1$ and $\tau_{-1}=1$, we have  }
\begin{align*}
    &y_{0}\leq \frac{1}{N^{2}}\iint_{Q_{1}}(\Tilde{u}-N)_{+}^{2}
    \leq\frac{1}{N^{2}}\|\Tilde{u}\|^{2}_{L^{2}(Q_{1})},\\
    &z_{0}\leq \frac{c}{(N-k_{-1})^{2}}\left(\|(\Tilde{u}-k_{-1})_{+}\|{^{2}}_{L^{2}(Q_{1})}+k_{-1}^{2}\right).
\end{align*}
 Choose a sufficiently large $N$ such that
\begin{equation}
    \label{N condition}
    N\geq c^{\frac{1+\kappa}{\sigma}}\left(\|\Tilde{u}\|_{L^{2}(Q_{1})}+\|\Tilde{f}\|_{L^{q,r}(Q_{1})}+\esup_{t\in[-1,0]}\int_{\mathbb{R}^{n}\setminus B_{\frac{1}{2}}}\frac{|\Tilde{u}(y,t)|}{|y|^{n+2s}}\dy\right),\quad \sigma=\min\{\delta,\kappa\},
\end{equation}
which implies
\begin{align*}
    &y_{0}\leq (2c)^{-\frac{1+\kappa}{\sigma}}\times \left(2^{n+4}\right)^{-\frac{1+\kappa}{\sigma^{2}}}\\
    &z_{0}\leq(2c)^{-\frac{1+\kappa}{\sigma}}\times\left(2^{n+4}\right)^{-\frac{1+\kappa}{\sigma^{2}}}.
\end{align*}
Applying Lemma \ref{TLI}, we have
\begin{align*}
    \sup_{Q_{1/2}}|\Tilde{u}(x,t)|&\leq c\left(\|\Tilde{u}\|_{L^{2}(Q_{1})}+\|\Tilde{f}\|_{L^{q,r}(Q_{1})}+\esup_{t\in[-1,0]}\int_{\mathbb{R}^{n}\setminus B_{\frac{1}{2}}}\frac{|\Tilde{u}(y,t)|}{|y|^{n+2s}}\dy\right),
\end{align*}
By scaling back, we see that
\begin{equation*}
\begin{aligned}
    \sup_{z\in Q_{\rho_{0}/2}(z_{0})}|u(z)|&\leq c\Bigg(\left(\dashiint_{Q_{\rho_{0}}(z_{0})}u^{2}(x,t)\dx\dt\right)^{\frac{1}{2}}+\ITail\left(u,z_{0},\rho_{0}/2,\rho_{0}^{2s}\right)\\
    &\qquad+\rho_{0}^{2s-\left(\frac{n}{q}+\frac{2s}{r}\right)}||f||_{L^{q,r}(Q_{\rho_{0}}(z_{0}))}\Bigg).
\end{aligned}
\end{equation*}
\qed

\medskip
We next want to assert the H\"older regularity of a local weak solution to \eqref{eq1} with \eqref{forcing term} by comparing the solution to the homogeneous equation \eqref{Reference PDE} below.
More precisely, we assert that 
there is a constant $\Tilde{\beta} \equiv \Tilde{\beta}(n,s,q,r,\beta)$ such that $u\in C^{\Tilde{\beta},\frac{\Tilde{\beta}}{2s}}(Q_{R_{0}}(z_{0}))$  for every $Q_{2R_{0}}(z_{0})\Subset\Omega_{T}$. 

\noindent
Now, we fix any $\rho>0$ and $R>0$ so that
\begin{equation}
\label{condition for nonho}
0<\rho<R<\min\left\{1,\left(2^{2s}-1\right)^{\frac{1}{2s}}\frac{R_{0}}{4},\frac{R_{0}}{4}\right\}
\end{equation}
and choose a point $\Tilde{z}\in Q_{R_{0}}(z_{0})$.
Note that $Q_{4R}(\Tilde{z})\subset Q_{2R_{0}}(z_{0})\Subset\Omega_{T}$.
Throughout this section, we set \begin{equation*}
    \gamma=2n\kappa,
\end{equation*} which is the positive number from \eqref{forcing term}.
\begin{lem}
\label{Comparison estimate}
Let $u$ be a local weak solution to \eqref{eq1} with \eqref{forcing term} and \eqref{condition for nonho}.
Suppose that
\begin{equation*}
\begin{aligned}
v\in L^{2}\left(\Tilde{t}-(3R)^{2s},\Tilde{t};W^{s,2}(B_{3R}(\Tilde{x}))\right)&\cap L^{\infty}\left(\Tilde{t}-(3R)^{2s},\Tilde{t};L^{1}_{2s}(\mathbb{R}^{n})\right)\\
&\cap C\left(\left[\Tilde{t}-(3R)^{2s},\Tilde{t}\right];L^{2}(B_{3R}(\Tilde{x}))\right)
\end{aligned}
\end{equation*} is the local weak solution to 
\begin{align}
\label{Reference PDE}
   \left\{
\begin{alignedat}{3}
\partial_{t}v+\mathcal{L}^{\Phi}_{A}v &=0 &&\qquad \mbox{in  $Q_{3R}(\Tilde{z})$} \\
v&=u&&\qquad  \mbox{on  $\partial_{P}Q_{3R}(\Tilde{z})\cup\left(\left(\mathbb{R}^{n}\setminus B_{3R}(\Tilde{x})\right)\times\left[\Tilde{t}-(3R)^{2s},\Tilde{t}\right]\right)$}.
\end{alignedat} \right. 
\end{align}
Then we have the estimates
\begin{equation*}
    \int_{\Tilde{t}-(3R)^{2s}}^{\Tilde{t}}[(u-v)(\cdot,t)]^{2}_{W^{s,2}(\mathbb{R}^{n})}\dt+\sup_{t\in[\Tilde{t}-(3R)^{2s},\Tilde{t}]}\int_{B_{3R}(\Tilde{x})}(u-v)^{2}(x,t)\dx\leq cR^{\gamma+n}\|f\|^{2}_{L^{q,r}(Q_{3R}(\Tilde{z}))}
\end{equation*}
and 
\begin{equation*}
    \dashiint_{Q_{3R}(\Tilde{z})}(u-v)^{2}\dx\dt\leq cR^{\gamma}\|f\|^{2}_{L^{q,r}(Q_{3R}(\Tilde{z}))},
\end{equation*}
for some constant  $c \equiv c(n,s,q,r,\lambda)$.

\end{lem}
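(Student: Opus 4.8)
The plan is to use $u-v$ as a test function in the weak formulations of \eqref{eq1} and \eqref{Reference PDE} and subtract, exploiting the monotonicity and Lipschitz bounds \eqref{Phi condition} on $\Phi$. Since $u-v$ vanishes on the parabolic boundary and outside $B_{3R}(\Tilde{x})$ on the relevant time interval, it is an admissible test function (after the usual Steklov/mollification regularization in time as performed in the proof of Lemma \ref{caccioppoli estimate}, which I would only sketch). First I would write the difference of the two weak formulations tested against $\phi = u-v$: the time terms combine to give $\frac12\partial_t\int_{B_{3R}(\Tilde{x})}(u-v)^2\dx$ after the limiting procedure, the nonlocal terms give
\[
\int\!\!\int_{\mathbb{R}^n\times\mathbb{R}^n}\bigl(\Phi(u(x)-u(y))-\Phi(v(x)-v(y))\bigr)\bigl((u-v)(x)-(u-v)(y)\bigr)\frac{A(x,y,t)}{|x-y|^{n+2s}}\dx\dy,
\]
and the right-hand side is $\int_{B_{3R}(\Tilde{x})}f\,(u-v)\dx$. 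By the first inequality in \eqref{Phi condition} together with the lower bound on $A$ in \eqref{coefficeint condition}, the nonlocal term is bounded below by $\lambda^{-2}[(u-v)(\cdot,t)]^2_{W^{s,2}(\mathbb{R}^n)}$ (here using that $(u-v)(x)-(u-v)(y)=\bigl((u(x)-u(y))-(v(x)-v(y))\bigr)$ and that the integrand vanishes unless at least one of $x,y$ lies in $B_{3R}(\Tilde{x})$, so only the $W^{s,2}(\mathbb{R}^n)$ seminorm of the compactly-supported function $u-v$ appears).

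Integrating in time from $\Tilde t-(3R)^{2s}$ to any $\tau\le\Tilde t$ and using that $(u-v)$ vanishes at the initial time slice, I obtain
\[
\tfrac12\int_{B_{3R}(\Tilde{x})}(u-v)^2(x,\tau)\dx+\lambda^{-2}\int_{\Tilde t-(3R)^{2s}}^{\tau}[(u-v)(\cdot,t)]^2_{W^{s,2}(\mathbb{R}^n)}\dt\le\int_{\Tilde t-(3R)^{2s}}^{\tau}\int_{B_{3R}(\Tilde{x})}f\,(u-v)\dx\dt.
\]
It remains to absorb the right-hand side. I would estimate it by H\"older's inequality in the form used for $J_3$ in Lemma \ref{caccioppoli estimate}: bound $\|f(u-v)\|_{L^1}$ by $\|f\|_{L^{q,r}(Q_{3R}(\Tilde z))}\,\|u-v\|_{L^{\hat q,\hat r}(Q_{3R}(\Tilde z))}$ where $\hat q=2(1+\kappa)q'$, $\hat r=2(1+\kappa)r'$ as in \eqref{hatqhatr}, then apply the embedding \eqref{zero embedding} of Lemma \ref{embedSF} (valid since $u-v\in L^2(I;W^{s,2}_0(B_{3R}(\Tilde x)))$) to get $\|u-v\|_{L^{\hat q,\hat r}(Q_{3R}(\Tilde z))}\le c\,\|u-v\|_{V^2_s(B_{3R}(\Tilde x)\times I)}$. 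This produces
\[
\|f\|_{L^{q,r}(Q_{3R}(\Tilde z))}\,R^{?}\,\|u-v\|_{V^2_s}\le \varepsilon\,\|u-v\|_{V^2_s}^2+c_\varepsilon R^{?}\,\|f\|_{L^{q,r}(Q_{3R}(\Tilde z))}^2,
\]
and absorbing the $\varepsilon$-term into the left-hand side yields the first claimed estimate; here the power of $R$ is dictated by scaling, and a direct check with \eqref{kappa condition} gives exactly $R^{\gamma+n}$ with $\gamma=2n\kappa$. Taking the supremum over $\tau$ gives the bound on both the $L^\infty L^2$ and the $L^2 W^{s,2}$ pieces. The second displayed estimate then follows by dividing the $L^\infty L^2$ bound by $|Q_{3R}(\Tilde z)|\sim R^{n+2s}$: $\dashiint_{Q_{3R}(\Tilde z)}(u-v)^2\le R^{-n-2s}\cdot R^{2s}\sup_t\int_{B_{3R}(\Tilde x)}(u-v)^2\le cR^{\gamma}\|f\|_{L^{q,r}}^2$.

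The main obstacle is the justification of $u-v$ as a test function, i.e.\ the time-regularity issue: neither $u$ nor $v$ has a priori a weak time derivative in a good space, so the pairing $\langle\partial_t(u-v),u-v\rangle$ must be given meaning through mollification in time exactly as in the proof of Lemma \ref{caccioppoli estimate} (the functions $(w_+^\epsilon\psi^2\eta^2)^\epsilon$ there), and one must check the convergence of each nonlocal and forcing term under this regularization — the key point being that $u,v\in C(I;L^2)$ makes the boundary-in-time terms pass to the limit and the initial term vanish. The rest of the argument is a routine energy estimate; once the test function is legitimized, the monotonicity of $\Phi$ does all the work and the scaling bookkeeping for the powers of $R$ is the only computation requiring care.
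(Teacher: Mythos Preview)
Your proposal is correct and follows essentially the same route as the paper: set $w=u-v$, test the difference of the two weak formulations with $w$ (legitimized via the time-mollification procedure from Lemma~\ref{caccioppoli estimate}), use the monotonicity in \eqref{Phi condition} together with $A\geq\lambda^{-1}$ to bound the nonlocal form below by $\lambda^{-2}[w]_{W^{s,2}(\mathbb{R}^n)}^2$, and then control $\int f\,w$ via H\"older, the zero-trace embedding \eqref{zero embedding} of Lemma~\ref{embedSF}, and Young's inequality before absorbing. One small imprecision: your displayed H\"older bound $\|fw\|_{L^1}\le\|f\|_{L^{q,r}}\|w\|_{L^{\hat q,\hat r}}$ is not literally H\"older since $1/q+1/\hat q<1$; as in the $J_3$ estimate you cite, a third factor $\|\chi_{Q_{3R}(\Tilde z)}\|_{L^{\tilde q,\tilde r}}$ with $\tilde q=\hat q/(2\kappa+1)$, $\tilde r=\hat r/(2\kappa+1)$ is needed, and it is precisely this factor that produces the power $R^{(n+\gamma)/2}$ you record.
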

\begin{rmk}
The existence of $v$ in \eqref{Reference PDE} follows from the Appendix \ref{Appendix}.
\end{rmk}

\begin{proof}
Let $w=u-v$ to find that 
\begin{equation*}
    w\in L^{2}\left(\Tilde{t}-(3R)^{2s},\Tilde{t};W^{s,2}_{0}(B_{3R}(\Tilde{x}))\right)\cap C\left(\left[\Tilde{t}-(3R)^{2s},\Tilde{t}\right];L^{2}(B_{3R}(\Tilde{x}))\right)
\end{equation*}
and
\begin{equation*}
    \partial_{t}w+\mathcal{L}_{A}^{\Phi}u-\mathcal{L}_{A}^{\Phi}v=f\text{ in }Q_{3R}(\Tilde{z}).
\end{equation*}
By an approximation argument with the mollification in time, we find that
\begin{align*}
    &\int_{B_{3R}(\Tilde{x})}\frac{w^{2}(x,t_{1})}{2}\dx+\int_{\Tilde{t}-(3R)^{2s}}^{t_{1}}\int_{\mathbb{R}^{n}}\int_{\mathbb{R}^{n}}A(x,y,t)\frac{\Phi(u(x,t)-u(y,t))}{|x-y|^{n+2s}}(w(x,t)-w(y,t))\dx\dy\dt\\
    &-\int_{\Tilde{t}-(3R)^{2s}}^{t_{1}}\int_{\mathbb{R}^{n}}\int_{\mathbb{R}^{n}}A(x,y,t)\frac{\Phi(v(x,t)-v(y,t))}{|x-y|^{n+2s}}(w(x,t)-w(y,t))\dx\dy\dt\\
    &\quad=\int_{\Tilde{t}-(3R)^{2s}}^{t_{1}}\int_{B_{3R}(\Tilde{x})}fw\dx\dt,
\end{align*}
where $t_{1}\in\left[\Tilde{t}-(3R)^{2s},\Tilde{t}\right]$. we have the estimate 
\begin{align*}
    \int_{\Tilde{t}-(3R)^{2s}}^{t_{1}}\int_{\mathbb{R}^{n}}\int_{\mathbb{R}^{n}}A(x,y,t)\frac{\Phi(u(x,t)-u(y,t))-\Phi(v(x,t)-v(y,t))}{|x-y|^{n+2s}}(w(x,t)-w(y,t))\dx\dy\dt\\
    \geq \lambda^{-2}\int_{\Tilde{t}-(3R)^{2s}}^{t_{1}}\int_{\mathbb{R}^{n}}\int_{\mathbb{R}^{n}}\frac{|w(x,t)-w(y,t)|^{2}}{|x-y|^{n+2s}}\dx\dy\dt,
\end{align*}
which implies 
\begin{align*}
    &\sup_{t\in\left[\Tilde{t}-(3R)^{2s},\Tilde{t}\right]}\int_{B_{3R}(\Tilde{x})}\frac{w^{2}(x,t)}{2}\dx+\lambda^{-2}\int_{\Tilde{t}-(3R)^{2s}}^{\Tilde{t}}\int_{\mathbb{R}^{n}}\int_{\mathbb{R}^{n}}\frac{|w(x,t)-w(y,t)|^{2}}{|x-y|^{n+2s}}\dx\dy\dt\\
    &\quad\leq \int_{Q_{3R}(\Tilde{z})}fw\dx\dt.
\end{align*}
Using H\"older's inequality, Lemma \ref{embedSF}, and Cauchy's inequality, we estimate the last term as follows:
\begin{align*}
    &\int_{Q_{3R}(\Tilde{z})}fw\dx\dt\\
    &\leq cR^{\frac{n+\gamma}{2}}\|f\|_{L^{q,r}(Q_{3R}(\Tilde{z}))}\Bigg[\left(\int_{\Tilde{t}-(3R)^{2s}}^{\Tilde{t}}[w(\cdot,t)]^{2}_{W^{s,2}(\mathbb{R}^{n})}\dt+\sup_{t\in\left[\Tilde{t}-(3R)^{2s},\Tilde{t}\right]}\int_{B_{3R}(\Tilde{x})}w^{2}(x,t)\dx\right)^{\frac{1}{2}}\Bigg]\\
    &\leq cR^{n+\gamma}\|f\|^{2}_{L^{q,r}(Q_{3R}(\Tilde{z}))}\\
    &\qquad+\frac{\lambda^{-2}}{4}\int_{\Tilde{t}-(3R)^{2s}}^{\Tilde{t}}\int_{\mathbb{R}^{n}}\int_{\mathbb{R}^{n}}\frac{|w(x,t)-w(y,t)|^{2}}{|x-y|^{n+2s}}\dx\dy\dt+\frac{\lambda^{-2}}{4}\sup_{t\in\left[\Tilde{t}-(3R)^{2s},\Tilde{t}\right]}\int_{B_{3R}(\Tilde{x})}w^{2}(x,t)\dx.
\end{align*}Combining all the estimates together with Lemma \ref{embedSF}, we get 
\begin{equation}
\label{u-v sup l2 estimate}
    \int_{\Tilde{t}-(3R)^{2s}}^{\Tilde{t}}[(u-v)(\cdot,t)]_{W^{s,2}(\mathbb{R}^{n})}\dt+\sup_{t\in\left[\Tilde{t}-(3R)^{2s},\Tilde{t}\right]}\int_{B_{3R}(\Tilde{x})}(u-v)^{2}(x,t)\dx\leq cR^{n+\gamma}\|f\|^{2}_{L^{q,r}(Q_{3R}(\Tilde{z}))}
\end{equation}
and 
\begin{equation*}
    \dashiint_{Q_{3R}(\Tilde{z})}(u-v)^{2}\dx\dt\leq cR^{\gamma}\|f\|^{2}_{L^{q,r}(Q_{3R}(\Tilde{z}))}.
\end{equation*}
\end{proof}
\noindent
Using the local boundedness of $u$, Lemma \ref{LBlem}, and Lemma \ref{Comparison estimate}, we have the following estimate.
\begin{lem}(Decay transfer)
\label{Decay transfer lemma}
Under the same conditions and  conclusion as in Lemma \ref{Comparison estimate}, there holds
\begin{align*}
    &\dashiint_{Q_{\rho}}|u-\overline{u}_{Q_{\rho}}|^{2}\dx\dt\\
    &\quad\leq c\left(\frac{R}{\rho}\right)^{n+2s}R^{\gamma}\|f\|^{2}_{L^{q,r}(Q_{4R}(\Tilde{z}))}\\
    &\qquad+c\left(\frac{\rho}{R}\right)^{2\beta }\left(R^{\gamma}\|f\|^{2}_{L^{q,r}(Q_{4R}(\Tilde{z}))}+\|u\|^{2}_{L^{\infty}(Q_{4R}(\Tilde{z}))}+\ITails(u,\Tilde{z},4R)\right),
\end{align*}
for some constant $c \equiv c(n,s,q,r,\lambda)$.
\end{lem}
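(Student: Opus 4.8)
The plan is a freezing argument: on $Q_{\rho}$ we compare $u$ with the homogeneous solution $v$ of \eqref{Reference PDE}, use the $L^{2}$ oscillation decay of $v$ supplied by Lemma~\ref{HomoHolder}, and absorb the discrepancy by the two comparison estimates of Lemma~\ref{Comparison estimate}. Everything takes place inside $Q_{3R}(\Tilde{z})\subset Q_{4R}(\Tilde{z})\Subset\Omega_{T}$, so in particular $u\in L^{\infty}(Q_{4R}(\Tilde{z}))$ by Theorem~\ref{LBlem}. Writing $\overline{v}_{Q_{\rho}}:=\overline{v}_{Q_{\rho}(\Tilde{z})}$ and using that the mean minimizes the $L^{2}$ distance,
\[
\dashiint_{Q_{\rho}}|u-\overline{u}_{Q_{\rho}}|^{2}\dx\dt\le \dashiint_{Q_{\rho}}|u-\overline{v}_{Q_{\rho}}|^{2}\dx\dt\le 2\dashiint_{Q_{\rho}}|u-v|^{2}\dx\dt+2\dashiint_{Q_{\rho}}|v-\overline{v}_{Q_{\rho}}|^{2}\dx\dt.
\]
Since $Q_{\rho}\subset Q_{3R}(\Tilde{z})$, enlarging the domain of integration and invoking the second estimate of Lemma~\ref{Comparison estimate} gives
\[
\dashiint_{Q_{\rho}}|u-v|^{2}\dx\dt\le\frac{|Q_{3R}(\Tilde{z})|}{|Q_{\rho}|}\dashiint_{Q_{3R}(\Tilde{z})}|u-v|^{2}\dx\dt\le c\Big(\tfrac{R}{\rho}\Big)^{n+2s}R^{\gamma}\|f\|^{2}_{L^{q,r}(Q_{4R}(\Tilde{z}))},
\]
which is exactly the first term of the claim.

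\textbf{Decay for $v$.} For the second term I would rescale $v$ to $Q_{1}$: with $\hat{v}(x,t)=v(\Tilde{x}+3Rx,\Tilde{t}+(3R)^{2s}t)$, $\hat{v}$ solves $\partial_{t}\hat{v}+\mathcal{L}^{\Phi}_{\hat{A}}\hat{v}=0$ in $Q_{1}$ with $\hat{A}\in\mathcal{L}_{0}(\lambda)$, so Lemma~\ref{HomoHolder} applies (with outer radius $2/3$ and inner radius $\rho/(3R)\le 1/3$, legitimate since $\rho<R$). Scaling back, and noting $\beta=\beta(n,s,\lambda)$ is scale invariant, this yields
\[
\dashiint_{Q_{\rho}}|v-\overline{v}_{Q_{\rho}}|^{2}\dx\dt\le c\Big(\tfrac{\rho}{R}\Big)^{2\beta}\Big[\,\dashiint_{Q_{2R}(\Tilde{z})}|v|^{2}\dx\dt+\ITails\big(v,\Tilde{x},R,\Tilde{t}-(2R)^{2s},\Tilde{t}\big)\Big].
\]
It then remains to replace the data of $v$ by that of $u$. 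For the $L^{2}$ term, $\dashiint_{Q_{2R}}|v|^{2}\le 2\dashiint_{Q_{2R}}|u-v|^{2}+2\dashiint_{Q_{2R}}|u|^{2}$; the first summand is again $\le cR^{\gamma}\|f\|^{2}_{L^{q,r}(Q_{4R})}$ by enlarging to $Q_{3R}$ and using Lemma~\ref{Comparison estimate}, and the second is $\le c\|u\|^{2}_{L^{\infty}(Q_{4R}(\Tilde{z}))}$.

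\textbf{The tail term.} On $\mathbb{R}^{n}\setminus B_{R}(\Tilde{x})$ one estimates $|v|\le|u-v|+|u|$. Since $u-v$ vanishes spatially outside $B_{3R}(\Tilde{x})$, its contribution to $\ITail(v,\Tilde{x},R,\dots)$ is at most $R^{2s}R^{-(n+2s)}\int_{B_{3R}(\Tilde{x})}|u-v|\dy$, which by Cauchy--Schwarz and the first (time-uniform $L^{2}$) estimate of Lemma~\ref{Comparison estimate} is $\le cR^{\gamma/2}\|f\|_{L^{q,r}(Q_{3R})}$, so its square is $\le cR^{\gamma}\|f\|^{2}_{L^{q,r}(Q_{4R})}$. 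For the $u$-part, split $\mathbb{R}^{n}\setminus B_{R}(\Tilde{x})=(B_{4R}(\Tilde{x})\setminus B_{R}(\Tilde{x}))\cup(\mathbb{R}^{n}\setminus B_{4R}(\Tilde{x}))$: on the annulus, $R^{2s}\int \frac{|u|}{|y-\Tilde{x}|^{n+2s}}\dy\le cR^{2s}R^{-(n+2s)}|B_{4R}|\,\|u\|_{L^{\infty}(Q_{4R}(\Tilde{z}))}\le c\|u\|_{L^{\infty}(Q_{4R}(\Tilde{z}))}$; on the far region, since $[\Tilde{t}-(2R)^{2s},\Tilde{t}]\subset[\Tilde{t}-(4R)^{2s},\Tilde{t}]$, $R^{2s}\esup_{t}\int_{\mathbb{R}^{n}\setminus B_{4R}(\Tilde{x})}\frac{|u|}{|y-\Tilde{x}|^{n+2s}}\dy\le 4^{-2s}\ITail(u,\Tilde{z},4R)$. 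Squaring, $\ITails(v,\Tilde{x},R,\dots)\le c\big(R^{\gamma}\|f\|^{2}_{L^{q,r}(Q_{4R})}+\|u\|^{2}_{L^{\infty}(Q_{4R}(\Tilde{z}))}+\ITails(u,\Tilde{z},4R)\big)$. Substituting the last two displays into the decay inequality for $v$ and adding the first-term bound from the first paragraph completes the proof.

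\textbf{Main obstacle.} I expect the delicate step to be precisely the tail estimate for $v$: one must carefully handle the annular region $B_{3R}(\Tilde{x})\setminus B_{R}(\Tilde{x})$ on which $v\neq u$, combining the local boundedness of $u$ near $\Tilde{z}$ (for the near part) with the time-uniform $L^{2}$ comparison bound of Lemma~\ref{Comparison estimate} (for the $u-v$ part), while tracking every power of $R$ so that exactly $R^{\gamma}\|f\|^{2}$ is produced. The rescaling needed to invoke Lemma~\ref{HomoHolder}, which is stated on $Q_{1}$, is routine bookkeeping.
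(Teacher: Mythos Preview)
Your proof is correct and follows essentially the same route as the paper: split into the comparison term $\dashiint_{Q_{\rho}}|u-v|^{2}$ and the oscillation term $\dashiint_{Q_{\rho}}|v-\overline{v}_{Q_{\rho}}|^{2}$, bound the first by Lemma~\ref{Comparison estimate}, the second by Lemma~\ref{HomoHolder}, and then convert the $v$--data (the $L^{2}$ average on $Q_{2R}$ and the tail at radius $R$) back into $u$--data via the same decomposition of $\ITail(v)$ into a $(u-v)$--part on the annulus $B_{3R}\setminus B_R$ (handled by the time-uniform estimate in Lemma~\ref{Comparison estimate}) and a $u$--part split into near and far regions. The only cosmetic difference is that you obtain the two-term split via the minimizing property of the mean, whereas the paper uses a three-term triangle inequality and then observes that the third term $|\overline{u}_{Q_{\rho}}-\overline{v}_{Q_{\rho}}|$ is dominated by the first two.
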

\begin{proof}
Using triangle inequality, we have
\begin{align*}
    \dashiint_{Q_{\rho}}|u-\overline{u}_{Q_{\rho}}|^{2}\dx\dt&\leq {c}\bigg[\dashiint_{Q_{\rho}}|u-v|^{2}\dx\dt+\dashiint_{Q_{\rho}}|v-\overline{v}_{Q_{\rho}}|^{2}\dx\dt\\
    &\qquad+\dashiint_{Q_{\rho}}|\overline{u}_{Q_{\rho}}-\overline{v}_{Q_{\rho}}|^{2}\dx\dt\bigg]\\
    &\leq c\dashiint_{Q_{\rho}}|u-v|^{2}\dx\dt+c\dashiint_{Q_{\rho}}|v-\overline{v}_{Q_{\rho}}|^{2}\dx\dt\\
    &\eqqcolon {I_{1}+I_{2}}.
\end{align*}
Using Lemma \ref{Comparison estimate}, we get 
\begin{align*}
    I_{1} \leq c\left(\frac{R}{\rho}\right)^{n+2s}R^{\gamma}\|f\|^{2}_{L^{q,r}(Q_{4R})}.
\end{align*}
From Lemma \ref{HomoHolder}, we deduce
\begin{align*}
    I_{2} \leq c\left(\frac{\rho}{R}\right)^{2\beta}\left(\dashiint_{Q_{2R}}|v|^{2}\dx\dt+\ITails(v;\Tilde{z},R,(2R)^{2s})\right).
\end{align*}
{Set $I_{2,1} \coloneqq \dashiint_{Q_{2R}}|v|^{2}\dx\dt$ and $I_{2,2} \coloneqq \ITails(v;\Tilde{z},R,(2R)^{2s})$}.
Then we have
\begin{align*}
    I_{2,1} &\leq 2\left(\dashiint_{Q_{2R}}|u|^{2}\dx\dt\right)+ 2\left(\dashiint_{Q_{2R}}|u-v|^{2}\dx\dt\right)\\
    &\leq 2\left(\dashiint_{Q_{4R}}|u|^{2}\dx\dt\right)+cR^{\gamma}\|f\|_{L^{q,r}(Q_{4R})}^{2}.
\end{align*} Since $u-v\equiv0$ in $\left(\left(\mathbb{R}^{n}\setminus B_{3R}(\Tilde{x})\right)\times\left[\Tilde{t}-(3R)^{2s},\Tilde{t}\right]\right)$, it follows from \eqref{u-v sup l2 estimate} that 
\begin{align*}
    I_{2,2}&\leq2\ITails(u-v;\Tilde{z},R,(2R)^{2s})+2\ITails(u;\Tilde{z},R,(2R)^{2s})\\
    &\leq c\Bigg(\Bigg(\esup_{t\in\left[\Tilde{t}-(2R)^{2s},\Tilde{t}\right]}\dashint_{B_{3R}(\Tilde{x})}|(u-v)(y,t)|\dy\Bigg)^{2}+\|u\|^{2}_{L^{\infty}(Q_{4R})}+\ITails(u;\Tilde{z},4R)\Bigg)\\
    &\leq c\left(R^{\gamma}\|f\|_{L^{q,r}(Q_{4R})}^{2}+\|u\|^{2}_{L^{\infty}(Q_{4R})}+\ITails(u;\Tilde{z},4R)\right).
\end{align*}
We combine all the estimates of $I_{2,1}$ and $I_{2.2}$ to finish the proof.
\end{proof}
\noindent
Now we are ready to prove the  H\"older regularity. 
\begin{lem}
\label{Holder for data}
Under the same conditions and conclusion as in Lemma \ref{Decay transfer lemma}, we have  $u\in C^{\Tilde{\beta},\frac{\Tilde{\beta}}{2s}}(Q_{R_{0}}(z_{0}))$ for some $\Tilde{\beta}=\Tilde{\beta}(n,s,q,r,\lambda)$
with the estimate 
\begin{equation*}
\begin{aligned}
    [u]_{C^{\Tilde{\beta},\frac{\Tilde{\beta}}{2s}}(Q_{R_{0}}(z_{0}))}&\leq c\Big(\|f\|_{L^{q,r}(Q_{2R_{0}}(z_{0}))}+\|u\|_{L^{\infty}(Q_{2R_{0}}(z_{0}))}+\ITail(u;z_{0},2R_{0})\Big),
\end{aligned}
\end{equation*}
for some constant $c \equiv c(n,s,q,r,\lambda).$
\end{lem}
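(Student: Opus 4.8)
The plan is to upgrade the oscillation bound of Lemma~\ref{Decay transfer lemma} into a Campanato-type decay estimate centred at an arbitrary point of $Q_{R_{0}}(z_{0})$, and then to invoke Campanato's embedding, Lemma~\ref{Campanato embedding for fractional}, with $p=2$. The decisive feature is that the right-hand side of Lemma~\ref{Decay transfer lemma} is already expressed through the \emph{fixed} quantities $\|f\|_{L^{q,r}}$, $\|u\|_{L^{\infty}}$ and $\ITails(u,\cdot)$ on $Q_{4R}(\Tilde z)$, each of which is controlled, up to a constant depending only on $n,s$, by the corresponding quantity on the fixed cylinder $Q_{2R_{0}}(z_{0})$ — for the tail one uses $B_{4R}(\Tilde x)\subset B_{2R_{0}}(x_{0})$ together with the elementary identity $(4R)^{2s}\int_{\mathbb{R}^{n}\setminus B_{4R}(\Tilde x)}|y-\Tilde x|^{-n-2s}\dy=c(n,s)$, and the $L^{\infty}$ bound is finite by Theorem~\ref{LBlem}. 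Consequently no iteration lemma is needed; it suffices to couple the two radii $\rho$ and $R$ suitably.

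First I would fix $\Tilde z\in Q_{R_{0}}(z_{0})$ and, for $\rho$ small, take $R=\rho^{\theta}$ with a parameter $\theta\in(0,1)$ to be chosen. For $\rho$ below a threshold $\rho_{\ast}=\rho_{\ast}(n,s,R_{0},\theta)$ this forces $\rho<R$ and the admissibility condition \eqref{condition for nonho}, so $Q_{4R}(\Tilde z)\Subset Q_{2R_{0}}(z_{0})$ and Lemma~\ref{Decay transfer lemma} applies. Substituting $R=\rho^{\theta}$ and using $R^{\gamma}\le1$ converts its two terms into $c\,\rho^{(\theta-1)(n+2s)+\theta\gamma}$ and $c\,\rho^{2\beta(1-\theta)}$ times the data on $Q_{2R_{0}}(z_{0})$. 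Since $\gamma=2n\kappa>0$, any $\theta\in\bigl(\tfrac{n+2s}{n+2s+\gamma},1\bigr)$ — in particular the value balancing the two exponents — makes both powers of $\rho$ strictly positive, and putting
\[
2\Tilde\beta:=\min\bigl\{(\theta-1)(n+2s)+\theta\gamma,\ 2\beta(1-\theta)\bigr\}>0
\]
we obtain $\Tilde\beta=\Tilde\beta(n,s,q,r,\lambda)$, since $\gamma$ depends only on $n,s,q,r$ and $\beta$ only on $n,s,\lambda$ by Lemma~\ref{HomoHolder}. This gives, for $0<\rho\le\rho_{\ast}$ and every $\Tilde z\in Q_{R_{0}}(z_{0})$,
\[
\dashiint_{Q_{\rho}(\Tilde z)}\bigl|u-\overline{u}_{Q_{\rho}(\Tilde z)}\bigr|^{2}\dx\dt\le c\,\rho^{2\Tilde\beta}\Bigl(\|f\|^{2}_{L^{q,r}(Q_{2R_{0}}(z_{0}))}+\|u\|^{2}_{L^{\infty}(Q_{2R_{0}}(z_{0}))}+\ITails(u;z_{0},2R_{0})\Bigr).
\]

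For the remaining radii $\rho\in(\rho_{\ast},cR_{0}]$ with $Q_{\rho}(\Tilde z)\subset Q_{2R_{0}}(z_{0})$ one argues trivially: $\dashiint_{Q_{\rho}(\Tilde z)}|u-\overline{u}_{Q_{\rho}(\Tilde z)}|^{2}\le 4\|u\|^{2}_{L^{\infty}(Q_{2R_{0}}(z_{0}))}\le c\,\rho^{2\Tilde\beta}\|u\|^{2}_{L^{\infty}(Q_{2R_{0}}(z_{0}))}$, since $\rho^{2\Tilde\beta}$ is bounded below on that range. The two ranges together verify the hypothesis of Lemma~\ref{Campanato embedding for fractional} on $Q_{2R_{0}}(z_{0})$ with $p=2$, exponent $\alpha=\Tilde\beta$, and $M^{2}$ equal to the bracketed data; that lemma then yields $u\in C^{\Tilde\beta,\frac{\Tilde\beta}{2s}}(Q_{R_{0}}(z_{0}))$ with $[u]_{C^{\Tilde\beta,\frac{\Tilde\beta}{2s}}(Q_{R_{0}}(z_{0}))}\le c(M+\|u\|_{L^{\infty}(Q_{R_{0}}(z_{0}))})$, and bounding $\|u\|_{L^{\infty}(Q_{R_{0}}(z_{0}))}\le\|u\|_{L^{\infty}(Q_{2R_{0}}(z_{0}))}$ absorbs the last term into $M$, giving the asserted estimate.

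The main obstacle is the exponent bookkeeping in the second step: one must check that the coupling $R=\rho^{\theta}$ stays admissible for all small $\rho$ uniformly in $\Tilde z$, and that the chosen $\theta$ makes \emph{both} powers of $\rho$ positive so that $\Tilde\beta$ depends only on $n,s,q,r,\lambda$. Uniformity in $\Tilde z$ is automatic because $Q_{R_{0}}(z_{0})$ sits at a fixed parabolic distance from $\partial_{P}Q_{2R_{0}}(z_{0})$, so one global threshold $\rho_{\ast}$ works; everything else — the substitution into Lemma~\ref{Decay transfer lemma} and the final invocation of Campanato's embedding — is routine.
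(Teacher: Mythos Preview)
Your proposal is correct and follows essentially the same route as the paper: apply the Decay transfer lemma, replace the $Q_{4R}(\Tilde z)$ data by $Q_{2R_{0}}(z_{0})$ data via the tail-splitting argument, couple $R=\rho^{\theta}$ to obtain a Campanato decay, and conclude by Lemma~\ref{Campanato embedding for fractional}. The paper makes the specific choice $\rho=R^{(n+2\beta+2s+\gamma)/(n+2\beta+2s)}$, which is precisely your ``value balancing the two exponents'' and yields $\Tilde\beta=\beta\gamma/(n+2s+2\beta+\gamma)$; your explicit treatment of the range $\rho>\rho_{\ast}$ is a detail the paper omits but that is indeed needed for a clean application of the Campanato lemma.
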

\begin{proof}
From Lemma \ref{Decay transfer lemma}, we find that for every $\Tilde{z}\in Q_{R_{0}}(z_{0})$,
\begin{align*}
    &\dashiint_{Q_{\rho}(\Tilde{z})}|u-\overline{u}_{Q_{\rho}(\Tilde{z})}|^{2}\dx\dt\\
    &\quad\leq c\left(\frac{R}{\rho}\right)^{n+2s}R^{\gamma}\|f\|^{2}_{L^{q,r}(Q_{4R}(\Tilde{z}))}\\
    &\qquad+c\left(\frac{\rho}{R}\right)^{2\beta }\left(R^{\gamma}\|f\|^{2}_{L^{q,r}(Q_{4R}(\Tilde{z}))}+\|u\|^{2}_{L^{\infty}(Q_{4R}(\Tilde{z}))}+\ITails(u,\Tilde{z},4R)\right)\\
    &\quad\leq c\left(\frac{R}{\rho}\right)^{n+2s}R^{\gamma}\|f\|^{2}_{L^{q,r}(Q_{2R_{0}}(z_{0}))}\\
    &\qquad+c\left(\frac{\rho}{R}\right)^{2\beta }\left((R_{0})^{\gamma}\|f\|^{2}_{L^{q,r}(Q_{2R_{0}}(z_{0}))}+\|u\|^{2}_{L^{\infty}(Q_{2R_{0}}(z_{0}))}+\ITails\left(u,\Tilde{z},4R\right)\right).
\end{align*}
We notice that
\begin{align*}
    \ITail\left(u,\Tilde{z},4R\right)&\leq (4R)^{2s}\int_{B_{2R_{0}}(x_{0})\setminus B_{4R}(\Tilde{x})}\frac{\|u\|_{L^{\infty}(Q_{2R_{0}}(z_{0}))}}{|y-\Tilde{x}|^{n+2s}}\dy\\
    &\quad+(4R)^{2s}\esup_{t\in[t_{0}-(2R_{0})^{2s},t_{0}]}\int_{\mathbb{R}^{n}\setminus B_{2R_{0}}(x_{0})}\frac{|u(y,t)|}{|y-x_{0}|^{n+2s}}\dy\\
    &\leq c\|u\|_{L^{\infty}(Q_{2R_{0}}(z_{0}))}+c\ITail\left(u,z_{0},2R_{0}\right),
\end{align*}
where we have used the fact that
\[|y-\Tilde{x}|\geq|y-x_{0}|-|\Tilde{x}-x_{0}|\geq\frac{|y-x_{0}|}{2} \text{ for any }y\in\mathbb{R}^{n}\setminus B_{2R_{0}}(x_{0}).\] 
Thus 
\begin{align*}
    &\dashiint_{Q_{\rho}(\Tilde{z})}|u-\overline{u}_{Q_{\rho}(\Tilde{z})}|^{2}\dx\dt\\
    &\quad\leq c\left(\frac{R}{\rho}\right)^{n+2s}R^{\gamma}\|f\|^{2}_{L^{q,r}(Q_{2R_{0}}(z_{0}))}\\
    &\qquad+c\left(\frac{\rho}{R}\right)^{2\beta }\left((R_{0})^{\gamma}\|f\|^{2}_{L^{q,r}(Q_{2R_{0}}(z_{0}))}+\|u\|^{2}_{L^{\infty}(Q_{2R_{0}}(z_{0}))}+\ITails(u;z_{0},2R_{0})\right).
\end{align*}
Now take  $\rho=R^{\frac{n+2\beta+2s+\gamma}{n+2\beta+2s}}<R$ to discover that  
\begin{equation*}
\begin{aligned}
\frac{1}{\rho^{2\Tilde{\beta}}}\dashiint_{Q_{\rho}(\Tilde{z})}|u-\overline{u}_{Q_{\rho}(\Tilde{z})}|^{2}\dx\dt
&\leq c\Big(\|f\|^{2}_{L^{q,r}(Q_{2R_{0}}(z_{0}))}\\
    &\qquad+\|u\|^{2}_{L^{\infty}(Q_{2R_{0}}(z_{0}))}+\ITails(u;x_{0},2R_{0})\Big),
\end{aligned}
\end{equation*}
for $\Tilde{\beta}=\frac{\beta\gamma}{n+2s+2\beta+\gamma}$ where $\beta$ is as in Lemma \ref{HomoHolder}.
The conclusion follows from Lemma \ref{Campanato embedding for fractional}. 
\end{proof}


\section{Improved H\"older regularity for homogeneous equation with kernel coefficient invariant under the translation in only spatial direction}
\label{section4}
In the previous chapter, we have proved the H\"older regularity to \eqref{eq1} with \eqref{forcing term} for some exponent. For now, we assume the kernel coefficient 
\begin{equation}
\label{kernel coefficient condition}
\Tilde{A}\in\mathcal{L}_{1}(\lambda;B_{1}\times B_{1}\times(-1{,}0))\cap\mathcal{L}_{0}(\lambda),
\end{equation}to obtain the higher H\"older regularity.
We focus on a weak solution 
\[u\in L^{2}\big((-2^{2s},0];W^{s,2}(B_{2})\big)\cap L^{\infty}\big((-2^{2s},0];L^{1}_{2s}(\mathbb{R}^{n})\big)
    \cap C\big((-2^{2s},0];L^{2}(B_{2})\big)\] to the locally normalized problem
\begin{equation}
\label{normalized homo}
    \partial_{t}u+\mathcal{L}^{\Phi}_{\Tilde{A}}u=0 \text{ in } Q_{2}.
\end{equation}
Many of the results and computations in this section are based on those in \cite{BLS, BLS2}.

First, we start with the following discrete differentiation of the equation, which is an extension of the one described in  \cite[Lemma 3.3]{BLS} as well as a parabolic analogue of the elliptic one in \cite[Prosition 3.1.]{NHH}.

\begin{lem}
\label{LDd1}
Assume that $u$ is a local weak solution to \eqref{normalized homo} with \eqref{kernel coefficient condition}. Let $\psi(x)\in C_{c}^{\infty}(B_{R})$ be a nonnegative function and $\eta(t)\in C^{\infty}(\mathbb{R})$ a nonnegative function with 
\[\eta(t)=0 \text{ at }t\leq t_{1}\text{ and }\eta(t)=1 \text{ at }t\geq t_{2}\]
where $-1<t_{1}<t_{2}<0$ and $R<1$. Then for any locally Lipschitz function $g:\mathbb{R}\to\mathbb{R}$ and  $h\in\mathbb{R}^{n}$ such that $0<|h|<\frac{1}{4(1-R)}$, we have
\begin{align}\label{LDd1.est}
\begin{split}
    &\int_{t_{1}}^{t_{2}}\int_{B_{R}}\int_{B_{R}}\big(\Phi(u_{h}(x,t)-u_{h}(y,t))-\Phi(u(x,t)-u(y,t))\big)\\
    &\quad\times\big(g(u_{h}(x,t)-u(x,t))\psi(x)^{2}-g(u_{h}(y,t)-u(y,t))\psi(y)^{2}\big)\eta(t)\frac{A(x,y,t)}{|x-y|^{n+2s}}\dx\dy\dt\\
    &\qquad=-\int_{t_{1}}^{t_{2}}\int_{\mathbb{R}^{n}\setminus B_{R}}\int_{B_{R}}\frac{\big(A_{h}(x,y,t)\Phi(u_{h}(x,t)-u_{h}(y,t))-A(x,y,t)\Phi(u(x,t)-u(y,t))\big)}{|x-y|^{n+2s}}\\
    &\qquad\qquad\times\big(g(u_{h}(x,t)-u(x,t))\psi(x)^{2}\eta(t)\big)\dx\dy\dt\\
    &\quad\qquad-\int_{t_{1}}^{t_{2}}\int_{B_{R}}\int_{\mathbb{R}^{n}\setminus B_{R}}\frac{\big(A_{h}(x,y,t)\Phi(u_{h}(x,t)-u_{h}(y,t))-A(x,y,t)\Phi(u(x,t)-u(y,t))\big)}{|x-y|^{n+2s}}\\
    &\qquad\qquad\times\big(g(u_{h}(y,t)-u(y,t))\psi(y)^{2}\eta(t)\big)\dx\dy\dt\\
    &\quad\qquad-\int_{B_{R}}G(\delta_{h}u(x,t_{2}))\psi^{2}(x)\dx+\int_{t_{1}}^{t_{2}}\int_{B_{R}}G(\delta_{h}u(x,t))\psi^{2}(x)\eta'(t)\dx\dt,
\end{split}
\end{align}
where $G(t)=\int_{0}^{t}g(s)\ds$ for any $t\in\mathbb{R}$ and { $A_{h}(x,y,t)=A(x+h,y+h,t)$ for $(x,y,t)\in\mathbb{R}^{n}\times\mathbb{R}^{n}\times\mathbb{R}$.}
\end{lem}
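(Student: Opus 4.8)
The plan is to test the weak formulation of \eqref{normalized homo} with a carefully chosen test function built from the finite difference $\delta_h u$, exploit the translation invariance of $\tilde A$ in the spatial variable, and reorganize the resulting identity by splitting the double integral over $\mathbb{R}^n\times\mathbb{R}^n$ into local ($B_R\times B_R$) and nonlocal ($B_R\times(\mathbb{R}^n\setminus B_R)$ and its transpose) pieces. Concretely, I would first use $\varphi(x,t) = g(u_h(x,t)-u(x,t))\psi(x)^2\eta(t)$ as a test function in the equation for $u$ (on the time slab $(t_1,t_2)$); this is admissible because $g$ is locally Lipschitz, $u$ is locally bounded by Theorem \ref{LBlem} (so the composition is bounded), and $\psi$ has compact spatial support in $B_R$. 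This produces a parabolic term $\int\!\!\int -u\,\partial_t\varphi$ and the nonlocal bilinear term with kernel $A(x,y,t)|x-y|^{-n-2s}$ acting on $\Phi(u(x,t)-u(y,t))$.

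Next I would do the same with the \emph{translated} equation: since $u_h$ solves $\partial_t u_h + \mathcal{L}^{\Phi}_{\tilde A_h} u_h = 0$ on a slightly larger set (here $|h|<\tfrac14(1-R)$ guarantees everything stays inside $B_1\times B_1$ where $\tilde A\in\mathcal{L}_1$), test it against the \emph{same} $\varphi$, and subtract the two identities. The crucial point is that $\tilde A(x,y,t)=a(x-y,t)$ forces $\tilde A_h(x,y,t)=a((x+h)-(y+h),t)=a(x-y,t)=\tilde A(x,y,t)$ on the region of integration, so in the \emph{local} part $B_R\times B_R$ the kernel coefficients for the $u_h$-term and the $u$-term coincide — call it $A(x,y,t)$ — and the two nonlocal bilinear terms combine into the left-hand side of \eqref{LDd1.est}, namely the integral of $\bigl(\Phi(u_h(x)-u_h(y))-\Phi(u(x)-u(y))\bigr)$ times the difference of the test function values. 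On the two \emph{off-diagonal} (nonlocal) pieces $B_R\times(\mathbb{R}^n\setminus B_R)$ and $(\mathbb{R}^n\setminus B_R)\times B_R$ the translation invariance does \emph{not} close up (the second argument $y$ ranges outside $B_1$ where $\tilde A$ need not be translation invariant, so one must keep $A_h$ and $A$ separate); by symmetry of $A$ these two pieces are equal, and after using $\varphi(y,t)=0$ for $y\notin B_R$ one is left exactly with the two nonlocal terms on the right-hand side of \eqref{LDd1.est}.

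It then remains to handle the parabolic term $\int_{t_1}^{t_2}\!\int_{B_R} (u_h-u)\bigl(-\partial_t\varphi\bigr)\dx\dt = \int_{t_1}^{t_2}\!\int_{B_R} \delta_h u\,\partial_t\bigl(g(\delta_h u)\bigr)\psi^2\eta\dx\dt + \int_{t_1}^{t_2}\!\int_{B_R} \delta_h u\, g(\delta_h u)\psi^2\eta'\dx\dt$ (sign from integration by parts in $t$ on the subtracted pair). Using the primitive $G(t)=\int_0^t g(s)\ds$, one has $\partial_t\bigl(G(\delta_h u)\bigr) = g(\delta_h u)\,\partial_t(\delta_h u)$, hence $\delta_h u\,\partial_t(g(\delta_h u))$ is not directly a total derivative, so instead I would rewrite the whole parabolic contribution after the subtraction as $\partial_t G(\delta_h u)$ paired against $\psi^2\eta$ — this is the natural way the term $G$ enters — integrate by parts in $t$, and collect the boundary contribution at $t=t_2$ (where $\eta=1$) together with the interior term carrying $\eta'$; the term at $t=t_1$ vanishes since $\eta(t_1)=0$. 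This yields precisely $-\int_{B_R}G(\delta_h u(x,t_2))\psi^2\dx + \int_{t_1}^{t_2}\!\int_{B_R} G(\delta_h u)\psi^2\eta'\dx\dt$. The main technical obstacle is the rigorous justification of using $\varphi$ as a test function and of the integration by parts in time: $u$ is only in $C_{\mathrm{loc}}(\,\cdot\,;L^2)\cap L^2(\,\cdot\,;W^{s,2})$, not differentiable in $t$, so this must be carried out via the time mollification $u^\epsilon$ (Lemma \ref{MT}) exactly as in the proof of Lemma \ref{caccioppoli estimate} — mollify, integrate by parts at the $\epsilon$-level, pass to the limit using continuity of $u$ in $L^2$ and local boundedness of $u$ and $u_h$ to control the nonlinear terms $\Phi(\cdot)$ and $g(\cdot)$ — and one must check the nonlocal integrals are absolutely convergent, which uses $u\in L^\infty_{\mathrm{loc}}(\,\cdot\,;L^1_{2s}(\mathbb{R}^n))$ and the bound $|x-y|\gtrsim (1-R)|y|$ for $x\in\operatorname{supp}\psi$, $y\notin B_R$, as in \eqref{tail point}.
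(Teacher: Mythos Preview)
Your proposal is correct and follows essentially the same approach as the paper: test the weak formulation with (a time-mollified version of) $\varphi=g(\delta_h u)\psi^2\eta$, test the equation for $u_h$ with the same $\varphi$ (the paper does this by testing the original equation with $\varphi_{-h}$ and changing variables, which is equivalent), subtract, use the translation invariance $\tilde A_h=\tilde A$ on $B_R\times B_R$ to obtain the local bilinear term, keep $A_h$ and $A$ separate on the two off-diagonal pieces, and handle the parabolic term via the identity $\partial_t G(\delta_h u)=g(\delta_h u)\partial_t(\delta_h u)$ at the mollified level together with integration by parts in $t$. The paper likewise defers the parabolic computation to \cite[Lemma~3.3]{BLS} and justifies passage to the limit using local boundedness (Theorem~\ref{LBlem}) and the local Lipschitz property of $g$, exactly as you indicate.
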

\begin{proof}
For convenience, we denote the first three terms in the above equation by $J_{1}$, $J_{2}$, and $J_{3}$.
Take
\begin{align*}
    &h_{0}=\dist(\supp\psi,\partial B_{R})\quad \text{and}\quad \epsilon_{0}=\frac{1}{2}\min\{-t_{2},t_{1}+1,t_{2}-t_{1}\},
\end{align*}  
and for any $0<\epsilon<\epsilon_{0}$, define
\[ {\phi_{\epsilon}(x,t)\coloneqq \left(g(u_{h}^{\epsilon}-u^{\epsilon})\psi^{2}\eta_{\epsilon}\right)^{\epsilon}(x,t)=\left(g\left(\delta_{h}u^{\epsilon}\right)\psi^{2}\eta_{\epsilon}\right)^{\epsilon}(x,t),\quad (x,t)\in Q_{1}},\] where 
\[ \eta_{\epsilon}(t)=\eta\left(\frac{t_{2}-t_{1}}{t_{2}-t_{1}-\epsilon}\left(t-t_{2}+\frac{\epsilon}{2}\right)+t_{2}\right),\quad t\in\mathbb{R}.\]
Testing to \eqref{normalized homo} with $\phi_{\epsilon}$, we get
\begin{equation}
\label{Dd1}
\begin{aligned}
    &\int_{t_{1}}^{t_{2}}\int_{B_{R}}-u\partial_{t}\phi_{\epsilon}\dx\dt\\
    &\quad+\int_{t_{1}}^{t_{2}}\int_{\mathbb{R}^{n}}\int_{\mathbb{R}^{n}}\Phi(u(x,t)-u(y,t))(\phi_{\epsilon}(x,t)-\phi_{\epsilon}(y,t))\frac{A(x,y,t)}{|x-y|^{n+2s}}\dx\dy\dt\\
    &=-\int_{B_{R}}u(x,t)\phi_{\epsilon}(x,t)\dx\Bigg\rvert_{t=t_{1}}^{t=t_{2}}.
\end{aligned}
\end{equation}
On the other hand, we take a test function $\phi_{-h}(x,t)\coloneqq\phi(x-h,t)$ for $0<|h|<\frac{h_{0}}{4}$ and using a change of variables, we have
\begin{equation}
\label{Dd2}
\begin{aligned}
    &\int_{t_{1}}^{t_{2}}\int_{B_{R}}-u_{h}\partial_{t}\phi_{\epsilon}\dx\dt\\
    &\quad+\int_{t_{1}}^{t_{2}}\int_{\mathbb{R}^{n}}\int_{\mathbb{R}^{n}}\Phi(u_{h}(x,t)-u_{h}(y,t))(\phi_{\epsilon}(x,t)-\phi_{\epsilon}(y,t))\frac{A_{h}(x,y,t)}{|x-y|^{n+2s}}\dx\dy\dt\\
    &=-\int_{B_{R}}u_{h}(x,t)\phi_{\epsilon}(x,t)\dx\Bigg\rvert_{t=t_{1}}^{t=t_{2}}.
\end{aligned}
\end{equation} 
By subtracting \eqref{Dd2} from \eqref{Dd1}, we observe that
\begin{align*}
I^{\epsilon}&\coloneqq \int_{t_{1}}^{t_{2}}\int_{B_{R}}(-u_{h}+u)\partial_{t}\phi_{\epsilon}\dx\dt+\int_{B_{R}}(u_{h}(x,t)-u(x,t))\phi_{\epsilon}(x,t)\dx\Bigg\rvert_{t=t_{1}}^{t=t_{2}}\\
&=-\int_{t_{1}}^{t_{2}}\int_{B_{R}}\int_{B_{R}}\big(\Phi(u(x,t)-u(y,t))-\Phi(u_{h}(x,t)-u_{h}(y,t))\big)\\
&\hspace{3.1cm}\times(\phi_{\epsilon}(x,t)-\phi_{\epsilon}(y,t))\frac{A(x,y,t)}{|x-y|^{n+2s}}\dx\dy\dt\\
&\quad-\int_{t_{1}}^{t_{2}}\int_{\mathbb{R}^{n}\setminus B_{R}}\int_{B_{R}}\frac{\big(A(x,y,t)\Phi(u(x,t)-u(y,t))-A_{h}(x,y,t)\Phi(u_{h}(x,t)-u_{h}(y,t))\big)}{|x-y|^{n+2s}}\\
&\hspace{3.6cm}\times\phi_{\epsilon}(x,t)\dx\dy\dt\\
&\quad-\int_{t_{1}}^{t_{2}}\int_{B_{R}}\int_{\mathbb{R}^{n}\setminus B_{R}}\frac{\big(A(x,y,t)\Phi(u(x,t)-u(y,t))-A_{h}(x,y,t)\Phi(u_{h}(x,t)-u_{h}(y,t))\big)}{|x-y|^{n+2s}}\\
&\hspace{3.6cm}\times\phi_{\epsilon}(y,t)\dx\dy\dt\\
&\eqqcolon -J_{1}^{\epsilon}-J_{2}^{\epsilon}-J_{3}^{\epsilon}.
\end{align*}
By following as in \cite[Lemma 3.3]{BLS}, we find that
\begin{align*}
    \lim_{\epsilon\to0}I^{\epsilon}=\int_{B_{R}}G(\delta_{h}u(x,t_{1}))\psi^{2}(x)\dx-\int_{t_{1}}^{t_{2}}\int_{B_{R}}G(\delta_{h}u(x,t))\psi^{2}(x)\eta'(t)\dx\dt
\end{align*} 
We now use the local boundedness result, Lemma \ref{LBlem} which implies $u\in L^{\infty}([t_{1}-\epsilon_{0},t_{2}+\epsilon_{0}]\times B_{R+2h_{0}})$ and the locally Lipschitz regularity of $g$ to observe 
\[ \phi_{\epsilon}(x,t)\to g(u_{h}-u)\psi^{2}(x)\eta(t)\quad \text {in }L^{2}([t_{1}-\epsilon_{0},t_{2}+\epsilon_{0}];W^{s,2}(B_{R}))\]
and
\[ \phi_{\epsilon}(x,t)\to g(u_{h}-u)\psi^{2}(x)\eta(t) \quad\text{in }C([t_{1}-\epsilon_{0},t_{2}+\epsilon_{0}];L^{2}(B_{R})).\] 
Similar arguments performed on the estimate of $I_{2}^{\epsilon}$ in Lemma \ref{caccioppoli estimate} yield that
\begin{align*}
    \lim_{\epsilon\to0}J_{i}^{\epsilon}=J_{i},\quad \text{for each }i=1,2,3.
\end{align*}
Combining the each limits of $I^{\epsilon},J_{1}^{\epsilon},J_{2}^{\epsilon}$, and $J_{3}^{\epsilon}$, we conclude the proof.
\end{proof}
With the above Lemma \ref{LDd1}, we see the gaining of integrability and differentiability of a weak solution $u$ to \eqref{normalized homo} in the following.

\begin{lem}
\label{base for iteration} 
Let $u$ be a local weak solution to \eqref{normalized homo} with \eqref{kernel coefficient condition} satisfying
\begin{equation*}
    \|u\|_{L^{\infty}(Q_{1})}\leq1 \ \mbox{and} \ \ITail(u;0,1)\leq1.
\end{equation*}
  Assume that for some $p\geq 2$, $\theta\in\mathbb{R}$ with $0<\frac{1+\theta p}{p}<1$,
\[ \int_{t_{1}}^{t_{2}}\sup_{0<|h|<h_{0}}\left\|\frac{\delta^{2}_{h}u(\cdot,t)}{|h|^{\frac{1+\theta p}{p}}}\right\|_{L^{p}(B_{1})}^{p}\dt<\infty,\]
whenever $0<h_{0}\leq\frac{1}{10}$ and $-1<t_{1}<t_{2}\leq0$.
Then we have
\begin{flalign*}
    &\int_{t_1+\tau}^{t_{2}}\sup_{0<|h|<h_{0}}\left\|\frac{\delta^{2}_{h}u(\cdot,t)}{|h|^{\frac{1+2s+\theta p}{p+1}}}\right\|_{L^{p+1}(B_{R-4h_{0}})}^{p+1}\dt+\sup_{0<|h|<h_{0}}\left\|\frac{\delta_{h}u(\cdot,t_{2})}{|h|^{\frac{1+\theta p}{p+1}}}\right\|_{L^{p+1}(B_{R-4h_{0}})}^{p+1}\\
    &\quad\leq c\left(\int_{t_{1}}^{t_{2}}\sup_{0<|h|<h_{0}}\left\|\frac{\delta^{2}_{h}u(\cdot,t)}{|h|^{\frac{1+\theta p}{p}}}\right\|_{L^{p}(B_{R+4h_{0}})}^{p}\dt+1\right)
\end{flalign*}
for some $c \equiv c(n,s,\lambda,p,h_{0},\tau,\theta)$, whenever $4h_{0}<R\leq1-5h_{0}$ and $0<\tau<t_{2}-t_{1}$.
\end{lem}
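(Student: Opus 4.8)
The plan is to apply the discrete differentiation identity \eqref{LDd1.est} with the particular choice $g(t)=|t|^{p-1}t$ (which is $C^{1}$, hence locally Lipschitz, because $p\geq2$), so that $G(t)=\tfrac{1}{p+1}|t|^{p+1}$. I would fix $h$ with $0<|h|<h_{0}$ and apply Lemma \ref{LDd1} on the ball $B_{R}$ with a cutoff $\psi\in C_{c}^{\infty}(B_{R})$ satisfying $\psi\equiv1$ on $B_{R-3h_{0}}$, $\dist(\supp\psi,\partial B_{R})\geq h_{0}$ and $|\nabla\psi|\leq ch_{0}^{-1}$, and a time cutoff $\eta$ with $\eta\equiv0$ near $t_{1}$, $\eta\equiv1$ on $[t_{1}+\tau,t_{2}]$ and $|\eta'|\leq c\tau^{-1}$. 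Since every constant below is independent of $h$, the suprema over $h$ and the time integrations in the conclusion follow at the end. Throughout I write $\mathcal{A}':=\int_{t_{1}}^{t_{2}}\sup_{0<|h|<h_{0}}\|\,\delta_{h}^{2}u(\cdot,t)|h|^{-\frac{1+\theta p}{p}}\|_{L^{p}(B_{R+4h_{0}})}^{p}\dt$ for the quantity on the right of the claim, and I set $W_{h}:=|\delta_{h}u|^{\frac{p-1}{2}}\delta_{h}u$, so $|W_{h}|=|\delta_{h}u|^{\frac{p+1}{2}}$.

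For the left-hand side of \eqref{LDd1.est} I would use \eqref{Phi condition} (both the monotonicity and the Lipschitz bound for $\Phi$) and the elementary inequality $(|a|^{p-1}a-|b|^{p-1}b)(a-b)\geq c_{p}|W(a)-W(b)|^{2}$ with $W(t)=|t|^{\frac{p-1}{2}}t$, together with the usual splitting of the $\psi^{2}$-weights, to bound it below by
\[
c\int_{t_{1}+\tau}^{t_{2}}[\,W_{h}(\cdot,t)\psi\,]_{W^{s,2}(\mathbb{R}^{n})}^{2}\dt+\tfrac{1}{p+1}\int_{B_{R}}|\delta_{h}u(x,t_{2})|^{p+1}\psi(x)^{2}\dx ,
\]
after moving the error term $\int_{t_{1}}^{t_{2}}\iint_{B_{R}\times B_{R}}\tfrac{(|\delta_{h}u(x)|^{p+1}+|\delta_{h}u(y)|^{p+1})|\psi(x)-\psi(y)|^{2}}{|x-y|^{n+2s}}\dx\dy\dt$ to the right-hand side and noting that, since $\supp\psi$ lies $h_{0}$ inside $B_{R}$, replacing the Gagliardo seminorm of $W_{h}\psi$ over $B_{R}\times B_{R}$ by the one over $\mathbb{R}^{n}$ only costs an extra $c(h_{0})\|W_{h}\psi\|_{L^{2}(B_{R})}^{2}$.

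Next I would show that the whole right-hand side of \eqref{LDd1.est} is $\leq c|h|^{1+\theta p}(\mathcal{A}'+1)$. For the $\eta'$-term and the above error term, using $\|\delta_{h}u\|_{L^{\infty}}\leq2$ (from $\|u\|_{L^{\infty}(Q_{1})}\leq1$) one has $|\delta_{h}u|^{p+1}\leq2|\delta_{h}u|^{p}$, so both are $\leq c(h_{0},\tau)\int_{t_{1}}^{t_{2}}\int_{B_{R}}|\delta_{h}u|^{p}$, which is $\leq c|h|^{1+\theta p}(\mathcal{A}'+1)$ after passing from second to first differences by Lemma \ref{embed1} (admissible since $\tfrac{1+\theta p}{p}\in(0,1)$) and using $\|u\|_{L^{\infty}(Q_{1})}\leq1$ once more. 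The nonlocal tail terms $J_{2},J_{3}$ are the heart of the matter: here I would use \eqref{kernel coefficient condition}, namely that $\tilde{A}(x,y,t)=a(x-y,t)$ on $B_{1}\times B_{1}\times(-1,0)$ forces $A_{h}(x,y,t)=A(x,y,t)$ whenever $(x,y)$ and $(x+h,y+h)$ both lie in $B_{1}\times B_{1}$. Hence, apart from a region $y\in\mathbb{R}^{n}\setminus B_{1-h_{0}}$ that is separated from $\supp\psi$ and absorbed using $\ITail(u;0,1)\leq1$ and $\|u\|_{L^{\infty}(Q_{1})}\leq1$, the kernel difference $A_{h}\Phi(u_{h}(x)-u_{h}(y))-A\Phi(u(x)-u(y))$ reduces to $A(\Phi(u_{h}(x)-u_{h}(y))-\Phi(u(x)-u(y)))$, of modulus $\leq\lambda^{2}|\delta_{h}u(x)-\delta_{h}u(y)|$. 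Splitting the outer $y$-integral over the annulus $B_{R+3h_{0}}\setminus B_{R}$ (where $\delta_{h}u$ is controlled in $L^{p}$ by Lemma \ref{embed1}) and over $\mathbb{R}^{n}\setminus B_{R+3h_{0}}$ (far field, controlled by $\ITail(u;0,1)$ and $\|u\|_{L^{\infty}(Q_{1})}$), and using $|g(\delta_{h}u(x))|=|\delta_{h}u(x)|^{p}$ together with $|\delta_{h}u(x)|^{p+1}\leq2|\delta_{h}u(x)|^{p}$, one again gets $|J_{2}|+|J_{3}|\leq c(h_{0})\int_{t_{1}}^{t_{2}}\int_{B_{R}}|\delta_{h}u|^{p}\leq c|h|^{1+\theta p}(\mathcal{A}'+1)$.

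Combining, I obtain $\mathcal{E}:=\int_{t_{1}+\tau}^{t_{2}}[W_{h}(\cdot,t)\psi]_{W^{s,2}(\mathbb{R}^{n})}^{2}\dt+\int_{B_{R}}|\delta_{h}u(x,t_{2})|^{p+1}\psi^{2}\dx\leq c|h|^{1+\theta p}(\mathcal{A}'+1)$. The estimate for $\delta_{h}u(\cdot,t_{2})$ is then immediate, since $\int_{B_{R-4h_{0}}}|\delta_{h}u(\cdot,t_{2})|^{p+1}\leq\mathcal{E}$, so dividing by $|h|^{1+\theta p}$ and taking $\sup_{0<|h|<h_{0}}$ gives the claimed exponent $\tfrac{1+\theta p}{p+1}$. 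For the second-difference term I would write $\delta_{h}u=|W_{h}|^{\frac{2}{p+1}-1}W_{h}$; since $t\mapsto|t|^{\frac{2}{p+1}}\operatorname{sgn}(t)$ is $\tfrac{2}{p+1}$-H\"older (as $p\geq1$), this gives $|\delta_{h}^{2}u(x)|\leq c|W_{h}(x+h)-W_{h}(x)|^{\frac{2}{p+1}}=c|\delta_{h}W_{h}(x)|^{\frac{2}{p+1}}$ on $B_{R-4h_{0}}$, hence $|\delta_{h}^{2}u|^{p+1}\leq c|\delta_{h}W_{h}|^{2}$ there. Applying Lemma \ref{embedSB}(1) to $W_{h}(\cdot,t)\psi$ (which equals $W_{h}(\cdot,t)$ on $B_{R-3h_{0}}$) yields $\|\delta_{h}W_{h}(\cdot,t)\|_{L^{2}(B_{R-4h_{0}})}^{2}\leq c|h|^{2s}[W_{h}(\cdot,t)\psi]_{W^{s,2}(\mathbb{R}^{n})}^{2}$, so that
\[
\int_{t_{1}+\tau}^{t_{2}}\int_{B_{R-4h_{0}}}|\delta_{h}^{2}u|^{p+1}\dx\dt\leq c|h|^{2s}\mathcal{E}\leq c|h|^{1+2s+\theta p}(\mathcal{A}'+1);
\]
dividing by $|h|^{1+2s+\theta p}$ and taking $\sup_{0<|h|<h_{0}}$ inside the time integral completes the proof. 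The hardest part is the tail estimate for $J_{2},J_{3}$: recovering the sharp $|h|^{1+\theta p}$ scaling there is exactly where the spatial translation invariance of $\tilde{A}$ is needed (to cancel the otherwise uncontrollable kernel difference near the diagonal), followed by the split of the tail into a near-boundary annulus (handled by Lemma \ref{embed1}) and a far field (handled by the $L^{\infty}$ and $\ITail$ normalizations).
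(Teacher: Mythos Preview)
Your proposal is correct and follows essentially the same route as the paper's proof. Both arguments apply Lemma~\ref{LDd1} with $g(t)=|t|^{p-1}t$, use the spatial translation invariance of $\tilde{A}$ to reduce the kernel difference in the tail terms to $A\,(\Phi(u_{h}(x)-u_{h}(y))-\Phi(u(x)-u(y)))$, bound the resulting right-hand side by $c\int_{t_{1}}^{t_{2}}\int_{B_{R}}|\delta_{h}u|^{p}/|h|^{1+\theta p}$, and then convert first differences to second differences via Lemma~\ref{embed1}. The paper outsources the pointwise estimates on $F_{1},F_{2},F_{3}$ and the passage from $[\,W_{h}\psi\,]_{W^{s,2}}$ to $\|\delta_{h}^{2}u\|_{L^{p+1}}$ to \cite[Proposition~3.1]{NHH}, whereas you spell out the latter explicitly (via the $\tfrac{2}{p+1}$-H\"older map $t\mapsto|t|^{2/(p+1)}\operatorname{sgn}t$ together with Lemma~\ref{embedSB}(1)); this is exactly what lies behind the cited step. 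One minor remark: your phrase ``taking $\sup_{0<|h|<h_{0}}$ inside the time integral'' should strictly be ``taking $\sup_{0<|h|<h_{0}}$ of both sides'' (the Caccioppoli estimate is already time-integrated, so one obtains $\sup_{h}\int$ rather than $\int\sup_{h}$), but the paper's presentation has the same informality and the distinction is immaterial for the iteration in Lemma~\ref{higher holder of homo 1}.
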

\begin{proof}
We first assume $t_{2}<0$ to use Lemma \ref{LDd1}. the case $t_{2}=0$ will be considered in Step 4.

\medskip

\noindent
\textbf{Step 1: Discrete differentiation of the equation.} Set $r=R-4h_{0}$ and fix $h\in\mathbb{R}^{n}$ such that $0<|h|<h_{0}$. Let $\psi(x)\in C_{c}^{\infty}(B_{R})$ be a cut-off function satisfying 
\begin{equation*}
    0\leq\psi\leq1,\quad \psi\equiv1 \ \text{in} \ B_{r} \quad \text{and} \quad \|\nabla\psi\|_{L^{\infty}(B_{R})}\leq\frac{2}{R-r}=\frac{1}{2h_{0}}.
\end{equation*}
Moreover, let $\eta(t) \in C^{\infty}(\mathbb{R}^{n})$ be a nonnegative function such that 
\begin{equation}
\label{test ftn for moser time}
\eta(t) \equiv 0 \ \mbox{in} \ (-\infty,t_{1}],\quad \eta(t) \equiv 1 \ \mbox{in} \ [t_{1}+\tau,\infty) \quad \mbox{and}\quad \|\eta'\|_{L^{\infty}(\mathbb{R})}\leq\frac{2}{\tau}.
\end{equation}
Set $g(t)\coloneqq|t|^{p-1}t$ and $G(t)\coloneqq\frac{1}{p+1}|t|^{p+1}$ for $t\in\mathbb{R}$. By using \eqref{LDd1.est} in Lemma \ref{LDd1} and dividing both sides by $|h|^{1+\theta p}$, we have
\begin{align*}
 \rom{1} &:= \int_{t_{1}}^{t_{2}}\int_{B_{R}}\int_{B_{R}}\frac{A(x,y,t)\big(\Phi(u_{h}(x,t)-u_{h}(y,t))-\Phi(u(x,t)-u(y,t))\big)}{|h|^{1+\theta p}|x-y|^{n+2s}}\\
    &\qquad\times\big[g(u_{h}(x,t)-u(x,t))\psi^{2}(x)-g(u_{h}(y,t)-u(y,t))\psi^{2}(y)\big]\eta(t)\dx\dy\dt\\
    &=-\int_{t_{1}}^{t_{2}}\int_{\mathbb{R}^{n}\setminus B_{R}}\int_{B_{R}}\frac{A_{h}(x,y,t)\big(\Phi(u_{h}(x,t)-u_{h}(y,t))-\Phi(u(x,t)-u(y,t))\big)}{|h|^{1+\theta p}|x-y|^{n+2s}}\\
    &\quad\qquad\times g(u_{h}(x,t)-u(x,t))\psi^{2}(x)\eta(t)\dx\dy\dt\\
    &\quad-\int_{t_{1}}^{t_{2}}\int_{B_{R}}\int_{\mathbb{R}^{n}\setminus B_{R}}\frac{A_{h}(x,y,t)\big(\Phi(u_{h}(x,t)-u_{h}(y,t))-\Phi(u(x,t)-u(y,t))\big)}{|h|^{1+\theta p}|x-y|^{n+2s}}\\
    &\quad\qquad\times g(u_{h}(y,t)-u(y,t))\psi^{2}(y)\eta(t)\dx\dy\dt\\
    &\quad-\int_{B_{R}}\frac{G(\delta_{h}u(x,t_{2}))}{|h|^{1+\theta p}}\psi^{2}(x)\dx+\int_{t_{1}}^{t_{2}}\int_{B_{R}}\frac{G(\delta_{h}u(x,t))}{|h|^{1+\theta p}}\psi^{2}(x)\eta'(t)\dx\dt\\
&=: - \rom{2} - \rom{3} - \rom{4} + \rom{5}.
\end{align*}

\noindent
\textbf{Step 2: Estimates of $\rom{1} \ \mbox{through} \ \rom{5}$.} First, we will estimate $\rom{1}$,$\rom{2}$ and $\rom{3}$. Let $F_{1}(t)$, $F_{2}(t)$ and $F_{3}(t)$ be the integrands of $\rom{1}$, $\rom{2}$ and $\rom{3}$ with respect to the measure $\eta(t) \,dt$, respectively. Then for fixed $t \in [t_{1},t_{2}]$, similar calculations as in \cite[Proposition 3.1, Step 2 - Step 4]{NHH} lead to the following estimates,
\begin{align*}
F_{1}(t)\geq\frac{1}{c}\left[\frac{|\delta_{h}u(\cdot,t)|^{\frac{p-1}{2}}\delta_{h}u(\cdot,t)}{|h|^{\frac{1+\theta p}{2}}}\psi(\cdot)\right]_{W^{s,2}(B_{R})}^{2}-c\int_{B_{R}}\frac{|\delta_{h}u(x,t)|^{p+1}}{|h|^{1+\theta p}}\dx,
\end{align*}
and
\begin{align*}
|F_{2}(t)|+|F_{3}(t)|\le c\int_{B_{R}}\frac{|\delta_{h}u(x,t)|^{p}}{|h|^{1+\theta p}}\dx.
\end{align*}
where $c$ depends only on $n,s,\lambda$ and $h_{0}$, but it is independent on $t$.
It follows that
\begin{equation*}
    \rom{1}\geq\frac{1}{c}\int_{t_{1}}^{t_{2}}\left[\frac{|\delta_{h}u(\cdot,t)|^{\frac{p-1}{2}}\delta_{h}u(\cdot,t)}{|h|^{\frac{1+\theta p}{2}}}\psi(\cdot)\right]_{W^{s,2}(B_{R})}^{2}\eta(t)\dt-c\int_{t_{1}}^{t_{2}}\int_{B_{R}}\frac{|\delta_{h}u(x,t)|^{p}}{|h|^{1+\theta p}}\dx\dt,
\end{equation*}
and
\begin{equation*}
    |\rom{2}|+|\rom{3}|\leq c\int_{t_{1}}^{t_{2}}\int_{B_{R}}\frac{|\delta_{h}u(x,t)|^{p}}{|h|^{1+\theta p}}\dx\dt,
\end{equation*}
where we have used $\eta \le 1$.
Next, we will derive estimates of $\rom{4}$ and $\rom{5}$. From the definition of $G$ and $\psi$, we get
\begin{equation*}
\rom{4} \ge \frac{1}{c}\int_{B_{r}}\frac{|\delta_{h}u(x,t_{2})|^{p+1}}{|h|^{1+\theta p}}\dx.
\end{equation*}
Since $\|u_{h}\|_{L^{\infty}(B_{R}\times[t_{1},t_{2}])}\leq\|u\|_{L^{\infty}(B_{1}\times[-1,0])}\leq1$ and \eqref{test ftn for moser time}, we deduce that 
\begin{equation*}
    |\rom{5}|\leq \frac{c}{\tau}\int_{t_{1}}^{t_{2}}\int_{B_{R}}\frac{|\delta_{h}u(x,t)|^{p}}{|h|^{1+\theta p}}\dx\dt.
\end{equation*}
Combining all the above estimates, we have 
\begin{align}
\label{EI}
    \int_{t_{1}}^{t_{2}}\left[\frac{|\delta_{h}u(\cdot,t)|^{\frac{p-1}{2}}\delta_{h}u(\cdot,t)}{|h|^{\frac{1+\theta p}{2}}}\psi(\cdot)\right]_{W^{s,2}(B_{R})}^{2}\eta(t)\dt&+\int_{B_{r}}\frac{|\delta_{h}u(x,t_{2})|^{p+1}}{|h|^{1+\theta p}}\dx\nonumber\\
    &\leq c(\tau)\int_{t_{1}}^{t_{2}}\int_{B_{R}}\frac{|\delta_{h}u(x,t)|^{p}}{|h|^{1+\theta p}}\dx\dt.
\end{align}

\noindent
We further estimate using the advantage of \cite[Proposition 3.1, Step 5]{NHH} and \eqref{test ftn for moser time},
\begin{align}\label{before1}
\begin{split}
    \int_{t_1+\tau}^{t_{2}}\left\|\frac{\delta^{2}_{h}u(\cdot,t)}{|h|^{\frac{1+2s+\theta p}{p+1}}}\right\|_{L^{p+1}(B_{r})}^{p+1}\dt&\leq     c\int_{t_{1}}^{t_{2}}\left[\frac{|\delta_{h}u(\cdot,t)|^{\frac{p-1}{2}}\delta_{h}u(\cdot,t)}{|h|^{\frac{1+\theta p}{2}}}\psi(\cdot)\right]_{W^{s,2}(B_{R})}^{2}\eta(t)\dt\\
    &\quad+c\int_{t_{1}}^{t_{2}}\int_{B_{R}}\frac{|\delta_{h}u(x,t)|^{p}}{|h|^{1+\theta p}}\dx\dt.
\end{split}
\end{align}
Moreover, Lemma \ref{embed1} and $\|u\|_{L^{\infty}(Q_{1})}\leq1$ imply
\begin{equation}
\label{before2}
\begin{aligned}
    \int_{t_{1}}^{t_{2}}\int_{B_{R}}\frac{|\delta_{h}u(x,t)|^{p}}{|h|^{1+\theta p}}\dx\dt&\leq c\left(\int_{t_{1}}^{t_{2}}\int_{B_{R+4h_{0}}}\frac{|\delta^{2}_{h}u(x,t)|^{p}}{|h|^{1+\theta p}}\dx\dt+\int_{t_{1}}^{t_{2}}\|u(\cdot,t)\|_{L^{p}(B_{R+4h_{0}})}^{p}\dt\right)\\
    &\leq c\left(\int_{t_{1}}^{t_{2}}\int_{B_{R+4h_{0}}}\frac{|\delta^{2}_{h}u(x,t)|^{p}}{|h|^{1+\theta p}}\dx\dt+1\right).
\end{aligned}
\end{equation}
Gathering together the estimates \eqref{EI}, \eqref{before1} and \eqref{before2} gives
\begin{align}\label{step3}
\begin{split}
    \int_{t_{1}+\tau}^{t_{2}}\sup_{0<|h|<h_{0}}\left\|\frac{\delta^{2}_{h}u(\cdot,t)}{|h|^{\frac{1+2s+\theta p}{p+1}}}\right\|_{L^{p+1}(B_{R-4h_{0}})}^{p+1}\dt&+\int_{B_{r}}\frac{|\delta_{h}u(x,t_{2})|^{p+1}}{|h|^{1+\theta p}}\dx\\
    &\leq c\left(\int_{t_{1}}^{t_{2}}\sup_{0<|h|<h_{0}}\left\|\frac{\delta^{2}_{h}u(\cdot,t)}{|h|^{\frac{1+\theta p}{p}}}\right\|_{L^{p}(B_{R+4h_{0}})}^{p}\dt+1\right).
\end{split}
\end{align}

\noindent
\textbf{Step 4: Case for $t_{2}=0.$} Note that $c$ in \eqref{step3} is independent of $t_{1}$. By the monotone convergence theorem, we get
\begin{equation}\label{step4.1}
    \lim_{t_{2}\to0}\int_{t_1+\tau}^{t_{2}}\sup_{0<|h|<h_{0}}\left\|\frac{\delta^{2}_{h}u(\cdot,t)}{|h|^{\frac{1+2s+\theta p}{p+1}}}\right\|_{L^{p+1}(B_{R-4h_{0}})}^{p+1}\dt=\int_{t_1+\tau}^{0}\sup_{0<|h|<h_{0}}\left\|\frac{\delta^{2}_{h}u(\cdot,t)}{|h|^{\frac{1+2s+\theta p}{p+1}}}\right\|_{L^{p+1}(B_{R-4h_{0}})}^{p+1}\dt.
\end{equation}
Since for any $0<|h|<h_{0}$, $\delta_{h}^{2}u(x,t)$ is in $C([-1,0];L^{2}(B_{1}))$, we have 
\[\lim_{t_{2}\to0}\|\delta_{h}^{2}u(\cdot,t_{2})-\delta_{h}^{2}u(\cdot,0)\|_{L^{2}(B_{R})}=0.\]which implies 
\[\lim_{t_{2}\to0}\delta_{h}^{2}u(x,t_{2})=\delta_{h}^{2}u(x,0)\quad a.e. \text{ in }B_{R}. \]
Therefore Fatou's lemma yields
\begin{equation}\label{step4.2}
    \left\|\frac{\delta_{h}u(\cdot,0)}{|h|^{\frac{1+\theta p}{p+1}}}\right\|_{L^{p+1}(B_{R-4h_{0}})}^{p+1}\leq\liminf_{t_{2}\to0}\left\|\frac{\delta_{h}u(\cdot,t_{2})}{|h|^{\frac{1+\theta p}{p+1}}}\right\|_{L^{p+1}(B_{R-4h_{0}})}^{p+1},\quad 0<|h|<h_{0}.
\end{equation}
Taking into account \eqref{step3}, \eqref{step4.1} and \eqref{step4.2}, we conclude
\begin{align*}
&\int_{t_1+\tau}^{0}\sup_{0<|h|<h_{0}}\left\|\frac{\delta^{2}_{h}u(\cdot,t)}{|h|^{\frac{1+2s+\theta p}{p+1}}}\right\|_{L^{p+1}(B_{R-4h_{0}})}^{p+1}\dt+\sup_{0<|h|<h_{0}}\left\|\frac{\delta_{h}u(\cdot,0)}{|h|^{\frac{1+\theta p}{p+1}}}\right\|_{L^{p+1}(B_{R-4h_{0}})}^{p+1}\\
    &\quad\leq c\left(\int_{t_{1}}^{0}\sup_{0<|h|<h_{0}}\left\|\frac{\delta^{2}_{h}u(\cdot,t)}{|h|^{\frac{1+\theta p}{p}}}\right\|_{L^{p}(B_{R+4h_{0}})}^{p}\dt+1\right).
\end{align*}
\end{proof}
Now we prove a higher H\"older regularity for \eqref{normalized homo} with respect to the spatial variables.
\begin{lem}
\label{higher holder of homo 1}
Let $u$ be a local weak solution to \eqref{normalized homo} with \eqref{kernel coefficient condition}. Then for any $0<\alpha<\min\{2s,1\}$,
\begin{align*}
    \esup_{t\in[-\left(1/2\right)^{2s},0]}[u(\cdot,t)]_{C^{\alpha}(B_{1/2})}&\leq c\Bigg(\|u\|_{L^{\infty}(Q_{1})}+\left(\int_{-1}^{0}[u(\cdot,t)]_{W^{s,2}(B_{1})}^{2}\dt\right)^{\frac{1}{2}}+\ITail(u;0,1)\Bigg),
\end{align*}
where $c \equiv c(n,s,\alpha,\lambda)$. 
\end{lem}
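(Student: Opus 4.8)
The strategy is a parabolic Moser-type iteration: starting from the $W^{s,2}$-regularity of $u$ one repeatedly applies the self-improvement Lemma~\ref{base for iteration}, gaining a unit of integrability and a definite amount of differentiability at each step, until a Besov embedding (Lemma~\ref{Holder for Besov}) produces the desired spatial H\"older bound; this is the parabolic analogue of the scheme of \cite{BLS2,NHH}. First I would normalize: with
\[
M:=\|u\|_{L^{\infty}(Q_{1})}+\Big(\int_{-1}^{0}[u(\cdot,t)]_{W^{s,2}(B_{1})}^{2}\dt\Big)^{\frac12}+\ITail(u;0,1)
\]
(finite by Lemma~\ref{HomoHolder} and the assumed function space; if $M=0$ then $u\equiv0$), the rescaled function $v:=u/M$ solves $\partial_{t}v+\mathcal{L}_{\Tilde{A}}^{\Phi_{M}}v=0$ with $\Phi_{M}(\xi):=\Phi(M\xi)/M$, which again obeys \eqref{Phi condition} with the same $\lambda$, and $\|v\|_{L^{\infty}(Q_{1})}\le1$, $\ITail(v;0,1)\le1$, $\int_{-1}^{0}[v(\cdot,t)]_{W^{s,2}(B_{1})}^{2}\dt\le1$; it then suffices to prove the estimate for $v$ with right-hand side a constant. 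To start the iteration I would take $p_{0}=2$ and the smoothness exponent $\beta_{0}=s$ (i.e.\ $\theta_{0}=s-\tfrac12$, so $0<\tfrac{1+\theta_{0}p_{0}}{p_{0}}=s<1$): writing $\delta_{h}^{2}v=\delta_{h}(\delta_{h}v)$, applying Lemma~\ref{embedSB}(2) on a ball slightly smaller than $B_{1}$, and integrating in time while using $\|v\|_{L^{\infty}(Q_{1})}\le1$ and the $W^{s,2}$ bound, one gets
\[
\int_{t_{1}}^{t_{2}}\sup_{0<|h|<h_{0}}\Big\|\tfrac{\delta_{h}^{2}v(\cdot,t)}{|h|^{s}}\Big\|_{L^{2}}^{2}\dt\le c
\]
for all admissible $t_{1}<t_{2}$ and $h_{0}$, which is precisely the hypothesis of Lemma~\ref{base for iteration} at level $(p_{0},\beta_{0})$.

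Next I would apply Lemma~\ref{base for iteration} finitely many times, raising $p$ by one each step. From $\delta_{h}^{2}v$ in $L^{p}$ with (time-integrated) exponent $\beta<1$ the lemma yields $\delta_{h}^{2}v$ in $L^{p+1}$ with exponent $\tfrac{p\beta+2s}{p+1}$ (on a ball shrunk by $8h_{0}$ and a time interval shortened by $\tau$) together with the matching endpoint-time bound for $\delta_{h}v$ in $L^{p+1}$. Since each step requires the input exponent in $(0,1)$, and when $2s>1$ the raw output $\tfrac{p\beta+2s}{p+1}$ may reach $1$, I would feed forward only a smaller exponent $\beta_{k+1}<1$ with $\beta_{k+1}\le\tfrac{p_{k}\beta_{k}+2s}{p_{k}+1}$, which is legitimate because $|h|<h_{0}$ is bounded; equivalently the increments obey $\beta_{k+1}-\beta_{k}\le\tfrac{2s-\beta_{k+1}}{p_{k}}$. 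As $2s-\beta_{k+1}$ stays positive and $\sum_{k}p_{k}^{-1}=\sum_{k}(k+2)^{-1}=\infty$, one can choose $(\beta_{k})$ strictly increasing with $\beta_{k}<1$ for all $k$ and $\beta_{k}\uparrow\min\{2s,1\}$, while $p_{k}=k+2\to\infty$. Over finitely many steps the constants (each depending on $n,s,\lambda,p_{k},h_{0},\tau,\theta_{k}$) stay bounded, the accumulated loss of radius is $\le cKh_{0}$ and of time length $K\tau$, so taking $h_{0},\tau$ small depending on the final step $K$ keeps the process inside $B_{1}\times(-1,0]$.

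Given $\alpha<\min\{2s,1\}$, I would stop at the first $K$ for which $p_{K}\beta_{K}>n$ and the endpoint-time differentiability exponent $\gamma_{K}=\tfrac{p_{K-1}}{p_{K-1}+1}\beta_{K-1}$ (which also tends to $\min\{2s,1\}$) satisfies $\gamma_{K}-\tfrac{n}{p_{K}}>\alpha$. Since the right-hand side of Lemma~\ref{base for iteration} at the last step is controlled, uniformly in the endpoint time, by the base-case bound plus $1$, this gives $\esup_{t}\sup_{0<|h|<h_{0}}\big\|\delta_{h}v(\cdot,t)/|h|^{\gamma_{K}}\big\|_{L^{p_{K}}(B_{\rho})}\le c$, and Lemma~\ref{Holder for Besov} (second statement, with a cutoff as in the remark following it) then yields $\esup_{t}[v(\cdot,t)]_{C^{\alpha}(B_{\rho_{0}})}\le c$ for some fixed $\rho_{0}=\rho_{0}(n,s,\alpha,\lambda)\in(0,\tfrac12)$ on a correspondingly shortened time interval. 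Undoing the normalization and, for each $x_{*}\in B_{1/2}$, applying this core estimate to the parabolic rescaling of $u$ around $(x_{*},t)$ normalized on $Q_{1}$ (admissible since $u$ is given on the larger cylinder $Q_{2}$), one controls $[u(\cdot,t)]_{C^{\alpha}}$ on a small ball around $x_{*}$; splitting $|x-y|$ into small versus comparable to that scale then gives $[u(\cdot,t)]_{C^{\alpha}(B_{1/2})}\le c\,M$ for a.e.\ $t\in[-(1/2)^{2s},0]$, and $\esup_{t}$ completes the proof. The main obstacle is the design of the exponent pair $(p_{k},\beta_{k})$: when $2s>1$ the natural output overshoots the admissibility threshold $1$, so one must deliberately discard smoothness at each step while still arranging $\beta_{k}\uparrow\min\{2s,1\}$ and $p_{k}\to\infty$, so that $\beta_{k}-n/p_{k}$ sweeps out all of $(0,\min\{2s,1\})$; the remaining bookkeeping — the shrinking balls and time intervals over the finitely many steps and the translation/rescaling reduction to $B_{1/2}$ — is routine.
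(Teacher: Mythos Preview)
Your proposal is correct and follows essentially the same route as the paper: normalize by $M$, seed the iteration at $p=2$ with exponent $s$ via Lemma~\ref{embedSB}, apply Lemma~\ref{base for iteration} finitely many times (capping the differentiability exponent below $1$ when $2s>1$), and finish with the Besov embedding Lemma~\ref{Holder for Besov}. The paper makes the bookkeeping explicit by setting $q_i=i+2$, $\theta_i=\tfrac{2s(i+1)-1}{i+2}$, $h_0=\tfrac{1}{64i_\alpha}$, and nested radii/times $R_i,T_i$ chosen so the final ball is exactly $B_{3/4}$; this lands the H\"older estimate directly on $B_{1/2}$ after Lemma~\ref{Holder for Besov}, so your concluding translation/rescaling covering step is unnecessary (though not wrong).
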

\begin{proof}
\textbf{Step 1: Normalization.} Let
\begin{align*}
M&\coloneqq\|u\|_{L^{\infty}(Q_{1})}+\left(\int_{-1}^{0}[u(\cdot,t)]^{2}_{W^{s,2}(B_{1})}\dt\right)^{\frac{1}{2}}+\ITail(u;0,1)
\end{align*}
and set
\[\Tilde{u}(x,t)\coloneqq \frac{1}{M}u(x,t),\quad (x,t)\in Q_{2}\quad\text{and}\quad\Tilde{\Phi}(\xi)\coloneqq\frac{1}{M}\Phi(M\xi),\quad \xi\in\mathbb{R}.\]
Then $\Tilde{u}$ is a local weak solution to $\partial_{t}\Tilde{u}+\mathcal{L}_{\Tilde{A}}^{\Tilde{\Phi}}\Tilde{u}=0$ in $Q_{2}$. In particular, $\Tilde{u}$ satisfies 
\begin{equation}
\label{scsc}
\|\Tilde{u}\|_{L^{\infty}(Q_{1})}\leq 1, \ \int_{-1}^{0}[\Tilde{u}(\cdot,t)]^{2}_{W^{s,2}(B_{1})}\dt\leq 1, \ \mbox{and} \ \ITail(\Tilde{u};0,1)\leq1.
\end{equation}
\textbf{Step 2: Iteration.} Let $q_{i}\coloneqq i+2$ and $\theta_{i}\coloneqq\frac{2s(i+1)-1}{i+2}$ for $i \ge 0$. Then $\frac{1+\theta_{i}q_{i}}{q_{i}}$ is an increasing sequence satisfying
\begin{equation*}
    \frac{1+2s+\theta_{i}q_{i}}{q_{i}+1}=\frac{1+\theta_{i+1}q_{i+1}}{q_{i+1}}, \quad s=\frac{1+\theta_{0}q_{0}}{q_{0}} \le \frac{1+\theta_{i}q_{i}}{q_{i}}<2s, \quad \text{and} \quad \underset{i \rightarrow \infty}{\lim}\frac{1+\theta_{i}q_{i}}{q_{i}}=2s.
\end{equation*}
Now fix $\alpha\in(0,\min\{2s,1\})$. We consider the following two cases.

\medskip

\noindent
\textbf{(Case 1 : $s\leq\frac{1}{2}$).} 
In this case, we can choose $i_{\alpha} \in \mathbb{N}$ such that 
\begin{equation}
\label{alpha1}
    \alpha<\frac{1+\theta_{i_{\alpha}}q_{i_{\alpha}}-n}{q_{i_{\alpha}}}<2s \le 1.
\end{equation}
Let 
\begin{align}\label{h_0 case 1}
h_{0}=\frac{1}{64{i_{\alpha}}}
\end{align}
and set for any $i \ge 0$, 
\begin{equation}\label{R_i,T_i}
 R_{i}=\frac{7}{8}-4h_{0}(2i+1)\quad\mbox{and}\quad   T_{i}=-\left(\frac{7}{8}\right)^{2s}+32h_{0}\left(\left(\frac{7}{8}\right)^{2s}-\left(\frac{3}{4}\right)^{2s}\right)i.
\end{equation} 
Then $R_{i}$ and $T_{i}$ have the following relations.
\[R_{i}-4h_{0}=R_{i+1}+4h_{0},\quad T_{i+1}=T_{i}+32h_{0}\left(\left(\frac{7}{8}\right)^{2s}-\left(\frac{3}{4}\right)^{2s}\right),\]
\[R_{0}+4h_{0}=\frac{7}{8},\quad R_{i_{\alpha}}+4h_{0}=\frac{3}{4}\quad\text{and}\quad T_{i_{\alpha}}=-\frac{1}{2}\left(\left(\frac{7}{8}\right)^{2s}+\left(\frac{3}{4}\right)^{2s}\right).\]
Note that $\frac{7}{8}+4h_{0}\leq1$ and $\delta^{2}_{h}\Tilde{u}=\delta_{2h}\Tilde{u}-2\delta_{h}\Tilde{u}$ to find
\begin{align*}
    \int_{T_{0}}^{0}\sup_{0<|h|<h_{0}}\left\|\frac{\delta^{2}_{h}\Tilde{u}(\cdot,t)}{|h|^{\frac{1+\theta_{0} q_{0}}{q_{0}}}}\right\|_{L^{q_{0}}(B_{R_{0}+4h_{0}})}^{q_{0}}\dt&= \int_{-\left(\frac{7}{8}\right)^{2s}}^{0}\sup_{0<|h|<h_{0}}\left\|\frac{\delta^{2}_{h}\Tilde{u}(\cdot,t)}{|h|^{s}}\right\|_{L^{2}(B_{7/8})}^{2}\dt\\
    &\leq c\left(\int_{-1}^{0}[\Tilde{u}(\cdot,t)]^{2}_{W^{s,2}(B_{1})}\dt+\int_{-1}^{0}\|\Tilde{u}(\cdot,t)\|^{2}_{L^{2}(B_{1})}\dt\right) \leq c,
\end{align*}
where we have also used Lemma \ref{embedSB} and \eqref{scsc}. Thus Lemma \ref{base for iteration} implies
\begin{align*}
&\int_{T_{i+1}}^{T}\sup_{0<|h|<h_{0}}\left\|\frac{\delta^{2}_{h}\Tilde{u}(\cdot,t)}{|h|^{\frac{1+\theta_{i+1}q_{i+1}}{q_{i+1}}}}\right\|_{L^{q_{i+1}}(B_{R_{i}-4h_{0}})}^{q_{i+1}}\dt+\sup_{0<|h|<h_{0}}\left\|\frac{\delta_{h}\Tilde{u}(\cdot,T)}{|h|^{\frac{1+\theta_{i+1} q_{i+1}}{q_{i+1}}}}\right\|_{L^{q_{i+1}}(B_{R_{i}-4h_{0}})}^{q_{i+1}}\\
    &\quad\leq c\left(\int_{T_{i}}^{T}\sup_{0<|h|<h_{0}}\left\|\frac{\delta^{2}_{h}\Tilde{u}(\cdot,t)}{|h|^{\frac{1+\theta_{i} q_{i}}{q_{i}}}}\right\|_{L^{q_{i}}(B_{R_{i}+4h_{0}})}^{q_{i}}\dt+1\right),
\end{align*}
for all $i=0,\ldots ,i_{\alpha-1}$ and $-\left(\frac{3}{4}\right)^{2s}\leq T\leq0$. After finite steps, we obtain
\begin{align*}
    \esup_{T\in[-\left(3/4\right)^{2s},0]}\sup_{0<|h|<h_{0}}\left\|\frac{\delta_{h}\Tilde{u}(\cdot,T)}{|h|^{\frac{1+\theta_{i_{\alpha}} q_{i_{\alpha}}}{q_{i_{\alpha}}}}}\right\|_{L^{q_{i_{\alpha}}}(B_{3/4})}^{q_{i_{\alpha}}}\leq c(n,s,\lambda,\alpha).
\end{align*}
Thus Lemma \ref{Holder for Besov} with \eqref{alpha1} yields
\begin{equation*}
    \esup_{t\in[-(1/2)^{2s},0]}[\Tilde{u}(\cdot,t)]_{C^{\alpha}(B_{1/2})}\leq c(n,s,\lambda,\alpha).
\end{equation*}
\textbf{(Case 2 : $s>\frac{1}{2}$).} On the other hand, when $2s>1$, there exists some $i_{\alpha}\in\mathbb{N}$ such that 
\begin{equation*}
    \frac{1+\theta_{i_{\alpha}-1}q_{i_{\alpha}-1}}{q_{i_{\alpha}-1}}<1\leq\frac{1+\theta_{i_{\alpha}}q_{i_{\alpha}}}{q_{i_{\alpha}}}.
\end{equation*}
Now take $\gamma \in (\alpha,1)$. Then there exists some $j_{\alpha}\in\mathbb{N}$ such that
\begin{equation}
\label{alpha2}
    \alpha<\gamma-\frac{n}{q_{i_{\alpha}+j_{\alpha}}}.
\end{equation}
Take 
\begin{align}\label{h_0 case 2}
    h_{0}=\frac{1}{64({i_{\alpha}}+j_{\alpha})},
\end{align}
and let $R_{i}$ and $T_{i}$ be as in \eqref{R_i,T_i}, but we use \eqref{h_0 case 2} instead of \eqref{h_0 case 1}.
Then
\[R_{0}+4h_{0}=\frac{7}{8},\quad R_{i_{\alpha}+j_{\alpha}}+4h_{0}=\frac{3}{4},\quad \mbox{and}\quad T_{i_{\alpha}+j_{\alpha}}=-\frac{1}{2}\left(\left(\frac{7}{8}\right)^{2s}+\left(\frac{3}{4}\right)^{2s}\right).\]
A similar calculation in Case 1 shows that
\begin{equation*}
    \int^{0}_{T_{i_{\alpha}}}\sup_{0<|h|<h_{0}}\left\|\frac{\delta^{2}_{h}\Tilde{u}(\cdot,t)}{|h|^{\gamma}}\right\|_{L^{q_{i_{\alpha}}}(B_{R_{i_{\alpha}+4h_{0}}})}^{q_{i_{\alpha}}}\dt\le\int_{T_{i_{\alpha}}}^{0}\sup_{0<|h|<h_{0}}\left\|\frac{\delta^{2}_{h}\Tilde{u}(\cdot,t)}{|h|^{\frac{1+\theta_{i_{\alpha}}q_{i_{\alpha}}}{q_{i_{\alpha}}}}}\right\|_{L^{q_{i_{\alpha}}}(B_{R_{i_{\alpha}+4h_{0}}})}^{q_{i_{\alpha}}}\dt\leq c,
\end{equation*}
where we have also used $\gamma<1\leq\frac{1+\theta_{i_{\alpha}}q_{i_{\alpha}}}{q_{i_{\alpha}}}$.
Take $\Tilde{\theta}_{i}=\gamma-\frac{1}{q_{i}}$, which implies $\frac{1+\Tilde{\theta}_{i}q_{i}}{q_{i}}=\gamma\in(0,1)$. Then since $s>\frac{1}{2}$, we discover 
\begin{equation*}
    \gamma<1+\frac{q_{i}(\gamma-1)}{q_{i}+1}=\frac{2+\Tilde{\theta}_{i}q_{i}}{q_{i}+1}<\frac{1+2s+\Tilde{\theta}_{i}q_{i}}{q_{i}+1}.
\end{equation*}
Therefore by applying $\Tilde{\theta}_{i}$ instead of $\theta_{i}$ to Lemma \ref{base for iteration}, we have
\begin{align*}
    \int^{T}_{T_{i+1}}\sup_{0<|h|<h_{0}}\left\|\frac{\delta^{2}_{h}\Tilde{u}(\cdot,t)}{|h|^{\gamma}}\right\|_{L^{q_{i+1}}(B_{R_{i}-4h_{0}})}^{q_{i+1}}\dt
    &\leq \int^{T}_{T_{i+1}}\sup_{0<|h|<h_{0}}\left\|\frac{\delta^{2}_{h}\Tilde{u}(\cdot,t)}{|h|^{\frac{1+2s+\Tilde{\theta}_{i}q_{i}}{q_{i}+1}}}\right\|_{L^{q_{i+1}}(B_{R_{i}-4h_{0}})}^{q_{i+1}}\dt\\
    &\leq c\left(\int_{T_{i}}^{T}\sup_{0<|h|<h_{0}}\left\|\frac{\delta^{2}_{h}\Tilde{u}(\cdot,t)}{|h|^{\frac{1+\Tilde{\theta}_{i} q_{i}}{q_{i}}}}\right\|_{L^{q_{i}}(B_{R_{i}+4h_{0}})}^{q_{i}}\dt+1\right),
\end{align*}
for each $i=i_{\alpha},\ldots,i_{\alpha}+j_{\alpha}-1$ and for any $T\in[-\left(\frac{3}{4}\right)^{2s},0]$. By Lemma \ref{Holder for Besov} and \eqref{alpha2}, we conclude 
\begin{equation*}
    \esup_{t\in[-(1/2)^{2s},0]}[\Tilde{u}(\cdot,t)]_{C^{\alpha}(B_{1/2})}\leq c.
\end{equation*}
By scaling back, we complete the proof.
\end{proof}
Next, we will prove a higher H\"older regularity for a solution to \eqref{normalized homo} with respect to time variable. To this end, we first introduce two lemmas. One is a generalized Poincar\'e type inequality from \cite[Lemma 6.1]{BLS}.
\begin{lem}[Generalized Poincar\'e type inequality] 
\label{GPI}
Let $0<s<1$ and $1\leq p<\infty$. For any $u\in W^{s,p}(B_{r})$ and $\psi\in C^{\infty}_{c}(B_{r})$ with $\overline{\psi}_{B_{r}}=1$, there holds 
\begin{equation*}
    \int_{B_{r}}|u-\overline{(u\psi)}_{B_{r}}|^{p}\dx\leq cr^{sp}\|\psi\|_{L^{\infty}(B_{r})}^{p}[u]_{W^{s,p}(B_{r})}^{p},
\end{equation*}
where $c \equiv c(n,s,p)$.
\end{lem}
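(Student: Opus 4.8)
The plan is to exploit the normalization $\overline{\psi}_{B_r}=1$ to rewrite the deviation $u-\overline{(u\psi)}_{B_r}$ as a weighted average of the increments $u(x)-u(y)$, and then to pass from this double average to the Gagliardo seminorm via the elementary bound $|x-y|\le 2r$ on $B_r\times B_r$. No covering or compactness argument is needed.

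First I would note that, since $\frac{1}{|B_r|}\int_{B_r}\psi(y)\dy=1$ and $u\in L^p(B_r)\subset L^1(B_r)$ with $\psi$ bounded, for a.e.\ $x\in B_r$ one has
\[
u(x)-\overline{(u\psi)}_{B_r}=\frac{1}{|B_r|}\int_{B_r}\bigl(u(x)-u(y)\bigr)\psi(y)\dy.
\]
Taking absolute values, using $|\psi|\le\|\psi\|_{L^\infty(B_r)}$, and applying Jensen's inequality (convexity of $t\mapsto|t|^p$) with respect to the average over $B_r$ gives
\[
\bigl|u(x)-\overline{(u\psi)}_{B_r}\bigr|^p\le \|\psi\|_{L^\infty(B_r)}^p\,\frac{1}{|B_r|}\int_{B_r}|u(x)-u(y)|^p\dy.
\]

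Next I would integrate in $x$ over $B_r$ and insert the singular kernel: since $x,y\in B_r$ forces $|x-y|\le 2r$, we have $1\le(2r)^{n+sp}|x-y|^{-(n+sp)}$, whence
\[
\int_{B_r}\bigl|u(x)-\overline{(u\psi)}_{B_r}\bigr|^p\dx\le\frac{(2r)^{n+sp}\|\psi\|_{L^\infty(B_r)}^p}{|B_r|}\int_{B_r}\int_{B_r}\frac{|u(x)-u(y)|^p}{|x-y|^{n+sp}}\dx\dy.
\]
Writing $\omega_n=|B_1|$ so that $|B_r|=\omega_n r^n$, the prefactor equals $2^{n+sp}\omega_n^{-1}r^{sp}$, and the remaining double integral is precisely $[u]_{W^{s,p}(B_r)}^p$; this gives the claim with $c=2^{n+sp}\omega_n^{-1}=c(n,s,p)$.

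I do not expect a genuine obstacle here: the argument is a short direct computation. The only two points worth a line of care are the integrability needed to justify the opening identity (immediate, since $W^{s,p}(B_r)$ is by definition contained in $L^p(B_r)$) and the remark that it is exactly the crude replacement $|x-y|\le 2r$ that produces the correct scaling weight $r^{sp}$; because the statement is the scale-invariant Poincar\'e inequality and not a sharp kernel estimate, nothing is lost at that step.
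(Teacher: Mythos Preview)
Your proof is correct. The paper does not prove this lemma itself but simply cites it from \cite[Lemma 6.1]{BLS}; your argument is the standard direct one (rewrite the deviation using $\overline{\psi}_{B_r}=1$, Jensen, then multiply and divide by $|x-y|^{n+sp}$ using $|x-y|\le 2r$), and it yields the stated estimate with the explicit constant $c=2^{n+sp}/|B_1|$.
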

The other is a gluing lemma regarding a H\"older regularity with respect to time variable.
\begin{lem}[Gluing lemma]
\label{GL}
Let $u$ be a local weak solution to \eqref{eq1} in $Q_{1}$ and let $z_{0}\in Q_{1}$ and $Q_{\rho,\theta}(z_{0})\Subset Q_{1}$.
Let $B_{\rho}\equiv B_{\rho}(x_{0})$ and let $\psi(x) \in C_{c}^{\infty}(B_{3\rho/4})$ be a nonnegative function such that 
\begin{equation}
\label{psi condition}
\overline{\psi}_{B_{\rho}}=1,\ \psi \equiv \|\psi\|_{L^{\infty}(B_{\rho})} \text{ in }B_{\frac{\rho}{2}} \text{ and } \|\nabla\psi\|_{L^{\infty}(\mathbb{R}^{n})}\leq\frac{c}{\rho}.
\end{equation}Then we have
\begin{equation}
\label{gluing lemma inequality}  
\begin{aligned}
    \left|\overline{(u\psi)}_{B_{\rho}}(T_{1})-\overline{(u\psi)}_{B_{\rho}}(T_{0})\right|&\leq c\frac{\theta}{\rho}\dashint_{t_{0}-\theta}^{t_{0}}\int_{B_{\rho}}\dashint_{B_{\rho}}\frac{|u(x,t)-u(y,t)|}{|x-y|^{n+2s-1}}\dx\dy\dt\\
    &\quad+c\theta\dashint_{t_{0}-\theta}^{t_{0}}\int_{\mathbb{R}^{n}\setminus B_{\rho}}\dashint_{B_{{3\rho/4}}}\frac{|u(x,t)-u(y,t)|}{|x_{0}-y|^{n+2s}}\dx\dy\dt,
\end{aligned}
\end{equation}
where $c \equiv c(n,s,\lambda)$ and $t_{0}-\theta < T_{0} < T_{1} < t_{0}$.
\end{lem}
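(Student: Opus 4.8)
The plan is to test the (homogeneous) weak formulation of \eqref{eq1} with the time-\emph{independent} function $\phi(x,t)=\psi(x)$ on the time slab $I=[T_{0},T_{1}]$. Since $Q_{\rho,\theta}(z_{0})\Subset Q_{1}$ and $t_{0}-\theta<T_{0}<T_{1}<t_{0}$, we have $I\Subset(0,T)$ and $\supp\psi\Subset B_{3\rho/4}(x_{0})\Subset B_{1}$, so $\phi$ is admissible; moreover $\partial_{t}\phi\equiv0$, so the parabolic term drops out and the definition of weak solution gives the identity
\begin{equation*}
\overline{(u\psi)}_{B_{\rho}}(T_{1})-\overline{(u\psi)}_{B_{\rho}}(T_{0})
=-\frac{1}{|B_{\rho}|}\int_{T_{0}}^{T_{1}}\int_{\mathbb{R}^{n}}\int_{\mathbb{R}^{n}}\Phi\big(u(x,t)-u(y,t)\big)\big(\psi(x)-\psi(y)\big)\frac{A(x,y,t)}{|x-y|^{n+2s}}\dx\dy\dt .
\end{equation*}
If one wishes to avoid using a test function that is only $W^{1,2}$ in time up to the endpoints of $I$, the standard mollification-in-time argument already employed in the proofs of Lemma \ref{caccioppoli estimate} and Lemma \ref{LDd1}, together with $u\in C(I;L^{2})$, justifies this identity and the meaning of the traces $u(\cdot,T_{0}),u(\cdot,T_{1})$.

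Next I would estimate the right-hand side. Using $\Phi(0)=0$ and the Lipschitz bound in \eqref{Phi condition}, one has $|\Phi(u(x,t)-u(y,t))|\leq\lambda|u(x,t)-u(y,t)|$, and $A\leq\lambda$, so it suffices to bound
\begin{equation*}
\frac{\lambda^{2}}{|B_{\rho}|}\int_{T_{0}}^{T_{1}}\int_{\mathbb{R}^{n}}\int_{\mathbb{R}^{n}}\frac{|u(x,t)-u(y,t)|\,|\psi(x)-\psi(y)|}{|x-y|^{n+2s}}\dx\dy\dt .
\end{equation*}
Since $\psi$ is supported in $B_{3\rho/4}(x_{0})$, the integrand vanishes unless $x$ or $y$ lies in $B_{3\rho/4}(x_{0})$, so the domain splits (up to harmless overlap) into the local piece $B_{\rho}(x_{0})\times B_{\rho}(x_{0})$ and the two tail pieces $B_{3\rho/4}(x_{0})\times(\mathbb{R}^{n}\setminus B_{\rho}(x_{0}))$ and its mirror. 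On the local piece I would use $|\psi(x)-\psi(y)|\leq\|\nabla\psi\|_{L^{\infty}}|x-y|\leq c\rho^{-1}|x-y|$, which gains one power of $|x-y|$ and produces the first term on the right of \eqref{gluing lemma inequality}. On each tail piece I would use $|\psi(x)-\psi(y)|=\psi(x)\leq\|\psi\|_{L^{\infty}(B_{\rho})}\leq c(n)$ — the last bound being a consequence of \eqref{psi condition}, since the gradient bound forces $\psi$ to fall from $\|\psi\|_{L^{\infty}(B_{\rho})}$ on $B_{\rho/2}$ to $0$ on $\partial B_{3\rho/4}$ across a distance $\rho/4$ — together with the elementary inequality $|x-y|\geq\tfrac14|x_{0}-y|$, valid for $x\in B_{3\rho/4}(x_{0})$ and $y\notin B_{\rho}(x_{0})$, which turns $|x-y|^{-n-2s}$ into $c\,|x_{0}-y|^{-n-2s}$ and produces the second term.

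Finally I would enlarge the time interval from $[T_{0},T_{1}]$ to $[t_{0}-\theta,t_{0}]$, note $|B_{3\rho/4}|\simeq|B_{\rho}|$, and rewrite the resulting integrals in terms of the averages $\dashint_{t_{0}-\theta}^{t_{0}}$, $\dashint_{B_{\rho}}$ and $\dashint_{B_{3\rho/4}}$ to obtain exactly \eqref{gluing lemma inequality} with $c\equiv c(n,s,\lambda)$. I do not expect any serious obstacle: the only genuinely technical point is the justification of the identity in the first paragraph (legitimacy of the time-independent test function and the endpoint traces of $u$), which is settled by $u\in C(I;L^{2})$ and, if desired, the same mollification scheme used earlier in the paper; the remaining steps are the two elementary pointwise inequalities for $\psi$ and for $|x-y|$ recorded above and routine bookkeeping of the domain splitting and the normalising constants.
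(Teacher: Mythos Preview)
Your proposal is correct and follows essentially the same route as the paper. The paper tests with $\psi(x)\eta_{\epsilon}(t)$, where $\eta_{\epsilon}$ is an explicit trapezoidal cutoff supported in $[T_{0},T_{1}]$, and then passes to the limit $\epsilon\to0$ using $u\psi\in C([t_{0}-\theta,t_{0}];L^{2}(B_{\rho}))$; you instead test with the time-independent $\psi$ and invoke the mollification scheme already used in Lemmas~\ref{caccioppoli estimate} and~\ref{LDd1}---these are equivalent justifications of the same identity, and the subsequent splitting of the nonlocal term, the use of $|\psi(x)-\psi(y)|\le c\rho^{-1}|x-y|$ on $B_{\rho}\times B_{\rho}$, and the bound $|x-y|\ge c|x_{0}-y|$ on the tail are identical to the paper's proof.
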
 
\begin{proof}
For a sufficiently small $0<\epsilon<\min\{\frac{T_{1}-T_{0}}{4},\frac{1+T_{0}}{2},-\frac{T_{1}}{2}\}$, we define 
\begin{equation*}
\begin{aligned}
    \eta_{\epsilon}(t)=\begin{cases}
    0 &\text{ in } t< T_{0}\\
    \frac{t-T_{0}}{\epsilon} &\text{ in } T_{0}\le t< T_{0}+\epsilon\\
    1 &\text{ in }T_{0}+\epsilon\le t < T_{1}-\epsilon\\
    \frac{T_{1}-t}{\epsilon} &\text{ in } T_{1}-\epsilon\le t< T_{1}\\
    0 &\text{ in } t\ge T_{1}.
    \end{cases}
\end{aligned}    
\end{equation*}
By the condition of \eqref{psi condition}, we obtain
\begin{equation*}
    \|\psi\|_{L^{\infty}(\mathbb{R}^{n})}= \overline{\psi}_{B_{\frac{\rho}{2}}}\leq c\overline{\psi}_{B_{\rho}}=c.
\end{equation*}
{Then we can use $\psi(x)\eta_{\epsilon}(t)$ as a test function}. By the definition of local weak solution and the conditions \eqref{Phi condition}, we see that
\begin{equation}
\label{for gluing11}
\begin{aligned}
&\left|\dashint_{T_{1} - \epsilon}^{T_{1}}\int_{B_{\rho}}u\psi\,dxdt - \dashint_{T_{0}}^{T_{0} + \epsilon}\int_{B_{\rho}}u\psi\,dxdt\right|\\
&= \left|\int_{t_{0} - \theta}^{t_{0}}\int_{B_{\rho}}u\psi\eta'_{\epsilon}\,dxdt\right|\\
&\le c\int_{t_{0} - \theta}^{t_{0}}\int_{\mathbb{R}^{n}}\int_{\mathbb{R}^{n}}\frac{|u(x,t) - u(y,t)|}{|x-y|^{n+2s}}|\psi(x) - \psi(y)|\,dxdydt\\
&\le \frac{c}{\rho}\int_{t_{0} - \theta}^{t_{0}}\int_{B_{\rho}}\int_{B_{\rho}}\frac{|u(x,t) - u(y,t)|}{|x-y|^{n+2s-1}}\,dxdydt + c\int_{t_{0} - \theta}^{t_{0}}\int_{\mathbb{R}^{n} \setminus B_{\rho}}\int_{B_{\rho}}\frac{|u(x,t) - u(y,t)|}{|x-y|^{n+2s}}\psi(x)\,dxdydt,\\
\end{aligned}
\end{equation}
where we used the fact that $|\psi(x)-\psi(y)| \le \frac{c}{\rho}|x-y|$.
Furthermore, the second term on the right hand side can be estimated by
\begin{equation}
\label{for gluing12}
\begin{aligned}
&\int_{t_{0} - \theta}^{t_{0}}\int_{\mathbb{R}^{n} \setminus B_{\rho}}\int_{B_{\rho}}\frac{|u(x,t) - u(y,t)|}{|x-y|^{n+2s}}\psi(x)\,dxdydt \le c\int_{t_{0} - \theta}^{t_{0}}\int_{\mathbb{R}^{n} \setminus B_{\rho}}\int_{B_{{3\rho/4}}}\frac{|u(x,t) - u(y,t)|}{|x_{0}-y|^{n+2s}}\,dxdydt,
\end{aligned}
\end{equation}
where we have used
\begin{align*}
\frac{|x_{0}-y|}{|x-y|} \le 1 + \frac{|x-x_{0}|}{|x-y|} \le 2 \quad \mbox{for} \ x \in B_{3\rho/4},\ y \in B_{\rho}.
\end{align*}
Since $u\psi$ is in $C\left([t_{0}-\theta,t_{0}];L^{2}(B_{\rho})\right)$, 
\begin{equation}
\label{for gluing2}
    \left|\overline{(u\psi)}_{B_{\rho}}(T_{1})-\overline{(u\psi)}_{B_{\rho}}(T_{0})\right|=\lim_{\epsilon\to0}\left|\dashint_{T_{1}-\epsilon}^{T_{1}}\dashint_{B_{\rho}}u\psi\dx\dt-\dashint_{T_{0}}^{T_{0}+\epsilon}\dashint_{B_{\rho}}u\psi\dx\dt\right|.
\end{equation}
Therefore, we combine \eqref{for gluing11}, \eqref{for gluing12} and \eqref{for gluing2} to discover \eqref{gluing lemma inequality}.
\end{proof}

With Lemma \ref{GPI} and Lemma \ref{GL}, we are now ready to prove the following time H\"older regularity.
\begin{lem}[H\"older regularity with respect to time]
\label{higher holder of homo 2}
Let $u$ be a local weak solution to \eqref{normalized homo}. Assume 
\begin{equation}
\label{SpHo}
    \|u\|_{L^{\infty}(Q_{1})}\leq 1,\ \ITail(u;0,1)\leq 1,\text{ and }
    \sup_{t\in[-(1/2)^{2s},0]}[u(\cdot,t)]_{C^{\beta}(B_{1/2})}\leq K_{\beta},
\end{equation}
for some $0<\beta<\min\{2s,1\}$ and $K_{\beta}>0$. Then there exists $c \equiv c(n,s,\lambda,\beta,K_{\beta})$ such that
\begin{equation*}
    |u(x,t)-u(x,\tau)|\leq  c|t-\tau|^{\frac{\beta}{2s}} \text{ for any } (x,t), (x,\tau)\in Q_{\frac{1}{4}}.
\end{equation*}
\end{lem}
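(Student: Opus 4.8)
The plan is to compare $u(x_{0},t)$ with the weighted average $m(s):=\overline{(u\psi)}_{B_{\rho}(x_{0})}(s)$ at the natural parabolic scale, bounding the \emph{spatial} discrepancy $|u(x_{0},s)-m(s)|$ by the $C^{\beta}$ bound in \eqref{SpHo} and the \emph{time} discrepancy $|m(t)-m(\tau)|$ by the gluing lemma, Lemma \ref{GL}. Fix $(x_{0},t),(x_{0},\tau)\in Q_{1/4}$ with $\tau<t$ and set $\theta:=t-\tau$. We may assume $\theta<\theta_{0}$ for a small $\theta_{0}=\theta_{0}(n,s)$, since otherwise the claim is immediate from $\|u\|_{L^{\infty}(Q_{1})}\le 1$; put $\rho:=\theta^{1/(2s)}$. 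For $\theta_{0}$ small this ensures $B_{3\rho/4}(x_{0})\subset B_{1/2}$, $[t-4\theta,t]\subset[-(1/2)^{2s},0]$ and $Q_{\rho,2\theta}((x_{0},t))\Subset Q_{1}$, so that \eqref{SpHo} and Lemma \ref{GL} apply on the relevant cylinders. Choose $\psi\in C_{c}^{\infty}(B_{3\rho/4}(x_{0}))$ as in Lemma \ref{GL}, so $\overline{\psi}_{B_{\rho}(x_{0})}=1$, $\psi\equiv\|\psi\|_{L^{\infty}}$ on $B_{\rho/2}(x_{0})$, $\|\nabla\psi\|_{L^{\infty}}\le c/\rho$ and $\|\psi\|_{L^{\infty}}\le 2^{n}$. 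Since $\overline{\psi}_{B_{\rho}(x_{0})}=1$, for $s\in\{t,\tau\}$ we have $u(x_{0},s)-m(s)=\dashint_{B_{\rho}(x_{0})}(u(x_{0},s)-u(y,s))\psi(y)\dy$, so $|u(x_{0},s)-m(s)|\le\|\psi\|_{L^{\infty}}K_{\beta}(2\rho)^{\beta}\le cK_{\beta}\rho^{\beta}=cK_{\beta}|t-\tau|^{\beta/(2s)}$ by \eqref{SpHo}.

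It remains to bound $|m(t)-m(\tau)|$. Applying Lemma \ref{GL} on $Q_{\rho,2\theta}((x_{0},t))$ with $T_{0}=\tau$ and $T_{1}\uparrow t$, and using $u\psi\in C([t-2\theta,t];L^{2}(B_{\rho}(x_{0})))$ to pass to the limit, we obtain
\[
|m(t)-m(\tau)|\le \frac{c\theta}{\rho}\dashint_{t-2\theta}^{t}\int_{B_{\rho}}\dashint_{B_{\rho}}\frac{|u(x,s)-u(y,s)|}{|x-y|^{n+2s-1}}\dx\dy\ds+c\theta\dashint_{t-2\theta}^{t}\int_{\mathbb{R}^{n}\setminus B_{\rho}}\dashint_{B_{3\rho/4}}\frac{|u(x,s)-u(y,s)|}{|x_{0}-y|^{n+2s}}\dx\dy\ds=:B_{1}+B_{2},
\]
where $B_{\rho}=B_{\rho}(x_{0})$ and $B_{3\rho/4}=B_{3\rho/4}(x_{0})$.

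For $B_{1}$, rather than inserting the $C^{\beta}$ modulus (whose integral against $|x-y|^{-(n+2s-1)}$ need not converge when $2s>1$), I will use the Cauchy--Schwarz inequality in $x,y$ together with $\int_{B_{\rho}}\int_{B_{\rho}}|x-y|^{-(n+2s-2)}\dx\dy\le c\rho^{n+2-2s}$ (finite since $s<1$), which gives $\int_{B_{\rho}}\dashint_{B_{\rho}}\frac{|u(x,s)-u(y,s)|}{|x-y|^{n+2s-1}}\dx\dy\le c[u(\cdot,s)]_{W^{s,2}(B_{\rho})}\rho^{n/2+1-s}$. Then Cauchy--Schwarz in time and a Caccioppoli estimate for the homogeneous equation at a scale comparable to $\rho$ — obtained from Lemma \ref{caccioppoli estimate} with $k=0$ applied to $\pm u$ (note $-u$ solves an equation of the same type, with $\Phi$ replaced by $\xi\mapsto-\Phi(-\xi)$, which again satisfies \eqref{Phi condition}), using $\|u\|_{L^{\infty}(Q_{1})}\le1$, $\ITail(u;0,1)\le1$ and $\theta\sim\rho^{2s}$ — yield $\int_{t-2\theta}^{t}[u(\cdot,s)]_{W^{s,2}(B_{\rho})}^{2}\ds\le c\rho^{n}$. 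Hence $B_{1}\le c\theta\rho^{n-2s}=c\rho^{n}=c\theta^{n/(2s)}\le c|t-\tau|^{\beta/(2s)}$, the last step because $\beta<1\le n$ and $\theta<1$.

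For $B_{2}$ the outer variable $y$ stays away from the diagonal. Splitting the $y$-integral into $B_{1/2}\setminus B_{\rho}(x_{0})$ and $\mathbb{R}^{n}\setminus B_{1/2}$: on the former, $|u(x,s)-u(y,s)|\le K_{\beta}|x-y|^{\beta}\le cK_{\beta}|x_{0}-y|^{\beta}$ (since $|x_{0}-y|\ge\rho>|x-x_{0}|$), so the contribution is $\le cK_{\beta}\int_{B_{1/2}\setminus B_{\rho}(x_{0})}|x_{0}-y|^{\beta-n-2s}\dy\le cK_{\beta}\rho^{\beta-2s}$ because $\beta<2s$; on the latter, $|u(x,s)-u(y,s)|\le 1+|u(y,s)|$, and comparing the tail centred at $x_{0}$ with the one centred at $0$, using $|u|\le1$ on $B_{1}\setminus B_{1/2}$ and $\ITail(u;0,1)\le1$, the contribution is $\le c$. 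Hence $B_{2}\le c\theta\big(K_{\beta}\rho^{\beta-2s}+1\big)=cK_{\beta}\rho^{\beta}+c\theta\le c(1+K_{\beta})|t-\tau|^{\beta/(2s)}$. Combining the estimates of $|u(x_{0},s)-m(s)|$, $B_{1}$ and $B_{2}$, recalling $\rho=|t-\tau|^{1/(2s)}$, gives the assertion with $c=c(n,s,\lambda,\beta,K_{\beta})$. The main obstacle is the estimate of $B_{1}$: the gluing lemma unavoidably produces the critically singular kernel $|x-y|^{-(n+2s-1)}$, and to control it one must abandon the pointwise Hölder bound in favour of the $W^{s,2}$-energy and invoke a Caccioppoli inequality precisely at the scale $\rho\sim|t-\tau|^{1/(2s)}$; checking that this single scale simultaneously renders the spatial discrepancies $|u(x_{0},s)-m(s)|$ and the nonlocal term $B_{2}$ of the correct order $|t-\tau|^{\beta/(2s)}$ is the other delicate point.
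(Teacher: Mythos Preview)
There is a computational slip in your bound for $B_{1}$ that unfortunately collapses the argument. By Cauchy--Schwarz one indeed has
\[
\int_{B_{\rho}}\int_{B_{\rho}}\frac{|u(x,s)-u(y,s)|}{|x-y|^{n+2s-1}}\,dx\,dy\le c\,[u(\cdot,s)]_{W^{s,2}(B_{\rho})}\,\rho^{\,n/2+1-s},
\]
but the quantity that actually appears in the gluing lemma is $\int_{B_{\rho}}\dashint_{B_{\rho}}(\cdots)$, i.e.\ the above divided by $|B_{\rho}|\sim\rho^{n}$; hence the correct exponent on the right is $-n/2+1-s$, not $+n/2+1-s$. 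Carrying this corrected exponent through your subsequent steps (Cauchy--Schwarz in time together with the Caccioppoli bound $\int_{t-2\theta}^{t}[u(\cdot,s)]_{W^{s,2}(B_{\rho})}^{2}\,ds\le c\rho^{n}$, which is itself correct) yields only
\[
B_{1}\le c\,\rho^{2s-1}\cdot\rho^{-n/2+1-s}\cdot\rho^{n/2-s}=c,
\]
with no decay in $\rho$. Thus the energy route to $B_{1}$ cannot produce the H\"older modulus in time: the spatial $C^{\beta}$ information \eqref{SpHo} must be fed into the near-diagonal integral, not bypassed.

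The paper does precisely that, inserting $|u(x,s)-u(y,s)|\le K_{\beta}|x-y|^{\beta}$ directly into $A_{2,1}$ to obtain $A_{2,1}\le cK_{\beta}\rho^{\beta}$. Your concern that $\int_{B_{\rho}}|x-y|^{-(n+2s-1-\beta)}\,dy$ may diverge is legitimate only when $\beta\le 2s-1$, i.e.\ only for $s>\tfrac12$ and small $\beta$; for $s\le\tfrac12$ it always converges, and in the applications (Section~\ref{section 5}) the lemma is invoked with $\beta$ close to $\min\{2s,1\}$, where the restriction is harmless. Your spatial-discrepancy estimate $|u(x_{0},s)-m(s)|\le cK_{\beta}\rho^{\beta}$ and the treatment of $B_{2}$ are fine and essentially match the paper's handling of $A_{1}$ and $A_{2,2}$.
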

\begin{proof}
Let ${z_{0}}=(x_{0},t_{0})\in Q_{\frac{1}{4}}$. Then $Q_{\rho}(z_{0})\subset Q_{\frac{1}{2}}$ for $0<\rho<\min\left\{\left(\frac{2^{2s}-1}{4^{2s}}\right)^{\frac{1}{2s}},\frac{1}{8}\right\}$. Notice that
\begin{align*}
\frac{|y|}{|y-x_{0}|} \le 1 + \frac{|x_{0}|}{|y-x_{0}|} \le 2 \quad \mbox{for} \ y \in B_{1},
\end{align*}
to see 
\begin{equation}
\label{TailEstimate}    
\begin{aligned}
&\esup_{t\in[-(1/2)^{2s},0]}\int_{\mathbb{R}^{n}\setminus B_{1/2}(x_{0})}\frac{|u(y,t)|}{|y-x_{0}|^{n+2s}}\dy\\
&\quad \leq \esup_{t\in[-1,0]}\int_{\mathbb{R}^{n}\setminus B_{1}}\frac{|u(y,t)|}{|y-x_{0}|^{n+2s}}\dy
+\esup_{t\in[-1,0]}\int_{B_{1}\setminus B_{1/2}(x_{0})}\frac{|u(y,t)|}{|y-x_{0}|^{n+2s}}\dy\\
&\quad \leq c\ITail(u;0,1)+2^{n+2s}|B_{1}|\leq c.
\end{aligned}
\end{equation}
Take a nonnegative cutoff function $\psi\in C_{c}^{\infty}(B_{\rho}(x_{0}))$ in Lemma \ref{GL}. Then
\begin{align*}
\begin{split}
    \dashiint_{Q_{\rho}(z_{0})}|u-\overline{u}_{Q_{\rho}(z_{0})}|\dx\dt&\leq \dashiint_{Q_{\rho}(z_{0})}|u-\overline{(u\psi)}_{B_{\rho(x_{0})}}(t)|\dx\dt\\
    &\quad+\dashiint_{Q_{\rho}(z_{0})}|\overline{(u\psi)}_{B_{\rho(x_{0})}}(t)-\overline{(u\psi)}_{Q_{\rho}(z_{0})}|\dx\dt\\
    &\quad+\dashiint_{Q_{\rho}(z_{0})}|\overline{(u\psi)}_{Q_{\rho}(z_{0})}-\overline{u}_{Q_{\rho}(z_{0})}|\dx\dt\\
    &\eqqcolon A_{1}+A_{2}+A_{3}.
\end{split}
\end{align*}
We observe that
\begin{align*}
A_{3} &= |\overline{(u\psi)}_{Q_{\rho}(z_{0})}-\overline{u}_{Q_{\rho}(z_{0})}| 
\le \dashiint_{Q_{\rho}(z_{0})}|u-\overline{(u\psi)}_{Q_{\rho}(z_{0})}|\dx\dt\\
&\le \dashiint_{Q_{\rho}(z_{0})}|u-\overline{(u\psi)}_{B_{\rho(x_{0})}}(t)|\dx\dt
+ \dashiint_{Q_{\rho}(z_{0})}|\overline{(u\psi)}_{B_{\rho(x_{0})}}(t)-\overline{(u\psi)}_{Q_{\rho}(z_{0})}|\dx\dt\\
&= A_{1} + A_{2}.
\end{align*}
Therefore it is sufficient to estimate $A_{1}$ and $A_{2}$. 

\medskip

\noindent
\textbf{Estimate $A_{1}$.}
Using H\"older inequality, Lemma \ref{GPI} and \eqref{SpHo}, we have
\begin{align*}
    A_{1}&\leq c\left(\rho^{2s}\dashint_{t_{0}-\rho^{2s}}^{t_{0}}\dashint_{B_{\rho}(x_{0})}\int_{B_{\rho}(x_{0})}\frac{|u(x,t)-u(y,t)|^{2}}{|x-y|^{n+2s}}\dx\dy\dt\right)^{\frac{1}{2}}\\
    &\leq c\left(\rho^{2s}\dashint_{t_{0}-\rho^{2s}}^{t_{0}}\dashint_{B_{\rho}(x_{0})}\int_{B_{\rho}(x_{0})}\frac{K_{\beta}^{2}}{|x-y|^{n+2s-2\beta}}\dx\dy\dt\right)^{\frac{1}{2}} \leq c\rho^{\beta}.
\end{align*}
\textbf{Estimate $A_{2}$.} From Lemma \ref{GL}, we deduce
\begin{align*}
    A_{2}&\leq \dashint_{t_{0}-\rho^{2s}}^{t_{0}}\dashint_{t_{0}-\rho^{2s}}^{t_{0}}|\overline{(u\psi)}_{B_{\rho}(x_{0})}(t)-\overline{(u\psi)}_{B_{\rho}(x_{0})}(\tau)|\,d\tau\dt\\
    &\leq\sup_{T_{0},T_{1}\in(t_{0}-\rho^{2s},t_{0})}|\overline{(u\psi)}_{B_{\rho}(x_{0})}(T_{0})-\overline{(u\psi)}_{B_{\rho}(x_{0})}(T_{1})|\\
    &\leq c{\rho^{2s-1}}\dashint_{t_{0}-\rho^{2s}}^{t_{0}}\int_{B_{\rho}(x_{0})}\dashint_{B_{\rho}(x_{0})}\frac{|u(x,t)-u(y,t)|}{|x-y|^{n+2s-1}}\dx\dy\dt\\
    &\quad+c\rho^{2s}\dashint_{t_{0}-\rho^{2s}}^{t_{0}}\int_{\mathbb{R}^{n}\setminus B_{\rho}(x_{0})}\dashint_{B_{\rho}(x_{0})}\frac{|u(x,t)-u(y,t)|}{|x_{0}-y|^{n+2s}}\dx\dy\dt\\
    &=A_{2,1}+A_{2,2}
\end{align*}
By \eqref{SpHo}, we see the following  
\begin{align*}
    A_{2,1}\le c{\rho^{2s-1}}\dashint_{t_{0}-\rho^{2s}}^{t_{0}}\int_{B_{\rho}(x_{0})}\dashint_{B_{\rho}(x_{0})}\frac{K_{\beta}}{|x-y|^{n+2s-1-\beta}}\dx\dy\dt \leq c(n,s,\beta,\lambda)K_{\beta}\rho^{\beta}.
\end{align*}
We use \eqref{SpHo} and \eqref{TailEstimate} to discover  
\begin{align*}
    A_{2,2}&\leq c \rho^{2s}\dashint_{t_{0}-\rho^{2s}}^{t_{0}}\int_{\mathbb{R}^{n}\setminus B_{1}}\dashint_{B_{\rho}(x_{0})}\frac{|u(x,t)-u(y,t)|}{|x_{0}-y|^{n+2s}}\dx\dy\dt\\
    &\quad+c \rho^{2s}\dashint_{t_{0}-\rho^{2s}}^{t_{0}}\int_{B_{1}\setminus B_{\rho}(x_{0})}\dashint_{B_{\rho}(x_{0})}\frac{|u(x,t)-u(y,t)|}{|x_{0}-y|^{n+2s}}\dx\dy\dt\\
    &\leq c \rho^{2s}\dashint_{t_{0}-\rho^{2s}}^{t_{0}}\int_{\mathbb{R}^{n}\setminus B_{1}}\dashint_{B_{\rho}(x_{0})}\frac{|u(x,t)|}{|x_{0}-y|^{n+2s}}\dx\dy\dt\\
    &\quad+ c \rho^{2s}\dashint_{t_{0}-\rho^{2s}}^{t_{0}}\int_{\mathbb{R}^{n}\setminus B_{1}}\dashint_{B_{\rho}(x_{0})}\frac{|u(y,t)|}{|x_{0}-y|^{n+2s}}\dx\dy\dt+cK_{\beta}\rho^{\beta} \leq c\rho^{\beta}.
\end{align*}
Combining the estimates $A_{1}$ and $A_{2}$ yields
\begin{equation*}
    \dashiint_{Q_{\rho}(z_{0})}|u-\overline{u}_{Q_{\rho}(z_{0})}|\dx\dt\leq c\rho^{\beta}.
\end{equation*}
With the help of Lemma \ref{Campanato embedding for fractional}, we have 
\begin{equation*}
    |u(x,t)-u(x,\tau)|\leq c|t-\tau|^{\frac{\beta}{2s}} \text{ for any } (x,t), (x,\tau)\in Q_{\frac{1}{4}}.
\end{equation*}
\end{proof}


\section{Higher H\"older regularity by approximation}
\label{section 5}
This section is devoted to prove the main theorem \ref{Holder}.
We will use an approximation argument based on the comparison estimate using \eqref{kernel coefficient condition} to obtain the higher H\"older regularity for the inhomogeneous problem \eqref{eq1}. Throughout this section, we assume that $q, r>1$ satisfy
\begin{equation}
\label{qr condition}
\frac{n}{2qs}+\frac{1}{r}<1.
\end{equation}
\begin{lem}
\label{Basestep}
For any $\epsilon>0$, there exists a small $\delta\equiv\delta(n,s,q,r,\lambda,\epsilon)>0$ such that for any local weak solution $u$ to \eqref{eq1} in {$Q_{4}$} with
\begin{equation*}
\sup_{Q_{4}}|u|\le1 \quad \mbox{and} \quad \ITail(u;0,4)\leq 1,
\end{equation*}
if there hold
\begin{equation*}
\|A-\Tilde{A}\|_{L^{\infty}(\mathbb{R}^{n}\times \mathbb{R}^{n}\times [-4^{2s},0])}\leq \delta \quad \mbox{and} \quad \|f\|_{L^{q,r}(Q_{4})}\leq \delta,
\end{equation*}
where $\Tilde{A}\in\mathcal{L}_{0}(\lambda)$, then there is a local weak solution $v$ to 
\begin{align*}
   \left\{
\begin{alignedat}{3}
\partial_{t}v+\mathcal{L}^{\Phi}_{\Tilde{A}}v &=0 &&\qquad \mbox{in  $Q_{2}$} \\
v&=u&&\qquad  \mbox{on  $\partial_{P}Q_{2}\cup\left(\left(\mathbb{R}^{n}\setminus B_{2}\right)\times[-2^{2s},0]\right)$} 
\end{alignedat} \right. 
\end{align*}
such that
\begin{equation*}
    \|u-v\|_{L^{\infty}(Q_{1})}\leq \epsilon.
\end{equation*}
\end{lem}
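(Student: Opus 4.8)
The statement is a standard compactness/approximation lemma, so the plan is to argue by contradiction. Suppose the conclusion fails. Then there exist $\epsilon_0>0$ and sequences of kernel coefficients $A_j$, $\Tilde{A}_j\in\mathcal{L}_0(\lambda)$ with $\|A_j-\Tilde{A}_j\|_{L^\infty(\mathbb{R}^n\times\mathbb{R}^n\times[-4^{2s},0])}\le 1/j$, nonhomogeneous terms $f_j$ with $\|f_j\|_{L^{q,r}(Q_4)}\le 1/j$, and local weak solutions $u_j$ to $\partial_t u_j+\mathcal{L}^{\Phi}_{A_j}u_j=f_j$ in $Q_4$ with $\sup_{Q_4}|u_j|\le 1$ and $\ITail(u_j;0,4)\le 1$, such that for the (unique, by Appendix \ref{Appendix}) solution $v_j$ of the homogeneous Dirichlet problem with kernel $\Tilde{A}_j$ and lateral/initial datum $u_j$ on $Q_2$ we have $\|u_j-v_j\|_{L^\infty(Q_1)}>\epsilon_0$ for all $j$.

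The first main step is to produce compactness for $\{u_j\}$ and $\{v_j\}$. From the uniform bounds $\sup_{Q_4}|u_j|\le 1$ and $\ITail(u_j;0,4)\le 1$, the Caccioppoli-type estimate (Lemma \ref{caccioppoli estimate}, applied on nested cylinders inside $Q_4$, with $k=0$) gives a uniform bound for $u_j$ in $V^2_s(Q_{3}\times(-3^{2s},0])$; the comparison estimate \eqref{u-v sup l2 estimate} in Lemma \ref{Comparison estimate} together with $\|f_j\|_{L^{q,r}}\to 0$ then bounds $u_j-v_j$ in the same space and forces $u_j-v_j\to 0$ in $L^2(Q_2)$; in particular $v_j$ is also uniformly bounded in $L^\infty(Q_2)$ and in $V^2_s$. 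Since $v_j$ solves a homogeneous equation, Lemma \ref{HomoHolder} gives a uniform $C^{\beta,\beta/2s}_{\loc}(Q_2)$ bound on $v_j$, hence (Arzel\`a--Ascoli, together with time-continuity in $L^2$ coming from the equation) along a subsequence $v_j\to v_\infty$ locally uniformly in $Q_2$ and $u_j\to u_\infty=v_\infty$ in $L^2(Q_2)$. Also, passing to a further subsequence, $\Tilde{A}_j\to A_\infty$ and $A_j\to A_\infty$ weakly-$*$ in $L^\infty$ (same limit, by $\|A_j-\Tilde{A}_j\|_\infty\to 0$), with $A_\infty\in\mathcal{L}_0(\lambda)$.

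The second main step is to pass to the limit in the weak formulations to identify $u_\infty$ and $v_\infty$. Using the uniform $W^{s,2}$-in-space bounds one checks that $\Phi(u_j(x,t)-u_j(y,t))\,|x-y|^{-n/2-s}$ converges (weakly in $L^2$ of the bilinear measure, using the Lipschitz bound on $\Phi$ from \eqref{Phi condition} and strong $L^2$ convergence of difference quotients on a subsequence) so that, together with $A_j\rightharpoonup A_\infty$, $f_j\to 0$, one obtains that both $u_\infty$ and $v_\infty$ are local weak solutions of $\partial_t w+\mathcal{L}^{\Phi}_{A_\infty}w=0$ in $Q_2$; moreover $u_\infty-v_\infty$ has zero lateral/initial trace on $Q_2$ (the lateral/tail datum passes to the limit since $u_j-v_j\to 0$ globally in the tail by \eqref{u-v sup l2 estimate} applied with the exterior data condition). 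By the monotonicity in \eqref{Phi condition}, the comparison/energy argument of Lemma \ref{Comparison estimate} with $f=0$ gives $u_\infty\equiv v_\infty$ on $Q_2$. But then $\|u_j-v_j\|_{L^\infty(Q_1)}\le \|u_j-v_\infty\|_{L^\infty(Q_1)}+\|v_\infty-v_j\|_{L^\infty(Q_1)}\to 0$, contradicting $\|u_j-v_j\|_{L^\infty(Q_1)}>\epsilon_0$.

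The step I expect to be the main obstacle is the upgrade from $L^2$-convergence $u_j-v_j\to 0$ to $L^\infty(Q_1)$-convergence, i.e.\ making the contradiction quantitative in the sup norm: one cannot apply the homogeneous regularity of Lemma \ref{HomoHolder} directly to $u_j$, so the argument must route the $L^\infty$ control of $u_j-v_j$ through $v_j$. Concretely one writes, for $z\in Q_1$ and small $r$, an oscillation-type decomposition $|u_j(z)-v_j(z)|\lesssim \osc_{Q_r(z)}(u_j-v_j)+ r^{-(n+2s)}\|u_j-v_j\|_{L^1(Q_r(z))}$, bounds the first term by $\osc_{Q_r} u_j + \osc_{Q_r}v_j$ — here $\osc_{Q_r}v_j\lesssim r^\beta$ uniformly by Lemma \ref{HomoHolder}, while $\osc_{Q_r}u_j$ is controlled by combining the local boundedness / De Giorgi oscillation bound for the inhomogeneous equation from Section \ref{section3} (Lemma \ref{Holder for data}) applied to $u_j$ with the $L^{q,r}$-smallness of $f_j$ — and bounds the second term using $\|u_j-v_j\|_{L^2(Q_2)}\to 0$. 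Choosing $r$ small to make the oscillation terms $\le\epsilon_0/2$ and then $j$ large to make the $L^1$ term $\le\epsilon_0/2$ closes the argument. Care is needed that all constants in these auxiliary estimates depend only on $n,s,q,r,\lambda$ and not on $j$, which is exactly what Lemmas \ref{caccioppoli estimate}, \ref{Comparison estimate}, \ref{HomoHolder} and \ref{Holder for data} provide.
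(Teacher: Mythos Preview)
Your overall strategy (contradiction plus compactness) matches the paper's, but there is a genuine gap and an unnecessary detour.

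\textbf{The gap: the comparison estimate does not apply.} You invoke Lemma \ref{Comparison estimate} (specifically \eqref{u-v sup l2 estimate}) to force $u_j-v_j\to 0$ in $L^2$. That lemma, however, compares $u$ with a $v$ solving the homogeneous problem with the \emph{same} kernel $A$; in your setting $v_j$ has kernel $\Tilde{A}_j\neq A_j$, so the cited estimate says nothing. The kernel smallness $\|A_j-\Tilde{A}_j\|_\infty\le 1/j$ does not enter your energy argument at all; you only use it later to force the weak-$*$ limits of $A_j$ and $\Tilde{A}_j$ to coincide, and that later step is itself problematic. What the paper does instead is test the equation
\[
\partial_t w_k+\mathcal{L}_{A_k}^{\Phi_k}u_k-\mathcal{L}_{\Tilde{A}_k}^{\Phi_k}v_k=f_k
\]
with $w_k=u_k-v_k$ and split the nonlocal part so as to produce, beyond the monotonicity term, the cross term
\[
\int\int\int \frac{\Phi_k(u_k(x,t)-u_k(y,t))}{|x-y|^{n+2s}}\,(w_k(x,t)-w_k(y,t))\,(A_k-\Tilde{A}_k)\,dx\,dy\,dt,
\]
which is bounded by $\frac{c}{k}$ times $[u_k]_{W^{s,2}}[w_k]_{W^{s,2}}$ (plus tail contributions handled via $\|u_k\|_{L^\infty(Q_4)}\le 1$ and $\ITail(u_k;0,4)\le 1$). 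Combined with the Caccioppoli bound on $u_k$ and the $f_k$-term, this yields $\|w_k\|_{V^2_s}^2\le c/k\to 0$. This derivation is the heart of the proof and is missing from your proposal.

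\textbf{The unnecessary detour.} Your ``second main step'' (passing to a limit equation with kernel $A_\infty$ and invoking uniqueness) is not needed and, as written, is not rigorous: you cannot multiply the weak-$*$ limit of $A_j$ with a merely weak limit of $\Phi(u_j(x,t)-u_j(y,t))|x-y|^{-n/2-s}$. Once the energy estimate gives $w_k\to 0$ in $L^2(Q_2)$, the paper proceeds directly: by Lemma \ref{Holder for data} the sequence $u_k$ is uniformly in $C^{\Tilde\beta,\Tilde\beta/2s}(\overline{Q_1})$, and by Lemma \ref{HomoHolder} (after bounding $\|v_k\|_{L^\infty}$ and $\ITail(v_k)$ via $\|w_k\|_{V^2_s}\le c$) the sequence $v_k$ is uniformly in $C^{\beta,\beta/2s}(\overline{Q_1})$. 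Hence $w_k$ is equicontinuous on $\overline{Q_1}$, Arzel\`a--Ascoli gives a uniformly convergent subsequence, and the $L^2$-limit forces the uniform limit to be $0$, contradicting $\|w_k\|_{L^\infty(Q_1)}\ge\epsilon_0$. Your hand-made oscillation decomposition in the last paragraph would also work once the energy gap is fixed, but it is just Arzel\`a--Ascoli unpacked; the paper's route is shorter.
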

\begin{proof}
We prove this lemma by contradiction. Suppose there exist some $\epsilon_{0}>0$, $\{A_{k}\}_{k=1}^{\infty}\subset\mathcal{L}_{0}(\lambda)$, $\{\Tilde{A}_{k}\}_{k=1}^{\infty}\subset\mathcal{L}_{0}(\lambda)$, $\{\Phi_{k}\}_{k=1}^{\infty}$ with \eqref{Phi condition}, $\{f_{k}\}_{k=1}^{\infty}$ with \eqref{qr condition} and $\{u_{k}\}_{k=1}^{\infty}$ such that $u_{k}$ is a local weak solution to
\begin{equation*}
    \partial_{t}u_{k}+\mathcal{L}_{A_{k}}^{\Phi_{k}}u_{k}=f_{k} \text{ in }Q_{4}
\end{equation*}
with 
\begin{equation}
\label{bounded assumption}
    \sup_{Q_{4}}|u_{k}|\le1,\quad \ITail(u_{k};0,4)\leq 1,
\end{equation}
\begin{equation}\label{con assumption}
    \|A_{k}-\Tilde{A}_{k}\|_{L^{\infty}(\mathbb{R}^{n}\times \mathbb{R}^{n}\times[-4^{2s},0])}\leq\frac{1}{k}\quad\mbox{and} \quad\|f_{k}\|_{L^{q,r}(Q_{4})}\leq\frac{1}{k},
\end{equation}
but
\begin{equation}
\label{contradiction}
    \|u_{k}-v_{k}\|_{L^{\infty}(Q_{1})}\geq\epsilon_{0}
\end{equation}
for any local weak solution $v_{k}$ to
\begin{align*}
   \left\{
\begin{alignedat}{3}
\partial_{t}v_{k}+\mathcal{L}^{\Phi_{k}}_{\Tilde{A}_{k}}v_{k} &=0 &&\qquad \mbox{in  $Q_{2}$} \\
v_{k}&=u_{k}&&\qquad  \mbox{on  $\partial_{P}Q_{2}\cup\left(\left(\mathbb{R}^{n}\setminus B_{2}\right)\times[-2^{2s},0]\right)$.} 
\end{alignedat} \right. 
\end{align*}
Then $w_{k}\coloneqq u_{k}-v_{k} \in L^{2}\left(\left(-2^{2s},0\right];W_{0}^{s,2}\left(B_{2}\right)\right)\cap C\left(\left[-2^{2s},0\right];L^{2}\left(B_{2}\right)\right)$ solves
\begin{equation}\label{comparison eq1}
    \partial_{t}w_{k}+\mathcal{L}_{A_{k}}^{\Phi_{k}}u_{k}-\mathcal{L}^{\Phi_{k}}_{\Tilde{A}_{k}}v_{k}=f_{k}\text{ in }Q_{2}.
\end{equation}
For convenience, we write $I=\left[-2^{2s},0\right]$. By an approximation argument, we take $w_{k}$ as a test function for \eqref{comparison eq1} to get
\begin{align}\label{comparison est1}
\begin{split}
    &\int_{B_{2}}\frac{w_{k}^{2}(x,t)}{2}\dx\\
    &\quad+\int_{I}\int_{\mathbb{R}^{n}}\int_{\mathbb{R}^{n}}\frac{\Phi_{k}(u_{k}(x,t)-u_{k}(y,t))}{|x-y|^{n+2s}}(w_{k}(x,t)-w_{k}(y,t))A_{k}(x,y,t)\dx\dy\dt\\
    &\quad-\int_{I}\int_{\mathbb{R}^{n}}\int_{\mathbb{R}^{n}}\frac{\Phi_{k}(v_{k}(x,t)-v_{k}(y,t))}{|x-y|^{n+2s}}(w_{k}(x,t)-w_{k}(y,t))\Tilde{A}_{k}(x,y,t)\dx\dy\dt\\
    &= \int_{I}\int_{B_{2}}f_{k}w_{k}\dx\dt,
\end{split}
\end{align}
where we have used the fact that $w_{k}(x,-2^{2s}) = 0$ for $x \in B_{2}$. Now let us handle
  the second and the third terms on the left hand side. Set
\begin{align*}
\rom{1} = \int_{I}\int_{\mathbb{R}^{n}}\int_{\mathbb{R}^{n}}\frac{\Phi_{k}(u_{k}(x,t)-u_{k}(y,t))}{|x-y|^{n+2s}}(w_{k}(x,t)-w_{k}(y,t))A_{k}(x,y,t)\dx\dy\dt,
\end{align*}
and
\begin{align*}
\rom{2} = \int_{I}\int_{\mathbb{R}^{n}}\int_{\mathbb{R}^{n}}\frac{\Phi_{k}(v_{k}(x,t)-v_{k}(y,t))}{|x-y|^{n+2s}}(w_{k}(x,t)-w_{k}(y,t))\Tilde{A}_{k}(x,y,t)\dx\dy\dt.
\end{align*} 
Note that \eqref{Phi condition} implies  
\begin{align*}
[\Phi_{k}(u_{k}(x,t)-u_{k}(y,t))-\Phi_{k}(v_{k}(x,t)-v_{k}(y,t))](w_{k}(x,t)-w_{k}(y,t)) \ge \frac{1}{c}|w_{k}(x,t)-w_{k}(y,t)|^{2}.
\end{align*}
By using the above inequality and \eqref{Phi condition}, we get
\begin{align*}
&\rom{1} - \rom{2}\\
&= \int_{I}\int_{\mathbb{R}^{n}}\int_{\mathbb{R}^{n}}\frac{\Phi_{k}(u_{k}(x,t)-u_{k}(y,t))-\Phi_{k}(v_{k}(x,t)-v_{k}(y,t))}{|x-y|^{n+2s}}(w_{k}(x,t)-w_{k}(y,t))\Tilde{A}_{k}\dx\dy\dt\\
&\quad +\int_{I}\int_{\mathbb{R}^{n}}\int_{\mathbb{R}^{n}}\frac{\Phi_{k}(u_{k}(x,t)-u_{k}(y,t))}{|x-y|^{n+2s}}(w_{k}(x,t)-w_{k}(y,t))({A}_{k}-\Tilde{A}_{k})\dx\dy\dt\\
&\ge \frac{1}{c}\int_{I}\int_{\mathbb{R}^{n}}\int_{\mathbb{R}^{n}}\frac{|w_{k}(x,t)-w_{k}(y,t)|^{2}}{|x-y|^{n+2s}}\dx\dy\dt\\
&\quad -\int_{I}\int_{\mathbb{R}^{n}}\int_{\mathbb{R}^{n}}\frac{|u_{k}(x,t)-u_{k}(y,t)|}{|x-y|^{n+2s}}|w_{k}(x,t)-w_{k}(y,t)||{A}_{k}-\Tilde{A}_{k}|\dx\dy\dt.
\end{align*}
Putting this inequality into \eqref{comparison est1} and taking essential supremum in $I$, we have
\begin{align*}
\begin{split}
    &J \coloneqq \esup_{t\in I}\int_{B_{2}}w_{k}^{2}(x,t)\dx + \int_{I}\int_{\mathbb{R}^{n}}\int_{\mathbb{R}^{n}}\frac{|w_{k}(x,t)-w_{k}(y,t)|^{2}}{|x-y|^{n+2s}}\dx\dy\dt\\
    &\quad \le c\int_{I}\int_{B_{2}}f_{k}w_{k}\dx\dt\\
    &\qquad +c\int_{I}\int_{\mathbb{R}^{n}}\int_{\mathbb{R}^{n}}\frac{|u_{k}(x,t)-u_{k}(y,t)|}{|x-y|^{n+2s}}|w_{k}(x,t)-w_{k}(y,t)||{A}_{k}-\Tilde{A}_{k}|\dx\dy\dt\\
    &\qquad \eqqcolon cJ_{1} + cJ_{2}.
\end{split}
\end{align*}
\noindent
\textbf{Estimate of $J_{1}$.}
Using Lemma \ref{embedSF}, H\"older's inequality, and Cauchy's inequality, we have
\begin{align*}
    J_{1}\leq c\|f_{k}\|^{2}_{L^{q,r}(Q_{2})}+\frac{1}{8}J.
\end{align*}

\noindent
\textbf{Estimate of $J_{2}$.} From \eqref{con assumption},
\begin{align*}
    J_{2}&\le \frac{c}{k}\Bigg[\int_{I}\int_{B_{3}}\int_{B_{3}}\frac{|u_{k}(x,t)-u_{k}(y,t)|}{|x-y|^{n+2s}}|w_{k}(x,t)-w_{k}(y,t)|\dx\dy\dt\\
    &\quad+\int_{I}\int_{\mathbb{R}^{n} \setminus B_{3}}\int_{B_{3}}\frac{|u_{k}(x,t)-u_{k}(y,t)|}{|x-y|^{n+2s}}|w_{k}(x,t)-w_{k}(y,t)|\dx\dy\dt\Bigg]\\
    &\eqqcolon \frac{c}{k} (J_{2,1} + J_{2,2}).
\end{align*}
To get estimate of $J_{2,1}$, we use H\"older inequality and Cauchy inequality so that
\begin{align*}
    J_{2,1}&\leq c\left(\int_{I}[u_{k}(\cdot,t)]^{2}_{W^{s,2}(B_{3})}\dt\right)^{\frac{1}{2}}\left(\int_{I}[w_{k}(\cdot,t)]^{2}_{W^{s,2}(B_{3})}\dt\right)^{\frac{1}{2}}\\
    &\leq c\int_{I}[u_{k}(\cdot,t)]^{2}_{W^{s,2}(B_{3})}\dt+\frac{J}{8}.
\end{align*}
Moreover, Lemma \ref{caccioppoli estimate} and \eqref{bounded assumption} imply $\int_{I}[u_{k}(\cdot,t)]^{2}_{W^{s,2}(B_{3})}\dt \le c$. Thus $J_{2,1} \le c + J/8$.
Next, we are going to estimate $J_{2,2}$. 
In light of the fact that $w=0$ a.e. in $\left(\mathbb{R}^{n}\setminus B_{2}\right)\times I$, we observe
\begin{align*}
    J_{2,2}&\leq\int_{I}\int_{\mathbb{R}^{n}\setminus B_{3}}\int_{B_{2}}\frac{|u_{k}(x,t)|+|u_{k}(y,t)|}{|x-y|^{n+2s}}|w_{k}(x,t)|\dx\dy\dt\\
    &\leq \int_{I}\int_{\mathbb{R}^{n}\setminus B_{3}}\int_{B_{2}}\frac{{|u_{k}(x,t)|}}{|x-y|^{n+2s}}|w_{k}(x,t)|\dx\dy\dt\\
    &\quad+\int_{I}\int_{\mathbb{R}^{n}\setminus B_{3}}\int_{B_{2}}\frac{|u_{k}(y,t)|}{|x-y|^{n+2s}}|w_{k}(x,t)|\dx\dy\dt\\
    &\eqqcolon J_{2,2,1} + J_{2,2,2}.
\end{align*}
To get estimate of $J_{2,2,1}$, we use the assumption $\sup_{Q_{3}}|u_{k}|\le1$ in \eqref{bounded assumption} to see
\begin{align*}
\int_{I}\int_{\mathbb{R}^{n}\setminus B_{3}}\int_{B_{2}}\frac{|u_{k}(x,t)|}{|x-y|^{n+2s}}|w_{k}(x,t)|\dx\dy\dt
&\le \int_{I}\int_{B_{2}}\int_{\mathbb{R}^{n}\setminus B_{3}}\frac{|w_{k}(x,t)|}{|x-y|^{n+2s}}\dy\dx\dt\\
&\le c\int_{I}\int_{B_{2}}\int_{\mathbb{R}^{n}\setminus B_{3}}\frac{|w_{k}(x,t)|}{|y|^{n+2s}}\dy\dx\dt\\
&\le c\int_{I}\int_{B_{2}}|w_{k}(x,t)|\dx\dt,
\end{align*}
where we have also used the fact that $\frac{|y|}{|x-y|}$ is bounded above for all $x \in B_{2}$ and $y \in \mathbb{R}^{n} \setminus B_{3}$.
Furthermore, to estimate $J_{2,2,2}$, we used the assumption $\ITail(u_{k};0,3)\leq 1$ as follows.
\begin{align*}
\int_{I}\int_{\mathbb{R}^{n}\setminus B_{3}}\int_{B_{2}}\frac{|u_{k}(y,t)|}{|x-y|^{n+2s}}|w_{k}(x,t)|\dx\dy\dt
&\le {c}\int_{I}\int_{B_{2}}\int_{\mathbb{R}^{n}\setminus B_{3}}\frac{|u_{k}(y,t)|}{|y|^{n+2s}}|w_{k}(x,t)|\dy\dx\dt\\
&\le c\ITail(u_{k};0,3)\int_{I}\int_{B_{2}}|w_{k}(x,t)|\dx\dt\\
&\le c\int_{I}\int_{B_{2}}|w_{k}(x,t)|\dx\dt.
\end{align*}
Thus
\begin{align*}
J_{2,2} &\le c\int_{I}\int_{B_{2}}|w_{k}(x,t)|\dx\dt \le c\left(\int_{I}\int_{B_{2}}|w_{k}(x,t)|^{2}\dx\dt\right)^{\frac{1}{2}}.
\end{align*}
Combining all the above estimates, we have 
\begin{equation}
\label{wk estimate}
    J=\int_{I}\int_{B_{2}}\int_{B_{2}}\frac{|w_{k}(x,t)-w_{k}(y,t)|^{2}}{|x-y|^{n+2s}}\dx\dy\dt+\esup_{t\in I}\int_{B_{2}}w_{k}^{2}(x,t)\dx\leq \frac{c}{k}.
\end{equation}
Therefore, we have
\begin{equation}
\label{limit}
    \lim_{k\to\infty}\esup_{t\in I}\int_{B_{2}}w_{k}^{2}(x,t)\dx=0.
\end{equation}
By Lemma \ref{Holder for data}, {$u_{k}$ and $v_{k}$} are H\"older continuous functions in $\overline{Q_{1}}$. In particular, there is a $\rho=\rho(s)>0$ such that for any $z_{0}\in \overline{Q_{1}}$, $Q_{2\rho}(z_{0})\subset Q_{2}$. From Lemma \ref{LBlem}, \eqref{bounded assumption}, and \eqref{wk estimate},  we have  
\begin{align*}
    \|v_{k}\|_{L^{\infty}(Q_{\rho}(z_{0}))}&\leq c\left(\left(\dashiint_{Q_{2\rho_{0}}(z_{0})}v^{2}_{k}\dx\dt\right)^{\frac{1}{2}}+\ITail(v_{k};x_{0},\rho/2,t_{0}-\rho^{2s},t_{0})\right)\\
    &\leq c\left(\left(\dashiint_{Q_{2\rho_{0}}(z_{0})}u^{2}_{k}\dx\dt\right)^{\frac{1}{2}}+\left(\dashiint_{Q_{2\rho_{0}}(z_{0})}w^{2}_{k}\dx\dt\right)^{\frac{1}{2}}\right)\\
    &\quad+c\left(\ITail(u_{k};x_{0},\rho/2,t_{0}-\rho^{2s},t_{0})+\ITail(w_{k};x_{0},\rho/2,t_{0}-\rho^{2s},t_{0})\right) \leq c.
\end{align*}
Similarly, using Lemma \ref{HomoHolder} and Lemma \ref{Holder for data}, we see that there are constants $\beta=\beta(n,s,q,r,\lambda)$ and $c \equiv c(n,s,q,r,\lambda)$ which {are} independent of $k$ such that
\begin{align*}
 \sup_{\overline{Q_{1}}}|v_{k}(x,t)|+[v_{k}]_{C^{\beta,\frac{\beta}{2s}}(\overline{Q_{1}})}\leq c,
\end{align*}
and 
\begin{align*}
    \sup_{\overline{Q_{1}}}|u_{k}(x,t)|+[u_{k}]_{C^{\beta,\frac{\beta}{2s}}(\overline{Q_{1}})}\leq c.
\end{align*}
By Arzel\'a-Ascoli theorem, there exist a subsequence of $w_{k_{j}}$ and a continuous function $w$ in $\overline{Q_{1}}$ such that $w_{k_{j}}\to w$ in $\overline{Q_{1}}$. From \eqref{limit} with the uniqueness of the limit, we have
\begin{equation*}
    \lim_{j\to\infty}\|w_{k_{j}}\|_{L^{\infty}(\overline{Q_{1}})}=0.
\end{equation*}
which contradicts \eqref{contradiction}. This completes the proof. 
\end{proof}
Now using Lemma \ref{Basestep}, we obtain the higher H\"older regularity provided that the kernel coefficient $A$ is sufficiently close to the corresponding kernel coefficient $\Tilde{A}$ which is invariant under the translation and the nonhomogeneous term $f$ is sufficiently small in $L^{q,r}$.
\begin{lem}
\label{almost main}
For any $0<\alpha<\min\left\{2s-(\frac{n}{q}+\frac{2s}{r}),1\right\}$, there is a positive $\delta\equiv\delta(n,s,q,r,\lambda,\alpha)<1$ such that for any local weak solution $u$ to \eqref{eq1} in $Q_{4}\subset Q_{5}$
with
\begin{equation}
\label{Base assumption}
    \sup_{Q_{4}}|u|\leq1 \quad \text{and} \quad \ITail(u;0,4)\leq 1,
\end{equation}
and any $\Tilde{A}\in\mathcal{L}_{1}(\lambda;B_{4}\times B_{4}\times[-4^{2s},0])$, if  
\begin{align}\label{delta assumption}
\|A-\Tilde{A}\|_{L^{\infty}(B_{4}\times B_{4}\times [-4^{2s},0])}\leq \delta \ \mbox{and} \ \|f\|_{L^{q,r}(Q_{4})}\leq \delta,
\end{align}
then we have $u\in C^{\alpha,\frac{\alpha}{2s}}(Q_{1})$ with the estimate
\begin{equation*}
    [u]_{C^{\alpha,\frac{\alpha}{2s}}(Q_{1})}\leq c(n,s,q,r,\lambda,\alpha).
\end{equation*}
\end{lem}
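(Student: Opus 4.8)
The plan is to run a Campanato-type iteration on dyadic cubes centered at an arbitrary point $z_0 \in Q_1$, using the approximation Lemma \ref{Basestep} together with the higher H\"older regularity for the homogeneous equation with spatially-invariant kernel (Lemmas \ref{higher holder of homo 1} and \ref{higher holder of homo 2}). First I would fix $\alpha < \min\{2s - (\tfrac nq + \tfrac{2s}{r}), 1\}$ and choose an auxiliary exponent $\beta \in (\alpha, \min\{2s-(\tfrac nq + \tfrac{2s}{r}),1\})$; note also we may shrink things so that $\gamma := 2s - (\tfrac nq + \tfrac{2s}{r}) > 0$ is the gain coming from the scaling of $\|f\|_{L^{q,r}}$. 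The core claim is a decay estimate: there is $\mu \in (0,1)$ (a fixed dyadic ratio) and constants such that, for the normalized solution, one has at each scale $\rho$
\begin{equation*}
\left(\dashiint_{Q_{\mu\rho}(z_0)} |u - \overline{u}_{Q_{\mu\rho}(z_0)}|^2\right)^{1/2} \leq \mu^{\alpha}\left(\dashiint_{Q_{\rho}(z_0)} |u - \overline{u}_{Q_{\rho}(z_0)}|^2\right)^{1/2} + c\,\rho^{\alpha}\,\|f\|_{L^{q,r}},
\end{equation*}
which iterated gives the Campanato bound $\dashiint_{Q_\rho(z_0)} |u - \overline{u}_{Q_\rho(z_0)}|^2 \leq c\,\rho^{2\alpha}$ for all small $\rho$, and then Lemma \ref{Campanato embedding for fractional} yields $u \in C^{\alpha,\alpha/2s}(Q_1)$ with the asserted bound.

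To prove the one-step decay I would, at a fixed scale, rescale the cube $Q_\rho(z_0)$ to $Q_4$ via the parabolic scaling $\tilde u(x,t) = \kappa^{-1} u(\rho x + x_0, \rho^{2s} t + t_0)$ where $\kappa$ absorbs $\|u\|_{L^\infty} + \Tail$-type quantities plus the forcing contribution $\rho^{\gamma}\|f\|_{L^{q,r}}$, so that the rescaled solution satisfies the normalization $\sup_{Q_4}|\tilde u| \leq 1$, $\Tail(\tilde u; 0, 4) \leq 1$, and $\|\tilde f\|_{L^{q,r}(Q_4)} \leq \delta$; here one uses the local boundedness Theorem \ref{LBlem} to control $\|u\|_{L^\infty}$ by the $L^2$-average and the tail. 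The hypothesis \eqref{assumption closedness of A} (after rescaling, with a $\Tilde A_{\tilde z} \in \mathcal L_1$ close to $A$) gives $\|A - \Tilde A\|_{L^\infty(B_4 \times B_4 \times [-4^{2s},0])} \leq \delta$. Lemma \ref{Basestep} then produces $v$ solving the homogeneous equation $\partial_t v + \mathcal L^\Phi_{\Tilde A} v = 0$ in $Q_2$ with $v = u$ outside, satisfying $\|\tilde u - v\|_{L^\infty(Q_1)} \leq \epsilon$. Since $\Tilde A \in \mathcal L_1$ is spatially invariant, Lemmas \ref{higher holder of homo 1} and \ref{higher holder of homo 2} give $v \in C^{\beta,\beta/2s}(Q_{1/4})$ with a universal bound, hence
\begin{equation*}
\dashiint_{Q_{\mu}(z_0)} |v - \overline{v}_{Q_{\mu}(z_0)}|^2 \leq c\,\mu^{2\beta}\|v\|^2_{C^{\beta,\beta/2s}} \leq c\,\mu^{2\beta}.
\end{equation*}
Combining via the triangle inequality $|u - \overline{u}_{Q_\mu}| \leq |u - v| + |v - \overline{v}_{Q_\mu}| + |\overline{v}_{Q_\mu} - \overline{u}_{Q_\mu}|$ and absorbing, the left side of the decay inequality is bounded by $c(\mu^{2\beta} + \epsilon)$; since $\beta > \alpha$, I first choose $\mu$ small so that $c\mu^{2\beta} \leq \tfrac12 \mu^{2\alpha}$, then choose $\epsilon$ (hence $\delta$, via Lemma \ref{Basestep}) small so that $c\epsilon \leq \tfrac12 \mu^{2\alpha}$, which closes the iteration.

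The main obstacle will be the bookkeeping of the tail terms across the iteration: when passing from scale $\rho$ to $\mu\rho$ and rescaling back, the quantity $\Tail(\tilde u; 0, 4)$ must stay controlled, but the tail of the rescaled solution involves the behavior of $u$ on \emph{all} of $\mathbb{R}^n$, not just on $Q_\rho(z_0)$; the standard remedy is to track, alongside the excess $\dashiint_{Q_\rho(z_0)}|u - \overline u_{Q_\rho(z_0)}|^2$, an auxiliary "excess plus affine-corrected tail" functional and to exploit that subtracting the constant $\overline{u}_{Q_\rho(z_0)}$ from $u$ does not change the equation, so the tail only needs to be measured relative to that constant. A secondary technical point is verifying that the hypotheses of Lemma \ref{Basestep} are genuinely met at \emph{every} dyadic scale — in particular that the smallness $\|\tilde f\|_{L^{q,r}(Q_4)} \leq \delta$ persists, which is where the scaling gain $\rho^\gamma$ with $\gamma > 0$ (and the choice $\alpha < \gamma$) is essential, since $\|\tilde f\|_{L^{q,r}(Q_4)} = \rho^{\gamma}\kappa^{-1}\|f\|_{L^{q,r}(Q_{\rho}(z_0))}$ actually decreases with $\rho$. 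Once the decay estimate holds uniformly in the scale, a standard iteration plus Lemma \ref{Campanato embedding for fractional} completes the proof, with the constant depending on $n,s,q,r,\lambda,\alpha$ as claimed.
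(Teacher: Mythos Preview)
Your proposal follows the same core strategy as the paper---approximate by the homogeneous solution $v$ via Lemma~\ref{Basestep}, invoke the higher H\"older regularity of $v$ (Lemmas~\ref{higher holder of homo 1} and~\ref{higher holder of homo 2}), and iterate across scales---but the organization differs. The paper runs an $L^{\infty}$ iteration: it constructs a sequence of constants $a_i$ and proves by induction that the rescaled function $u_k(x,t)=\rho^{-\alpha k}\bigl(u(\rho^{k}x,\rho^{2sk}t)-a_k\bigr)$ satisfies $\|u_k\|_{L^\infty(Q_4)}\le 1$ together with the tail bound $\esup_t\int_{\mathbb{R}^n\setminus B_4}|u_k(y,t)|\,|y|^{-n-2s}\,dy\le 1$; at each step one sets $a_{k+1}=a_k+v(0,0)\rho^{\alpha k}$ and checks both conditions, with the tail verification done by an explicit three-piece decomposition of the integral over $\mathbb{R}^n\setminus B_4$. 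Your Campanato variant instead iterates the $L^2$ excess.

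Both routes work, but two remarks. First, the paper's $L^\infty$ scheme meshes directly with the output of Lemma~\ref{Basestep} (which gives $\|u-v\|_{L^\infty}\le\epsilon$), whereas your $L^2$ scheme must insert an application of Theorem~\ref{LBlem} at every scale to recover the hypothesis $\sup_{Q_4}|\tilde u|\le 1$ from the $L^2$ excess. Second, your one-step decay estimate as written,
\[
\Bigl(\dashiint_{Q_{\mu\rho}}|u-\overline u|^2\Bigr)^{1/2}\le \mu^{\alpha}\Bigl(\dashiint_{Q_{\rho}}|u-\overline u|^2\Bigr)^{1/2}+c\rho^{\alpha}\|f\|_{L^{q,r}},
\]
is oversimplified: the approximation error after un-rescaling is $\kappa\epsilon$ where $\kappa$ is your normalizing factor, and $\kappa$ necessarily contains the tail $\ITail(u-\overline u_{Q_\rho};z_0,\rho)$, not just the excess and the forcing. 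So the iterated quantity must be the sum of excess \emph{and} tail, exactly as you anticipate in your ``main obstacle'' paragraph---this is not optional, and the paper handles it by making the tail bound part of the induction hypothesis from the outset. Once you carry the tail along (its propagation is a geometric-series estimate using $\alpha<2s$), the argument closes; but the decay inequality you display does not close on its own.
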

\begin{proof}
\noindent
\textbf{Step 1: Regularity at the origin.} We want to show that there is a sufficiently small $\delta>0$ such that under the assumptions as in \eqref{delta assumption}, there exist $a \equiv a(n,s,q,r,\lambda,\alpha)$, $c \equiv c(n,s,q,r,\lambda,\alpha)$ and small $\rho<\frac{1}{12}$ satisfying 
\begin{equation*}
    \|u(x,t)-a\|_{L^{\infty}(Q_{\rho^{k}})}\leq c\rho^{\alpha k}, \quad k \ge 0.
\end{equation*}
To prove the statement, it is sufficient to show the following.

\medskip

\noindent
\textbf{Claim:} There exist a positive $\rho \equiv \rho(n,s,q,r,\lambda,{\alpha})<\frac{1}{12}$, $c_{1} \equiv c_{1}(n,s,q,r,\lambda,\alpha)$ and $\{a_{i}\}_{i=-1}^{\infty}$ with $a_{-1}=0$ such that 
\begin{equation}
\label{induction1}
    \sup_{Q_{4}}|u(\rho^{i}x,\rho^{2si}t)-a_{i}|\leq \rho^{\alpha i}, \text{ } |a_{i}-a_{i-1}|\leq c_{1}\rho^{\alpha i},\quad i\geq0,
\end{equation}
and
\begin{equation}
\label{induction2}
\esup_{t\in[-4^{2s},0]}\int_{\mathbb{R}^{n}\setminus B_{4}}\frac{|u\left(\rho^{i}y,\rho^{2si}t\right)-a_{i}|}{\rho^{\alpha i}|y|^{n+2s}}\dy\leq 1,\quad i\geq0.
\end{equation}

\medskip

\noindent
\textbf{Proof of the claim.} Suppose a constant $c_{1}>0$ is given, which is to be determined later. Let $\Tilde{\alpha}=\frac{\alpha+\min\{2s,1\}}{2}$ and take a small $\rho \equiv \rho(n,s,\alpha,c_{1}) < \frac{1}{12}$ such that 
\begin{equation}
\label{rho condition}
(4\rho)^{2s}+1<2^{2s},\ \rho^{2s-\alpha}\left(1+(2c_{1}+1)\frac{\omega_{n}}{2s}\right)\leq \frac{1}{8} \ \text{ and } \ c_{1}\rho^{\Tilde{\alpha}}\leq (s-\Tilde{\alpha}/2)\frac{\rho^{\alpha}}{8(1+\omega_{n})},
\end{equation} where $w_{n}$ means the surface area of n-dimensional unit sphere. Take $\epsilon=\frac{s}{2(1+\omega_{n})}\rho^{\alpha}$, then we find a suitable $\delta=\delta(n,s,q,r,\lambda,\alpha)$ as in Lemma \ref{Basestep}.
Now we extend $\Tilde{A}$ by $A$ outside $B_{4}\times B_{4}\times [-4^{2s},0]$.
Then we get
$\Tilde{A}\in\mathcal{L}_{1}(B_{4}\times B_{4}\times(-4^{2s},0))\cap\mathcal{L}_{0}(\lambda)$ and  
\begin{equation*}
\|A-\Tilde{A}\|_{L^{\infty}(\mathbb{R}^{n}\times \mathbb{R}^{n}\times [-4^{2s},0])}\leq \delta. 
\end{equation*}

\noindent {Now we construct $a_{i}$, $i\geq0$,  as follows.}
For $i=0$, set $a_{0}=0$. Then \eqref{Base assumption} directly implies \eqref{induction1} and \eqref{induction2} . Now suppose that there is $a_{i}$ satisfying \eqref{induction1} and \eqref{induction2} for $i=0,1,\ldots,k$. Set 
\begin{equation*}
    u_{k}(x,t) \coloneqq \frac{u\left(\rho^{k}x,\rho^{2sk}t\right)-a_{k}}{\rho^{\alpha k}}, \ f_{k}(x,t) \coloneqq \rho^{(2s-\alpha)k}f\left(\rho^{k}x,\rho^{2sk}t\right),{\quad(x,t)\in \mathbb{R}^{n}\times (-4^{2s},0]},
\end{equation*}
\begin{equation*}
    A_{k} \coloneqq A\left(\rho^{k}x,\rho^{k}y,\rho^{2sk}t\right), \ \Tilde{A}_{k} \coloneqq \Tilde{A}\left(\rho^{k}x,\rho^{k}y,\rho^{2sk}t\right),\quad(x,y,t)\in\mathbb{R}^{n}\times\mathbb{R}^{n}\times\mathbb{R},
\end{equation*}
and
\begin{equation*}
    \Phi_{k}(\xi) \coloneqq \frac{1}{\rho^{\alpha k}}\Phi\left(\rho^{\alpha k}\xi\right),\quad \xi\in\mathbb{R}.
\end{equation*}
Then $u_{k}$ is a local weak solution to
\begin{equation*}
    \partial_{t}u_{k}+\mathcal{L}_{A_{k}}^{\Phi_{k}}u_{k}=f_{k} \text{ in } Q_{4}
\end{equation*}
such that
\begin{equation}\label{uk condition}
    \|u_{k}\|_{L^{\infty}(Q_{4})}\leq1\text{ and }\ITail(u_{k};0,4)\leq 1.
\end{equation}
Moreover, $A_{k},\Tilde{A}_{k}\in\mathcal{L}_{0}(\lambda)$ and $f_{k}$ satisfy 
\begin{equation*}
    \|A_{k}-\Tilde{A}_{k}\|_{L^{\infty}(\mathbb{R}^{n}\times \mathbb{R}^{n}\times[-4^{2s},0])}\leq\delta,
\end{equation*}
and
\begin{equation*}
    \|f_{k}\|_{L^{q,r}(Q_{4})}\leq \rho^{(2s-\alpha)k-\left(\frac{n}{q}+\frac{2s}{r}\right)k}\|f\|_{L^{q,r}(Q_{4\rho^{k}})}\leq \delta.
\end{equation*}
Here we used $2s-\left(\frac{n}{q}+\frac{2s}{r}\right)>\alpha$ and $\rho<1$.
By Lemma \ref{Basestep} with $\epsilon=\frac{s}{2(1+\omega_{n})}\rho^{\alpha}$, there exists a weak solution $v$ to
\begin{align*}
   \left\{
\begin{alignedat}{3}
\partial_{t}v+\mathcal{L}^{\Phi_{k}}_{\Tilde{A}_{k}}v &=0 &&\qquad \mbox{in  $Q_{2}$} \\
v&=u_{k}&&\qquad  \mbox{on  $\partial_{P}Q_{2}\cup\left(\left(\mathbb{R}^{n}\setminus B_{2}\right)\times[-2^{2s},0]\right)$} 
\end{alignedat} \right. 
\end{align*}
with 
\begin{equation}
\label{condition alpha}
    \|u_{k}-v\|_{L^{\infty}(Q_{1})}\leq \frac{s}{2(1+\omega_{n})}\rho^{\alpha}.
\end{equation}
A similar calculation as in \eqref{wk estimate} shows
\begin{equation*}
    \dashiint_{Q_{2}}v^{2}\dx\dt\leq {c}\left[\dashiint_{Q_{2}}(u_{k}-v)^{2}\dx\dt+\dashiint_{Q_{2}}u_{k}^{2}\dx\dt \right]\leq c,
\end{equation*}
where $c \equiv c(n,s,q,r,\lambda)$ is independent on $k$.
Moreover, \eqref{uk condition} implies
\begin{align*}
     \ITail(v;0,1,[-2^{2s},0]) \le \ITail(u_{k};0,1,[-2^{2s},0]) + \ITail(u_{k}-v;0,1,[-2^{2s},0]) \le c,
\end{align*}
where $c \equiv c(n,s,q,r,\lambda)$ is also independent on $k$.
Then in light of Theorem \ref{LBlem}, Lemma \ref{higher holder of homo 1} and Lemma \ref{higher holder of homo 2}, we can take $c_{1}=c_{1}(n,s,q,r,\lambda,{\alpha})$, which is independent on $k$, so that
\begin{equation}
\label{Holder for v}
     |v(0,0)|\leq c_{1} \text{ and }[v]_{C^{\frac{\Tilde{\alpha}}{2}}(\overline{Q_{1}})}\leq c_{1}.
\end{equation}
Note that 
\begin{equation*}
\begin{aligned}
\|v(x,t)-v(0,0)\|_{L^{\infty}(Q_{4\rho})}\leq c_{1}\rho^{\Tilde{\alpha}}
\end{aligned}
\end{equation*}
to discover
\begin{equation*}
\begin{aligned}
\|u_{k}(x,t)-v(0,0)\|_{L^{\infty}(Q_{4\rho})}\leq\|u_{k}(x,t)-v(x,t)\|_{L^{\infty}(Q_{4\rho})}+\|v(x,t)-v(0,0)\|_{L^{\infty}(Q_{4\rho})}\leq c\rho^{\alpha},
\end{aligned}
\end{equation*}
where we have used \eqref{rho condition} and \eqref{condition alpha}.
Take $a_{k+1}=a_{k}+v(0,0)\rho^{\alpha k}$. Then \eqref{induction1} also holds for $i=k+1$.
Furthermore, we estimate
\begin{align*}
    &\esup_{t\in[-4^{2s},0]}\int_{\mathbb{R}^{n}\setminus B_{4}}\frac{\left|u\left(\rho^{k+1}y,\rho^{2s(k+1)}t\right)-a_{k+1}\right|}{\rho^{\alpha(k+1)}|y|^{n+2s}}\dy\\
    &=\esup_{t\in[-4^{2s},0]}\int_{\mathbb{R}^{n}\setminus B_{\frac{4}{\rho}}}\frac{\left|u\left(\rho^{k+1}y,\rho^{2s(k+1)}t\right)-a_{k+1}\right|}{\rho^{\alpha(k+1)}|y|^{n+2s}}\dy\\
    &\quad+\esup_{t\in[-4^{2s},0]}\int_{B_{\frac{4}{\rho}}\setminus B_{\frac{1}{\rho}}}\frac{\left|u\left(\rho^{k+1}y,\rho^{2s(k+1)}t\right)-a_{k+1}\right|}{\rho^{\alpha(k+1)}|y|^{n+2s}}\dy\\
    &\quad+\esup_{t\in[-4^{2s},0]}\int_{B_{\frac{1}{\rho}}\setminus B_{4}}\frac{\left|u\left(\rho^{k+1}y,\rho^{2s(k+1)}t\right)-a_{k+1}\right|}{\rho^{\alpha(k+1)}|y|^{n+2s}}\dy\eqqcolon \rom{1} + \rom{2} + \rom{3}.
\end{align*}
Then using \eqref{induction1} for $i=k+1$ and \eqref{induction2} for $i=k$, we have
\begin{align*}
    \rom{1}&=\rho^{2s-\alpha}\esup_{t\in\left[-(4\rho)^{2s},0\right]}\int_{\mathbb{R}^{n}\setminus B_{4}}\frac{\left|u\left(\rho^{k}y,\rho^{2sk}t\right)-a_{k+1}\right|}{\rho^{\alpha k}|y|^{n+2s}}\dy\\
    &\leq\rho^{2s-\alpha}\left(\esup_{t\in\left[-4^{2s},0\right]}\int_{\mathbb{R}^{n}\setminus B_{4}}\frac{\left|u\left(\rho^{k}y,\rho^{2sk}t\right)-a_{k}\right|}{\rho^{\alpha k}|y|^{n+2s}}\dy+\esup_{t\in\left[-4^{2s},0\right]}\int_{\mathbb{R}^{n}\setminus B_{4}}\frac{|a_{k}-a_{k+1}|}{\rho^{\alpha k}|y|^{n+2s}}\dy\right)\\
    &\leq\rho^{2s-\alpha}\left(1+c_{1}\frac{\omega_{n}}{2s}\right),
\end{align*}
and 
\begin{align*}
\rom{2}&\leq\rho^{2s-\alpha}\left(\esup_{t\in\left[-4^{2s},0\right]}\int_{B_{4}\setminus B_{1}}\frac{\left|u\left(\rho^{k}y,\rho^{2sk}t\right)-a_{k}\right|}{\rho^{\alpha k}|y|^{n+2s}}\dy+\esup_{t\in\left[-4^{2s},0\right]}\int_{B_{4}\setminus B_{1}}\frac{|a_{k}-a_{k+1}|}{\rho^{\alpha k}|y|^{n+2s}}\dy\right)\\
&\leq \rho^{2s-\alpha}(c_{1}+1)\frac{\omega_{n}}{2s}.
\end{align*}
In addition, using \eqref{rho condition}, \eqref{condition alpha} and \eqref{Holder for v}, we obtain
\begin{align*}
    \rom{3}&\leq\rho^{2s-\alpha}\left(\esup_{t\in\left[-(4\rho)^{2s},0\right]}\int_{B_{1}\setminus B_{4\rho}}\frac{\left|u\left(\rho^{k}y,\rho^{2sk}t\right)-\left(a_{k}+\rho^{\alpha k}v(y,t)\right)\right|}{\rho^{\alpha k}|y|^{n+2s}}\dy\right)\\
    &\quad+\rho^{2s-\alpha}\left(\esup_{t\in\left[-(4\rho)^{2s},0\right]}\int_{B_{1}\setminus B_{4\rho}}\frac{\left|\rho^{\alpha k}v(y,t)-\rho^{\alpha k}v(0,0)\right|}{\rho^{\alpha k}|y|^{n+2s}}\dy\right)\\
    &\leq \rho^{2s-\alpha}\left(\esup_{t\in\left[-(4\rho)^{2s},0\right]}\int_{B_{1}\setminus B_{4\rho}}\frac{\left|u_{k}(y,t)-v(y,t)\right|}{|y|^{n+2s}}\dy + \esup_{t\in\left[-(4\rho)^{2s},0\right]}\int_{B_{1}\setminus B_{4\rho}}\frac{c_{1}}{|y|^{n+2s-\Tilde{\alpha}}}\dy\right)\\
    &\leq \frac{\omega_{n}}{4^{2s+1}(1+\omega_{n})}+c_{1}\frac{\rho^{\Tilde{\alpha}-\alpha}\omega_{n}}{2s-\Tilde{\alpha}}\leq\frac{3}{8}.
 \end{align*}
We combine the estimates $\rom{1}$, $\rom{2}$, $\rom{3}$ and \eqref{rho condition} to find
\begin{equation*}
    \esup_{t\in[-4^{2s},0]}\int_{\mathbb{R}^{n}\setminus B_{4}}\frac{\left|u\left(\rho^{k+1}y,\rho^{2s(k+1)}t\right)-a_{k+1}\right|}{\rho^{(k+1)\alpha}|y|^{n+2s}}\dy\\
\leq \rho^{2s-\alpha}\left(1+(2c_{1}+1)\frac{\omega_{n}}{2s}\right)+\frac{3}{8}\leq 1.
\end{equation*}
This completes the proof of the claim.

\medskip

By \eqref{induction1}, $a_{i}$ converges to some $a\in\mathbb{R}$ and
\begin{equation*}
\|u(x,t)-a\|_{L^{\infty}(Q_{\rho^{i}})}\leq c\rho^{\alpha i},\quad i \ge 0.
\end{equation*}
This finishes Step 1.

\medskip

\noindent
\textbf{Step 2: Regularity in $Q_{1}$.} Let $z_{0}\in Q_{1}$. Then $Q_{4\rho}(z_{0})\Subset Q_{2}$ from \eqref{rho condition}. Now set 
\begin{equation*}
    \Tilde{u}(x,t) \coloneqq u\left(\rho x+x_{0},\rho^{2s} t+t_{0}\right),\quad f_{1}(x,t) \coloneqq \rho^{2s}f\left(\rho x+x_{0},\rho^{2s} t+t_{0}\right),\quad(x,t)\in Q_{4},
\end{equation*}
and
\begin{align*}
    A_{1}(x,y,t) \coloneqq A\left(\rho x+x_{0},\rho y+y_{0},\rho^{2s} t+t_{0}\right),\quad (x,y,t)\in\mathbb{R}^{2n}\times\mathbb{R}.
\end{align*}
Then $\Tilde{u}$ is a local weak solution to 
\[\partial_{t}\Tilde{u}+\mathcal{L}^{\Phi}_{A_{1}}\Tilde{u}=f_{1}\text{ in } Q_{4}.\]
Moreover, \eqref{rho condition} implies
\[\sup_{Q_{4}}|\Tilde{u}|\leq1 \text{ and }\ITail(\Tilde{u};0,4)\leq 1.\]
From the result of Step 1, there is some constant $a\in\mathbb{R}$ such that 
\begin{align*}
    \|\Tilde{u}(x,t)-a\|_{L^{\infty}(Q_{\rho^{k}})}\leq c\rho^{\alpha k}, \quad k \ge 0,
\end{align*}
which implies 
\begin{align}
\label{holder for other points}
    \|u(x,t)-a\|_{L^{\infty}(Q_{\rho^{k+1}}(z_{0}))}\leq c\rho^{\alpha(k+1)}, \quad k \ge 0.
\end{align}
{Let $(x_{0},t_{0}),(x_{1},t_{1})\in Q_{1}$. We may assume $t_{0}<t_{1}$. Then there is a nonnegative integer $k\geq0$ such that $(x_{0},t_{0})\in Q_{\rho^{k}}(x_{1},t_{1})\setminus Q_{\rho^{k+1}}(x_{1},t_{1})$, which implies that
\begin{equation}
\label{distx1x2}
\rho^{k+1}<|x_{0}-x_{1}|+|t_{0}-t_{1}|^{\frac{1}{2s}}\leq 2\rho^{k}.
\end{equation} From \eqref{holder for other points}, there is a constant $a_{1}\in\mathbb{R}$ such that
\begin{equation*}
    \|u(x,t)-a_{1}\|_{L^{\infty}(Q_{\rho^{k}}(x_{1},t_{1}))}\leq c\rho^{\alpha k}.
\end{equation*}
Therefore, we have 
\begin{equation*}
    |u(x_{0},t_{0})-u(x_{1},t_{1})|\leq |u(x_{0},t_{0})-a_{1}|+|a_{1}-u(x_{1},t_{1})|\leq 2\|u(x,t)-a_{1}\|_{L^{\infty}(Q_{\rho^{k}}(x_{1},t_{1}))}\leq c\rho^{\alpha k}.
\end{equation*}
We combine the above inequality and \eqref{distx1x2} to see that
\begin{equation}
\label{lasthi}
    |u(x_{0},t_{0})-u(x_{1},t_{1})|\leq c\left(|x_{0}-x_{1}|+|t_{0}-t_{1}|^{\frac{1}{2s}}\right)^{\alpha}.
\end{equation}
Since we have proved \eqref{lasthi} for any $(x_{0},t_{0})$ and $(x_{1},t_{1})\in Q_{1}$, we conclude that
$u\in C^{\alpha,\frac{\alpha}{2s}}(Q_{1})$ with the estimate 
\begin{equation*}
    [u]_{C^{\alpha,\frac{\alpha}{2s}}(Q_{1})}\leq c.
\end{equation*}}
\end{proof}
Now we are ready to prove the main result.

\noindent
\textbf{Proof of Theorem \ref{Holder}.} 
Let $\delta>0$ be a small number to be determined later, depending on $\alpha$.
Let $Q_{\rho_{0}}(z_{0})\Subset\Omega_{T}$. Set 
{\begin{equation}
\label{rhocondforhod}
    \rho=\min\left\{\frac{\rho_{0}}{32},\frac{1}{8}\left(\left(\frac{3}{4}\right)^{2s}-\left(\frac{1}{2}\right)^{2s}\right)^{\frac{1}{2s}}\rho_{0}\right\}
\end{equation}}so that $Q_{4\rho}(\Tilde{z})\Subset Q_{\frac{3\rho_{0}}{4}}(z_{0})$ for any $\Tilde{z}\in Q_{\frac{\rho_{0}}{2}}(z_{0})$. Fix $\Tilde{z}\in Q_{\frac{\rho_{0}}{2}}(z_{0})$. According to the assumption \eqref{assumption closedness of A}, there exist $0<\Tilde{\rho}_{\Tilde{z}}\leq\min\{\rho,\frac{\rho_{\Tilde{z}}}{4}\}$ and $\Tilde{A}_{\Tilde{z}}\in \mathcal{L}_{1}\left(\lambda;B_{\rho_{\Tilde{z}}}(\Tilde{x})\times B_{\rho_{\Tilde{z}}}(\Tilde{x})\times [\Tilde{t}-\rho_{\Tilde{z}}^{2s},\Tilde{t}]\right)$ such that
\begin{equation*}
\|\Tilde{A}_{\Tilde{z}}-A\|_{L^{\infty}\left(B_{4\Tilde{\rho}_{\Tilde{z}}}(\Tilde{x})\times B_{4\Tilde{\rho}_{\Tilde{z}}}(\Tilde{x})\times \left[\Tilde{t}-\left(4\Tilde{\rho}_{\Tilde{z}}\right)^{2s},\Tilde{t}\right]\right)}\leq \delta.
\end{equation*}
We write
\[M_{0}=\|u\|_{L^{\infty}(Q_{4\Tilde{\rho}_{\Tilde{z}}}(\Tilde{z}))}+\ITail\left(u;\Tilde{z},4\Tilde{\rho}_{\Tilde{z}}\right)+\frac{\left(4\Tilde{\rho}_{\Tilde{z}}\right)^{2s-\left(\frac{n}{q}+\frac{2s}{r}\right)}}{\delta}\|f\|_{L^{q,r}(Q_{4\Tilde{\rho}_{\Tilde{z}}}(\Tilde{z}))}\]
to define
\begin{equation*}
    \Tilde{u}(x,t)=\frac{u\left(\Tilde{\rho}_{\Tilde{z}}x+\Tilde{x}, \left(\Tilde{\rho}_{\Tilde{z}}\right)^{2s}t+\Tilde{t}\right)}{M_{0}},\quad f_{1}(x,t)=\frac{\left(\Tilde{\rho}_{\Tilde{z}}\right)^{2s}f\left(\Tilde{\rho}_{\Tilde{z}}x+\Tilde{x},\left(\Tilde{\rho}_{\Tilde{z}}\right)^{2s}t+\Tilde{t}\right)}{M_{0}},\quad (x,t)\in Q_{4},
\end{equation*}
\begin{equation*}
    A_{1}(x,y,t)=A\left(\Tilde{\rho}_{\Tilde{z}}x+\Tilde{x},\Tilde{\rho}_{\Tilde{z}}y+\Tilde{x},\left(\Tilde{\rho}_{\Tilde{z}}\right)^{2s}t+\Tilde{t}\right),\quad (x,y,t)\in\mathbb{R}^{2n}\times\mathbb{R},
\end{equation*}
\begin{equation*}
    \Tilde{A}_{\Tilde{z},1}(x,y,t)=\Tilde{A}\left(\Tilde{\rho}_{\Tilde{z}}x+\Tilde{x},\Tilde{\rho}_{\Tilde{z}}y+\Tilde{x},\left(\Tilde{\rho}_{\Tilde{z}}\right)^{2s}t+\Tilde{t}\right),\quad (x,y,t)\in\mathbb{R}^{2n}\times\mathbb{R},
\end{equation*}
and
\begin{equation*}
    \Phi_{1}(\xi)=\frac{\Phi(M_{0}\xi)}{M_{0}},\quad \xi\in\mathbb{R}.
\end{equation*} 
Then $\Tilde{u}$ is a local weak solution to
\begin{equation*}
    \partial_{t}\Tilde{u}+\mathcal{L}_{A_{1}}^{\Phi_{1}}\Tilde{u}=f_{1} \text{ in }{Q_{4}}
\end{equation*}
with 
\begin{equation*}
    \sup_{Q_{4}}|\Tilde{u}|\leq1 \text{ and }\ITail(\Tilde{u};0,4)\leq 1,
\end{equation*}
where $\Phi_{1}$ satisfies \eqref{Phi condition}.
Also we directly check
\begin{align*}
\|\Tilde{A}_{\Tilde{z},1}-A_{1}\|_{L^{\infty}\left(B_{4} \times B_{4} \times \left[-4^{2s},0 \right]\right)}\leq \delta \ \mbox{and} \ \|f\|_{L^{q,r}(Q_{4})} \le \delta.
\end{align*}
We are now under the assumptions and settings in Lemma \ref{almost main}, which implies that there is a positive constant $\delta(n,s,q,r,\lambda,\alpha)$ such that
\begin{equation*}
 [\Tilde{u}]_{C^{\alpha,\frac{\alpha}{2s}}(Q_{1})}\leq c(n,s,q,r,\lambda,\alpha).
\end{equation*}
Therefore, scaling back, we have
\begin{align*}
[u]_{C^{\alpha,\frac{\alpha}{2s}}(Q_{\Tilde{\rho}_{\Tilde{z}}}(\Tilde{z}))}&\leq \frac{c}{\Tilde{\rho}_{\Tilde{z}}^{\alpha}}\Bigg(\|u\|_{L^{\infty}(Q_{4\Tilde{\rho}_{\Tilde{z}}}(\Tilde{z}))}+\ITail(u;\Tilde{z}, 4\Tilde{\rho}_{\Tilde{z}})+(\Tilde{\rho}_{\Tilde{z}})^{2s-\left(\frac{n}{q}+\frac{2s}{r}\right)}\|f\|_{L^{q,r}(Q_{4\Tilde{\rho}_{\Tilde{z}}}(\Tilde{z}))}\Bigg).
\end{align*}
Let us see the second term of the right hand side. For fixed $t \in \left[t_{0}-(\frac{3}{4}\rho_{0})^{2s},t_{0}\right]$,

\begin{align}\label{main tail}
\begin{split}
\int_{\mathbb{R}^{n}\setminus B_{4\Tilde{\rho}_{\Tilde{z}}}(\Tilde{x})}\frac{|u(y,t)|}{|\Tilde{x}-y|^{n+2s}}\dy
&\le \int_{\mathbb{R}^{n}\setminus B_{\frac{3\rho_{0}}{4}}(x_{0})}\frac{|u(y,t)|}{|\Tilde{x}-y|^{n+2s}}\dy
+ \int_{B_{\frac{3\rho_{0}}{4}}(x_{0}) \setminus B_{4\Tilde{\rho}_{\Tilde{z}}}(\Tilde{x})}\frac{|u(y,t)|}{|\Tilde{x}-y|^{n+2s}}\dy\\
&\le \int_{\mathbb{R}^{n}\setminus B_{\frac{3\rho_{0}}{4}}(x_{0})}\frac{|u(y,t)|}{|x_{0}-y|^{n+2s}}\dy
+ c\|u\|_{L^{\infty}(Q_{\frac{3\rho_{0}}{4}}(z_{0}))}\Tilde{\rho}_{\Tilde{z}}^{-2s},
\end{split}
\end{align}
where we have used 
\begin{equation*}
    |\Tilde{x}-y|\geq|x_{0}-y|-|\Tilde{x}-x_{0}|{\geq} c|x_{0}-y|,\quad y\in\mathbb{R}^{n}\setminus Q_{\frac{3\rho_{0}}{4}}(z_{0}).
\end{equation*}
Thus
\begin{align*}
\ITail(u;\Tilde{z}, 4\Tilde{\rho}_{\Tilde{z}}) \le c\|u\|_{L^{\infty}(Q_{\frac{3\rho_{0}}{4}}(z_{0}))}+c\rho_{0}^{2s}\esup_{t\in\left[t_{0}-(\frac{3}{4}\rho_{0})^{2s},t_{0}\right]}\int_{\mathbb{R}^{n}\setminus B_{\frac{3\rho_{0}}{4}}(x_{0})}\frac{|u(y,t)|}{|x_{0}-y|^{n+2s}}\dy.
\end{align*}
In turn, we use \eqref{main tail}$, Q_{4\Tilde{\rho}_{\Tilde{z}}}(\Tilde{z})\Subset Q_{\frac{3\rho_{0}}{4}}(z_{0})$ and Theorem \ref{LBlem} with a simple modification to derive
\begin{align}\label{covering holder}
\begin{split}
    [u]_{C^{\alpha,\frac{\alpha}{2s}}(Q_{\Tilde{\rho}_{\Tilde{z}}}(\Tilde{z}))}&\leq\frac{c}{\Tilde{\rho}_{\Tilde{z}}^{\alpha}}\Bigg(\|u\|_{L^{\infty}(Q_{\frac{3\rho_{0}}{4}}(z_{0}))}+\rho_{0}^{2s}\esup_{t\in\left[t_{0}-(\frac{3}{4}\rho_{0})^{2s},t_{0}\right]}\int_{\mathbb{R}^{n}\setminus B_{\frac{3\rho_{0}}{4}}(x_{0})}\frac{|u(y,t)|}{|x_{0}-y|^{n+2s}}\dy\\
    &\qquad+\rho_{\Tilde{z}}^{2s-\left(\frac{n}{q}+\frac{2s}{r}\right)}\|f\|_{L^{q,r}(Q_{\frac{3}{4}\rho_{0}}(\Tilde{z}))}\Bigg)\\
    &\leq \frac{c}{\Tilde{\rho}_{\Tilde{z}}^{\alpha}}\Bigg(\left(\dashiint_{Q_{\rho_{0}}(z_{0})}u^{2}(x,t)\dx\dt\right)^{\frac{1}{2}}+\rho_{0}^{2s}\esup_{t\in\left[t_{0}-(\rho_{0})^{2s},t_{0}\right]}\int_{\mathbb{R}^{n}\setminus B_{\frac{\rho_{0}}{2}}(x_{0})}\frac{|u(y,t)|}{|x_{0}-y|^{n+2s}}\dy\\
    &\qquad+\rho_{0}^{2s-\left(\frac{n}{q}+\frac{2s}{r}\right)}\|f\|_{L^{q,r}(Q_{\rho_{0}}(z_{0}))}\Bigg).
\end{split}
\end{align}
 Since $Q_{\frac{\rho_{0}}{2}}(z_{0})$ is compact, there is a finite subcover $\left\{Q_{\frac{\Tilde{\rho}_{\Tilde{z}_{i}}}{4}}(\Tilde{z}_{i})\right\}_{i=1}^{N}$ of $Q_{\frac{\rho_{0}}{2}}(z_{0})$ and we choose  
\begin{equation}\label{rhomincond}
    \rho_{\min}\coloneqq\min_{i=1,2,\ldots,N}\Tilde{\rho}_{\Tilde{z}_{i}}>0.
\end{equation}
Fix $z_{1}=(x_{1},t_{1}),z_{2}=(x_{2},t_{2})\in Q_{\frac{\rho_{0}}{2}}(z_{0})$. 
{Let us assume 
\begin{equation}
\label{distx1x2t1t2}
\max\left\{4|x_{1}-x_{2}|,\left(\frac{4^{2s}}{2^{2s}-1}|t_{1}-t_{2}|\right)^{\frac{1}{2s}}\right\}<\rho_{\min}.
\end{equation} We note that there exists $i$ such that $z_{1}\in Q_{\frac{\Tilde{\rho}_{\Tilde{z}_{i}}}{4}}(\Tilde{z}_{i})$. We are going to show $z_{2}\in Q_{\frac{\Tilde{\rho}_{\Tilde{z}_{i}}}{2}}(\Tilde{z}_{i})$. By \eqref{distx1x2t1t2} and \eqref{rhomincond}, we have
\begin{equation*}
    |x_{2}-\tilde{x}_{i}|\leq |x_{2}-x_{1}|+|x_{1}-\tilde{x}_{i}|<\frac{\rho_{\min}}{4}+\frac{\Tilde{\rho}_{\Tilde{z}_{i}}}{4}\leq \frac{\Tilde{\rho}_{\Tilde{z}_{i}}}{2}
\end{equation*}
and
\begin{equation*}
    |t_{2}-\tilde{t}_{i}|\leq |t_{2}-t_{1}|+|t_{1}-\tilde{t}_{i}|<\frac{(2^{2s}-1)\rho_{\min}^{2s}}{4^{2s}}+\left(\frac{\Tilde{\rho}_{\Tilde{z}_{i}}}{4}\right)^{2s}\leq \left(\frac{\Tilde{\rho}_{\Tilde{z}_{i}}}{2}\right)^{2s},
\end{equation*}
which implies $z_{2}\in  Q_{\frac{\Tilde{\rho}_{\Tilde{z}_{i}}}{2}}(\Tilde{z}_{i}) $.}
Thus \eqref{covering holder} yields
\begin{equation}
\label{holder 1}    
\begin{aligned}
    \frac{|u(z_{1})-u(z_{2})|}{|x_{1}-x_{2}|^{\alpha}+|t_{1}-t_{2}|^{\frac{\alpha}{2s}}}\leq c\Bigg(&\left(\dashiint_{Q_{\rho_{0}}(z_{0})}u^{2}(x,t)\dx\dt\right)^{\frac{1}{2}}+\ITail(u;z_{0},\rho_{0}/2,\rho_{0}^{2s})\\
    &+\rho_{0}^{2s-\left(\frac{n}{q}+\frac{2s}{r}\right)}\|f\|_{L^{q,r}(Q_{\rho_{0}}(z_{0}))}\Bigg).
\end{aligned}
\end{equation}
 On the other hand, if $\max\left\{4|x_{1}-x_{2}|,\left(\frac{4^{2s}}{2^{2s}-1}|t_{1}-t_{2}|\right)^{\frac{1}{2s}}\right\}\geq\rho_{\min}$, then we deduce that
\begin{equation}
\label{holder 2}    
\begin{aligned}
    \frac{|u(z_{1})-u(z_{2})|}{|x_{1}-x_{2}|^{\alpha}+|t_{1}-t_{2}|^{\frac{\alpha}{2s}}}&\leq c\|u\|_{L^{\infty}(Q_{\frac{\rho_{0}}{2}}(z_{0}))}\\
    &\le c\Bigg(\left(\dashiint_{Q_{\rho_{0}}(z_{0})}u^{2}(x,t)\dx\dt\right)^{\frac{1}{2}}+\ITail(u;z_{0},\rho_{0}/2,\rho_{0}^{2s})\\
    &\qquad +\rho_{0}^{2s-\left(\frac{n}{q}+\frac{2s}{r}\right)}\|f\|_{L^{q,r}(Q_{\rho_{0}}(z_{0}))}\Bigg).
\end{aligned}
\end{equation}
From \eqref{holder 1} and \eqref{holder 2}, we see that
\begin{align*}
    [u]_{C^{\alpha,\frac{\alpha}{2s}}(Q_{\frac{\rho_{0}}{2}}(z_{0}))}&\leq c\Bigg(\rho_{0}^{-\frac{n+2s}{2}}\|u\|_{L^{2}(Q_{\rho_{0}}(z_{0}))}+\ITail(u;z_{0},\rho_{0}/2,\rho_{0}^{2s})\\
    &\qquad\quad+\rho_{0}^{2s-\left(\frac{n}{q}+\frac{2s}{r}\right)}\|f\|_{L^{q,r}(Q_{\rho_{0}}(z_{0}))}\Bigg).
\end{align*}
Since $Q_{\rho_{0}}(z_{0})$ is chosen arbitrary, we have $u\in C_{\loc}^{\alpha,\frac{\alpha}{2s}}(\Omega_{T})$.
\qed

{In particular, we obtain the following higher H\"older regularity result with a more refined estimate \eqref{holderrefinedestimate} if the kernel coefficient $A$ is H\"older continuous.
\begin{cor}
\label{main theorem}
Let $u$ be a local weak solution to \eqref{eq1} with \eqref{forcing term}.
Suppose that a kernel coefficient $A$ satisfies
\begin{align}
\label{holdercontiassump}
    \frac{|A(x,y,t)-A(x',y',t')|}{\left(|(x,y)-(x',y')|+|t-t'|^{\frac{1}{2s}}\right)^{\beta}}\leq L, \quad x,x',y,y'\in\Omega\text{ and }t,t'\in(0,T),
\end{align}
for some constants $\beta\in(0,1)$ and $L>0$.
Then $u\in C^{\alpha,\frac{\alpha}{2s}}_{\loc}(\Omega_{T})$ for any $\alpha$ satisfying \eqref{alphacondthm}. In particular, there is a sufficiently small $\rho_{\beta}=\rho_{\beta}(n,s,q,r,\lambda,\alpha,\beta,L)$ such that for any $Q_{\rho_{0}}(z_{0})\Subset\Omega_{T}$ with $\rho_{0}\leq\rho_{\beta}$ , we have
\begin{equation}
\label{holderrefinedestimate}
\begin{aligned}
[u]_{C^{\alpha,\frac{\alpha}{2s}}(Q_{\rho_{0}/2}(z_{0}))}&\leq \frac{c}{\rho_{0}^{\alpha}}\Bigg(\rho_{0}^{-\frac{n+2s}{2}}\|u\|_{L^{2}(Q_{\rho_{0}}(z_{0}))}+\ITail(u;z_{0},\rho_{0}/2,\rho_{0}^{2s})\\
&\qquad\quad+\rho_{0}^{2s-\left(\frac{n}{q}+\frac{2s}{r}\right)}\|f\|_{L^{q,r}(Q_{\rho_{0}}(z_{0}))}\Bigg),
\end{aligned}
\end{equation}
where $c \equiv c(n,s,q,r,\lambda,\alpha)$.
\end{cor}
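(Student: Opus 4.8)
The plan is to verify that the Hölder continuity assumption \eqref{holdercontiassump} is a particular instance of the structural closeness condition \eqref{assumption closedness of A} on the kernel, with a quantitatively controlled choice of the radius $\rho_{\tilde z}$, and then invoke Theorem \ref{Holder} together with a careful tracking of constants through the scaling argument in its proof. The point is that, unlike in the general statement of Theorem \ref{Holder}, the radius can be chosen \emph{uniformly} in $\tilde z$ when $A$ is Hölder continuous, which removes the dependence of the final constant on the family $\{\rho_z\}_{z}$.

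First I would fix $\alpha$ satisfying \eqref{alphacondthm} and let $\delta = \delta(n,s,q,r,\lambda,\alpha)>0$ be the constant produced by Theorem \ref{Holder}. Given any $\tilde z = (\tilde x, \tilde t)\in\Omega_T$, I would set
\begin{equation*}
\tilde A_{\tilde z}(x,y,t) = A(\tilde x,\tilde x,\tilde t) \quad\text{for }(x,y,t)\in B_{\rho_{\tilde z}}(\tilde x)\times B_{\rho_{\tilde z}}(\tilde x)\times[\tilde t-\rho_{\tilde z}^{2s},\tilde t],
\end{equation*}
which trivially lies in $\mathcal{L}_1(\lambda;\cdot)$ since it is constant, hence of the form $a(x-y,t)$. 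For $(x,y,t)$ in that cylinder one has, using \eqref{holdercontiassump},
\begin{equation*}
|\tilde A_{\tilde z}(x,y,t)-A(x,y,t)| = |A(\tilde x,\tilde x,\tilde t)-A(x,y,t)| \le L\left(|(x,y)-(\tilde x,\tilde x)|+|t-\tilde t|^{\frac{1}{2s}}\right)^{\beta}\le L(3\rho_{\tilde z})^{\beta}.
\end{equation*}
Thus choosing $\rho_{\beta} = \rho_{\beta}(n,s,q,r,\lambda,\alpha,\beta,L)$ small enough that $L(3\rho_{\beta})^{\beta}\le\delta$ and restricting to $\rho_{\tilde z}\le\rho_{\beta}$, the assumption \eqref{assumption closedness of A} is satisfied with this \emph{same} $\rho_{\tilde z}$ for every $\tilde z$. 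Consequently $u\in C^{\alpha,\frac{\alpha}{2s}}_{\loc}(\Omega_T)$ by Theorem \ref{Holder}.

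For the refined estimate \eqref{holderrefinedestimate} I would not quote the conclusion of Theorem \ref{Holder} verbatim (whose constant carries the $\{\rho_z\}$ dependence), but rather re-run the covering argument at the end of its proof with the uniform radius. Concretely, fix $Q_{\rho_0}(z_0)\Subset\Omega_T$ with $\rho_0\le\rho_\beta$ and choose $\rho$ as in \eqref{rhocondforhod}. For each $\tilde z\in Q_{\rho_0/2}(z_0)$ take $\tilde\rho_{\tilde z} = \min\{\rho,\rho_\beta/4\}$, which is now a fixed number depending only on $\rho_0$ and the structural data, and apply the normalization and Lemma \ref{almost main} exactly as in the proof of Theorem \ref{Holder} to get the scaled estimate \eqref{covering holder} with $\tilde\rho_{\tilde z}$ in place of a $\tilde z$-dependent radius. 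Since $\tilde\rho_{\tilde z}$ is comparable to $\rho_0$ (up to a constant depending only on $n,s,\beta,L$ and $\alpha$ through $\rho_\beta$), one absorbs the factor $\tilde\rho_{\tilde z}^{-\alpha}$ into $\rho_0^{-\alpha}$ and then patches the local pieces together via $\rho_{\min}$ as in \eqref{rhomincond}–\eqref{holder 2}, using Theorem \ref{LBlem} for the large-scale case. Tracking constants, $\rho_{\min}$ is now bounded below by a fixed multiple of $\rho_0$, so the final constant is $c\equiv c(n,s,q,r,\lambda,\alpha)$ (the dependence on $\beta,L$ having been pushed entirely into the threshold $\rho_\beta$), and we arrive at \eqref{holderrefinedestimate}.

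The main obstacle is bookkeeping rather than any new idea: one must make sure that in the covering step the number of cylinders and the lower bound $\rho_{\min}$ do not secretly reintroduce a dependence on the position $z_0$ or on the geometry beyond $\rho_0$. This is handled precisely because the Hölder hypothesis gives a \emph{single} admissible radius $\rho_\beta$ valid at every point, so the cover of the compact set $\overline{Q_{\rho_0/2}(z_0)}$ can be taken by cylinders all of the same radius $\tilde\rho_{\tilde z}\simeq\rho_0$, making the count and $\rho_{\min}$ depend only on $n$ and the ratio, hence only on the allowed parameters. Everything else — the scaling identities, the tail splitting as in \eqref{main tail}, and the passage from the $L^\infty$ bound to the $L^2$ bound via Theorem \ref{LBlem} — is identical to what appears in the proof of Theorem \ref{Holder}.
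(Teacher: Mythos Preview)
Your proposal is correct and follows essentially the same approach as the paper's proof: freeze the kernel at the diagonal value $A(\tilde x,\tilde x,\tilde t)$, use the H\"older assumption \eqref{holdercontiassump} to choose a single radius $\rho_\beta$ (independent of $\tilde z$) making \eqref{assumption closedness of A} hold, and then re-run the scaling/covering argument from the proof of Theorem \ref{Holder} with the uniform radius $\rho$ from \eqref{rhocondforhod} in place of the $\tilde z$-dependent $\tilde\rho_{\tilde z}$. The only cosmetic difference is that the paper invokes $\delta$ directly from Lemma \ref{almost main} rather than from Theorem \ref{Holder}, and replaces $\tilde\rho_{\tilde z}$ by $\rho$ outright (your $\min\{\rho,\rho_\beta/4\}$ equals $\rho$ anyway once $\rho_0\le\rho_\beta$), but the substance is identical.
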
 
\begin{proof}
Fix $\alpha\in\left(0,\min\left\{2s-\left(\frac{n}{q}+\frac{2s}{r}\right),1\right\}\right)$. Let $\delta=\delta(n,s,q,r,\lambda,\alpha)>0$ be determined in Lemma \ref{almost main}. By \eqref{holdercontiassump}, there is a sufficiently small $\rho_{\beta}=\rho_{\beta}(n,s,q,r,\lambda,\alpha,\beta,L)>0$ such that if $(x,y,t),(x',y',t')\in \Omega\times\Omega\times(0,T)$ satisfy 
\begin{equation*}
\left(|(x,y)-(x',y')|+|t-t'|^{\frac{1}{2s}}\right)\leq \rho_{\beta},
\end{equation*}
then
\begin{equation*}
    |A(x,y,t)-A(x',y',t')|\leq \delta.
\end{equation*}
We now take $Q_{\rho_{0}}(z_{0})\Subset\Omega_{T}$ with $\rho_{0}\leq\rho_{\beta}$. Then for any $\tilde{z}\in Q_{\rho_{0}}(z_{0})$, we see that
\begin{equation*}
\|\Tilde{A}_{\Tilde{z}}-A\|_{L^{\infty}\left(B_{4\rho}(\Tilde{x})\times B_{4\rho}(\Tilde{x})\times \left[\Tilde{t}-(4\rho)^{2s},\Tilde{t}\right]\right)}\leq \delta,
\end{equation*} 
where the constant $\rho>0$ is given in \eqref{rhocondforhod} and we take
\begin{align*}
    \Tilde{A}_{\Tilde{z}}(x,y,t)=A(\Tilde{x},\Tilde{x},\Tilde{t}),\quad (x,y,t)\in B_{4\rho}(\Tilde{x})\times B_{4\rho}(\Tilde{x})\times\left[\Tilde{t}-(4\rho)^{2s},\Tilde{t}\right]
\end{align*}
as in (1) of Remark \ref{rmkkernel}. Thus, by following the same lines as in the proof of \eqref{covering holder} with $\tilde{\rho}_{\tilde{z}}$ there, replaced by $\rho$, we have 
\begin{equation*}
\begin{aligned}
    [u]_{C^{\alpha,\frac{\alpha}{2s}}(Q_{\rho}(\tilde{z}))}&\leq\frac{c}{{\rho}^{\alpha}}\Bigg(\left(\dashiint_{Q_{\rho_{0}}(z_{0})}u^{2}\dx\dt\right)^{\frac{1}{2}}+\rho_{0}^{2s}\esup_{t\in\left[t_{0}-(\rho_{0})^{2s},t_{0}\right]}\int_{\mathbb{R}^{n}\setminus B_{\frac{\rho_{0}}{2}}(x_{0})}\frac{|u(y,t)|}{|x_{0}-y|^{n+2s}}\dy\\
    &\qquad+\rho_{0}^{2s-\left(\frac{n}{q}+\frac{2s}{r}\right)}\|f\|_{L^{q,r}(Q_{\rho_{0}}(z_{0}))}\Bigg)
\end{aligned}
\end{equation*}
for some constant $c=c(n,s,q,r,\lambda,\alpha)$.
Using the definition of $\rho$ given in \eqref{rhocondforhod}, we get
\begin{equation*}
\begin{aligned}
    [u]_{C^{\alpha,\frac{\alpha}{2s}}(Q_{\rho}(\tilde{z}))}&\leq\frac{c}{{\rho_{0}}^{\alpha}}\Bigg(\left(\dashiint_{Q_{\rho_{0}}(z_{0})}u^{2}\dx\dt\right)^{\frac{1}{2}}+\ITail(u;z_{0},\rho_{0}/2,\rho_{0}^{2s})\\
    &\qquad+\rho_{0}^{2s-\left(\frac{n}{q}+\frac{2s}{r}\right)}\|f\|_{L^{q,r}(Q_{\rho_{0}}(z_{0}))}\Bigg)
\end{aligned}
\end{equation*}
for some constant $c=c(n,s,q,r,\lambda,\alpha)$. We now use the standard covering argument to conclude \eqref{holderrefinedestimate}.
\end{proof}}

\appendix
\section{Existence and uniqueness of an initial and boundary value problem}
\label{Appendix}
In this section, we prove the existence and uniqueness of a weak solution to \eqref{eq1} with boundary conditions. 
When $r \ge 2$, this is proved in \cite[Appendix A]{BLS}. Here we extend the argument to the case $r>1$ by regularizing the nonhomogeneous term.
To this end, let us introduce appropriate function spaces.
Let $\Omega$ and $\Omega'$ be bounded open sets with $\Omega\Subset\Omega'\subset\mathbb{R}^{n}$.
We introduce a space as in \cite{KKP},
\begin{equation*}
    X_{\phi}^{s,2}(\Omega,\Omega')=\left\{v\in W^{s,2}(\Omega')\cap L^{1}_{2s}(\mathbb{R}^{n})\ ; \ v=\phi \text{ on }\mathbb{R}^{n}\setminus\Omega\right\}\quad \text{for }\phi\in L^{1}_{2s}(\mathbb{R}^{n}).
\end{equation*}
Assume that functions $u_{0}$, $f$ and $\zeta$ satisfy the followings,
\begin{equation}
\label{Data condition of IVP}
    \begin{aligned}
    &u_{0}\in L^{2}(\Omega),\\
    &f\in L^{q,r}(\Omega_{T}) \text{ for }\frac{n}{2qs}+\frac{1}{r}\leq 1+\frac{n}{4s}, \\
    &\zeta\in L^{2}(0,T;W^{s,2}(\Omega'))\cap L^{2}(0,T;L^{1}_{2s}(\mathbb{R}^{n}))\text{ and }\partial_{t}\zeta\in \left(L^{2}(0,T;W^{s,2}(\Omega')\right)^{*}.
\end{aligned}
\end{equation}
We say that $u\in L^{2}(0,T;W^{s,2}(\Omega'))\cap L^{2}(0,T;L^{1}_{2s}(\mathbb{R}^{n}))\cap C([0,T];L^{2}(\Omega))$ is a weak solution to 
\begin{equation}
\label{IVP}
\left\{
\begin{alignedat}{3}
\partial_{t}u+\mathcal{L}^{\Phi}_{A}u&= f&&\qquad \mbox{in  $\Omega\times(0,T)$} \\
u&=\zeta&&\qquad  \mbox{on $\mathbb{R}^{n}\setminus\Omega\times[0,T]$} \\
u(\cdot,0)&=u_{0} &&\qquad \mbox{on  $\Omega$},
\end{alignedat} \right.
\end{equation}
if it satisfies the following three conditions:
\begin{enumerate}
\item $u(\cdot,t)\in X^{s,p}_{\zeta(\cdot,t)}(\Omega,\Omega')$ for almost every $t\in I$.
\item $\lim\limits_{t\to0}\|u(\cdot,t)-u_{0}\|_{L^{2}(\Omega)}=0$.
\item For every $\phi\in L^{2}(t_{1},t_{2};X_{0}^{s,2}(\Omega,\Omega'))\cap C^{1}([t_{1},t_{2}];L^{2}(\Omega))$, we have
\begin{equation*}
\begin{aligned}
&-\int_{t_{1}}^{t_{2}}\int_{\Omega}u(x,t)\partial_{t}\phi(x,t)\dx\dt+\int_{t_{1}}^{t_{2}}\int_{\mathbb{R}^{n}}\int_{\mathbb{R}^{n}}\frac{\Phi(u(x,t)-u(y,t))}{|x-y|^{n+2s}}(\phi(x,t)-\phi(y,t))\dx\dy\dt\\
&\quad=\int_{t_{1}}^{t_{2}}\int_{\Omega}f(x,t)\phi(x,t)\dx\dt-\int_{\Omega}u(x,t)\phi(x,t)\dx\Bigg\rvert_{t=t_{1}}^{t=t_{2}}
\end{aligned}
\end{equation*}
whenever $J\coloneqq[t_{1},t_{2}]\Subset I$.
\end{enumerate}
\begin{lem}
Under the assumptions \eqref{Data condition of IVP}, there is a unique weak solution 
\[u\in L^{2}(0,T;W^{s,2}(\Omega'))\cap L^{2}(0,T;L^{1}_{2s}(\mathbb{R}^{n}))\cap C([0,T];L^{2}(\Omega))\]
to \eqref{IVP}. In particular, if $\zeta\in L^{\infty}(0,T;L^{1}_{2s}(\mathbb{R}^{n}))$, then $u\in L^{\infty}(0,T;L^{1}_{2s}(\mathbb{R}^{n}))$.
\end{lem}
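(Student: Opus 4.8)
The plan is to establish existence by a standard regularization-plus-compactness scheme, reducing to the case $r\geq 2$ which is already available from \cite[Appendix A]{BLS}. First I would mollify the nonhomogeneous term: for $\varepsilon>0$, let $f_{\varepsilon}$ be a smooth (in time) approximation of $f$, obtained for instance by convolving $f$ in the time variable with a standard mollifier, so that $f_{\varepsilon}\in L^{\infty}(0,T;L^{q}(\Omega))\subset L^{q,\tilde r}(\Omega_{T})$ for every $\tilde r\geq 1$; in particular we may treat $f_{\varepsilon}$ as lying in $L^{q,r'}(\Omega_{T})$ with $r'\geq 2$, and $f_{\varepsilon}\to f$ in $L^{q,r}(\Omega_{T})$ as $\varepsilon\to0$. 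For each $\varepsilon$, the known result of \cite{BLS} produces a unique weak solution $u_{\varepsilon}\in L^{2}(0,T;W^{s,2}(\Omega'))\cap L^{2}(0,T;L^{1}_{2s}(\mathbb{R}^{n}))\cap C([0,T];L^{2}(\Omega))$ to \eqref{IVP} with $f$ replaced by $f_{\varepsilon}$ and the same data $u_{0}$, $\zeta$.

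Next I would derive uniform-in-$\varepsilon$ energy estimates. Testing the equation for $u_{\varepsilon}$ with $u_{\varepsilon}-\zeta$ (as in the proof of Lemma \ref{Comparison estimate}), using the coercivity and Lipschitz bounds \eqref{Phi condition} on $\Phi$, the ellipticity \eqref{coefficeint condition} of $A$, together with the embedding Lemma \ref{embedSF} to absorb the term $\int f_{\varepsilon}(u_{\varepsilon}-\zeta)$ — this is exactly where the sharp Sobolev exponent relation $\frac{n}{2\hat q s}+\frac{1}{\hat r}=\frac{n}{4s}$ and the hypothesis $\frac{n}{2qs}+\frac1r\leq 1+\frac{n}{4s}$ enter — yields a bound
\[
\sup_{t\in[0,T]}\|u_{\varepsilon}(\cdot,t)\|_{L^{2}(\Omega)}^{2}+\int_{0}^{T}[u_{\varepsilon}(\cdot,t)]_{W^{s,2}(\Omega')}^{2}\dt\leq c\left(\|u_{0}\|_{L^{2}(\Omega)}^{2}+\|f\|_{L^{q,r}(\Omega_{T})}^{2}+\text{data}(\zeta)\right)
\]
independent of $\varepsilon$. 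From the equation itself one also gets a uniform bound on $\partial_{t}u_{\varepsilon}$ in $L^{2}(0,T;X_{0}^{s,2}(\Omega,\Omega')^{*})$. By Aubin–Lions compactness, a subsequence $u_{\varepsilon}\to u$ strongly in $L^{2}(\Omega_{T})$, weakly in $L^{2}(0,T;W^{s,2}(\Omega'))$, and $\partial_{t}u_{\varepsilon}\rightharpoonup \partial_{t}u$; the $L^{\infty}_{t}L^{2}_{x}$ bound plus the time-derivative bound gives, after possibly a further subsequence, $u_{\varepsilon}(\cdot,t)\to u(\cdot,t)$ in $L^{2}(\Omega)$ for a.e.\ $t$, and one checks $u\in C([0,T];L^{2}(\Omega))$ with $u(\cdot,0)=u_{0}$. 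The tail part is handled by noting $u_{\varepsilon}=\zeta$ outside $\Omega$, so the $L^{1}_{2s}$ and, under the extra hypothesis, $L^{\infty}(0,T;L^{1}_{2s})$ bounds are inherited directly from $\zeta$.

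Then I would pass to the limit in the weak formulation. The only nonlinear term is $\int\int\int \Phi(u_{\varepsilon}(x,t)-u_{\varepsilon}(y,t))(\phi(x,t)-\phi(y,t))|x-y|^{-n-2s}A\,dxdydt$; using the Lipschitz bound in \eqref{Phi condition} and the strong $L^{2}$ convergence of $u_{\varepsilon}$ (together with the weak convergence in $W^{s,2}$ to control the double integral over $\Omega'\times\Omega'$ and the uniform tail control for the far part, split at $\partial(\Omega'\times\Omega')$ exactly as in the estimates of $I_{2}^{\epsilon}$ in Lemma \ref{caccioppoli estimate}), this term converges to the corresponding expression with $u$. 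The linear terms and the $\int f_{\varepsilon}\phi$ term converge trivially since $f_{\varepsilon}\to f$ in $L^{q,r}$ and $\phi\in L^{r',q'}$ by Lemma \ref{embedSF}. Hence $u$ is a weak solution to \eqref{IVP}. I expect the main obstacle to be the limit passage in the nonlinear nonlocal term near the diagonal and simultaneously in the tail: the integrand is only controlled by $W^{s,2}$-type bounds plus the $L^{1}_{2s}$ tail, so one must carefully split the double integral into the region $\Omega'\times\Omega'$ (use weak $W^{s,2}$ convergence and strong $L^{2}$ convergence, mimicking \eqref{moll limit1}) and the complementary region (use the uniform tail bound and the compact support of $\phi$, mimicking \eqref{moll limit2}–\eqref{moll limit3}), being mindful that $\Phi$ is merely Lipschitz and not assumed continuous in any stronger sense.

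Finally, uniqueness follows the same route as the comparison estimate: if $u_{1},u_{2}$ are two weak solutions with the same data, then $w=u_{1}-u_{2}\in L^{2}(0,T;W_{0}^{s,2}(\Omega,\Omega'))\cap C([0,T];L^{2}(\Omega))$ with $w(\cdot,0)=0$ solves the difference equation, and testing with $w$ (via the time-mollification approximation argument already used in the proof of Lemma \ref{Comparison estimate}) together with the monotonicity
\[
\bigl(\Phi(u_{1}(x,t)-u_{1}(y,t))-\Phi(u_{2}(x,t)-u_{2}(y,t))\bigr)\bigl(w(x,t)-w(y,t)\bigr)\geq \lambda^{-1}|w(x,t)-w(y,t)|^{2}
\]
and the ellipticity of $A$ gives $\frac12\|w(\cdot,t_{1})\|_{L^{2}(\Omega)}^{2}+\lambda^{-2}\int_{0}^{t_{1}}[w(\cdot,t)]_{W^{s,2}(\mathbb{R}^{n})}^{2}\dt\leq 0$, whence $w\equiv 0$. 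This also shows the full sequence $u_{\varepsilon}$ (not merely a subsequence) converges, and the extra regularity statement $u\in L^{\infty}(0,T;L^{1}_{2s}(\mathbb{R}^{n}))$ when $\zeta\in L^{\infty}(0,T;L^{1}_{2s}(\mathbb{R}^{n}))$ is immediate since $u=\zeta$ on $\mathbb{R}^{n}\setminus\Omega$ and $u\in L^{\infty}(0,T;L^{2}(\Omega))\hookrightarrow L^{\infty}(0,T;L^{1}(\Omega))$ with $\Omega$ bounded.
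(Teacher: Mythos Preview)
Your approach is correct and shares the same starting point as the paper (regularize $f$ in time, apply the existence result from \cite{BLS} for $r\geq 2$, then pass to the limit), but the paper takes a shorter route for the convergence step. Rather than deriving uniform energy bounds by testing with $u_{\varepsilon}-\zeta$ and then invoking Aubin--Lions compactness, the paper exploits monotonicity directly: it tests the difference equation for $u_{n}-u_{m}$ with $u_{n}-u_{m}$ itself (exactly the same computation as your uniqueness argument) to obtain
\[
\sup_{t\in[0,T]}\|u_{n}-u_{m}\|_{L^{2}(\Omega)}^{2}+\int_{0}^{T}[u_{n}-u_{m}]_{W^{s,2}(\mathbb{R}^{n})}^{2}\dt\leq c\|f_{n}-f_{m}\|_{L^{q,r}(\Omega_{T})}^{2},
\]
so $\{u_{n}-\zeta\}$ is Cauchy in $L^{2}(0,T;W^{s,2}(\mathbb{R}^{n}))\cap C([0,T];L^{2}(\Omega))$. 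This yields \emph{strong} $W^{s,2}$-convergence, after which the limit passage in the nonlinear nonlocal term is immediate from the Lipschitz bound on $\Phi$ and Cauchy--Schwarz. Your route, by contrast, only gives weak $W^{s,2}$-convergence, and the limit passage you describe then needs an extra ingredient you did not make fully explicit: strong $L^{2}$ convergence gives (along a subsequence) a.e.\ convergence, and combined with the uniform $W^{s,2}$ bound this forces $\Phi(u_{\varepsilon}(x)-u_{\varepsilon}(y))|x-y|^{-(n+2s)/2}$ to converge weakly in $L^{2}$, which then pairs against the fixed $L^{2}$ function $(\phi(x)-\phi(y))A|x-y|^{-(n+2s)/2}$. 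Both arguments work; the paper's is cleaner because the monotonicity already needed for uniqueness upgrades weak to strong convergence for free, and it also avoids the test function $u_{\varepsilon}-\zeta$ and the attendant $\partial_{t}\zeta$ terms.
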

\begin{proof}
Take a sequence $\{f_{n}\}_{n\in\mathbb{N}}\subset C([0,T];L^{q}(\Omega))$ such that $\lim_{n\to\infty}\|f_{n}-f\|_{L^{q,r}(\Omega_{T})}=0$.
Since $\{f_{n}\}_{n\in\mathbb{N}}\subset L^{2}(0,T;L^{q}(\Omega))\subset L^{2}\left(0,T;\left(X_{0}^{s,2}(\Omega,\Omega')\right)^{*}\right)$, we can use \cite[Theorem A.3]{BLS} and \cite[Proposition 4.1]{Nexist} with simple modifications to handle $\Phi(\xi)$. Thus we have a unique weak solution 
\[u_{n}\in L^{2}(0,T;W^{s,2}(\Omega'))\cap L^{2}(0,T;L^{1}_{2s}(\mathbb{R}^{n}))\cap C([0,T];L^{2}(\Omega))\] to
\begin{equation*}
\left\{
\begin{alignedat}{3}
\partial_{t}u_{n}+\mathcal{L}^{\Phi}_{A}u_{n}&=f_{n}&&\qquad \mbox{in  $\Omega\times(0,T)$} \\
u_{n} &=\zeta&&\qquad  \mbox{on $\left(\mathbb{R}^{n}\setminus\Omega\right)\times(0,T)$} \\
u_{n}(\cdot,0)&=u_{0} &&\qquad \mbox{on  $\Omega$}.
\end{alignedat} \right.
\end{equation*}
We next show that $\{u_{n}-\zeta\}_{n\in\mathbb{N}}$ is a Cauchy sequence in $L^{2}(0,T;W^{s,2}(\mathbb{R}^{n}))\cap C([0,T];L^{2}(\Omega))$. Then $u_{n}-u_{m}\in L^{2}(0,T;W_{0}^{s,2}(\Omega))\cap C([0,T];L^{2}(\Omega))$ and 
\begin{equation}\label{Appendix eq}
    \partial_{t}(u_{n}-u_{m})+\mathcal{L}_{A}^{\Phi}u_{n}-\mathcal{L}_{A}^{\Phi}u_{m}=f_{n}-f_{m} \quad\text{ in }\Omega_{T}.
\end{equation}
Using $u_{n}-u_{m}$ as a test function to \eqref{Appendix eq}, we deduce that 
\begin{align*}
    &\sup_{t\in[0,T]}\int_{\Omega}(u_{n}-u_{m})^{2}(x,t)\dx+\int_{0}^{T}[(u_{n}-u_{m})(\cdot,t)]^{2}_{W^{s,2}(\mathbb{R}^{n})}\dt\\
&\quad \leq c\int_{0}^{T}\int_{\Omega}(f_{n}-f_{m})(u_{n}-u_{m})\dx\dt
\end{align*}
for some constant $c \equiv c(\lambda)$. Then using Lemma \ref{embedSF} and Young's inequality, we have
\begin{align*}
    \sup_{t\in[0,T]}\int_{\Omega}(u_{n}-u_{m})^{2}(x,t)\dx+\int_{0}^{T}[(u_{n}-u_{m})(\cdot,t)]^{2}_{W^{s,2}(\mathbb{R}^{n})}\dt\leq c\|f_{n}-f_{m}\|^{2}_{L^{q,r}(\Omega_{T})}.
\end{align*}
This implies that $\{u_{n}-\zeta\}_{n\in\mathbb{N}}$ is a Cauchy sequence in $L^{2}(0,T;W^{s,2}(\mathbb{R}^{n}))\cap C([0,T];L^{2}(\Omega))$.  Then we have 
\begin{equation*}
\Tilde{u}=\lim_{n\to\infty}u_{n}-\zeta\in L^{2}(0,T;W^{s,2}(\mathbb{R}^{n}))\cap C([0,T];L^{2}(\Omega)).
\end{equation*}
Since $\lim\limits_{n\to\infty}u_{n}=\Tilde{u}+\zeta\in L^{2}(0,T;W^{s,2}(\Omega'))\cap L^{2}(0,T;L^{1}_{2s}(\mathbb{R}^{n}))$ and $\Tilde{u}+\zeta=\zeta$ on $\left(\mathbb{R}^{n}\setminus\Omega\right)\times(0,T)$, we have
\begin{equation*}
u=\Tilde{u}+\zeta\in L^{2}(0,T;W^{s,2}(\Omega'))\cap L^{2}(0,T;L^{1}_{2s}(\mathbb{R}^{n}))\cap C([0,T];L^{2}(\Omega))
\end{equation*}   
is a weak solution to $\eqref{IVP}$. 
For the uniqueness, let $u$,$v$ be solutions to \eqref{IVP}.
Then $u-v\in L^{2}(0,T;W_{0}^{s,2}(\Omega))\cap C([0,T];L^{2}(\Omega))$ and it satisfies
\begin{equation*}
    \partial_{t}(u-v)+\mathcal{L}_{A}^{\Phi}u-\mathcal{L}_{A}^{\Phi}v=0 \quad\text{ in }\Omega_{T}.
\end{equation*}
Taking $u-v$ as a test function, we get 
\begin{align*}
    \sup_{t\in[0,T]}\int_{\Omega}(u-v)^{2}(x,t)\dx+\int_{0}^{T}[(u-v)(\cdot,t)]^{2}_{W^{s,2}(\mathbb{R}^{n})}\dt\leq 0,
\end{align*}
which implies $u$ and $v$ are same in $L^{2}(0,T;W_{0}^{s,2}(\Omega))\cap C([0,T];L^{2}(\Omega))$. In addition, if $\zeta\in L^{\infty}(0,T;L^{1}_{2s}(\mathbb{R}^{n}))$, we discover that
\begin{align*}
    \esup_{t\in(0,T)}\int_{\mathbb{R}^{n}}\frac{|w(y,t)|}{1+|y|^{n+2s}}\dy&\leq\esup_{t\in(0,T)}\int_{\Omega}\frac{|w(y,t)|}{1+|y|^{n+2s}}\dy+\esup_{t\in(0,T)}\int_{\mathbb{R}^{n}\setminus\Omega}\frac{|\zeta(y,t)|}{1+|y|^{n+2s}}\dy<\infty,
\end{align*}
which yields that
\[u\in L^{\infty}(0,T;L^{1}_{2s}(\mathbb{R}^{n})).\]
\end{proof}

\textbf{Data Availability} All data generated or analysed during this study are included in this published article.

\textbf{Conflict of Interest} There is no conflict of interest.

\textbf{Acknowledgement} We would like to thank a referee for the careful reading of the early version to give valuable comments and constructive suggestions.

\medskip

\providecommand{\bysame}{\leavevmode\hbox to3em{\hrulefill}\thinspace}
\providecommand{\MR}{\relax\ifhmode\unskip\space\fi MR }
\providecommand{\MRhref}[2]{%
  \href{http://www.ams.org/mathscinet-getitem?mr=#1}{#2}
}

\providecommand{\href}[2]{#2}

\end{document}